\documentclass[11pt]{article}

\usepackage{amsmath}
\usepackage{amssymb}
\usepackage{amsthm}
\usepackage{bbold}
\usepackage{bm}
\usepackage{colonequals}
\usepackage{authblk}
\usepackage{xcolor}
\usepackage[letterpaper,margin=1in]{geometry}
\usepackage{graphicx}
\usepackage{siunitx}

\definecolor{cblue}{rgb}{0.121569,0.466667,0.705882}
\definecolor{corange}{rgb}{1.000000,0.498039,0.054902}
\definecolor{cgreen}{rgb}{0.172549,0.627451,0.172549}

\usepackage{hyperref}
\hypersetup{
  linkcolor  = cblue,
  citecolor  = cgreen,
  urlcolor   = corange,
  colorlinks = true,
}

\newtheorem{theorem}{Theorem}[section]
\newtheorem{remark}[theorem]{Remark}
\newtheorem{lemma}[theorem]{Lemma}
\newtheorem{definition}[theorem]{Definition}
\newtheorem{proposition}[theorem]{Proposition}
\newtheorem{corollary}[theorem]{Corollary}
\newtheorem{conjecture}[theorem]{Conjecture}

\newcommand{\what}{\widehat}

\newcommand{\CC}{\mathbb{C}}
\newcommand{\EE}{\mathbb{E}}
\newcommand{\RR}{\mathbb{R}}
\renewcommand{\SS}{\mathbb{S}}
\newcommand{\ZZ}{\mathbb{Z}}
\DeclareSymbolFont{bbold}{U}{bbold}{m}{n}
\DeclareSymbolFontAlphabet{\mathbbold}{bbold}
\newcommand{\One}{\mathbbold{1}}

\newcommand{\ba}{\bm a}
\newcommand{\be}{\bm e}
\newcommand{\bq}{\bm q}
\newcommand{\bu}{\bm u}
\newcommand{\bv}{\bm v}
\newcommand{\bw}{\bm w}
\newcommand{\bx}{\bm x}
\newcommand{\bA}{\bm A}
\newcommand{\bB}{\bm B}
\newcommand{\bC}{\bm C}
\newcommand{\bD}{\bm D}
\newcommand{\bE}{\bm E}
\newcommand{\bH}{\bm H}
\newcommand{\bK}{\bm K}
\newcommand{\bM}{\bm M}
\newcommand{\bP}{\bm P}
\newcommand{\bQ}{\bm Q}
\newcommand{\bR}{\bm R}
\newcommand{\bS}{\bm S}
\newcommand{\bT}{\bm T}
\newcommand{\bU}{\bm U}
\newcommand{\bV}{\bm V}
\newcommand{\bW}{\bm W}
\newcommand{\bX}{\bm X}
\newcommand{\bY}{\bm Y}
\newcommand{\bZ}{\bm Z}

\newcommand{\sD}{\mathcal{D}}
\newcommand{\sG}{\mathcal{G}}
\newcommand{\sH}{\mathcal{H}}
\newcommand{\sL}{\mathcal{L}}
\newcommand{\sN}{\mathcal{N}}

\DeclareSymbolFont{sfoperators}{OT1}{cmss}{m}{n}
\DeclareSymbolFontAlphabet{\mathsf}{sfoperators}
\makeatletter
\renewcommand{\operator@font}{\mathgroup\symsfoperators}
\makeatother

\DeclareMathOperator*{\argmax}{arg\,max}
\DeclareMathOperator{\GOE}{GOE}
\DeclareMathOperator{\sym}{sym}
\DeclareMathOperator{\Tr}{Tr}
\DeclareMathOperator{\Var}{Var}
\DeclareMathOperator{\Cov}{Cov}
\DeclareMathOperator{\Unif}{Unif}
\renewcommand\Re{\operatorname{Re}}

\newcommand\numberthis{\addtocounter{equation}{1}\tag{\theequation}}

\title{Lehner's operator norm formulas, semidefinite programming, and spiked matrix models}
\author{Dmitriy Kunisky\thanks{Email: \texttt{kunisky@jhu.edu}.}}
\affil{Department of Applied Mathematics \& Statistics, Johns Hopkins University}
\date{June 12, 2026}

\newcommand{\free}{\mathsf{free}}
\renewcommand{\sc}{\mathsf{sc}}
\newcommand{\rad}{\mathsf{rad}}
\newcommand{\SDP}{\mathsf{SDP}}
\newcommand{\herm}{\mathsf{herm}}
\newcommand{\spec}{\mathsf{spec}}
\newcommand{\Law}{\mathsf{Law}}
\newcommand{\id}{\mathfrak{1}}

\begin{document}

\maketitle
\thispagestyle{empty}

\begin{abstract}
    Lehner (1999) derived elegant formulas for the operator norm $\|\mathfrak{X}\|$ of operators of the form $\mathfrak{X} = \bA_0 \otimes \id + \sum_{i = 1}^n \bA_i \otimes \mathfrak{m}_i$, also easily generalized to the spectral edge $\lambda_{\max}(\mathfrak{X})$, in terms of nonlinear optimization problems over positive definite matrices.
    Here the $\bA_i$ are finite-dimensional Hermitian matrices, the $\mathfrak{m}_i$ are either free semicircular or free Rademacher families of operators, and $\id$ is the identity operator.
    We first show that both of Lehner's nonlinear optimizations can be rewritten as linear semidefinite programs (SDPs), even in the Rademacher case where Lehner's optimization is not itself convex.
    We give the primal and dual forms of these SDPs, derive the complementary slackness relations and consequences thereof, and propose that the SDPs are more stable and accurate than the iterative numerical scheme proposed in Lehner's original work.

    We then apply the SDPs from the semicircular case to spiked matrix models, studied recently via Lehner's formula by Bandeira, Cipolloni, Schr{\"o}der, and van Handel~(2024).
    We give a new proof of the Baik--Ben Arous--P\'{e}ch\'{e} (BBP) transition they establish in models with isotropic (but possibly correlated) Gaussian noise by constructing feasible variables for the associated primal and dual SDPs.
    Combining our construction with a sensitivity interpretation of optimal dual variables, we study the fluctuations of leading eigenvectors of such models.
    We conjecture and give numerical evidence that these fluctuations are Gaussian but anisotropic and non-universal, and that their covariance may be computed in terms of the optimizer of the dual of Lehner's formula, which in turn approximately equals the leading eigenmatrix of a completely positive operator associated to the covariance of the noise model.
\end{abstract}

\clearpage

\tableofcontents
\thispagestyle{empty}

\clearpage

\section{Introduction}
\pagenumbering{arabic}

\subsection{Lehner formulas}

Write $\CC^{d \times d}_{\herm}$ for the set of $d \times d$ Hermitian matrices.
Below we always take $\bA_0, \bA_1, \dots, \bA_n \in \CC^{d \times d}_{\herm}$ for $d, n \geq 1$.
When we discuss operators, they will always be bounded self-adjoint operators on Hilbert spaces of countable dimension.
For $\mathfrak{X}$ such an operator, we write $\spec(\mathfrak{X}) \subseteq \RR$ for its spectrum, which we recall is a compact set.
Thus, we may also speak of $\lambda_{\max}(\mathfrak{X}) \colonequals \max\{\lambda: \lambda \in \spec(\mathfrak{X})\}$, the right edge of the spectrum.
We may also apply these notions to finite-dimensional Hermitian matrices $\mathfrak{X} = \bX$, in which case $\spec(\bX)$ is the set of (finitely many) real eigenvalues and $\lambda_{\max}(\bX)$ is the largest element of this set.
As above, we always use Fraktur letters ($\mathfrak{s}, \mathfrak{r}, \mathfrak{X}$) to denote infinite-dimensional operators, and bold capital letters ($\bA$, $\bX$, $\bY$) to denote finite-dimensional matrices.

The seminal work of \cite{Lehner-1999-NormFreeOperatorMatrixCoefficients} showed the following formulas for $\lambda_{\max}(\mathfrak{X})$ for $\mathfrak{X}$ a sum of tensor products of deterministic Hermitian matrices $\bA_0, \bA_1, \dots, \bA_n$ with certain free families of operators.
Remarkably, these formulas can ``solve out'' the role of the infinite-dimensional operators and leave only explicit finite-dimensional optimization problems parameterized by $\bA_0, \bA_1, \dots, \bA_n$.
We call these the \emph{Lehner formulas}.

\begin{theorem}[Semicircular Lehner formula, {\cite[Corollary 1.5]{Lehner-1999-NormFreeOperatorMatrixCoefficients}}]
    \label{thm:lehner-sc}
    Suppose $(\mathfrak{s}_i)_{i = 1}^n$ is a free semicircular family of operators on a suitable Hilbert space and let $\id$ be the identity operator on that space.
    Then,
    \begin{align*}
      \Lambda_{\sc}(\bA_0, \bA_1, \dots, \bA_n) &\colonequals \lambda_{\max}\left(\bA_0 \otimes \id + \sum_{i = 1}^n \bA_i \otimes \mathfrak{s}_i\right) \\
      &= \inf_{\substack{\bZ \in \CC^{d \times d}_{\herm} \\ \bZ \succ \bm 0}} \lambda_{\max}\left(\bZ + \bA_0 + \sum_{i = 1}^n \bA_i\bZ^{-1}\bA_i\right). \numberthis \label{eq:lehner-sc}
    \end{align*}
    Further, the infimum in \eqref{eq:lehner-sc} can be restricted to those $\bZ$ for which the expression inside $\lambda_{\max}(\cdot)$ equals a scalar multiple of $\bm I_d$.
\end{theorem}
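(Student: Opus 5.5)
We prove the two inequalities separately. The upper bound $\Lambda_{\sc} \le \inf_{\bZ}$ comes from an operator-valued Cauchy--Schwarz / Schur complement argument. The lower bound comes from the matrix-valued Stieltjes transform, i.e.\ the subordination (matrix Dyson) equation of operator-valued free probability. Throughout, write $\mathfrak{X} = \bA_0 \otimes \id + \sum_i \bA_i \otimes \mathfrak{s}_i$. Realize the $\mathfrak{s}_i$ on the full Fock space $\mathcal{F}(\CC^n)$ as $\mathfrak{s}_i = \ell_i + \ell_i^*$, with $\ell_i$ the left creation operators; these satisfy $\ell_i^*\ell_j = \delta_{ij}\id$. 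Let $\tau$ be the vacuum state and $E = \mathrm{id}\otimes\tau$ the induced conditional expectation onto $\CC^{d\times d}$.

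\textbf{Upper bound.} Fix $\bZ \succ \bm 0$ and set $\mathfrak{L} = \sum_i \bA_i \otimes \ell_i^*$, so that $\sum_i \bA_i\otimes \mathfrak{s}_i = \mathfrak{L} + \mathfrak{L}^*$. For any vector $\xi$,
\[
2\Re\langle \xi, \mathfrak{L}\xi\rangle \le \langle \xi, (\bZ\otimes\id)\xi\rangle + \langle \mathfrak{L}\xi, (\bZ^{-1}\otimes\id)\mathfrak{L}\xi\rangle ,
\]
which is the scalar inequality $2|\langle a,b\rangle| \le \langle a,Ka\rangle + \langle b,K^{-1}b\rangle$ applied with $K = \bZ\otimes\id$. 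Using $\ell_i\ell_j^* \preceq \delta_{ij}$-type bounds, we get
\[
\mathfrak{L}^*(\bZ^{-1}\otimes\id)\mathfrak{L} = \sum_{i,j}\bA_i\bZ^{-1}\bA_j\otimes \ell_i\ell_j^* \preceq \sum_i \bA_i\bZ^{-1}\bA_i\otimes\id .
\]
This holds because the map $(\ell_j^*)_j$ is a coisometry-type row. Concretely, $\sum_j \ell_j\ell_j^* \preceq \id$, so the block operator $[\ell_i\ell_j^*]_{ij}$ is dominated by $\mathrm{diag}(\id)$ after the appropriate rearrangement. This step needs care, and if it fails with $\ell^*$ we switch to $\mathfrak{L} = \sum_i \bA_i\otimes \ell_i$, for which $\ell_i^*\ell_j = \delta_{ij}$ gives exact equality. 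Combining the two displays,
\[
\mathfrak{X} \preceq \Bigl(\bZ + \bA_0 + \sum_i \bA_i\bZ^{-1}\bA_i\Bigr)\otimes\id ,
\]
and taking $\lambda_{\max}$ gives the upper bound.

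\textbf{Lower bound and the scalar-multiple claim.} For real $\lambda > \lambda_{\max}(\mathfrak{X})$, define
\[
\bG(\lambda) = E\bigl[(\lambda - \mathfrak{X})^{-1}\bigr] \succ \bm 0 .
\]
This is positive definite because $(\lambda-\mathfrak{X})^{-1}$ is positive and invertible and $E$ is faithful on the Fock realization restricted appropriately. The standard semicircular operator-valued subordination (the matrix Dyson equation) gives
\[
\bG(\lambda) = \Bigl(\lambda - \bA_0 - \sum_i \bA_i \bG(\lambda)\bA_i\Bigr)^{-1},
\]
derived by a Schur complement on the Fock space decomposition $\mathcal{F} = \CC\Omega \oplus \bigoplus_i \ell_i\mathcal{F}$. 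Set $\bZ = \bG(\lambda)^{-1}$. Then
\[
\bZ + \bA_0 + \sum_i \bA_i\bZ^{-1}\bA_i = \lambda \bm I_d ,
\]
so the infimum is at most $\lambda$, witnessed by a $\bZ$ making the expression scalar.

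Letting $\lambda \downarrow \lambda_{\max}(\mathfrak{X})$ gives $\inf \le \Lambda_{\sc}$. Together with the upper bound this proves equality. It also proves the final claim, since these $\bZ$ produce scalar matrices whose values approach the infimum.

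\textbf{Main obstacle.} The main difficulty is rigorously establishing the Dyson equation with positive definite $\bG(\lambda)$ for every $\lambda$ above the edge, not just for $\lambda$ large. For large $\lambda$ I would obtain it by Neumann-series or moment computations. I would then extend to all $\lambda > \lambda_{\max}(\mathfrak{X})$ by analytic continuation along the real axis, using that $\bG$ is analytic on $\CC\setminus\spec(\mathfrak{X})$ and that both sides of the equation are analytic there. Positivity follows from $\lambda - \mathfrak{X} \succ 0$ and positivity of $E$. Strict definiteness, which is needed to invert $\bG$, follows from $(\lambda-\mathfrak{X})^{-1} \succeq (\lambda + \|\mathfrak{X}\|)^{-1}$, which gives $\bG \succeq (\lambda+\|\mathfrak{X}\|)^{-1}\bm I_d$.
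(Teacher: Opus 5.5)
The paper gives no proof of this theorem. It is imported from Lehner, with a remark that his norm argument also gives $\lambda_{\max}$, and then used as a black box, so there is no in-paper argument to match. Your two-sided argument is a correct, self-contained proof that works directly with $\lambda_{\max}$, once you fix two points.

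\textbf{The $\ell_i^*$ version of the upper bound is false, so commit to the fallback.} The block operator $[\ell_i\ell_j^*]_{i,j}$ equals $\mathfrak{C}\mathfrak{C}^*$ for the column $\mathfrak{C} = (\ell_1, \dots, \ell_n)^{\top}$. Since $\mathfrak{C}^*\mathfrak{C} = n\,\id$, it is only dominated by $n$ times the identity of $\mathcal{F}^{\oplus n}$. Indeed $\sum_{i,j}\bA_i\bZ^{-1}\bA_j\otimes\ell_i\ell_j^*$ has norm $\|\bZ^{-1/2}(\sum_i\bA_i^2)\bZ^{-1/2}\|$, and this can exceed $\|\sum_i\bA_i\bZ^{-1}\bA_i\|$. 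For example, take $\bA_1 = (\be_1\be_2^{\top} + \be_2\be_1^{\top})/\sqrt{2}$, $\bA_2 = (\be_1\be_1^{\top} - \be_2\be_2^{\top})/\sqrt{2}$, and $\bZ^{-1} = \mathsf{Diag}(t^2, 1)$ with $t > 1$; the two norms are $t^2$ and $(1 + t^2)/2$. With $\mathfrak{L} = \sum_i\bA_i\otimes\ell_i$ you get $\mathfrak{L}^*(\bZ^{-1}\otimes\id)\mathfrak{L} = \Phi(\bZ^{-1})\otimes\id$ exactly, and the upper bound goes through.

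\textbf{The Schur complement over $\CC\Omega\oplus\bigoplus_i\ell_i\mathcal{F}$ does not decouple,} because $\ell_j$ maps $\ell_i\mathcal{F}$ into $\ell_j\mathcal{F}$. Split by the \emph{last} letter instead. For the right creation operators $r_i\xi = \xi\otimes\be_i$ one has $\ell_j r_i = r_i\ell_j$ and $\ell_j^* r_i\xi = r_i\ell_j^*\xi + \delta_{ij}\langle\Omega,\xi\rangle\Omega$. Hence the compression of $\mathfrak{X}$ to $\CC^d\otimes\Omega^{\perp} = \bigoplus_i\CC^d\otimes r_i\mathcal{F}$ is unitarily equivalent to $n$ copies of $\mathfrak{X}$. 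These copies couple to the vacuum block only through $u\otimes\Omega\mapsto\sum_i\bA_i u\otimes r_i\Omega$. The Schur complement then gives $\bm G(\lambda)^{-1} = \lambda\bm I_d - \bA_0 - \Phi(\bm G(\lambda))$ for every $\lambda\notin\spec(\mathfrak{X})$ at once. This removes what you call the main obstacle. Your Neumann-series-plus-continuation route also works, but continue the inverse-free identity $\bm G(\lambda)(\lambda\bm I_d - \bA_0 - \Phi(\bm G(\lambda))) = \bm I_d$, since invertibility of the bracket is not known in advance.

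\textbf{Comparison with what the paper has available.} The paper's only internal explanation of the final clause is complementary slackness (Lemma~\ref{lem:sc-slackness}). That lemma yields the scalar form only at a primal optimizer, and only when some dual optimizer has full rank; the remark after the lemma shows the full-rank condition can fail. Your resolvent argument settles the clause unconditionally and explicitly: $\bZ_\lambda = \bm G(\lambda)^{-1}$ satisfies $\bZ_\lambda + \bA_0 + \Phi(\bZ_\lambda^{-1}) = \lambda\bm I_d$ for every $\lambda > \lambda_{\max}(\mathfrak{X})$. Your upper bound, the operator inequality $\mathfrak{X}\preceq(\bZ + \bA_0 + \Phi(\bZ^{-1}))\otimes\id$ in a concrete realization, is the semicircular counterpart of the paper's Proof 1 of Lemma~\ref{lem:rad-sdp-2}. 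The price is that both halves rely on semicircular-specific structure, namely $\ell_i^*\ell_j = \delta_{ij}\id$ and the self-similarity of Fock space. The cited source, by contrast, also delivers the Rademacher formula of Theorem~\ref{thm:lehner-rad}.
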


\begin{theorem}[Rademacher Lehner formula, {\cite[Theorem 1.1]{Lehner-1999-NormFreeOperatorMatrixCoefficients}}]
    \label{thm:lehner-rad}
    Suppose $(\mathfrak{r}_i)_{i = 1}^n$ is a free Rademacher family of operators on a suitable Hilbert space and let $\id$ be the identity operator on that space.
    Then,
    \begin{align*}
      &\Lambda_{\rad}(\bA_0, \bA_1, \dots, \bA_n) \\
      &\hspace{1cm}\colonequals \lambda_{\max}\left(\bA_0 \otimes \id + \sum_{i = 1}^n \bA_i \otimes \mathfrak{r}_i\right) \\
      &\hspace{1cm}= \inf_{\substack{\bZ \in \CC^{d \times d}_{\herm} \\ \bZ \succ \bm 0}} \lambda_{\max}\left(\bZ + \bA_0 + \sum_{i = 1}^n \frac{1}{2} \bZ^{1/2}((\bm I_d + 4(\bZ^{-1/2}\bA_i \bZ^{-1/2})^2)^{1/2} - \bm I_d)\bZ^{1/2}\right). \numberthis \label{eq:lehner-rad}
    \end{align*}
    Further, the infimum in \eqref{eq:lehner-rad} can be restricted to those $\bZ$ for which the expression inside $\lambda_{\max}(\cdot)$ equals a scalar multiple of $\bm I_d$.
\end{theorem}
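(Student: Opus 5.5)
Lehner's Rademacher formula is the free-probability analogue of a "linearization plus subordination" argument, and I would prove it the same way the semicircular case is usually proved. The first step is to reduce to the Hermitian-valued free additive convolution structure. The operator $\mathfrak{X} = \bA_0 \otimes \id + \sum_i \bA_i \otimes \mathfrak{r}_i$ is a sum of $M_d$-valued free random variables $\bA_i \otimes \mathfrak{r}_i$ (free with amalgamation over $M_d(\CC)$). Each summand has an explicitly computable operator-valued $R$-transform. Since $\mathfrak{r}_i$ has distribution $\frac12(\delta_{1}+\delta_{-1})$, its scalar $R$-transform is $R(z) = (\sqrt{1+4z^2}-1)/(2z)$. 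For the rank-one tensor $\bA_i\otimes\mathfrak{r}_i$, the matrix-valued $R$-transform is then obtained by functional calculus: $\mathcal{R}_i(\bW) = \frac12 \bW^{-1/2}\bigl((\bm I+4(\bW^{1/2}\bA_i\bW^{1/2})^2)^{1/2}-\bm I\bigr)\bW^{-1/2}$ in a suitable symmetrized form. I would verify this by checking that it reproduces the moments of $\bA_i \otimes \mathfrak{r}_i$ against $M_d$-valued Cauchy transforms. Substituting $\bW = \bZ^{-1}$ gives exactly the summands in \eqref{eq:lehner-rad}.

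Second, I would prove the upper bound $\Lambda_{\rad} \le \inf_{\bZ}\lambda_{\max}(\cdots)$. Write $F(\bZ) = \bZ + \bA_0 + \sum_i \mathcal{R}_i(\bZ^{-1})$. Suppose $\lambda > \lambda_{\max}(F(\bZ))$. I would show that the operator-valued Cauchy transform $G(\lambda) = \EE[(\lambda - \mathfrak{X})^{-1}]$, where $\EE = \mathrm{id}\otimes\tau$, extends analytically along the real axis down to $\lambda$. The approach is the subordination fixed point $G = (\lambda - \bA_0 - \sum_i \mathcal{R}_i(G))^{-1}$. Given the $\bZ$, one has $\bZ^{-1}$ positive definite and $\lambda - \bA_0 - \sum_i \mathcal{R}_i(\bZ^{-1}) \succ \bZ$. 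A monotone iteration starting from $\bZ^{-1}$ then produces a positive solution $G(\lambda) \preceq \bZ^{-1}$. This uses that $\mathcal{R}_i$ is operator-monotone increasing on positive matrices. The boundedness of this positive solution, together with the real analyticity of $G$ on $(\lambda_{\max}(\mathfrak{X}),\infty)$, forces $\lambda \ge \lambda_{\max}(\mathfrak{X})$. The reason is that $\|G(\mu)\|\to\infty$ as $\mu\downarrow\lambda_{\max}$ fails only if one can continue past the edge, and the positivity of $\EE[(\mu-\mathfrak{X})^{-1}]$ for real $\mu$ above the spectrum forbids crossing a spectral point. A cleaner alternative, which I would try first, is Lehner's original route. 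That route bounds $\|\mathfrak{X}\|$ via $\|\sum_i \bA_i\otimes\mathfrak{r}_i\|$ and the free analogue of the Haagerup inequality, with a change of variables $\bA_i \mapsto \bZ^{-1/2}\bA_i\bZ^{-1/2}$. It is less transparent for $\lambda_{\max}$, however.

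Third, the lower bound and the scalar-multiple claim come together. For $\lambda > \lambda_{\max}(\mathfrak{X})$, set $\bZ = G(\lambda)^{-1}$, which is positive definite since $\lambda - \mathfrak{X} \succ 0$. The subordination identity then reads $F(\bZ) = \lambda \bm I_d$ exactly. Hence the infimum is at most $\lambda$, and it is attained within the set where the argument of $\lambda_{\max}$ is scalar. Letting $\lambda \downarrow \lambda_{\max}(\mathfrak{X})$ gives equality.

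The main obstacle is the second step, the upper bound, because the Rademacher map $\mathcal{R}_i$ is not convex in $\bZ$ and the monotonicity needed for the fixed-point comparison must be checked carefully. I expect it to follow from operator monotonicity of $t\mapsto\sqrt{1+4t}$ applied in the congruence-transformed variable. If the comparison argument stalls, I would fall back on Lehner's original norm-based proof and deduce the $\lambda_{\max}$ version by shifting $\bA_0 \mapsto \bA_0 + c\bm I_d$ for large $c$, so that $\lambda_{\max}$ equals the norm.
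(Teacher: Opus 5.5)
The gap is in your second step. Your monotone iteration $\bW^{(k+1)} = (\lambda\bm I_d - \bA_0 - \sum_i \mathcal{R}_i(\bW^{(k)}))^{-1}$ only decreases if each $\mathcal{R}_i$ is order-preserving on positive definite matrices, and it is not. Take $d = 2$ and $\bA_1 = \bm I_2$. Your formula then gives $\mathcal{R}_1(\bW) = R(\bW)$, where $R(w) = (\sqrt{1+4w^2}-1)/(2w) = w - w^3 + O(w^5)$ is the scalar Rademacher $R$-transform applied by functional calculus. Let $\bW = \mathsf{Diag}(s, t)$ and let $\bH = \frac{1}{2}\left[\begin{array}{cc} 1 & 1 \\ 1 & 1\end{array}\right] \succeq \bm 0$. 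The derivative of $R$ at $\bW$ in direction $\bH$ is half the Loewner matrix with diagonal $R'(s), R'(t)$ and off-diagonal $(R(s) - R(t))/(s - t)$. Its determinant is $-(s-t)^2 + O(\max(s,t)^4)$. So for small $s = \epsilon$, $t = 2\epsilon$, the difference $R(\bW + \delta\bH) - R(\bW)$ is indefinite for small $\delta > 0$, even though $\bW + \delta\bH \succeq \bW$. Thus $R$ is not even $2$-monotone, and $\bZ \mapsto P_{f_{\rad}}(\bA, \bZ)$ is not order-reversing. No argument through operator monotonicity of $t \mapsto \sqrt{1 + 4t}$ can repair this, because the congruence by $\bW^{1/2}$ itself moves with $\bW$.

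The endgame of that step is also flawed. $\|G(\mu)\|$ need not blow up as $\mu \downarrow \lambda_{\max}(\mathfrak{X})$: at a square-root edge, e.g.\ the Kesten--McKay law of $\sum_{i=1}^n \mathfrak{r}_i$ for $n \geq 3$, $G$ stays bounded. A contradiction would instead have to come from analytic continuation across the edge plus Stieltjes inversion. That requires identifying your constructed positive fixed point with the continuation of $G$. The fixed-point equation can have more than one positive solution, and you give no such identification.

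This direction has a short proof that avoids both issues, and it is in the paper. By Lemma~\ref{lem:rad-sdp-1}, for every $\bZ \succ \bm 0$ the matrices $\bY_i = P_{f_{\rad}}(\bA_i, \bZ)$ satisfy $\left[\begin{array}{cc} \bY_i & \bA_i \\ \bA_i & \bY_i + \bZ \end{array}\right] \succeq \bm 0$. Proof~1 of Lemma~\ref{lem:rad-sdp-2} uses that $\mathfrak{r}_i$ swaps the ranges of $\mathfrak{q}_i$ and $\mathfrak{p}_i$ in $\ell^2((\ZZ/2\ZZ)^{*n})$ to convert this into $\bA_i \otimes \mathfrak{r}_i \preceq \bY_i \otimes \id + \bZ \otimes \mathfrak{p}_i$. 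Summing and using $\sum_{i \geq 1} \mathfrak{p}_i \preceq \id$ gives $\mathfrak{X} \preceq F(\bZ) \otimes \id$ for your matrix $F(\bZ) = \bZ + \bA_0 + \sum_i P_{f_{\rad}}(\bA_i, \bZ)$. Hence $\lambda_{\max}(\mathfrak{X}) \leq \lambda_{\max}(F(\bZ))$, with no analytic continuation at all. If you want an iteration, it has to run on the auxiliary matrices, $\bY_i \mapsto \bA_i(\lambda\bm I_d - \bA_0 - \sum_{j \neq i} \bY_j)^{-1}\bA_i$, which is order-preserving, as in Proof~2 of that lemma.

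Your first and third steps are sound. $P_{f_{\rad}}(\bA_i, \bW^{-1})$ is the right $M_d$-valued $R$-transform. Setting $\bZ = G(\lambda)^{-1}$ gives $F(\bZ) = \lambda\bm I_d$, which yields both $\inf_{\bZ} \lambda_{\max}(F(\bZ)) \leq \lambda_{\max}(\mathfrak{X})$ and the scalar-multiple restriction. You do still need to extend the subordination identity from large $\lambda$ to all of $(\lambda_{\max}(\mathfrak{X}), \infty)$, which real-analyticity in $\lambda$ gives.

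That half is precisely what the paper does not prove. It cites Lehner for this theorem, and its own arguments give only $\lambda_{\max}(\mathfrak{X}) \leq \SDP_{\rad} = \inf_{\bZ} \lambda_{\max}(F(\bZ))$. Your fallback shift $\bA_0 \mapsto \bA_0 + c\bm I_d$ does correctly reduce the $\lambda_{\max}$ formula to Lehner's norm formula, but that is a citation rather than a proof.
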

\noindent
We note that our formulation of \eqref{eq:lehner-rad} differs from that of \cite{Lehner-1999-NormFreeOperatorMatrixCoefficients} by a change of variables $\bZ \leftarrow 2\bZ$, which we can already see here makes the Rademacher formula look more similar to the semicircular formula and which we will use to make a common reformulation of both in Section~\ref{sec:perspective-reform}.

\begin{remark}[Norm versus right edge]
    For $\mathfrak{X} = \bA_0 \otimes \id + \sum_{i = 1}^n \bA_i \otimes \mathfrak{m}_i$ for the operators $\mathfrak{m}_i$ appearing in either result above, Lehner's results in \cite{Lehner-1999-NormFreeOperatorMatrixCoefficients} give formulas for the operator norm $\|\mathfrak{X}\|$.
    Lehner's versions are given by just replacing $\lambda_{\max}(\cdot)$ with $\|\cdot\|$ throughout the above.
    But, as has been remarked for instance by \cite{BCSVH-2024-MatrixConcentrationFreeProbability2}, the same exact proofs also give the formulas for $\lambda_{\max}(\mathfrak{X})$ that we have stated; further, as $\mathfrak{X}$ is bounded and self-adjoint we have $\|\mathfrak{X}\| = \max\{\lambda_{\max}(\mathfrak{X}), \lambda_{\max}(-\mathfrak{X})\}$, so the spectral edge formulas immediately recover Lehner's norm formulas.
\end{remark}

This paper concerns the structure of the formulas \eqref{eq:lehner-sc} and \eqref{eq:lehner-rad}.
To make the motivation for studying these more concrete, before summarizing our main results let us review the interest in Lehner's formulas in the theories of von Neumann algebras, random matrices, and random graphs.

\subsection{Models and applications of operator series}
\label{sec:appl}

First, we note that the operators $\mathfrak{X}$ discussed above can be realized concretely, as discussed in~\cite{Lehner-1999-NormFreeOperatorMatrixCoefficients}.
Namely, a free semicircular family $(\mathfrak{s}_i)_{i = 1}^n$ can be realized on the full Fock space $\mathcal{T}(\CC^n)$ as the operators $\mathfrak{s}_i = \mathfrak{l}_i + \mathfrak{l}_i^*$ for creation and annihilation operators $\mathfrak{l}_i$ and $\mathfrak{l}_i^*$, in the notation of \cite{Lehner-1999-NormFreeOperatorMatrixCoefficients}.
And, a free Rademacher family $(\mathfrak{r}_i)_{i = 1}^n$ can be taken to be the collection of images of generators of a free product $G = (\ZZ / 2\ZZ)^{*n}$ under its left-regular representation, operators on the Hilbert space $\ell^2(G)$.
There has been considerable interest in the literature in the question of efficiently computing the norms of operators formed in such ways.
For some relevant references, see, e.g., \cite{Kesten-1958-SymmetricRandomWalksGroups,AO-1976-ComputingNormsCStar,Haagerup-1978-NonNuclearCStar,HP-1993-BoundedLinearOperatorsCStar,Lehner-1998-FreeOperatorsOperatorCoefficients,Lehner-1999-NormFreeOperatorMatrixCoefficients,Buchholz-1999-NormConvolutionFreeGroups,Lehner-2001-ComputationSpectraFreeProbability,FNT-2014-ComputeOperatorNorm,PvH-2025-ExtremeSingularValuesFree}, as well as discussion in the book \cite{Pisier-2003-IntroductionOperatorSpaceTheory}.

Further, instead of spectral edges of infinite-dimensional operators, both the semicircular and Rademacher cases of Lehner's formula can be viewed naturally as limits of spectral edges of certain random matrices.
\begin{definition}[Gaussian orthogonal ensemble]
    \label{def:goe}
    $\GOE(m)$ is the law of the random matrix $\bW \in \RR^{m \times m}_{\sym}$ having $W_{ij} = W_{ji} \sim \sN(0, \frac{1}{m}(1 + \One\{i = j\}))$ independently for all $1 \leq i \leq j \leq m$.
\end{definition}

\begin{theorem}[\cite{Schultz-2005-StrongConvergenceRealSymplectic,CM-2011-StrongAsymptoticFreenessHaar}]
    \label{thm:lehner-random}
    Let $\bA_0, \bA_1, \dots, \bA_n \in \CC^{d \times d}_{\herm}$.
    Then, the following hold, with convergences in probability as $m \to \infty$:
    \begin{enumerate}
        \item Let $\bW_1^{(m)}, \dots, \bW^{(m)}_n \sim \GOE(m)$ independently. Then,
            \[ \lambda_{\max}\left(\bA_0 \otimes \bm I_m + \sum_{i = 1}^n \bA_i \otimes \bW_i^{(m)}\right) \to \Lambda_{\sc}(\bA_0, \bA_1, \dots, \bA_n). \]
        \item Let $\bP_1^{(m)}, \dots, \bP_n^{(m)}$ be projection matrices to independent Haar-distributed subspaces of $\CC^m$ of dimension $m/2$ for each even $m$.
            Write $\bR_i^{(m)} \colonequals \bm I_m - 2\bP_i^{(m)}$ for each $m \geq 1$ and $i \in [n]$ (forming orthogonal reflection matrices).
            Then, taking the limit only over even $m$,
            \[ \lambda_{\max}\left(\bA_0 \otimes \bm I_m + \sum_{i = 1}^n \bA_i \otimes \bR_i^{(m)}\right) \to \Lambda_{\rad}(\bA_0, \bA_1, \dots, \bA_n). \]
    \end{enumerate}
\end{theorem}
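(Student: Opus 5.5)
This statement is a strong convergence result for random matrix models. I would derive it from strong asymptotic freeness together with a linearization/tensoring argument. It is not something to reprove from scratch here, but the plan is as follows.

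The first input is strong asymptotic freeness of the matrix families. For $\GOE$ matrices this is Schultz's extension of Haagerup--Thorbj{\o}rnsen. For independent Haar projections, equivalently independent Haar unitaries conjugating a fixed reflection $\bm I_m - 2\bP$ with $\bP$ of rank $m/2$, it is the Collins--Male theorem. In each case the statement is: for every noncommutative polynomial $p$ in $n$ variables with \emph{scalar} coefficients, $\|p(\bW_1^{(m)},\dots,\bW_n^{(m)})\| \to \|p(\mathfrak{s}_1,\dots,\mathfrak{s}_n)\|$ in probability, and likewise with $\bR_i^{(m)}$ and $\mathfrak{r}_i$. This includes convergence of normalized traces of all mixed moments, which is weak asymptotic freeness. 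For the reflections, the limit is a family of free symmetries with zero trace, i.e.\ a free Rademacher family, since $\bR_i^2 = \bm I$ and $\operatorname{tr}\bR_i = 0$. The second-to-last point is where the Haar-projection model needs the even-$m$ restriction and the trace-zero normalization.

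The next step is to pass from scalar to $d\times d$ matrix coefficients. Here I would use the standard fact that strong convergence of a family $x^{(m)}\to x$ (as a $*$-homomorphism of the polynomial algebra being asymptotically isometric) implies $\|P(x^{(m)})\|\to\|P(x)\|$ for polynomials $P$ with coefficients in $M_d(\CC)$. This holds because $M_d(\CC)\otimes C^*(x)$ and the matrix amplifications are handled by the equivalence between strong convergence and convergence of norms in $M_d(\CC)\otimes$. The cleanest route is the fact that the limiting $C^*$-algebra and the norm convergence on a dense $*$-subalgebra combine: the map $p(x)\mapsto (p(x^{(m)}))_m$ in $\prod_m M_m/\bigoplus_m M_m$ is an injective $*$-homomorphism, hence isometric, and therefore remains isometric after tensoring with $M_d$.

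Finally, I would convert norms into spectral edges. For $\mathfrak{X}$ self-adjoint and $c$ large, $\lambda_{\max}(\mathfrak{X}) = \|\mathfrak{X}+c\|-c$ whenever $c \geq \|\mathfrak{X}\|$. The same identity holds for the random matrices $\bX_m$ on the event that $\|\bX_m\| \leq c$. That event has probability tending to one by the norm convergence already established, applied with $c = \|\mathfrak{X}\|+1$. Applying the matrix-coefficient norm convergence to the linear polynomial $(\bA_0+c\bm I_d)\otimes 1 + \sum_i \bA_i\otimes x_i$ then gives the result.

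The main obstacle is the step quoted as input: strong convergence itself, specifically the upper bound excluding outliers. For GOE this needs the linearization trick and resolvent/master-equation estimates. For Haar reflections it needs the Collins--Male transfer from GUE strong convergence via unitary approximation. I would simply cite these results rather than reprove them.
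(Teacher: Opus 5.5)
Your proposal is correct and matches the paper, which gives no argument beyond citing Schultz (GOE) and Collins--Male (Haar reflections) for strong convergence. The limits are $\Lambda_{\sc}$ and $\Lambda_{\rad}$ by their very definition as spectral edges of the free operators, and your matrix-coefficient and shift-by-$c$ steps are the standard glue. One small precision: the norm on $\prod_m M_m/\bigoplus_m M_m$ is a $\limsup$, so the isometry argument only gives $\limsup_m \|P(x^{(m)})\| = \|P(x)\|$. Run it along every subsequence, or get the matching $\liminf$ from convergence of normalized traces, to obtain the full limit, applied pathwise on the almost-sure event provided by the cited theorems.
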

\noindent
In both parts of Theorem~\ref{thm:lehner-random}, the expectations of the left-hand sides also converge to the right-hand side by standard arguments showing uniform integrability of the spectral edge of the associated random matrices.
See also \cite{BC-2019-EigenvaluesRandomLifts,VanHandel-2025-StrongConvergenceSurvey} for additional discussion of the framework of \emph{strong convergence} generalizing results of this kind for various random matrices.

The above gives infinite- and finite-dimensional models that realize Lehner's formulas.
While the random matrix models of Theorem~\ref{thm:lehner-random} are perhaps somewhat artificial, the formulas can also be used to understand other finite-dimensional random matrices more relevant to applications.

In the semicircular case, the recent line of work \cite{BB-2021-SpectralNormGaussianCorrelated, BBVH-2021-MatrixConcentrationFreeProbability, BCSVH-2024-MatrixConcentrationFreeProbability2} showed that, under mild assumptions, the spectrum of the random matrix
\[ \bX^{(m)} \colonequals \bA_0 \otimes \bm I_m + \sum_{i = 1}^n \bA_i \otimes \bW_i^{(m)} \in \CC^{dm \times dm}_{\herm} \]
for large $m$, whose ``limit'' in various senses is indeed the operator $\mathfrak{X} = \bA_0 \otimes \id + \sum_{i = 1}^n \bA_i \otimes \mathfrak{s}_i$ (called $\bX_{\free}$ in the notation of the above references), resembles that of
\begin{equation}
    \label{eq:X-gauss}
    \bX = \bA_0 + \sum_{i = 1}^n g_i \bA_i \in \CC^{d \times d}_{\herm}
\end{equation}
for $g_1, \dots, g_n \sim \sN(0, 1)$ i.i.d.\ Gaussian \emph{scalars}.
The latter kind of model is called a \emph{Gaussian matrix series}, and by standard manipulations \emph{any} Hermitian $\bX$ with jointly Gaussian entries can be represented in this way.
Combining this comparison principle with Theorem~\ref{thm:lehner-sc} which calculates $\lambda_{\max}(\mathfrak{X}) = \Lambda_{\sc}(\bA_0, \bA_1, \dots, \bA_n)$ yields the following.
\begin{theorem}[Corollary~2.3 of \cite{BCSVH-2024-MatrixConcentrationFreeProbability2}]
    \label{thm:BCSVH}
    For $\bX$ defined as in \eqref{eq:X-gauss}, define the quantities
\begin{align*}
\sigma(\bX)^2
  &\colonequals \|\EE(\bX - \EE \bX)^2\| \\
  &= \left\| \sum_{i = 1}^n \bA_i^2 \right\|, \numberthis \\
v(\bX)^2
  &\colonequals \|\EE \mathsf{vec}(\bX - \EE \bX)\mathsf{vec}(\bX - \EE \bX)^{*}\| \\
  &= \left\|\sum_{i = 1}^n \mathsf{vec}(\bA_i) \mathsf{vec}(\bA_i)^*\right\|, \numberthis \label{eq:BBVH-v} \\
\sigma_*(\bX)^2
  &\colonequals \sup_{\|\bu\| = \|\bw\| = 1}\EE[|\bu^{*}(\bX - \EE \bX)\bw|^2] \\
  &= \sup_{\|\bu\| = \|\bw\| = 1} \sum_{i = 1}^n \left| \langle \bu, \bA_i \bw \rangle \right|^2, \numberthis \label{eq:BBVH-sigma-star} \\
  \widetilde v(\bX)^2
  &\colonequals
v(\bX) \cdot \sigma(\bX). \numberthis
\end{align*}
    There is a universal constant $C > 0$ such that
    \[
        \left|\EE\lambda_{\max}(\bX) - \Lambda_{\sc}(\bA_0, \bA_1, \dots, \bA_n)\right|
        \leq
        C \,\widetilde{v}(\bX) (\log d)^{3/4}.
    \]
    Also, if $\bX = \bX^{(d)} \in \CC^{d \times d}_{\herm}$ is a sequence of such random matrices with $\bX^{(d)} = \bA_0^{(d)} + \sum_{i = 1}^n g_i\bA_i^{(d)}$ for some $n = n(d)$, and one has, as $d \to \infty$,
    \[
        \widetilde{v}(\bX^{(d)})(\log d)^{3/4} \to 0,
    \]
    then the following convergence in probability holds as $d \to \infty$:
    \[
        |\lambda_{\max}(\bX^{(d)}) - \Lambda_{\sc}(\bA_0^{(d)}, \bA_1^{(d)}, \dots, \bA_n^{(d)})| \to 0.
    \]
\end{theorem}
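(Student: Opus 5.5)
This statement is the combination, as the paper itself indicates, of the comparison theorem of \cite{BCSVH-2024-MatrixConcentrationFreeProbability2} with Theorem~\ref{thm:lehner-sc}. The plan is to take from that reference the comparison principle between the Gaussian matrix series $\bX$ and its free model. With $\bX_{\free} \colonequals \bA_0 \otimes \id + \sum_{i=1}^n \bA_i \otimes \mathfrak{s}_i$, the principle gives
\[ \left|\EE \lambda_{\max}(\bX) - \lambda_{\max}(\bX_{\free})\right| \leq C\, \widetilde v(\bX)(\log d)^{3/4}. \]
The point I expect to be the main obstacle is that this bound must be for the \emph{right edge} $\lambda_{\max}$ and not only for the norm, since that reference is often phrased in terms of norms. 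If only a norm version is available, I would apply it to the shifted matrix $\bX + t\bm I_d$ with $t$ large (say $t \geq \|\bA_0\| + 2\sum_i\|\bA_i\|$). Then, with high probability and also for the free operator, the norm equals $\lambda_{\max} + t$. Moreover $v, \sigma$, and hence $\widetilde v$, are invariant under deterministic shifts, since they depend only on $\bX - \EE\bX$. The low-probability event where $\lambda_{\min}(\bX+t\bm I_d) < -\lambda_{\max}(\bX + t\bm I_d)$ contributes negligibly once $t\to\infty$, by Gaussian tails. In practice the cited work states the spectrum-level comparison directly, so I would invoke that and keep the shift trick only as a fallback.

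Second, Theorem~\ref{thm:lehner-sc} identifies $\lambda_{\max}(\bX_{\free}) = \Lambda_{\sc}(\bA_0,\dots,\bA_n)$. Substituting this into the display gives the expectation bound immediately.

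For the convergence in probability, write
\[ |\lambda_{\max}(\bX) - \Lambda_{\sc}| \leq |\lambda_{\max}(\bX) - \EE\lambda_{\max}(\bX)| + |\EE\lambda_{\max}(\bX) - \Lambda_{\sc}|. \]
The second term tends to $0$ by the first part. For the first term, note that $g \mapsto \lambda_{\max}(\bA_0 + \sum_i g_i\bA_i)$ is Lipschitz in $g\in\RR^n$, with constant $\sup_{\|\bu\|=1,\|\bm c\|=1} \sum_i c_i \bu^*\bA_i\bu \leq \sigma_*(\bX)$. Gaussian concentration then gives $\Pr(|\lambda_{\max}(\bX)-\EE\lambda_{\max}(\bX)|>\epsilon) \leq 2e^{-\epsilon^2/2\sigma_*^2}$. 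It remains to check that $\sigma_*(\bX) \leq v(\bX)$. This holds because $\sum_i |\langle \bu,\bA_i\bw\rangle|^2 = \sum_i |\langle \mathsf{vec}(\bA_i), \mathsf{vec}(\bu\bw^*)\rangle|^2 \leq v(\bX)^2$, since $\|\mathsf{vec}(\bu\bw^*)\|=1$. Finally, $v \leq \widetilde v$ up to the comparison $v\leq\sigma$, which follows from $v^2 = \|\sum_i \mathsf{vec}(\bA_i)\mathsf{vec}(\bA_i)^*\| \leq \Tr$-type bounds or directly from the same duality. Hence $\sigma_* \leq \widetilde v \to 0$, and the fluctuation term vanishes in probability, completing the argument.
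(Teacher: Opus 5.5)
Your treatment of the expectation bound matches how the paper obtains this statement. The paper does not prove it; it combines the comparison principle of \cite{BCSVH-2024-MatrixConcentrationFreeProbability2} (which compares spectra, so the right-edge version is available directly) with Theorem~\ref{thm:lehner-sc} to identify $\lambda_{\max}(\mathfrak{X}) = \Lambda_{\sc}$. Your shift trick is a valid fallback. For the convergence in probability, your route is also sound: Gaussian concentration with Lipschitz constant at most $\sigma_*(\bX)$, plus your correct proof that $\sigma_*(\bX) \le v(\bX)$. This is the same mechanism that produces the $\sigma_*$ tail term in the cited corollary.

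There is, however, a false step at the end. The comparison $v(\bX) \le \sigma(\bX)$, equivalently $v(\bX) \le \widetilde v(\bX)$, fails in general. For a single-term series $\bX = g\bA$ one has $v = \|\bA\|_F$ and $\sigma = \|\bA\|$. Thus $\bX = d^{-1/3} g \bm I_d$ gives $\sigma = d^{-1/3}$ and $v = d^{1/6}$. Then $\widetilde v(\log d)^{3/4} = d^{-1/12}(\log d)^{3/4} \to 0$ while $v \to \infty$, so under the hypothesis $v$ need not even stay bounded, and the chain $\sigma_* \le v \le \widetilde v$ is broken.

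The conclusion you need, $\sigma_* \to 0$, is still true. The fix is to compare $\sigma_*$ with both parameters. First, $\sigma_*(\bX)^2 \le \sup_{\|\bw\| = 1} \sum_i \|\bA_i \bw\|^2 = \|\sum_i \bA_i^2\| = \sigma(\bX)^2$. Combined with your bound $\sigma_* \le v$, this gives $\sigma_*^2 \le \min\{\sigma^2, v^2\} \le \sigma v = \widetilde v^2$. With $\sigma_* \le \widetilde v$ obtained this way, the rest of your argument goes through.
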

\noindent
We will discuss in Sections~\ref{sec:intro-spiked} and~\ref{sec:bbp} how our reformulations of Lehner's semicircular formula clarify some aspects of the application of the above to spiked matrix models by \cite{BCSVH-2024-MatrixConcentrationFreeProbability2}.

The Rademacher case is most naturally associated to questions about finite and infinite graphs.
As shown by \cite{GVK-2019-SpectraInfiniteGraphsFreenessAmalgamation}, the adjacency operators of universal covering trees $T$ of arbitrary finite graphs $G$, and more generally periodic Jacobi operators on such trees, can be written as $\bX = \sum_{i = 1}^n \bA_i \otimes \mathfrak{r}_i$ for a free Rademacher family $(\mathfrak{r}_i)_{i = 1}^n$, where $n$ is the number of edges in the graph.
Roughly speaking, the adjacency operator of such a $T$ is just the adjacency matrix of $G$, but where every edge's entry is replaced by a free Rademacher element.
In particular, Lehner's formula can be used to compute the spectral radius of such trees.
\begin{theorem}[\cite{GVK-2019-SpectraInfiniteGraphsFreenessAmalgamation}]
    \label{thm:universal-cover-rademacher}
    Let $G$ be a finite connected graph with $n$ edges, and let $T$ be its (infinite) universal covering tree.
    Then there exist explicit Hermitian matrices $\bA_1, \dots, \bA_n \in \CC^{|V(G)| \times |V(G)|}_{\herm}$ such that the adjacency operator $\mathfrak{a}_T$ of $T$ is unitarily equivalent to $\sum_{i = 1}^n \bA_i \otimes \mathfrak{r}_i$.
    Consequently,
    \[
        \lambda_{\max}(\mathfrak{a}_T) = \Lambda_{\rad}(\bm 0, \bA_1, \dots, \bA_n).
    \]
\end{theorem}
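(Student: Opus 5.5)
We prove this by realizing the adjacency operator of $T$ concretely and matching it with a free Rademacher family via the free product structure. Fix a spanning tree of $G$ (or, more simply, work with all edges at once) and enumerate the edges $e_1, \dots, e_n$ with endpoints $\{u_i, v_i\}$. For each edge set
\[ \bA_i \colonequals \be_{u_i}\be_{v_i}^* + \be_{v_i}\be_{u_i}^* \in \CC^{|V(G)| \times |V(G)|}_{\herm}, \]
so that $\sum_i \bA_i$ is the adjacency matrix of $G$. (Loops and multiple edges can be handled in the same way, with a loop giving a diagonal $\bA_i = \be_{u_i}\be_{u_i}^*$; I would first treat simple graphs.) Let $\Gamma = (\ZZ/2\ZZ)^{*n}$ with generators $\gamma_1, \dots, \gamma_n$, and let $\mathfrak{r}_i$ be left multiplication by $\gamma_i$ on $\ell^2(\Gamma)$. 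This is the free Rademacher family described in Section~\ref{sec:appl}. The operator $\mathfrak{Y} = \sum_i \bA_i \otimes \mathfrak{r}_i$ acts on $\CC^{V(G)} \otimes \ell^2(\Gamma)$, which has orthonormal basis $\be_v \otimes \delta_g$. It is the adjacency operator of the graph $\mathcal{G}$ on vertex set $V(G) \times \Gamma$ with edges $(u_i, g) \sim (v_i, \gamma_i g)$.

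The key step is to show that $\mathcal{G}$ is a disjoint union of copies of $T$. Each vertex $(v,g)$ has exactly $\deg_G(v)$ neighbors, one per edge incident to $v$, and the projection $(v,g) \mapsto v$ is a covering map $\mathcal{G} \to G$. It remains to check that every connected component of $\mathcal{G}$ is a tree. A cycle in $\mathcal{G}$ starting at $(v,g)$ projects to a closed walk in $G$ using edges $e_{i_1}, \dots, e_{i_k}$ that never immediately backtracks through the same edge, since in $\mathcal{G}$ consecutive edges are distinct. Returning to the same group element forces $\gamma_{i_k}\cdots\gamma_{i_1} = 1$ in $\Gamma$. Non-backtracking means $i_{j+1} \neq i_j$, so this word is reduced in the free product, and a reduced nonempty word cannot equal the identity. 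Hence no cycles exist, each component is a connected tree covering $G$, and by uniqueness of universal covers each component is isomorphic to $T$.

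It follows that $\mathfrak{Y}$ is unitarily equivalent to $\mathfrak{a}_T \otimes \id_{\ell^2(S)}$ for a countable index set $S$ of components. The components are countable, and indeed $\Gamma$-translates permute them. Then $\spec(\mathfrak{Y}) = \spec(\mathfrak{a}_T)$. To get literally $\mathfrak{a}_T$ unitarily equivalent to $\sum_i \bA_i \otimes \mathfrak{r}_i$, I would either note that the statement only needs the spectral identity, or identify $\ell^2(S) \otimes \ell^2(T)$ with $\ell^2$ of a single component after re-indexing. The cleanest fix is to absorb the multiplicity: one may take the free Rademacher family on a space of the form $\ell^2(\Gamma) \otimes \ell^2(S)$, which is again free Rademacher. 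Finally, Theorem~\ref{thm:lehner-rad} with $\bA_0 = \bm 0$ gives $\lambda_{\max}(\mathfrak{a}_T) = \Lambda_{\rad}(\bm 0, \bA_1, \dots, \bA_n)$.

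I expect the main obstacle to be the acyclicity argument, specifically getting the reducedness right. One has to make sure a cycle in $\mathcal{G}$ really yields a reduced word. The danger is a path that alternates between different edges sharing the same label, which cannot happen here since each label is a distinct edge, and loops in $G$, which require care since $\bA_i$ is then diagonal. The bookkeeping of multiplicity in the unitary equivalence is a secondary, essentially cosmetic issue.
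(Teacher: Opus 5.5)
The paper gives no proof of this statement (it is quoted from the literature), so your argument has to stand on its own. Its core is correct and is the standard covering-graph argument. With $\mathfrak{r}_i$ acting as left multiplication by $\gamma_i$ on $\ell^2(\Gamma)$, the operator $\sum_i \bA_i \otimes \mathfrak{r}_i$ is exactly the adjacency operator of your graph $\mathcal{G}$ on $V(G) \times \Gamma$, and $(v,g) \mapsto v$ is a covering map. At each vertex of $\mathcal{G}$ the incident edges carry distinct labels, so any cycle produces a word $\gamma_{i_k} \cdots \gamma_{i_1}$ with consecutive letters distinct. Such a word is nontrivial in $(\ZZ/2\ZZ)^{*n}$, so every component of $\mathcal{G}$ is a tree covering $G$, i.e.\ a copy of $T$. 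This gives $\sum_i \bA_i \otimes \mathfrak{r}_i \cong \mathfrak{a}_T \otimes \id_{\ell^2(S)}$. Hence the two operators have the same spectrum, and Lehner's formula with $\bA_0 = \bm 0$ gives the displayed identity.

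What fails is your attempt to upgrade this to literal unitary equivalence with $\mathfrak{a}_T$ itself.
\begin{itemize}
\item Re-indexing an orthonormal basis gives a unitary between the Hilbert spaces, but it does not intertwine $\mathfrak{a}_T \otimes \id_{\ell^2(S)}$ with $\mathfrak{a}_T$. Spectral multiplicity is a unitary invariant.
\item Replacing $\mathfrak{r}_i$ by $\mathfrak{r}_i \otimes \id_{\ell^2(S)}$ multiplies the number of copies rather than removing them.
\end{itemize}
No repair is possible in general. If $G$ is a tree with $n \geq 2$ edges, then $T = G$ and $\ell^2(T)$ is finite-dimensional. But any free Rademacher family with $n \geq 2$ acts on an infinite-dimensional space, since the reduced words are linearly independent operators. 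If $G$ is a cycle of length at least $3$, then $\mathfrak{a}_T$ is the adjacency operator of $\ZZ$, which has spectral multiplicity $2$. Your $\mathcal{G}$, however, has infinitely many components, because the image of $\pi_1(G)$ is infinite cyclic and so has infinite index in $(\ZZ/2\ZZ)^{*n}$.

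So ``unitarily equivalent'' in the statement can only mean ``unitarily equivalent to a direct sum of copies of $\mathfrak{a}_T$.'' That is exactly what you proved, and it is all the $\lambda_{\max}$ identity needs. State it that way and drop the two proposed fixes.

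Finally, loops are not really a special case. With $\bA_i = \be_u \be_u^*$, a loop lifts to the perfect matching $(u,g) \sim (u, \gamma_i g)$ on the fiber, and your acyclicity argument applies verbatim. This is the convention in which a loop contributes $1$ to the degree, matching the paper's description of random $d$-regular graphs as lifts of a bouquet of $d$ loops. If loops are meant to contribute $2$, their lift is a bi-infinite path, and a single involution cannot produce it.
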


Further, \cite{BC-2019-EigenvaluesRandomLifts} showed that the adjacency matrices of large \emph{random lifts} \cite{ALMR-2001-RandomLifts} of $G$ converge strongly to this adjacency operator, and thus Lehner's formula can compute the limiting spectral radius of large random lifts of arbitrary finite graphs.
We omit the somewhat intricate definitions of these notions below; see the reference for details.
\begin{theorem}[\cite{BC-2019-EigenvaluesRandomLifts}]
    \label{thm:lift-rademacher}
    Let $G$ be a finite connected graph, let $T$ be its universal covering tree, and let $G^{(m)}$ be a uniformly random $m$-lift of $G$.
    Let $\bA_{G^{(m)}} \in \{0, 1\}^{m|V(G)| \times m|V(G)|}_{\sym}$ denote the adjacency matrix of $G^{(m)}$, and let $\bP^{(m)} \in \RR^{m|V(G)| \times m|V(G)|}_{\sym}$ be the orthogonal projection matrix to the orthogonal complement of the subspace of functions on $V(G^{(m)})$ that are constant on the fibers of the covering map of $G$ by $G^{(m)}$.
    Then, the following convergence in probability holds:
    \[ \lambda_{\max}(\bP^{(m)}\bA_{G^{(m)}}\bP^{(m)}) \to \lambda_{\max}(\mathfrak{a}_T) = \Lambda_{\rad}(\bm 0, \bA_1, \dots, \bA_n), \]
    where the $\bA_i$ are as in Theorem~\ref{thm:universal-cover-rademacher}.
\end{theorem}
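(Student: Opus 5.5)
The statement is Theorem~\ref{thm:lift-rademacher}. I would prove it by reducing to the strong convergence principle behind Theorem~\ref{thm:lehner-random}, now for random permutations, and then identifying the limit with Lehner's formula via Theorem~\ref{thm:universal-cover-rademacher}.

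\textbf{Step 1: encode the lift by random permutations.} Orient each edge $e_i = (u_i, v_i)$ of $G$, $i \in [n]$. A uniformly random $m$-lift is specified by independent uniform permutations $\sigma_1, \dots, \sigma_n \in S_m$, with permutation matrices $\bm\Pi_i^{(m)}$. Then the adjacency matrix can be written as
\[ \bA_{G^{(m)}} = \sum_{i=1}^n \left(\be_{u_i}\be_{v_i}^* \otimes \bm\Pi_i^{(m)} + \be_{v_i}\be_{u_i}^* \otimes (\bm\Pi_i^{(m)})^*\right). \]
Self-loops and multi-edges are handled similarly, using involutions where needed. The subspace of functions constant on fibers is $\CC^{|V(G)|} \otimes \One$. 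This subspace is invariant under every $\bm\Pi_i^{(m)}$, so compressing by $\bP^{(m)}$ amounts to restricting each $\bm\Pi_i^{(m)}$ to $\One^\perp$, the standard representation.

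\textbf{Step 2: identify the limit operator.} Write the limit as
\[ \mathfrak{Y} = \sum_{i=1}^n \left(\be_{u_i}\be_{v_i}^* \otimes \mathfrak{u}_i + \be_{v_i}\be_{u_i}^* \otimes \mathfrak{u}_i^*\right), \]
where the $\mathfrak{u}_i$ are the free Haar unitaries generating $\lambda(F_n)$ on $\ell^2(F_n)$. This $\mathfrak{Y}$ is the adjacency operator of the universal cover $T$: the vertices of $T$ are identified with $V(G) \times F_n$ modulo the non-reduced words, a standard covering-space identification. Theorem~\ref{thm:universal-cover-rademacher} then gives $\lambda_{\max}(\mathfrak{Y}) = \lambda_{\max}(\mathfrak{a}_T) = \Lambda_{\rad}(\bm 0, \bA_1, \dots, \bA_n)$. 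I take that theorem as given, including its unitary equivalence with free Rademacher elements.

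\textbf{Step 3: strong convergence of random permutations.} This is the main obstacle. I need that the restrictions of $(\bm\Pi_i^{(m)})$ to $\One^\perp$ converge strongly in probability to $(\mathfrak{u}_i)$. Concretely, for every noncommutative polynomial with matrix coefficients the operator norms converge; this is the Bordenave--Collins theorem. For the upper bound I would follow their route:
\begin{enumerate}
\item Use a linearization trick to reduce to linear pencils like the one above.
\item Use a nonbacktracking (Ihara--Bass type) operator to convert the spectral radius into the growth of traces of high powers.
\item Run a path-counting argument with the trace method, using powers of order $\log m$. Tangle-free paths contribute as in the tree, and tangled configurations are rare and must be excluded separately.
\end{enumerate}
The lower bound $\liminf \lambda_{\max} \ge \lambda_{\max}(\mathfrak{Y})$ is easier. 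It follows from weak convergence of moments, since random permutations are asymptotically free, together with the fact that the spectral edge of $\mathfrak{Y}$ lies in the support of the spectral measure; the latter holds because the trace on $\lambda(F_n)$ is faithful.

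\textbf{Step 4: conclude.} Since $\lambda_{\max}$ of a self-adjoint polynomial is a continuous function of the norms of shifted polynomials, Step~3 gives
\[ \lambda_{\max}\left(\bP^{(m)}\bA_{G^{(m)}}\bP^{(m)}\right) \to \lambda_{\max}(\mathfrak{Y}) \]
in probability. Step~2 identifies this limit with $\Lambda_{\rad}(\bm 0, \bA_1, \dots, \bA_n)$, which proves the theorem.
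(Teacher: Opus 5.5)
Your proposal matches what the paper does. The paper gives no independent argument for this statement: it cites Bordenave--Collins, whose content is precisely the strong convergence of random permutation matrices restricted to $\One^\perp$ that your Step~3 sketches, and it identifies the limit with $\Lambda_{\rad}(\bm 0, \bA_1, \dots, \bA_n)$ through Theorem~\ref{thm:universal-cover-rademacher}, as in your Step~2. One small correction: your $\mathfrak{Y}$ on $\CC^{|V(G)|} \otimes \ell^2(F_n)$ is in general a direct sum of countably many copies of $\mathfrak{a}_T$, one per connected component of the associated graph on $V(G) \times F_n$, rather than $\mathfrak{a}_T$ itself. This is harmless, since the spectrum, and hence $\lambda_{\max}$, is unchanged.
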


In the simplest cases, Theorem~\ref{thm:universal-cover-rademacher} with Lehner's formula shows that the spectral edge and operator norm of the adjacency operator of the infinite $d$-regular tree are $2\sqrt{d - 1}$ (recovering an earlier result of Kesten \cite{Kesten-1958-SymmetricRandomWalksGroups}), and a special case of the main result of \cite{BC-2019-EigenvaluesRandomLifts} (recovering an earlier result of Friedman \cite{Friedman-2003-AlonConjecture}) shows that the spectral edge and spectral radius of non-trivial eigenvalues of a large random $d$-regular graph, which can be modeled as a random lift of the graph on a single vertex with $d$ self-loops, indeed converges in probability to $2\sqrt{d - 1}$.

\subsection{Main results}

In addition to the operator-theoretic motivation, the Lehner formulas may therefore also be seen as giving explicit finite-dimensional optimizations for determining the limiting behavior of various interesting sequences of random matrices and graphs of growing dimension.
Our goal is to show that these complicated-looking optimizations are in fact instances of \emph{semidefinite programming (SDP)} optimization problems and to derive various consequences of this observation.

\subsubsection{Semidefinite programming formulations}
We first study how the Lehner formulas can be expressed as SDPs.
In the semicircular case, we find that $\Lambda_{\sc}(\bA_0, \bA_1, \dots, \bA_n)$ can rather straightforwardly be written in two different ways as an SDP whose parameters depend on the $\bA_i$.
\begin{theorem}[``Small'' semicircular SDPs]
    \label{thm:sc-sdp-small}
    For any $\bA_0, \bA_1, \dots, \bA_n \in \CC^{d \times d}_{\herm}$,
    \begin{align}
      \Lambda_{\sc}(\bA_0, \bA_1, \dots, \bA_n)
      &= \left\{ \begin{array}{ll} \text{infimum of} & c \\ \text{subject to} & \left[\begin{array}{cc} c \bm I_d - \bA_0 - \Phi(\bZ) & \bm I_d \\ \bm I_d & \bZ \end{array}\right] \succeq \bm 0, \\ & c \in \RR, \bZ \in \CC^{d \times d}_{\herm} \end{array} \right\} \label{eq:sc-sdp-small-primal} \\
      &= \left\{ \begin{array}{ll} \text{maximum of} & \langle \bA_0, \bm S \rangle + 2\Re(\Tr(\bm T)) \\ \text{subject to} & \left[\begin{array}{cc} \bm S & -\bm T \\ -\bm T^{*} & \Phi(\bS) \end{array}\right] \succeq \bm 0, \\ & \bS \in \CC^{d \times d}_{\herm}, \bT \in \CC^{d \times d}, \\ & \Tr(\bS) = 1 \end{array}\right\} \label{eq:sc-sdp-small-dual} \\
      &= \max_{\substack{\bS \succeq \bm 0 \\ \Tr(\bS) = 1}} \left\{\langle \bA_0, \bS \rangle + 2\|\bS^{1/2} \Phi(\bS)^{1/2}\|_*\right\}. \label{eq:sc-sdp-small-dual-2}
    \end{align}
    The first two optimizations are Lagrangian duals of one another.
    The maximum in \eqref{eq:sc-sdp-small-dual} and \eqref{eq:sc-sdp-small-dual-2} is always achieved.
    If $\bigcap_{i = 1}^n \ker(\bA_i) = \{\bm 0\}$, then the infimum in \eqref{eq:sc-sdp-small-primal} is also achieved (as is the infimum in the original definition \eqref{eq:lehner-sc} of $\Lambda_{\sc}$).
    In \eqref{eq:sc-sdp-small-dual-2}, $\|\bY\|_* \colonequals \Tr((\bY\bY^{*})^{1/2})$ is the nuclear norm.
\end{theorem}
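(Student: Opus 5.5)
Here $\Phi(\bZ) \colonequals \sum_{i=1}^n \bA_i \bZ \bA_i$, the completely positive map evidently intended in the statement. The plan is to go from Theorem~\ref{thm:lehner-sc} to \eqref{eq:sc-sdp-small-primal} by a Schur complement argument, then take the Lagrangian dual, and then eliminate $\bT$ from the dual in closed form.

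\textbf{Primal.} For $\bZ \succ \bm 0$, the Schur complement criterion gives that the block matrix in \eqref{eq:sc-sdp-small-primal} is PSD if and only if
\[ c\bm I_d - \bA_0 - \Phi(\bZ) - \bZ^{-1} \succeq \bm 0, \]
that is, $c \geq \lambda_{\max}(\bZ^{-1} + \bA_0 + \Phi(\bZ))$. Any feasible $\bZ$ in the SDP must satisfy $\bZ \succeq \bm 0$, and then in fact $\bZ \succ \bm 0$: the off-diagonal block is $\bm I_d$, so a vector $\bv \in \ker \bZ$ would give a contradiction by testing the block matrix against $(t\bv, \bv)$ for suitable $t$. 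Substituting $\bZ \leftarrow \bZ^{-1}$, a bijection on positive definite matrices, turns Lehner's objective $\lambda_{\max}(\bZ + \bA_0 + \sum_i \bA_i \bZ^{-1}\bA_i)$ into exactly this expression. Hence the SDP value equals $\Lambda_{\sc}$.

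\textbf{Dual.} I would pair the constraint with a PSD multiplier $\begin{bmatrix} \bS & -\bT \\ -\bT^* & \bR \end{bmatrix}$ and form the Lagrangian
\[ c - \langle M(c,\bZ), \text{multiplier}\rangle. \]
Stationarity in $c$ forces $\Tr \bS = 1$. Stationarity in $\bZ$ forces $\bR = \Phi^*(\bS) = \Phi(\bS)$, using that $\Phi$ is self-adjoint for the trace inner product. The constant terms leave the objective
\[ \langle \bA_0, \bS\rangle + 2\Re\Tr\bT, \]
which gives \eqref{eq:sc-sdp-small-dual}. Weak duality is immediate. For strong duality and attainment of the dual maximum I would use Slater's condition on the primal: take $\bZ = \bm I_d$ and $c$ large, so the block matrix is strictly positive definite. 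This gives zero duality gap and an attained dual optimum.

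\textbf{Nuclear norm form.} For fixed $\bS \succeq \bm 0$, the block constraint holds if and only if $\bT = \bS^{1/2}\bK\,\Phi(\bS)^{1/2}$ for some contraction $\|\bK\| \leq 1$. Then
\[ \max_{\|\bK\| \leq 1} \Re\Tr(\bS^{1/2}\bK\Phi(\bS)^{1/2}) = \max_{\|\bK\| \leq 1} \Re\Tr(\bK\,\Phi(\bS)^{1/2}\bS^{1/2}) = \|\Phi(\bS)^{1/2}\bS^{1/2}\|_*, \]
by nuclear/operator norm duality. This equals $\|\bS^{1/2}\Phi(\bS)^{1/2}\|_*$ because the nuclear norm is invariant under adjoints, which gives \eqref{eq:sc-sdp-small-dual-2}.

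\textbf{Primal attainment (main obstacle).} Assume $\bigcap_i \ker \bA_i = \{\bm 0\}$. Then $\Phi(\bZ) \succeq \epsilon\,\lambda_{\min}(\bZ)\bm I_d$ for some $\epsilon > 0$: if $\bv^*\Phi(\bm I_d)\bv = \sum_i \|\bA_i\bv\|^2 = 0$ then $\bv = \bm 0$, so $\Phi(\bm I_d) \succ \bm 0$, and $\Phi$ is monotone with $\bZ \succeq \lambda_{\min}(\bZ)\bm I_d$. I would show that a minimizing sequence $\bZ_k$ for Lehner's objective stays in a compact subset of the open PSD cone; the difficulty is controlling both $\|\bZ_k\|$ and $\|\bZ_k^{-1}\|$. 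The objective is at least $\lambda_{\max}(\bZ) - \|\bA_0\|$, which bounds $\|\bZ_k\|$ from above. It is also at least $\lambda_{\max}(\sum_i \bA_i\bZ^{-1}\bA_i) - \|\bA_0\| \geq \epsilon/\lambda_{\max}(\bZ) - \|\bA_0\|$; if $\lambda_{\min}(\bZ_k) \to 0$ while $\|\bZ_k\|$ stays bounded, this bound does not directly blow up. In that case I would use that $\bZ^{-1}$ is large along a direction $\bv$ and that some $\bA_i$ maps into $\bv$ nontrivially, which by the kernel condition (applied to $\bA_i$ Hermitian, so $\bv \notin \bigcap_i \ker \bA_i$) forces $\sum_i \bA_i\bZ^{-1}\bA_i$ to be large. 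With the minimizing sequence confined to a compact set, continuity gives a minimizer of \eqref{eq:lehner-sc}, and hence of \eqref{eq:sc-sdp-small-primal} via the Schur complement equivalence.
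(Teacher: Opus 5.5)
Your proposal is correct and follows essentially the paper's route: the Schur complement with positive definiteness of feasible $\bZ$ and the substitution $\bZ \leftarrow \bZ^{-1}$, the Lagrangian dual with Slater's condition, elimination of $\bT$ via the contraction characterization and nuclear-norm duality, and coercivity for primal attainment. The only cosmetic difference is that you argue attainment in Lehner's original variable rather than the inverted one. Your last step becomes rigorous via $\Phi(\bZ^{-1}) \succeq \lambda_{\max}(\bZ^{-1})\,\Phi(\bv\bv^*)$ together with $\Tr\Phi(\bv\bv^*) = \bv^*\Phi(\bm I_d)\bv \geq \epsilon$ uniformly over unit vectors $\bv$, which is the same bound $\lambda_{\max}(\Phi(\bW)) \geq \alpha\,\lambda_{\max}(\bW)$ that the paper obtains by compactness.
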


\begin{remark}[Prior work]
    Ramon van Handel in a private communication described to us also obtaining the dual form \eqref{eq:sc-sdp-small-dual-2} of the semicircular Lehner formula, using the Sion minimax theorem instead of SDP formulations.
    Some calculations with this dual appear in \cite{BBVH-2021-MatrixConcentrationFreeProbability, BCSVH-2024-MatrixConcentrationFreeProbability2}, though there it is only given in special cases where the matrix positivity constraints can be reduced to scalar ones because of special structure of the $\bA_i$.
\end{remark}

A simple variant of our approach to deriving Theorem~\ref{thm:sc-sdp-small} also gives the following alternative SDP formulation:
\begin{theorem}[``Large'' semicircular SDPs]
    \label{thm:sc-sdp-large}
    For any $\bA_0, \bA_1, \dots, \bA_n \in \CC^{d \times d}_{\herm}$,
    \begin{align*}
      &\Lambda_{\sc}(\bA_0, \bA_1, \dots, \bA_n) \\
      &= \left\{\begin{array}{ll} \text{minimum of} & c \\ \text{subject to} & \left[\begin{array}{ccccc} c\bm I_d - \bA_0 - \bZ & \bA_1 & \bA_2 & \cdots & \bA_n \\ \bA_1 & \bZ & \bm 0 & \cdots & \bm 0 \\ \bA_2 & \bm 0 & \bZ & \cdots & \bm 0 \\
                                                                                       \vdots & \vdots & \vdots & \ddots & \vdots \\ \bA_n & \bm 0 & \bm 0 & \cdots & \bZ \end{array}\right] \succeq \bm 0, \\ & c \in \RR, \bZ \in \CC^{d \times d}_{\herm} \end{array}\right\} \numberthis \label{eq:sc-sdp-large-primal} \\
      \intertext{}
      &= \left\{\begin{array}{ll} \text{maximum of} & \left\langle \left[\begin{array}{cccc} \bA_0 & -\bA_1 & \cdots & -\bA_n \\ -\bA_1 & \bm 0 & \cdots & \bm 0 \\ \vdots & \vdots & \ddots & \vdots \\ -\bA_n & \bm 0 & \cdots & \bm 0 \end{array}\right], \left[\begin{array}{cccc} \bm \Gamma^{[0, 0]} & \bm \Gamma^{[0, 1]} & \cdots & \bm \Gamma^{[0, n]} \\ \bm \Gamma^{[1, 0]} & \bm \Gamma^{[1, 1]} & \cdots & \bm\Gamma^{[1, n]} \\ \vdots & \vdots & \ddots & \vdots \\ \bm\Gamma^{[n, 0]} & \bm\Gamma^{[n, 1]} & \cdots & \bm \Gamma^{[n, n]} \end{array}\right]\right\rangle
\\ \text{subject to} & \bm\Gamma^{[i, j]} \in \CC^{d \times d} \text{ and } (\bm\Gamma^{[i, j]})^* = \bm\Gamma^{[j, i]} \text{ for all } 0 \leq i, j \leq n, \\ & \bm \Gamma = [\bm \Gamma^{[i, j]}]_{i, j = 0}^n \succeq \bm 0, \\ & \bm \Gamma^{[0, 0]} = \sum_{i = 1}^n \bm \Gamma^{[i, i]}, \\ & \Tr(\bm \Gamma^{[0, 0]}) = 1 \end{array}\right\}. \numberthis \label{eq:sc-sdp-large-dual}
    \end{align*}
    The two SDPs are Lagrangian duals of one another.
    The minimum in~\eqref{eq:sc-sdp-large-primal} and the maximum in~\eqref{eq:sc-sdp-large-dual} are both always achieved.
\end{theorem}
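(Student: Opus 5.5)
We begin from Theorem~\ref{thm:lehner-sc} and convert the inner nonlinearity by a Schur complement. Fix $\bZ \succ \bm 0$ and $c \in \RR$. Since the lower-right blocks of the matrix in \eqref{eq:sc-sdp-large-primal} form $\bm I_n \otimes \bZ \succ \bm 0$, the Schur complement lemma shows that this block matrix is positive semidefinite if and only if
\[ c\bm I_d - \bA_0 - \bZ - \sum_{i=1}^n \bA_i \bZ^{-1}\bA_i \succeq \bm 0, \]
which is equivalent to $c \geq \lambda_{\max}(\bZ + \bA_0 + \sum_i \bA_i\bZ^{-1}\bA_i)$. Thus the infimum of $c$ over pairs with $\bZ \succ \bm 0$ equals $\Lambda_{\sc}$. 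Feasibility in \eqref{eq:sc-sdp-large-primal} forces $\bZ \succeq \bm 0$, so we must also handle singular $\bZ$. If $\bZ \succeq \bm 0$ is singular and $(c,\bZ)$ is feasible, the generalized Schur complement says that the ranges of the $\bA_i$ lie in the range of $\bZ$ and that $c\bm I - \bA_0 - \bZ - \sum_i \bA_i \bZ^{+}\bA_i \succeq \bm 0$. Replacing $\bZ$ by $\bZ + \epsilon \bm I$ and $c$ by $c+\epsilon$ gives a strictly feasible pair, because $\bA_i(\bZ+\epsilon\bm I)^{-1}\bA_i \preceq \bA_i \bZ^{+}\bA_i$ on that range. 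Letting $\epsilon\to 0$ shows the SDP value equals $\Lambda_{\sc}$.

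For duality, we form the Lagrangian with a multiplier $\bm\Gamma \succeq \bm 0$ of the same block shape. Pairing the constraint matrix with $\bm\Gamma$ gives
\[ c\,\Tr(\bm\Gamma^{[0,0]}) - \langle \bA_0 + \bZ, \bm\Gamma^{[0,0]}\rangle + \sum_i \langle \bZ, \bm\Gamma^{[i,i]}\rangle + 2\sum_i \Re\langle \bA_i, \bm\Gamma^{[0,i]}\rangle. \]
Minimizing $c - (\text{this})$ over free $c$ and $\bZ$ is finite only when $\Tr\bm\Gamma^{[0,0]}=1$ and $\bm\Gamma^{[0,0]}=\sum_i \bm\Gamma^{[i,i]}$. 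The remaining value is the objective of \eqref{eq:sc-sdp-large-dual}, since the pairing with the stated coefficient matrix equals $\langle \bA_0,\bm\Gamma^{[0,0]}\rangle - 2\sum_i\Re\langle\bA_i,\bm\Gamma^{[0,i]}\rangle$. This yields weak duality directly.

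Strong duality and attainment of both optima is the main obstacle. On the primal side, strict feasibility is immediate: take $\bZ = \bm I$ and $c$ large. On the dual side, take $\bm\Gamma^{[0,0]} = \bm I/d$, $\bm\Gamma^{[i,i]} = \bm I/(nd)$, and zero off-diagonal blocks. This is positive definite and satisfies the linear constraints, so it is a Slater point. Slater on both sides gives zero duality gap and attainment of both optima. The value is finite because it equals $\Lambda_{\sc}$.

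As a check, we will verify primal attainment directly. Feasibility with $\bZ \succeq \bm 0$ gives $c\bm I \succeq \bA_0 + \bZ$, so on a sublevel set of $c$ (bounded below by weak duality) the variable $\bZ$ ranges over a compact set. The feasible set is closed, so the minimum is attained, possibly at a singular $\bZ$. This is consistent with needing $\bigcap_i\ker\bA_i=\{\bm 0\}$ only for attainment in the original formula \eqref{eq:lehner-sc}. Dual attainment likewise follows because the dual feasible set is compact: its trace is bounded, since $\Tr\bm\Gamma = 2\Tr\bm\Gamma^{[0,0]} = 2$.
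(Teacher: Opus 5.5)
Your proof is correct and follows the paper's route: the Schur complement on $\bm I_n \otimes \bZ$, an $\epsilon$-shift to handle singular $\bZ$, the same Lagrangian computation, and the same Slater points on both sides. Your compactness arguments for attainment are valid, but redundant given Slater in both directions.

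The one place the paper is simpler is the singular case. It just observes $\bH(c+2\epsilon, \bZ+\epsilon\bm I_d) = \bH(c,\bZ) + \epsilon\bm I_{d(n+1)} \succ \bm 0$, so no generalized Schur complement is needed. Also, your shift $(c+\epsilon, \bZ+\epsilon\bm I_d)$ gives a feasible pair with $\bZ+\epsilon\bm I_d \succ \bm 0$, not a strictly feasible one, though that is all your argument actually uses.
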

\begin{remark}[Sign of $\bA_i$]
    \label{rem:large-sdp-sign}
    We have that $\bD\bm\Gamma\bD$ for $\bD$ the diagonal orthogonal matrix $\bD = \mathsf{Diag}(\bm I_d, -\bm I_d, \dots, -\bm I_d)$ is feasible for the dual problem \eqref{eq:sc-sdp-large-dual} if and only if $\bm\Gamma$ is.
    Changing variables $\bm\Gamma \leftarrow \bD\bm\Gamma\bD$, we see that we may replace each $-\bA_i$ with $\bA_i$ for $i \in [n]$ in \eqref{eq:sc-sdp-large-dual} without changing the value of the problem; however, the resulting SDP is not the Lagrangian dual of \eqref{eq:sc-sdp-large-primal} anymore.
\end{remark}

\begin{remark}[Merits of large SDPs]
    While higher-dimensional, the large SDPs seem to have a few favorable properties compared to the small SDPs of Theorem~\ref{thm:sc-sdp-small}.
    First, we note that the issue with the infimum of \eqref{eq:sc-sdp-small-primal} possibly not being achieved does not occur here; the minimum of \eqref{eq:sc-sdp-large-primal} is always achieved, even when the infimum in the original Lehner formula is not achieved (for instance in the example of Remark~\ref{rem:inf-achieved} below).
    The dual large SDP \eqref{eq:sc-sdp-large-dual} also has the nice feature that, along with the previous Remark~\ref{rem:large-sdp-sign}, if we let $\mathcal{G} \subset \CC^{d(n + 1) \times d(n + 1)}_{\herm}$ be the set of feasible block matrices $\bm\Gamma$ in \eqref{eq:sc-sdp-large-dual} and define
    \[ \widetilde{\bA} \colonequals \left[\begin{array}{cccc} \bA_0 & \bA_1 & \cdots & \bA_n \\ \bA_1 & \bm 0 & \cdots & \bm 0 \\ \vdots & \vdots & \ddots & \vdots \\ \bA_n & \bm 0 & \cdots & \bm 0 \end{array}\right] \in \CC^{d(n + 1) \times d(n + 1)}_{\herm}, \]
    then this formulation shows that $\Lambda_{\sc}(\bA_0, \bA_1, \dots, \bA_n)$ is precisely the support function of the set $\sG$ (whose definition does not depend on the $\bA_i$) evaluated on $\widetilde{\bA}$.
\end{remark}

After proving these formulations in Section~\ref{sec:sc-sdp}, we give several supplementary results.
First, in Section~\ref{sec:slackness}, we describe the \emph{complementary slackness} relations between primal and dual optimizers.
This gives a partial alternative explanation of the fact that Lehner's original formula \eqref{eq:lehner-sc} can be restricted to those $\bZ$ for which $\bZ + \bA_0 + \sum_{i = 1}^n \bA_i\bZ^{-1}\bA_i$ is a multiple of the identity.
Second, in Section~\ref{sec:dist-to-opt}, we give a quantitative version of complementary slackness that is also the basis of our approach to spiked matrix models below.
Lastly, in Section~\ref{sec:explicit-bounds}, by producing explicit primal and dual certificates for the above SDPs, we give upper and lower bounds on $\Lambda_{\sc}$ that are simple explicit functions of the $\bA_i$, not requiring any optimization.

In the Rademacher case the situation at first appears quite different.
Here, Lehner's optimization is \emph{not} convex, and thus does not admit a direct formulation as an SDP:
\begin{theorem}[Non-convexity of Lehner's Rademacher objective]
    \label{thm:rad-non-convex}
    There exist $\bA_0, \bA_1, \dots, \bA_n \in \CC^{d \times d}_{\herm}$ such that the function $F: \{\bZ \in \CC^{d \times d}_{\herm}: \bZ \succ \bm 0\} \to \RR$ defined by
    \[ F(\bZ) \colonequals \lambda_{\max}\left(\bZ + \bA_0 + \sum_{i = 1}^n \frac{1}{2}\bZ^{1/2}((\bm I_d + 4(\bZ^{-1/2}\bA_i \bZ^{-1/2})^2)^{1/2} - \bm I_d)\bZ^{1/2}\right), \]
    the objective function of the optimization \eqref{eq:lehner-rad}, is not convex.
    These may further be chosen to satisfy $\bA_i \succ \bm 0$ for all $i = 0, 1, \dots, n$.
\end{theorem}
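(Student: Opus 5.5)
The plan is to exhibit an explicit counterexample with $d = 2$ and $n = 1$, and certify a failure of midpoint convexity along a line segment in the positive definite cone. First, $d \geq 2$ is genuinely needed. For $d = 1$ the objective becomes $F(z) = z + a_0 + \sum_i \frac{1}{2}(\sqrt{z^2 + 4a_i^2} - z)$, which is convex. So the non-convexity has to come from non-commutativity between $\bZ$ and the $\bA_i$.

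The heuristic guiding the choice of instance is as follows. Each summand $\bZ^{1/2} h(\bZ^{-1/2}\bA_i\bZ^{-1/2})\bZ^{1/2}$, with $h(x) = \frac{1}{2}(\sqrt{1 + 4x^2} - 1)$, is the noncommutative (operator) perspective of $h$. Such a perspective is jointly convex in $(\bA_i, \bZ)$ exactly when $h$ is operator convex. The function $x \mapsto \sqrt{1 + x^2}$ is convex but not operator convex, which already fails for $2 \times 2$ matrices. I would also use the two asymptotic regimes to locate the bad region:
\begin{itemize}
\item For $\bZ$ large, the summand is approximately $\bA_i\bZ^{-1}\bA_i$, which is convex.
\item For $\bZ$ small and $\bA_i \succ \bm 0$, the summand is approximately $\bA_i - \frac{1}{2}\bZ$, which is affine.
\end{itemize}
So I would look for a counterexample at intermediate scales, with $\bZ$ of size comparable to $\bA_1$.

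Concretely, I would take $\bA_1 \succ \bm 0$ with nonzero off-diagonal entries, for instance $\bA_1 = \left[\begin{smallmatrix} 1 & 1/2 \\ 1/2 & 1 \end{smallmatrix}\right]$. I would take $\bA_0 = \epsilon \bm I_2$ with $\epsilon > 0$; adding a multiple of the identity only shifts $F$ by a constant, so it does not affect convexity but secures $\bA_0 \succ \bm 0$. I would then restrict $F$ to a segment of diagonal matrices $\bZ(t) = \mathsf{Diag}(1 + t, 1 - t)$, or more generally $\bZ_0 + t\bH$, and search numerically (a small grid over the entries of $\bA_1$, $\bZ_0$, $\bH$) for a triple $\bZ(-t_0), \bZ(0), \bZ(t_0)$ with
\[
F(\bZ(0)) > \tfrac{1}{2}\bigl(F(\bZ(-t_0)) + F(\bZ(t_0))\bigr).
\]
It helps to choose the segment so that the argument of $\lambda_{\max}$ has a simple top eigenvalue along it. Then $F$ is smooth there, and one could alternatively show that its second derivative at $t = 0$ is negative via second-order perturbation theory. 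I would prefer the three-point midpoint check, however, since it avoids derivative formulas for the matrix square roots.

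For certification, I would choose $\bZ$ at the three points to be diagonal with squares of rationals on the diagonal, so that $\bZ^{\pm 1/2}$ is explicit. Then $\bM = \bZ^{-1/2}\bA_1\bZ^{-1/2}$ is an explicit $2 \times 2$ matrix, and $(\bm I + 4\bM^2)^{1/2}$ can be computed in closed form. For a $2 \times 2$ positive definite $\bm B$ we have $\bm B^{1/2} = (\bm B + \sqrt{\det \bm B}\,\bm I)/\sqrt{\Tr \bm B + 2\sqrt{\det \bm B}}$. The $2 \times 2$ eigenvalue formula then gives each value of $F$ in closed form with nested radicals, and the midpoint inequality can be checked with rigorous interval bounds. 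The main obstacle I expect is the search step: finding a configuration where the violation of convexity is large enough to be certified cleanly. The perspective heuristic suggests the effect is modest, so I would scan over the off-diagonal size of $\bA_1$ and the eccentricity of $\bZ$ until the gap is comfortably nonzero.
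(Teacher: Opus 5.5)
\textbf{Gap: the proposal never produces the counterexample.} Your plan is the paper's plan: take $d = 2$, $n = 1$, exhibit an explicit violation of midpoint convexity between two positive definite points, and set $\bA_0 = \epsilon \bm I_2$. You are right that any $\epsilon > 0$ works, since it only shifts $F$.

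For an existence statement of this kind, though, the witness \emph{is} the proof, and you stop before producing one. Your heuristics do not guarantee that the search succeeds. Failure of operator convexity of $h$, and even failure of Loewner convexity of $\bZ \mapsto P_{f_{\rad}}(\bA_1, \bZ)$ for fixed $\bA_1$ (Proposition~\ref{prop:frad}), does not transfer to the scalar $\lambda_{\max}$ objective. For example, take $\bA_1 = \bm I_d$:
\begin{itemize}
\item Then $P_{f_{\rad}}(\bm I_d, \bZ) = \tilde h(\bZ)$ with $\tilde h(t) = \tfrac12(\sqrt{t^2+4} - t) = 2 f_{\rad}(t/4) + 1 - t/2$, which is not operator convex.
\item Yet $F(\bZ) = g(\lambda_{\max}(\bZ)) + \epsilon$ with $g(t) = \tfrac12(t + \sqrt{t^2 + 4})$ convex and increasing, so $F$ is convex.
\end{itemize}

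\textbf{The paper's witness, and what it says about your search.} The paper uses $\bA_1 = \left[\begin{smallmatrix} 21 & 2 \\ 2 & 2 \end{smallmatrix}\right]$ with endpoints $\left[\begin{smallmatrix} 10 & 3 \\ 3 & 3 \end{smallmatrix}\right]$ and $\left[\begin{smallmatrix} 10 & 3 \\ 3 & 2 \end{smallmatrix}\right]$. Recomputing, $F \approx 27.50052$ and $F \approx 27.46694$ at the endpoints and $F \approx 27.48401$ at the midpoint. The midpoint gap is about $-2.8 \times 10^{-4}$, a relative effect of order $10^{-5}$. The example uses a strongly anisotropic $\bA_1$ and non-commuting endpoints. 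So your certification device (all three points diagonal) excludes it, and a coarse scan near $\left[\begin{smallmatrix} 1 & 1/2 \\ 1/2 & 1 \end{smallmatrix}\right]$ along diagonal segments could easily miss any violation.

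\textbf{You do not need the diagonal restriction.} Since $\bZ^{-1/2}(\bZ + 4\bA_1\bZ^{-1}\bA_1)\bZ^{-1/2} = \bm I_2 + 4(\bZ^{-1/2}\bA_1\bZ^{-1/2})^2$, one has $\bZ + P_{f_{\rad}}(\bA_1, \bZ) = \tfrac12(\bZ + \bZ \# \bB)$, where $\bB = \bZ + 4\bA_1\bZ^{-1}\bA_1$ and $\#$ is the matrix geometric mean. For $2 \times 2$ matrices, $\bZ \# \bB = \sqrt{ab}\,(\bZ/a + \bB/b)/\sqrt{\det(\bZ/a + \bB/b)}$ with $a = \sqrt{\det \bZ}$ and $b = \sqrt{\det \bB}$. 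This gives $F$ in closed form at arbitrary $2 \times 2$ points. Your interval-arithmetic certification, which would be more rigorous than the paper's bare numerical claim, then applies directly to such a witness.
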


Yet, perhaps surprisingly, it \emph{is} in fact possible to give an SDP formulation of $\Lambda_{\rad}$ as well, though with a more involved construction involving additional auxiliary variables for each $\bA_i$.
\begin{theorem}[Rademacher SDP]
    \label{thm:rad-sdp}
    For any $\bA_0, \bA_1, \dots, \bA_n \in \CC^{d \times d}_{\herm}$,
    \begin{align}
      \Lambda_{\rad}(\bA_0, \bA_1, \dots, \bA_n)
      &= \left\{\begin{array}{ll} \text{minimum of} & c \\ \text{subject to} & \bZ \succeq \bm 0, \\ & \left[\begin{array}{cc} \bY_i & \bA_i \\ \bA_i & \bY_i + \bZ \end{array}\right] \succeq \bm 0 \text{ for all } i \in [n], \\ & \bZ + \bA_0 + \sum_{i = 1}^n \bY_i \preceq c \bm I_d, \\ & c \in \RR, \bZ, \bY_i \in \CC^{d \times d}_{\herm}, \text{ for all } i \in [n] \end{array}\right\} \label{eq:rad-sdp-primal} \\
      \intertext{}
      &= \left\{\begin{array}{ll} \text{maximum of} & \langle \bA_0, \bm \Delta \rangle + \sum_{i = 1}^n \langle \bA_i, \bR_i + \bR_i^{*} \rangle \\ \text{subject to} & \left[\begin{array}{cc} \bS_i & -\bR_i \\ -\bR_i^{*} & \bT_i \end{array}\right] \succeq \bm 0 \text{ for all } i \in [n], \\ & \bS_i + \bT_i = \bm\Delta \text{ for all } i \in [n], \\ & \bm\Delta - \sum_{i = 1}^n \bT_i \succeq \bm 0, \\ & \Tr(\bm\Delta) = 1, \\ & \bm\Delta, \bS_i, \bT_i \in \CC^{d \times d}_{\herm}, \bR_i \in \CC^{d \times d} \text{ for all } i \in [n] \end{array}\right\}. \label{eq:rad-sdp-dual}
    \end{align}
    The two SDPs are Lagrangian duals of one another.
    The minimum in~\eqref{eq:rad-sdp-primal} and the maximum in~\eqref{eq:rad-sdp-dual} are both always achieved.
\end{theorem}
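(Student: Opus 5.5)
The plan is to show that, for each fixed $\bZ \succ \bm 0$, the nonlinear term in Lehner's Rademacher objective is exactly the \emph{Loewner-minimal} feasible $\bY_i$ in the block constraint of \eqref{eq:rad-sdp-primal}. Once that holds, minimizing over $\bY_i$ and $c$ for fixed $\bZ$ returns $F(\bZ)$, and \eqref{eq:rad-sdp-primal} collapses to \eqref{eq:lehner-rad}. The scalar case suggests this. For $d = 1$ the constraint $\left[\begin{smallmatrix} y & a \\ a & y + z\end{smallmatrix}\right] \succeq 0$ reads $y(y+z) \geq a^2$ with $y \geq 0$. Its least solution is $y = \frac{1}{2}(\sqrt{z^2 + 4a^2} - z) = \frac{z}{2}(\sqrt{1 + 4a^2/z^2} - 1)$, which is precisely Lehner's term.

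\textbf{Matrix version of the key lemma.} Fix $\bZ \succ \bm 0$ and one index $i$, and conjugate the block constraint by $\mathsf{Diag}(\bZ^{-1/2}, \bZ^{-1/2})$. Writing $\bB = \bZ^{-1/2}\bA_i\bZ^{-1/2}$ and $\bW = \bZ^{-1/2}\bY_i\bZ^{-1/2}$, the constraint becomes
\[ \left[\begin{smallmatrix} \bW & \bB \\ \bB & \bW + \bm I\end{smallmatrix}\right] \succeq \bm 0. \]
The candidate minimizer is $\bW_\star = \frac{1}{2}((\bm I + 4\bB^2)^{1/2} - \bm I)$.

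Feasibility of $\bW_\star$ is easy. It commutes with $\bB$ and satisfies $\bW_\star(\bW_\star + \bm I) = \bB^2$, so the Schur complement $\bW_\star - \bB(\bW_\star+\bm I)^{-1}\bB$ vanishes.

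Minimality, i.e.\ $\bW \succeq \bW_\star$ for every feasible $\bW$, is the main obstacle, because feasible $\bW$ need not commute with $\bB$. I would first try the following symmetrization. Set $\bV = \bW + \frac12\bm I$ and conjugate by $\frac{1}{\sqrt2}\left[\begin{smallmatrix} \bm I & \bm I \\ \bm I & -\bm I\end{smallmatrix}\right]$. The constraint becomes
\[ \left[\begin{smallmatrix} \bX & -\frac12\bm I \\ -\frac12\bm I & \bY\end{smallmatrix}\right] \succeq \bm 0, \qquad \bX = \bV + \bB, \quad \bY = \bV - \bB. \]
By the extremal characterization of the operator geometric mean, this is equivalent to $\bX \# \bY \succeq \frac12 \bm I$.

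The goal is then to show that $\bX - \bY = 2\bB$ together with $\bX \# \bY \succeq \frac12\bm I$ forces $\frac12(\bX+\bY) \succeq \frac12(\bm I + 4\bB^2)^{1/2} = \bV_\star$. Equality holds in the commuting case, since $\bV_\star^2 - \bB^2 = \frac14\bm I$. If this route stalls, I would fall back on a Riccati comparison argument. Write $\bW = \bW_\star + \bm P$ and rearrange the Schur complement inequality into the form $\bm P \succeq \bm K^*\bm P\bm K + (\text{PSD})$ with $\bm K = (\bW_\star + \bm I)^{-1}\bB$. Then iterate this inequality, using that $\|\bm K\| < 1$ because the scalar map $b \mapsto |b|/(w_\star(b)+1)$ is below $1$.

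\textbf{Assembling the primal.} With the lemma in hand, every feasible $(c, \bZ, \bY_i)$ with $\bZ \succ \bm 0$ satisfies $c\bm I \succeq \bZ + \bA_0 + \sum_i \bY_i \succeq \bZ + \bA_0 + \sum_i f_{\bZ}(\bA_i)$, so $c \geq F(\bZ)$. Conversely, the choice $\bY_i = f_{\bZ}(\bA_i)$ attains $c = F(\bZ)$. The SDP value therefore equals $\inf_{\bZ \succ 0} F(\bZ)$, which is $\Lambda_{\rad}$ by Theorem~\ref{thm:lehner-rad}. Singular $\bZ \succeq \bm 0$ are handled by replacing $(\bZ, \bY_i)$ with $(\bZ + \varepsilon\bm I, \bY_i + \varepsilon\bm I)$. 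This remains feasible, since adding $\varepsilon\bm I$ to both diagonal blocks preserves positive semidefiniteness, and it raises $c$ by only $O(n\varepsilon)$.

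\textbf{Duality and attainment.} Next I form the Lagrangian of \eqref{eq:rad-sdp-primal}. The multipliers are $\left[\begin{smallmatrix}\bS_i & -\bR_i \\ -\bR_i^* & \bT_i\end{smallmatrix}\right]$ for the block constraints, $\bm\Delta$ for the constraint involving $c$, and a PSD multiplier for $\bZ \succeq \bm 0$. Setting the coefficients of the free variables $c$, $\bY_i$, $\bZ$ to zero yields $\Tr\bm\Delta = 1$, $\bS_i + \bT_i = \bm\Delta$, and $\bm\Delta - \sum_i\bT_i \succeq \bm 0$, giving exactly \eqref{eq:rad-sdp-dual}.

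Strong duality and attainment on both sides follow from Slater points on both sides. For the primal, take $\bZ = \bm I$, $\bY_i = M\bm I$ with $M$ large, and $c$ large. For the dual, take $\bm\Delta = \bm I/d$, $\bS_i = \bT_i = \bm\Delta/2$ (shrunk slightly so that $\bm\Delta - \sum\bT_i \succ \bm 0$, with $\bS_i$ enlarged correspondingly), and $\bR_i = \bm 0$. Strict feasibility of each problem implies the other's optimum is attained and the two values are equal.
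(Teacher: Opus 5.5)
Your direction $\SDP_{\rad} \leq \Lambda_{\rad}$ (feasibility of $\bY_i = P_{f_{\rad}}(\bA_i, \bZ)$), the Lagrangian computation, and the use of Slater's condition are fine. The reverse inequality, however, rests on a key lemma that is false. For fixed $\bZ \succ \bm 0$, a feasible $\bY_i$ need not satisfy $\bY_i \succeq P_{f_{\rad}}(\bA_i, \bZ)$. In fact, the minimum of $c$ over $\bY_1, \dots, \bY_n$ can be strictly below Lehner's objective $F(\bZ)$.

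To see this already at $\bZ = \bm I_d$, note that the set of pairs $(\bB, \bW)$ with $\left[\begin{array}{cc} \bW & \bB \\ \bB & \bW + \bm I_d \end{array}\right] \succeq \bm 0$ is jointly convex and upward closed in $\bW$. If $f_{\rad}(\bB)$ were its least element for every Hermitian $\bB$, this set would be the epigraph of $\bB \mapsto f_{\rad}(\bB)$, which would force $f_{\rad}$ to be operator convex. Proposition~\ref{prop:frad} rules this out: the Hansen--Tomiyama determinant is $-(1 + 4t^2)^{-5} < 0$, so $2 \times 2$ counterexamples exist. Alternatively, the partial minimum over $(c, \bY_i)$ is a convex function of $\bZ$, whereas $F$ is not convex by Theorem~\ref{thm:rad-non-convex}. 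The paper makes exactly this observation before its second proof.

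Both of your routes therefore must stall. The geometric-mean inequality you aim for fails for some non-commuting $\bB$. In the Riccati fallback, the linearization has the opposite sign from what you wrote. With $\bM \colonequals \bW_\star + \bm I_d$ and $\bK \colonequals \bM^{-1}\bB$ (Hermitian, since $\bM$ commutes with $\bB$), the Schur complement condition is $\bm P \succeq -\bK\bm P\bK + \bK\bm P(\bM + \bm P)^{-1}\bm P\bK$. The map $\bm P \mapsto -\bK\bm P\bK$ is order-reversing; this reflects that $\bX \mapsto \bB\bX^{-1}\bB$ is anti-monotone, as in the alternating iterates of Proposition~\ref{prop:Pfrad-iter}. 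So no Stein-type iteration forces $\bm P \succeq \bm 0$, and $\|\bK\| < 1$ does not help.

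The missing idea is that $\SDP_{\rad} \geq \Lambda_{\rad}$ holds only after optimizing over $\bZ$. From a feasible point you must move to a different, larger $\bZ$ that absorbs the slack in $\bZ + \bA_0 + \sum_i \bY_i \preceq c\bm I_d$. The paper's Proof~2 of Lemma~\ref{lem:rad-sdp-2} does this for a strictly feasible point by iterating $\bY_i^{(k+1)} \colonequals \bA_i(c\bm I_d - \bA_0 - \sum_{j \neq i}\bY_j^{(k)})^{-1}\bA_i$ from $\bY_i^{(0)} \colonequals \bY_i$. Since each update sees only the other $\bY_j$, this map is order-preserving, and the iterates decrease to limits $\bP_i$. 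Then $\bW \colonequals c\bm I_d - \bA_0 - \sum_i \bP_i \succeq \bZ \succ \bm 0$ satisfies $\bP_i = \bA_i(\bW + \bP_i)^{-1}\bA_i$. By Proposition~\ref{prop:Pfrad-riccati} this gives $\bP_i = P_{f_{\rad}}(\bA_i, \bW)$, so Lehner's objective at $\bW$ equals exactly $c$. Proof~1 instead realizes the $\mathfrak{r}_i$ on $\ell^2((\ZZ/2\ZZ)^{*n})$ and turns each block constraint into $\bA_i \otimes \mathfrak{r}_i \preceq \bY_i \otimes \id + \bZ \otimes \mathfrak{p}_i$ with $\sum_i \mathfrak{p}_i \preceq \id$.

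One smaller slip concerns your dual Slater point. For $n \geq 3$, taking $\bT_i$ near $\bm\Delta/2$ makes $\bm\Delta - \sum_i \bT_i$ negative definite. You need $\bT_i = \alpha\bm\Delta$ with $0 < \alpha < 1/n$, not just a slight shrink.
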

\noindent
Since they are not relevant to our statistical applications below, we do not explore the structure of the SDPs in Theorem~\ref{thm:rad-sdp} as thoroughly here, though variants of the ideas we develop in the semicircular case in Section~\ref{sec:sc-sdp} can likely be mimicked in the Rademacher case as well.

Finally, for both the semicircular and Rademacher Lehner formulas, we discuss in Section~\ref{sec:numerical} why solving the associated SDPs with off-the-shelf convex optimization solvers in practice behaves much more stably than the numerical scheme Lehner proposes in \cite{Lehner-1999-NormFreeOperatorMatrixCoefficients}.
We suggest that, provided the SDPs are not prohibitively large, solving the SDPs should be the preferred numerical method for computing $\Lambda_{\sc}$ and $\Lambda_{\rad}$.

\subsubsection{Spiked matrix models}
\label{sec:intro-spiked}

For the second portion of our results, we return to Lehner's semicircular formula.
We focus on the following application of Theorem~\ref{thm:BCSVH} given by \cite{BCSVH-2024-MatrixConcentrationFreeProbability2} to \emph{spiked matrix models}, low-rank matrices perturbed by additive noise.
This result describes a quite general version of the much studied \emph{Baik--Ben Arous--P\'{e}ch\'{e} (BBP) transition} (named for the work of \cite{BBAP-2005-LargestEigenvalueSampleCovariance}, originating with observations of \cite{Johnstone-2001-LargestEigenvaluePCA}) in such models.
See, e.g., the survey treatment in \cite{Capitaine-2017-HDRRandomMatrices,JP-2018-PCASurvey} for general background on these phenomena.
For the sake of simplicity, we switch in these applications to focusing on real symmetric rather than complex Hermitian matrices.

\begin{theorem}[Special case of Theorem~3.1 of \cite{BCSVH-2024-MatrixConcentrationFreeProbability2}]
    \label{thm:BCSVH-BBP}
    Define the function
    \begin{equation}
        B(\theta) \colonequals \left\{\begin{array}{ll} 2 & \text{if } \theta \leq 1, \\ \theta + \theta^{-1} & \text{if } \theta > 1 \end{array}\right\}.
        \label{eq:bbp-B}
    \end{equation}
    Let $\bW = \bW^{(d)} \in \RR^{d \times d}_{\sym}$ be a sequence of real symmetric random matrices with centered jointly Gaussian entries.
    Write
    \[ \bW^{(d)} = \sum_{i = 1}^n g_i \bA_i^{(d)} \]
    for some $n = n(d)$, deterministic $\bA_i^{(d)} \in \RR^{d \times d}_{\sym}$, and independent $g_i \sim \sN(0, 1)$.
    Let $\bv = \bv^{(d)} \in \SS^{d - 1}$ be deterministic, let $\theta \geq 0$ not depend on $d$, and set
    \begin{align*}
      \bA_0^{(d)} &\colonequals \theta\bv\bv^{\top}, \\
      \bX^{(d)} &\colonequals \bA_0^{(d)} + \bW^{(d)}.
    \end{align*}
    Suppose that
    \begin{align}
        \EE[(\bW^{(d)})^2] &= \bm I_d \text{ for all } d \geq 1, \label{eq:BCSVH-BBP-1} \\
        v(\bW^{(d)}) &= o(\log(d)^{-3/2}) \text{ as } d \to \infty, \label{eq:BCSVH-BBP-2}
    \end{align}
    where $v(\cdot)$ is as defined in \eqref{eq:BBVH-v}.
    Then, the following hold, as $d \to \infty$:
    \begin{align}
      \Lambda_{\sc}(\bA_0^{(d)}, \bA_1^{(d)}, \dots, \bA_n^{(d)}) &\to B(\theta), \label{eq:bbp-Lambda}\\
      \lambda_{\max}(\bX^{(d)}) &\to B(\theta) \text{ in probability.} \label{eq:bbp-rand}
    \end{align}
    Further, let $\what{\bv} = \what{\bv}^{(d)}$ be the leading eigenvector of $\bX^{(d)}$.
    Then, under the same assumptions we have, as $d \to \infty$,
    \begin{equation}
        \label{eq:bbp-v}
        \langle \bv, \what{\bv} \rangle^2 \to B^{\prime}(\theta) = \left\{\begin{array}{ll} 0 & \text{if } \theta \leq 1, \\ 1 - \theta^{-2} & \text{if } \theta > 1 \end{array}\right\} \text{ in probability}.
    \end{equation}
\end{theorem}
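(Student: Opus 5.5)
The plan is to prove \eqref{eq:bbp-Lambda} by giving matching upper and lower bounds on $\Lambda_{\sc}$ from explicit feasible points of the small SDPs in Theorem~\ref{thm:sc-sdp-small}. Then \eqref{eq:bbp-rand} follows from Theorem~\ref{thm:BCSVH}, and \eqref{eq:bbp-v} follows from a derivative argument in $\theta$. We write $\Phi(\bZ) = \sum_i \bA_i \bZ \bA_i$, so that $\Phi(\bm I_d) = \EE[\bW^2] = \bm I_d$ by \eqref{eq:BCSVH-BBP-1}.

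\emph{Upper bound.} Take $\bZ = \alpha \bm I_d + \beta \bv\bv^\top$ in \eqref{eq:lehner-sc}, with the goal of making $\bZ + \theta\bv\bv^\top + \Phi(\bZ^{-1})$ close to $B(\theta)\bm I_d$. We have $\bZ^{-1} = \alpha^{-1}\bm I_d + \gamma \bv\bv^\top$ for an explicit $\gamma$. Hence $\Phi(\bZ^{-1}) = \alpha^{-1}\bm I_d + \gamma\,\Phi(\bv\bv^\top)$, where $\Phi(\bv\bv^\top) = \sum_i \bA_i\bv\bv^\top\bA_i \succeq \bm 0$. We have $\Tr\Phi(\bv\bv^\top) = \bv^\top\Phi(\bm I_d)\bv = 1$. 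The key smallness estimate is $\|\Phi(\bv\bv^\top)\| \le \sigma_*(\bW)^2 \le v(\bW)^2 \to 0$. So the term $\gamma\Phi(\bv\bv^\top)$ has vanishing operator norm; when $\gamma<0$ it can be dropped in $\lambda_{\max}$, and otherwise it is $o(1)$. Now we optimize the scalar problem in the two regimes:
\begin{itemize}
\item for $\theta\le 1$, take $\alpha = 1$ and a small $\beta$ (or $\beta=0$), giving $\lambda_{\max} \le 2 + \theta - \ldots$;
\item for $\theta > 1$, take $\alpha = 1$ and choose $\beta$ so that the top eigenvalue in direction $\bv$, namely $1+\beta+\theta$, equals $1+1$ plus the correction. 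Precisely, pick $\bZ = \bm I_d + (\theta^{-1}-1)\bv\bv^\top$, or the variant that balances $1+\beta+\theta$ against $\alpha+\alpha^{-1}$ with $\alpha=\theta$ on the complement. This gives $\theta+\theta^{-1}+o(1)$.
\end{itemize}
I would tune these two-parameter choices by solving the scalar equations.

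\emph{Lower bound.} Use the dual \eqref{eq:sc-sdp-small-dual-2}.
\begin{itemize}
\item For $\theta \le 1$, take $\bS = d^{-1}\bm I_d$. Then $\Phi(\bS) = d^{-1}\bm I_d$, and the nuclear norm term equals $\|d^{-1}\bm I_d\|_* = 1$. This gives value $\theta/d + 2 \to 2$.
\item For $\theta>1$, take $\bS = t\bv\bv^\top + (1-t)\bm P/(d-1)$, with $\bm P$ the projection onto $\bv^\perp$. Then $\Phi(\bS) \approx t\Phi(\bv\bv^\top) + (1-t)d^{-1}\bm I_d$.
\end{itemize}
The main obstacle is bounding $\|\bS^{1/2}\Phi(\bS)^{1/2}\|_*$ from below, since $\Phi(\bv\bv^\top)$ is not aligned with $\bS$. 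To handle it I would use $\|\bY\|_* \ge \Tr(\bY)$ for $\bY = \bS^{1/2}\Phi(\bS)^{1/2}$. Its trace is at least $\Tr(\bS^{1/2}((1-t)/d)^{1/2})$, which is approximately $\sqrt{(1-t)}\cdot\sqrt{1-t}\cdot\ldots$ on $\bv^\perp$ plus $\sqrt{t}\,\bv^\top\Phi(\bS)^{1/2}\bv$. I would then use operator concavity of the square root, $\Phi(\bS)^{1/2} \succeq$ the square root of each summand, together with $\bv^\top \Phi(\bv\bv^\top)\bv$. The latter may be small, so more carefully I would lower bound via $\Tr(\bS^{1/2}\Phi(\bS)^{1/2}) \ge \Tr(\bS\,\Phi(\bS))^{\ldots}$, or replace the trial point by the dual block form \eqref{eq:sc-sdp-small-dual} with explicit $\bT$. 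Optimizing over $t$ should give $t = 1-\theta^{-2}$ and value $\theta t + 2\sqrt{t(1-t)}\cdot(\text{cross term}) \approx \theta+\theta^{-1}$. I expect this to be the hardest step, and would try the explicit block certificate $\bT = \sqrt{\cdot}\,\bS^{1/2}(\ldots)$ first.

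\emph{Random statements.} For \eqref{eq:bbp-rand}, note that $\sigma(\bW)=1$, so $\widetilde v(\bX)(\log d)^{3/4} = v(\bW)^{1/2}(\log d)^{3/4} \to 0$ by \eqref{eq:BCSVH-BBP-2}; the second part of Theorem~\ref{thm:BCSVH} then applies. For \eqref{eq:bbp-v}, $\lambda_{\max}(\theta\bv\bv^\top + \bW)$ is convex in $\theta$, and its derivative is $\langle\bv,\what\bv\rangle^2$. The convex functions converge pointwise in probability to the function $B$, which is differentiable, and derivatives of convex functions converge at points of differentiability. This gives \eqref{eq:bbp-v} at every $\theta \neq 1$, using a uniform-in-$\theta$ version obtained via monotonicity on a countable grid. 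At $\theta=1$ the derivative limit is sandwiched between the one-sided limits $0$ and $0$.
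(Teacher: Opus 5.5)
\textbf{Main gap: the lower bound for $\theta > 1$.} The problem is not that $\|\bS^{1/2}\Phi(\bS)^{1/2}\|_*$ needs a cleverer estimate. Your ansatz $\bS = t\bv\bv^\top + \frac{1-t}{d-1}(\bm I_d - \bv\bv^\top)$ is the wrong certificate for general isotropic noise.

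Take $\bv = \be_1$ and $\bW = \bW_1 \oplus \bW_2$, with independent GOE blocks of sizes $k$ and $d-k$, rescaled so that $\EE\bW^2 = \bm I_d$, where $(\log d)^3 \ll k \ll d$. Then \eqref{eq:BCSVH-BBP-1} and \eqref{eq:BCSVH-BBP-2} hold, since $v(\bW) \asymp k^{-1/2}$. But $\Phi(\be_1\be_1^\top)$ is supported on the first block. Your $\bS$ and $\Phi(\bS)$ are both diagonal, so the nuclear norm is exactly a trace. A direct computation gives $\|\bS^{1/2}\Phi(\bS)^{1/2}\|_* = 1 - t + O(k^{-1/2} + \sqrt{k/d})$. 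Hence the dual value is at most $\max\{\theta, 2\} + o(1) < \theta + \theta^{-1}$ for every $t$. No sharper inequality (trace bound, operator concavity) can rescue this choice.

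The isotropic ansatz works for GOE only because there $\Phi(\bv\bv^\top) \approx d^{-1}\bm I_d$ refills the complement isotropically. In general, the off-$\bv$ mass of a good certificate must sit where $\Phi$ transports the mass of $\bv\bv^\top$ and of the certificate itself, self-consistently. Such a certificate is generally not available in closed form, and finding it is the missing idea.

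\textbf{How the paper builds the certificate.} It works in the inverted variable of \eqref{eq:sc-sdp-small-primal} with $\bZ(\alpha) = \alpha\bv\bv^\top + \theta^{-1}(\bm I_d - \bv\bv^\top)$.
\begin{enumerate}
\item It chooses $\alpha$ via Proposition~\ref{prop:lam-max-mon} so that $\lambda_{\max}(\widetilde{\Psi}_{\bZ}) = 1$.
\item It takes $\bS \succeq \bm 0$ of trace one with $\bS = \bZ\Phi(\bS)\bZ$ (Proposition~\ref{prop:Psi}). This is what makes your ``explicit $\bT$'' idea work: $\bT = \bS\bZ^{-1}$ is then feasible in \eqref{eq:sc-sdp-small-dual} (Lemma~\ref{lem:approx-slackness}).
\item It shows that $\sigma_* \to 0$ forces $\alpha \to \infty$.
\item The fixed-point relation together with $\Phi(\bm I_d) = \bm I_d$ gives exactly $\langle \bv\bv^\top, \bS\rangle = \alpha^2(1-\theta^{-2})/(\alpha^2 - \theta^{-2})$. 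So the dual value $2\theta + (2\alpha^{-1} - \theta)\langle \bv\bv^\top, \bS\rangle$ tends to $\theta + \theta^{-1}$.
\end{enumerate}

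\textbf{Upper bound: wrong parameter choices, easily repaired.} Once fixed, your construction agrees with the paper's primal certificate after $\bZ \leftarrow \bZ^{-1}$. With $\bZ = \alpha\bm I_d + \beta\bv\bv^\top$ in \eqref{eq:lehner-sc}, the objective matrix is $(\alpha + \alpha^{-1})\bm I_d + (\beta + \theta)\bv\bv^\top + \gamma\Phi(\bv\bv^\top)$.
\begin{itemize}
\item Your choice $\bZ = \bm I_d + (\theta^{-1}-1)\bv\bv^\top$ gives $\theta + \theta^{-1} + 1$.
\item Your choice $\beta = 0$ gives $2 + \theta$.
\end{itemize}
For $\theta \geq 1$, the requirement $\alpha + \beta > 0$ forces $\alpha \to \theta$ and $\alpha + \beta \to 0$. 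For example, $\bZ = \max\{1,\theta\}(\bm I_d - \bv\bv^\top) + \epsilon\bv\bv^\top$ gives $\lambda_{\max} \leq B(\theta) + \epsilon + \epsilon^{-1}\sigma_*^2$. So $\gamma \approx \epsilon^{-1}$ must diverge. Your claim that $\gamma\Phi(\bv\bv^\top)$ is automatically negligible hides precisely the trade-off \eqref{eq:alpha-cond}.

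\textbf{The rest is fine.} Your derivation of \eqref{eq:bbp-rand} is the paper's route: $\sigma(\bW) = 1$ and $\widetilde{v}(\bX)(\log d)^{3/4} = (v(\bW)(\log d)^{3/2})^{1/2} \to 0$. Your convexity argument for \eqref{eq:bbp-v} goes beyond the paper, which only cites \cite{BCSVH-2024-MatrixConcentrationFreeProbability2} for it, and it is valid given the eigenvalue limits. Any top eigenvector $\what{\bv}$ makes $\langle \bv, \what{\bv}\rangle^2$ a subgradient. So it suffices to sandwich it between difference quotients at $\theta \pm h$ (only the right one at $\theta = 0$); no uniform-in-$\theta$ grid is needed.
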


In Section~\ref{sec:bbp-iso}, we give a new proof of \eqref{eq:bbp-Lambda} (from which \eqref{eq:bbp-rand} follows immediately using Theorem~\ref{thm:BCSVH}).
Our proof is somewhat more cumbersome than that of \cite{BCSVH-2024-MatrixConcentrationFreeProbability2}, but is arguably more natural: the proof of \cite{BCSVH-2024-MatrixConcentrationFreeProbability2} proves an upper bound on $\Lambda_{\sc}$ by constructing a suitable $\bZ$ for Lehner's formula \eqref{eq:lehner-sc}, but proves a lower bound by showing with a proof by contradiction that a $\bZ$ achieving a considerably smaller value cannot exist.
We instead follow a more standard style of argument in convex optimization by both producing an explicit $\bZ$ to prove an upper bound via \eqref{eq:lehner-sc} or equivalently the primal SDP \eqref{eq:sc-sdp-small-primal}, and producing an explicit \emph{dual certificate} $\bS$ to prove a lower bound via the reduced dual \eqref{eq:sc-sdp-small-dual-2}.

Our construction of a dual certificate $\bS$ is somewhat involved but is summarized in Definition~\ref{def:dual-cert}.
It is given not quite in closed form, but rather as the leading eigenmatrix of a certain completely positive operator on $\CC^{d \times d}_{\herm}$ constructed from $\theta, \bv$, and $\bA_1, \dots, \bA_n$.
It turns out that, beyond just being a useful proof technique, studying this dual near-optimizer yields its own substantive insights.
We make a heuristic argument concerning the dual optimizer below, leading to an intriguing new conjecture on spiked matrix models.

First, a well-known principle in convex optimization allows us to interpret optimal dual variables in such situations as measuring certain notions of sensitivity of the value of an optimization problem to changes in its parameters.
In our case, we have the following:
\begin{theorem}[Danskin's Theorem, Proposition B.22 of \cite{Bertsekas-1997-NonlinearProgramming}]
    \label{thm:sdp-diff}
    Fix $\bA_1, \dots, \bA_n \in \CC^{d \times d}_{\herm}$.
    Suppose that, for some $\bA_0 \in \CC^{d \times d}_{\herm}$, the dual problem~\eqref{eq:sc-sdp-small-dual-2} has a unique optimizer $\bS^{\star}$.
    Then the function
    \[
        \bM \mapsto \Lambda_{\sc}(\bM, \bA_1, \dots, \bA_n)
    \]
    is differentiable at $\bA_0$ as a function on $\CC^{d \times d}_{\herm}$, viewed as a real vector space, and its derivative with respect to the trace inner product is
    \[
        \frac{\partial\Lambda_{\sc}}{\partial \bA_0}(\bA_0, \bA_1, \dots, \bA_n) = \bS^{\star}.
    \]
\end{theorem}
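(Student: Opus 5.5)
Since this is Danskin's theorem applied to the dual representation~\eqref{eq:sc-sdp-small-dual-2}, the plan is to write $\Lambda_{\sc}(\cdot, \bA_1, \dots, \bA_n)$ as a pointwise maximum of affine functions of $\bM$ over a compact set, and then differentiate.

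First, by Theorem~\ref{thm:sc-sdp-small}, for every $\bM \in \CC^{d\times d}_{\herm}$,
\[ \Lambda_{\sc}(\bM, \bA_1, \dots, \bA_n) = \max_{\bS \in \mathcal{K}} \phi(\bM, \bS), \qquad \phi(\bM, \bS) \colonequals \langle \bM, \bS\rangle + 2\|\bS^{1/2}\Phi(\bS)^{1/2}\|_*, \]
where $\mathcal{K} = \{\bS \succeq \bm 0, \Tr(\bS) = 1\}$ is the spectrahedron. It is compact and, crucially, does not depend on $\bM$. The function $\phi$ is continuous on $\CC^{d\times d}_{\herm} \times \mathcal{K}$: the matrix square root is continuous on PSD matrices, $\Phi$ is linear, and the nuclear norm is continuous. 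For each fixed $\bS$, the map $\bM \mapsto \phi(\bM, \bS)$ is affine with gradient $\bS$ in the trace inner product on the real vector space $\CC^{d\times d}_{\herm}$.

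Next, I would invoke Danskin's theorem in the form of \cite[Proposition B.22]{Bertsekas-1997-NonlinearProgramming}. Its hypotheses are a compact parameter set, continuity of $\phi$, and continuous differentiability of $\phi(\cdot, \bS)$ with gradient $\nabla_{\bM}\phi(\bM, \bS) = \bS$ jointly continuous. All of these hold here, and convexity in $\bM$ holds as well. The conclusion is that the directional derivative of $\Lambda_{\sc}$ at $\bA_0$ in direction $\bH$ equals $\max_{\bS \in \mathcal{K}^\star(\bA_0)} \langle \bH, \bS\rangle$, where $\mathcal{K}^\star(\bA_0)$ is the set of maximizers. When $\mathcal{K}^\star(\bA_0) = \{\bS^\star\}$, this directional derivative is the linear functional $\bH \mapsto \langle \bH, \bS^\star\rangle$. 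That gives differentiability (the function is convex and finite, so Gateaux and Fréchet differentiability coincide), with gradient $\bS^\star$.

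There is no real obstacle here. The only points needing care are, first, that the feasible set is independent of $\bA_0$, which holds precisely because we use the reduced dual~\eqref{eq:sc-sdp-small-dual-2} rather than the primal; and second, the identification of the real inner product $\Re\Tr(\bH\bS)$ with the trace inner product on Hermitian matrices, which is real there.
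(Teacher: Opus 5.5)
Your proposal is correct and matches the paper's intended argument. The paper gives no separate proof; it simply invokes Danskin's theorem on the reduced dual \eqref{eq:sc-sdp-small-dual-2}. That is exactly your setup: the maximization runs over the compact spectrahedron, which does not depend on $\bA_0$, and the objective is affine in $\bA_0$ with gradient $\bS$, so a unique maximizer $\bS^{\star}$ yields differentiability with gradient $\bS^{\star}$.
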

\noindent
See also Section~5.6.3 of \cite{BV-2004-ConvexOptimization} for more discussion of such sensitivity interpretations.

We apply this observation to spiked matrix models as follows.
For $\bX = \bA_0 + \bW$, if $\what{\bv}$ is the leading eigenvector of $\bX$ and the leading eigenvalue $\lambda_{\max}(\bX)$ is simple, then a standard result of linear algebra (see, e.g., \cite{Magnus-1985-DifferentiatingEigenvaluesEigenvectors}) gives that the function $F(\bA_0) \colonequals \lambda_{\max}(\bA_0 + \bW)$ is differentiable at $\bA_0$ and
\[ \frac{\partial F}{\partial \bA_0}(\bA_0) = \what{\bv}\what{\bv}^{\top} \]
for $\what{\bv}$ the leading eigenvector as above.
Since by Theorem~\ref{thm:BCSVH} we expect to have
\[ \Lambda_{\sc}(\bA_0, \bA_1, \dots, \bA_n) \approx \EE \lambda_{\max}(\bX), \]
if we can take derivatives on either side of this approximation, the dual optimizer in \eqref{eq:sc-sdp-small-dual-2} is unique, and our construction $\bS^{\mathsf{est}}$ is close to that optimizer, we arrive at the heuristic prediction
\[ \EE[\what{\bv}\what{\bv}^{\top}] = \EE\left[\frac{\partial F}{\partial \bA_0}(\bA_0)\right] \approx \frac{\partial \Lambda_{\sc}}{\partial \bA_0}(\bA_0, \bA_1, \dots, \bA_n) = \bS^{\star}. \]
We formulate this precisely as the following first conjecture.
\begin{conjecture}
    \label{conj:spiked-matrix}
    In the setting of Theorem~\ref{thm:BCSVH-BBP}, in particular assuming~\eqref{eq:BCSVH-BBP-1} and~\eqref{eq:BCSVH-BBP-2}, suppose that $\theta > 1$, that $d_0 \geq 1$ is fixed, and that $\bR = \bR^{(d)} \in \RR^{d_0 \times d}$ are such that $d \cdot \bR\bS^{\star}\bR^{\top} \to \bm\Gamma \in \RR^{d_0 \times d_0}_{\sym}$ and $\|\bR\| = O(1)$ as $d \to \infty$.
    Suppose also that the dual Lehner formula~\eqref{eq:sc-sdp-small-dual-2} evaluated with $\bA_0 = \theta \bv\bv^{\top}$ has a unique optimizer $\bS^{\star}$ for each $d \geq 1$.
    Then, with the eigenvector $\what{\bv}$ defined such that $\langle \what{\bv}, \bv \rangle > 0$, we have the convergence as $d \to \infty$
    \[ d \cdot \bR \,\EE[\what{\bv}\what{\bv}^{\top}] \bR^{\top} \to \bm \Gamma. \]
    Further, the same holds with the same limit $\bm\Gamma$ if $\bS^{\star}$ is replaced by $\bS^{\mathsf{est}}$ as defined in Definition~\ref{def:dual-cert}, provided the conditions of Proposition~\ref{prop:Phi-irreducible-transfer} are satisfied so that $\bS^{\mathsf{est}}$ is well-defined.
\end{conjecture}
\noindent
This is a conservative conjecture about convergence of rescaled finite-dimensional projections of the second moment matrix $\EE[\what{\bv}\what{\bv}^{\top}]$; it is natural to hope that some notion of distance between $\EE[\what{\bv}\what{\bv}^{\top}]$ and $\bS^{\star}$ or $\bS^{\mathsf{est}}$ converges to zero as well, but it is unclear what the correct such notion would be and we do not venture such a formulation here.

We also propose a more statistically meaningful conjecture, deriving from this prediction of $\EE[\what{\bv}\what{\bv}^{\top}]$ a prediction of the covariance structure of fluctuations in $\what{\bv}$.
First, note that the eigenvector $\what{\bv}$ is only well-defined up to sign, so let us choose it such that $\langle \what{\bv}, \bv \rangle > 0$.
By \eqref{eq:bbp-v}, we expect this inner product to be close to $\sqrt{1 - \theta^{-2}}$, and thus we expect
\begin{equation}
    \label{eq:what-bv-exp}
    \EE[\what{\bv}] \approx \sqrt{1 - \theta^{-2}}\, \bv,
\end{equation}
and therefore
\[ \Cov[\what{\bv}] = \EE[\what{\bv}\what{\bv}^{\top}] - \EE[\what{\bv}]\EE[\what{\bv}]^{\top} \approx \bS^{\star} - (1 - \theta^{-2}) \bv\bv^{\top} \equalscolon \bm\Sigma. \]
So, the dual optimizer $\bS^{\star}$ in Lehner's formula encodes the covariance of the fluctuations of $\what{\bv}$ around the expectation given in \eqref{eq:what-bv-exp}, and thus contains information about the second-order behavior of this top eigenvector beyond the first-order convergence give in the BBP transition as in \eqref{eq:bbp-v}.

Absent any obvious obstruction, it is natural to further suppose that these fluctuations should be Gaussian, as occurs in the case of Wigner noise matrices (see, e.g., \cite{CDM-2018-DeformedWignerEigenvectorFluctuations,CK-2025-EigenvectorFluctuationsSpikedMatrix} for such results).
Under this assumption, the covariance fully determines the fluctuations, which we may conjecturally characterize as follows:
\begin{conjecture}[Spiked matrix model eigenvector fluctuations]
    \label{conj:spiked-matrix-fluct}
    In the setting of Theorem~\ref{thm:BCSVH-BBP}, in particular assuming~\eqref{eq:BCSVH-BBP-1} and~\eqref{eq:BCSVH-BBP-2}, suppose that $\theta > 1$, that $d_0 \geq 1$ is fixed, and that $\bR = \bR^{(d)} \in \RR^{d_0 \times d}$ are such that $d \cdot \bR\bm\Sigma\bR^{\top} \to \bm\Gamma \in \RR^{d_0 \times d_0}_{\sym}$ and $\|\bR\| = O(1)$ as $d \to \infty$, and $\bR \bv = \bR^{(d)}\bv^{(d)} = \bm 0$ for each $d \geq 1$.
    Suppose also that the dual Lehner formula~\eqref{eq:sc-sdp-small-dual-2} evaluated with $\bA_0 = \theta \bv\bv^{\top}$ has a unique optimizer $\bS^{\star}$ for each $d \geq 1$.
    Then, with the eigenvector $\what{\bv}$ defined such that $\langle \what{\bv}, \bv \rangle > 0$, we have the weak convergence as $d \to \infty$
    \[ \Law\left(\sqrt{d} \cdot \bR\left(\what{\bv} - \sqrt{1 - \theta^{-2}}\,\bv\right)\right) \to \sN(\bm 0, \bm\Gamma). \]
    Further, the same holds with the same limit $\bm\Gamma$ if $\bm\Sigma$ is replaced by $\bm\Sigma^{\mathsf{est}} \colonequals \bS^{\mathsf{est}} - (1 - \theta^{-2})\bv\bv^{\top}$, where $\bS^{\mathsf{est}}$ is as defined in Definition~\ref{def:dual-cert}, provided the conditions of Proposition~\ref{prop:Phi-irreducible-transfer} are satisfied so that $\bS^{\mathsf{est}}$ is well-defined.
\end{conjecture}
\noindent
As with Conjecture~\ref{conj:spiked-matrix}, we restrict our predictions here to only rescaled finite-dimensional marginal covariances, though in Figure~\ref{fig:spiked-covariance-table} we observe that, qualitatively, we clearly have $\bQ \Cov[\what{\bv}] \bQ^{\top} \approx \bQ\bm\Sigma\bQ^{\top}$ for $\bQ \in \RR^{(d - 1) \times d}$ merely projecting away from the $\bv$ direction, even in rather small dimension ($d = 40$ for those experiments).

\begin{remark}
    Note that we restrict our attention here to test directions that are orthogonal to the signal $\bv$ by making the assumption $\bR\bv = \bm 0$.
    While experiments and heuristic calculations indicate that fluctuations in the $\bv$ direction should be asymptotically Gaussian as well, the above reasoning does not give a correct estimate of their variance (or their covariance with other directions).
    The reason for this is as follows: we expect to have $\EE[\what{\bv}] = \sqrt{1 - \theta^{-2}}\,\bv + \bm\delta$ for an error vector $\bm\delta$ with $\|\bm\delta\| = O(d^{-1/2})$, so that $\langle \bm\delta, \bv \rangle = O(d^{-1})$ and thus $\EE[\langle \what{\bv}, \bv \rangle] = \sqrt{1 - \theta^{-2}} + O(d^{-1})$.
    But, since we also expect to have $\Var[\langle \what{\bv}, \bv \rangle] = O(d^{-1})$, when we compute $\Var[\langle \what{\bv}, \bv \rangle] = \EE[\langle \what{\bv}, \bv \rangle^2] - (\EE[\langle \what{\bv}, \bv \rangle])^2$, the error term in $\EE[\langle \what{\bv}, \bv \rangle]$ makes a leading order contribution.
    Thus, to predict the magnitude of fluctuations in the $\bv$ direction, we would need to identify the second-order $O(d^{-1})$ behavior of the expected correlation $\EE[\langle \what{\bv}, \bv \rangle]$ of the signal $\bv$ and its estimate $\what{\bv}$.
    This does not appear to be known in the literature for general noise models of the kind we consider, and also, curiously, does not seem to be encoded in the objects in our SDP analysis the way that $\EE[\what{\bv}\what{\bv}^{\top}]$ is.
\end{remark}

We give further discussion of and numerical evidence for Conjecture~\ref{conj:spiked-matrix-fluct} in Section~\ref{sec:conj-evidence}.
To the best of our knowledge, eigenvector fluctuations in spiked matrix models with general correlated noise have not been considered before in the literature, and we hope these first steps will motivate further study of their interesting behavior.
Our conjecture differs from previous results on such fluctuations with noise given by Wigner matrices (having i.i.d.\ entries up to symmetry) in two important regards:
\begin{enumerate}
\item Unlike the analogous isotropic Gaussian limits in prior work such as \cite{CH-2012-FluctuationsEigenvectorSpikedMatrix,LCC-2024-GaussianFluctuationsEigenvectors,CK-2025-EigenvectorFluctuationsSpikedMatrix} for the case of Gaussian Wigner matrices (drawn from the Gaussian orthogonal or unitary ensembles), the fluctuations of $\what{\bv}$ can be anisotropic, with the projected covariances $\Gamma$ depending non-trivially on the structure of $(\bv, \bA_1, \dots, \bA_n)$, and thus behaving non-universally across different isotropic noise models.
\item Unlike the case of Wigner matrix noise with non-Gaussian entry distribution as studied by~\cite{CDM-2018-DeformedWignerEigenvectorFluctuations,CK-2025-EigenvectorFluctuationsSpikedMatrix}, we do \emph{not} find that the case of localized $\bv$ (with its mass concentrated on a few entries) differs from the case of delocalized $\bv$ (with its mass distributed more evenly over all entries): even if, say, $\bv = \be_1$ and $\bR = \be_1\be_1^{\top}$ measures fluctuations of $\what{\bv}$ in the $\bv$ direction, Conjecture~\ref{conj:spiked-matrix} still makes a Gaussian prediction.
    We believe this Gaussianity can only break down when the noise matrix $\bW^{(d)}$ itself becomes biased in the $\bv$ direction, say by having $\sigma_*(\bW^{(d)}) = \Omega(1)$.
\end{enumerate}

\section{Preliminaries}

\subsection{Notation}

We write $\CC^{d \times d}_{\herm}$ for the set of $d \times d$ complex Hermitian matrices and $\RR^{d \times d}_{\sym}$ for the set of $d \times d$ real symmetric matrices.
We write $\langle \bX, \bY \rangle \colonequals \Tr(\bX^{*} \bY) = \sum_{i, j} \overline{X_{ij}}Y_{ij}$ for the trace inner product of matrices with the same dimensions, and $\|\bX\|_F \colonequals \sqrt{\langle \bX, \bX \rangle}$ for the Frobenius norm, the norm associated to this inner product.
We write $\|\bX\|$ for the operator norm and $\|\bX\|_* \colonequals \Tr((\bX\bX^*)^{1/2})$ for the nuclear or Schatten-1 norm.

\subsection{Linear algebra}

We review the following well-known constructions and results.
\begin{proposition}[Schur complement criterion]
    \label{prop:schur}
    Suppose $\bA \in \CC^{m \times m}_{\herm}$, $\bB \in \CC^{m \times n}$, $\bC \in \CC^{n \times n}_{\herm}$.
    Then, we have
    \[ \left[\begin{array}{cc} \bA & \bB \\ \bB^{*} & \bC \end{array}\right] \succ \bm 0 \]
    if and only if
    \[ \bA \succ \bm 0 \,\,\,\,\text{ and }\,\,\,\, \bC - \bB^{*}\bA^{-1}\bB \succ \bm 0. \]
    The matrix $\bC - \bB^{*}\bA^{-1}\bB$ is called the \emph{Schur complement} of the original block matrix with respect to the block $\bA$.
\end{proposition}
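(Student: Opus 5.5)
We prove the Schur complement criterion by a congruence transformation that block-diagonalizes the matrix.

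\emph{Forward direction.} Suppose the block matrix $\bM = \left[\begin{array}{cc} \bA & \bB \\ \bB^{*} & \bC \end{array}\right]$ is positive definite. Restricting its quadratic form to vectors $(\bx, \bm 0)$ with $\bx \neq \bm 0$ shows $\bx^*\bA\bx > 0$, so $\bA \succ \bm 0$. In particular $\bA$ is invertible, which makes the Schur complement well-defined; this is the one step where some care is needed, and it is resolved at this point.

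\emph{The factorization.} With $\bA$ invertible, take the invertible upper-triangular matrix
\[ \bE \colonequals \left[\begin{array}{cc} \bm I_m & -\bA^{-1}\bB \\ \bm 0 & \bm I_n \end{array}\right]. \]
Multiplying out the blocks gives
\[ \bE^{*}\bM\bE = \left[\begin{array}{cc} \bA & \bm 0 \\ \bm 0 & \bC - \bB^{*}\bA^{-1}\bB \end{array}\right]. \]
Congruence by an invertible matrix preserves positive definiteness, since $\by^*\bE^*\bM\bE\by = (\bE\by)^*\bM(\bE\by)$ and $\by \mapsto \bE\by$ is a bijection. So $\bM \succ \bm 0$ implies the block-diagonal matrix is positive definite. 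A block-diagonal Hermitian matrix is positive definite if and only if each diagonal block is, so $\bC - \bB^{*}\bA^{-1}\bB \succ \bm 0$.

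\emph{Converse.} Suppose $\bA \succ \bm 0$ and $\bC - \bB^{*}\bA^{-1}\bB \succ \bm 0$. Then the block-diagonal matrix above is positive definite. Since $\bE$ is invertible, $\bM = \bE^{-*}(\bE^*\bM\bE)\bE^{-1}$ is congruent to it, so $\bM \succ \bm 0$.

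Equivalently, for any $\bx$ and $\by$, completing the square gives the identity
\[ (\bx,\by)^*\bM(\bx,\by) = (\bx + \bA^{-1}\bB\by)^*\bA(\bx + \bA^{-1}\bB\by) + \by^*(\bC - \bB^*\bA^{-1}\bB)\by, \]
which yields both directions directly.
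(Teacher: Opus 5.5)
Your proof is correct: you extract $\bA \succ \bm 0$ before forming $\bA^{-1}$, the block computation of $\bE^{*}\bM\bE$ checks out (using $\bA^{-*} = \bA^{-1}$), and congruence by an invertible matrix settles both directions. The paper gives no proof of its own, listing this as a well-known fact, and your congruence argument is the standard one. The same identity also gives the semidefinite variant that the paper actually invokes in several places: for an invertible positive definite pivot block, the full matrix is $\succeq \bm 0$ iff the Schur complement is $\succeq \bm 0$, and this holds with either diagonal block (e.g., $\bZ \succ \bm 0$) as the pivot.
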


\begin{proposition}[Proposition 1.3.2 of \cite{Bhatia-2009-PositiveDefiniteMatrices}]
    \label{prop:contraction}
    Suppose $\bA, \bB \in \CC^{n \times n}_{\herm}$ have $\bA, \bB \succeq \bm 0$ and $\bT \in \CC^{n \times n}$.
    Then,
    \[ \left[\begin{array}{cc} \bA & \bT \\ \bT^{*} & \bB \end{array}\right] \succeq \bm 0 \]
    if and only if there is some $\bR$ with $\|\bR\| \leq 1$ such that $\bT = \bA^{1/2}\bR\bB^{1/2}$.
\end{proposition}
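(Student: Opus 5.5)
The plan is to reduce to the case where $\bA$ and $\bB$ are invertible by a congruence, and then handle the singular case by perturbation and compactness. The reverse implication needs no invertibility at all.

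\textbf{Reverse implication.} Suppose $\bT = \bA^{1/2}\bR\bB^{1/2}$ with $\|\bR\| \leq 1$. Then the block matrix factors as
\[ \left[\begin{array}{cc} \bA & \bT \\ \bT^{*} & \bB \end{array}\right] = \left[\begin{array}{cc} \bA^{1/2} & \bm 0 \\ \bm 0 & \bB^{1/2} \end{array}\right] \left[\begin{array}{cc} \bm I & \bR \\ \bR^{*} & \bm I \end{array}\right] \left[\begin{array}{cc} \bA^{1/2} & \bm 0 \\ \bm 0 & \bB^{1/2} \end{array}\right]. \]
A congruence preserves positive semidefiniteness, so it suffices to show $\left[\begin{smallmatrix} \bm I & \bR \\ \bR^{*} & \bm I \end{smallmatrix}\right] \succeq \bm 0$ whenever $\|\bR\| \leq 1$. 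For vectors $\bx, \bm y$, Cauchy--Schwarz and AM--GM give
\[ \|\bx\|^2 + \|\bm y\|^2 + 2\Re\langle \bx, \bR\bm y\rangle \geq \|\bx\|^2 + \|\bm y\|^2 - 2\|\bx\|\|\bm y\| \geq 0. \]
Conversely, if $\|\bR\| > 1$, pick a unit $\bm y$ with $\|\bR\bm y\| > 1$ and set $\bx = -\bR\bm y/\|\bR\bm y\|$. The quadratic form then equals $2 - 2\|\bR\bm y\| < 0$. So this $2 \times 2$ block matrix is positive semidefinite if and only if $\|\bR\| \leq 1$.

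\textbf{Forward implication, invertible case.} Assume $\bA, \bB \succ \bm 0$ and the block matrix is $\succeq \bm 0$. Conjugate by $\mathsf{Diag}(\bA^{-1/2}, \bB^{-1/2})$. This yields $\left[\begin{smallmatrix} \bm I & \bR \\ \bR^{*} & \bm I \end{smallmatrix}\right] \succeq \bm 0$ with $\bR \colonequals \bA^{-1/2}\bT\bB^{-1/2}$. By the equivalence above, $\|\bR\| \leq 1$, and $\bT = \bA^{1/2}\bR\bB^{1/2}$ by construction.

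\textbf{Forward implication, general case.} This is the main obstacle, since $\bA^{-1/2}$ may not exist. I would perturb. For $\varepsilon > 0$, the matrix with diagonal blocks $\bA + \varepsilon\bm I$ and $\bB + \varepsilon\bm I$ and the same off-diagonal $\bT$ equals the original block matrix plus $\varepsilon\bm I$, so it is still $\succeq \bm 0$. Its diagonal blocks are now positive definite. The invertible case then produces contractions $\bR_\varepsilon$ with
\[ \bT = (\bA + \varepsilon\bm I)^{1/2}\,\bR_\varepsilon\,(\bB + \varepsilon\bm I)^{1/2}. \]
The unit ball of the operator norm is compact, so along a sequence $\varepsilon_k \to 0$ we get $\bR_{\varepsilon_k} \to \bR$ with $\|\bR\| \leq 1$. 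The matrix square root is continuous on positive semidefinite matrices, so passing to the limit gives $\bT = \bA^{1/2}\bR\bB^{1/2}$, which completes the proof.
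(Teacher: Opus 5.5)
Your proof is correct. The paper gives no proof of its own and only cites Bhatia, and your argument is essentially the standard one from that source: congruence by $\mathsf{Diag}(\bA^{-1/2}, \bB^{-1/2})$ reduces the invertible case to the fact that $\left[\begin{smallmatrix} \bm I & \bR \\ \bR^{*} & \bm I \end{smallmatrix}\right] \succeq \bm 0$ if and only if $\|\bR\| \leq 1$, and the singular case follows by perturbing to $\bA + \varepsilon\bm I$, $\bB + \varepsilon\bm I$ and using compactness of the contraction ball together with continuity of the matrix square root.
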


\begin{definition}[Spectral functions]
    For $\bX \in \CC^{n \times n}_{\herm}$, let $\bX = \sum_{i = 1}^n \lambda_i(\bX) \bu_i\bu_i^{*}$ be the spectral decomposition.
    Then, for $f: \RR \to \RR$ we write $f(\bX) \colonequals \sum_{i = 1}^n f(\lambda_i(\bX)) \bu_i\bu_i^{*}$.
    We say that:
    \begin{itemize}
    \item $f$ is \emph{operator monotone} if $f(\bX) \preceq f(\bY)$ for all $n \geq 1$ and $\bX, \bY \in \CC^{n \times n}_{\herm}$ with $\bX \preceq \bY$.
    \item $f$ is \emph{operator convex} if $f(t\bX + (1 - t)\bY) \preceq tf(\bX) + (1 - t)f(\bY)$ for all $t \in [0, 1]$, $n \geq 1$, and $\bX, \bY \in \CC^{n \times n}_{\herm}$.
    \end{itemize}
    Further, we say $f$ is operator monotone or convex on a subset $I \subseteq \RR$ if the corresponding definition holds for all $\bX, \bY$ with $\spec(\bX), \spec(\bY) \subseteq I$.
\end{definition}

\subsubsection{Non-commutative perspectives}
\label{sec:perspective}

The following notion that has been studied in operator theory and linear algebra turns out to be closely related to the structure of Lehner's formulas.

\begin{definition}[Non-commutative perspective]
    \label{def:perspective}
    For $\bA, \bZ \in \CC^{n \times n}_{\herm}$ with $\bZ \succ \bm 0$, we define the \emph{non-commutative $f$-perspective} transformation as
    \[ P_f(\bA, \bZ) \colonequals \bZ^{1/2} f\left(\bZ^{-1/2}\bA \bZ^{-1/2}\right) \bZ^{1/2}. \]
\end{definition}

\begin{proposition}[\cite{ENE-2011-PerspectiveMatrixConvex}]
    \label{prop:perspective-convex}
    If $f$ is operator convex on an open interval $I \subseteq \RR$ (possibly infinite on one or both sides), then $P_f$ is a convex function on the set of input pairs
    \[ D_{I} \colonequals \{(\bA, \bZ) \in (\CC^{d \times d}_{\herm})^2: \bZ \succ \bm 0, \spec(\bZ^{-1/2}\bA\bZ^{-1/2}) \subseteq I\}. \]
\end{proposition}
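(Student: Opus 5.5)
The plan is to reduce the statement to the classical Hansen--Pedersen/Effros characterization of operator convexity, by means of the block-matrix trick. Write $\bA = (\bA_1,\bA_2)$, $\bZ = (\bZ_1,\bZ_2)$ for two points of $D_I$ and $t\in[0,1]$, and set $\bZ_t = t\bZ_1 + (1-t)\bZ_2 \succ \bm 0$, $\bA_t = t\bA_1 + (1-t)\bA_2$. Define the contractions
\[ \bK_1 \colonequals \sqrt{t}\,\bZ_1^{1/2}\bZ_t^{-1/2}, \qquad \bK_2 \colonequals \sqrt{1-t}\,\bZ_2^{1/2}\bZ_t^{-1/2}, \]
so that $\bK_1^{*}\bK_1 + \bK_2^{*}\bK_2 = \bZ_t^{-1/2}(t\bZ_1 + (1-t)\bZ_2)\bZ_t^{-1/2} = \bm I$. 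With $\bX_j \colonequals \bZ_j^{-1/2}\bA_j\bZ_j^{-1/2}$, whose spectra lie in $I$, one checks directly that
\[ \bK_1^{*}\bX_1\bK_1 + \bK_2^{*}\bX_2\bK_2 = \bZ_t^{-1/2}\bA_t\bZ_t^{-1/2}. \]
Since the left-hand side is a compression of $\bX_1 \oplus \bX_2$ by the isometry $\bV = [\bK_1;\bK_2]$, its spectrum lies in the convex hull of $\spec(\bX_1)\cup\spec(\bX_2)\subseteq I$, because $I$ is an interval. Thus $D_I$ is convex, and $P_f$ is defined at the convex combination.

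The key inequality is the operator Jensen inequality: for $f$ operator convex on $I$ and an isometry $\bV$ ($\bV^{*}\bV=\bm I$), we have $f(\bV^{*}\bY\bV) \preceq \bV^{*}f(\bY)\bV$ whenever $\spec(\bY)\subseteq I$. Applying it with $\bY = \bX_1\oplus\bX_2$ gives
\[ f(\bZ_t^{-1/2}\bA_t\bZ_t^{-1/2}) \preceq \bK_1^{*}f(\bX_1)\bK_1 + \bK_2^{*}f(\bX_2)\bK_2 . \]
Conjugating by $\bZ_t^{1/2}$ preserves the Loewner order, and $\bZ_t^{1/2}\bK_j^{*} = \sqrt{t_j}\,\bZ_j^{1/2}$ with $t_1=t$, $t_2=1-t$. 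The result is then
\[ P_f(\bA_t,\bZ_t) \preceq tP_f(\bA_1,\bZ_1) + (1-t)P_f(\bA_2,\bZ_2), \]
which is the claimed (matrix, hence also scalar) convexity.

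The main obstacle is the Jensen step. Operator Jensen for isometries, without the assumption $f(0)\le 0$, is the Hansen--Pedersen theorem. I would prove it by the standard unitary dilation: complete $\bV$ to a unitary $\bU$ on the doubled space, and average $\bU^{*}\bY\bU$ against its conjugate by $\mathrm{diag}(\bm I,-\bm I)$. This expresses $\bV^{*}\bY\bV\oplus \bm *$, up to block-diagonal pinching, as a midpoint of two matrices whose spectra lie in $I$. Operator convexity, together with the fact that $f$ commutes with unitary conjugation and respects the block-diagonal structure, then yields the inequality on the top-left block. A technical subtlety is that the pinched lower block must also have spectrum in $I$; this is again guaranteed by the convex-hull argument. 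If needed, I would instead cite \cite{ENE-2011-PerspectiveMatrixConvex} or Hansen--Pedersen directly for this step.
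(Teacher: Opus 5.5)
Your proof is correct and follows essentially the same route as the cited source: the paper gives no proof of its own and simply cites \cite{ENE-2011-PerspectiveMatrixConvex}, where joint convexity of $P_f$ comes from the Hansen--Pedersen--Jensen inequality applied with the contractions $\sqrt{t}\,\bZ_1^{1/2}\bZ_t^{-1/2}$ and $\sqrt{1-t}\,\bZ_2^{1/2}\bZ_t^{-1/2}$, exactly as you do. Your numerical-range argument for convexity of $D_I$ and your unitary-dilation proof of the Jensen step are also sound, so the argument is self-contained.
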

\noindent
See also \cite{Effros-2009-MatrixConvexityQuantumInequalities,EH-2014-NonCommutativePerspectives,FSP-2019-SDPMatrixLogarithm} for related discussion and special cases of the above.

\subsubsection{Completely positive operators}
\label{sec:cp}

We will see that an important role in many of our derivations is played by various completely positive operators associated to the parameters $\bA_1, \dots, \bA_n$.
We give a few basic parts of the theory surrounding such operators here.
\begin{definition}[Completely positive maps]
    \label{def:cp}
    Let $\sL: \CC^{d \times d} \to \CC^{d \times d}$ be a linear map.
    We say that $\sL$ is \emph{positive} if $\sL(\bS) \succeq \bm 0$ for every $\bS \succeq \bm 0$.
    We say that $\sL$ is \emph{completely positive} if, for every $m \geq 1$ and every positive semidefinite block matrix $[\bS_{ij}]_{i, j = 1}^m \succeq \bm 0$, we have
    \[
        [\sL(\bS_{ij})]_{i, j = 1}^m \succeq \bm 0.
    \]
\end{definition}

\begin{definition}[Irreducibility and primitivity]
    \label{def:cp-irred}
    Suppose $\sL: \CC^{d \times d} \to \CC^{d \times d}$ is completely positive and has the form
    \[
        \sL(\bS) = \sum_{i = 1}^n \bB_i \bS \bB_i^*,
    \]
    sometimes called a \emph{Kraus representation}.
    We say that $\sL$ is \emph{irreducible} if there is no nonzero proper subspace $L \subset \CC^d$ such that $\bB_i L \subseteq L$ for every $i \in [n]$.
    We say that $\sL$ is \emph{primitive} if there is some $m \geq 1$ such that $\sL^m(\bS) \succ \bm 0$ for every nonzero $\bS \succeq \bm 0$.
\end{definition}

\begin{theorem}[Quantum Perron-Frobenius theorem \cite{EHK-1978-SpectralPropertiesPositiveMaps}]
    \label{thm:quantum-pf}
    Let $\sL: \CC^{d \times d} \to \CC^{d \times d}$ be a nonzero completely positive map, and let $r(\sL)$ denote its spectral radius as a linear operator on $\CC^{d \times d}$.
    Then there is some nonzero $\bS \succeq \bm 0$ such that $\sL(\bS) = r(\sL)\bS$.
    If $\sL$ is irreducible, then $r(\sL) > 0$, this eigenmatrix may be chosen positive definite, and it is the unique positive semidefinite eigenmatrix associated to the eigenvalue $r(\sL)$ up to rescaling.
    In this case, the eigenvalue $r(\sL)$ is algebraically simple.
    If $\sL$ is primitive, then every other eigenvalue $\lambda$ of $\sL$ satisfies $|\lambda| < r(\sL)$.
\end{theorem}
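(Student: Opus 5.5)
The plan is to prove the statement in four stages: existence of a positive semidefinite eigenmatrix, positive definiteness under irreducibility, uniqueness and simplicity, and finally the peripheral spectrum under primitivity. Throughout I use the Kraus form $\sL(\bS) = \sum_i \bB_i \bS \bB_i^*$, which holds for any completely positive map on $\CC^{d \times d}$ (Choi). This form shows that $\sL$ preserves Hermiticity and the cone $\mathcal{K} = \{\bS \succeq \bm 0\}$.

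\textbf{Existence.} For $\varepsilon > 0$, set $\sL_\varepsilon(\bS) \colonequals \sL(\bS) + \varepsilon \Tr(\bS)\bm I_d$. The map $\bS \mapsto \sL_\varepsilon(\bS)/\Tr(\sL_\varepsilon(\bS))$ sends the compact convex set $\{\bS \succeq \bm 0, \Tr(\bS) = 1\}$ continuously into itself, and the denominator is at least $\varepsilon d > 0$. Brouwer's theorem then gives $\bS_\varepsilon$ with $\sL_\varepsilon(\bS_\varepsilon) = \lambda_\varepsilon \bS_\varepsilon$. Since $\sL_\varepsilon(\bS_\varepsilon) \succeq \varepsilon\bm I_d$, this eigenmatrix is positive definite.

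Next I show $\lambda_\varepsilon = r(\sL_\varepsilon)$. The bound $\lambda_\varepsilon \le r(\sL_\varepsilon)$ is trivial. For the reverse, take any Hermitian $\bX$ with $-c\bS_\varepsilon \preceq \bX \preceq c\bS_\varepsilon$. Positivity gives $-c\lambda_\varepsilon^k \bS_\varepsilon \preceq \sL_\varepsilon^k(\bX) \preceq c\lambda_\varepsilon^k\bS_\varepsilon$. Because $\bS_\varepsilon \succ \bm 0$, such $\bX$ span all Hermitian matrices, and hence all of $\CC^{d\times d}$ via real and imaginary parts. So $\|\sL_\varepsilon^k\|^{1/k}$ grows at most like $\lambda_\varepsilon$, and Gelfand's formula gives $r(\sL_\varepsilon) \le \lambda_\varepsilon$.

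Finally, let $\varepsilon \to 0$ along a subsequence. The spectral radius is continuous, so $\bS_\varepsilon \to \bS \succeq \bm 0$ with $\Tr(\bS) = 1$ and $\sL(\bS) = r(\sL)\bS$.

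\textbf{Irreducible case.} The key lemma is that the range $L = \mathrm{range}(\bS)$ of any PSD eigenmatrix $\bS$ is invariant under every $\bB_i$. Indeed, $\bB_i\bS\bB_i^* \preceq \sL(\bS) = r\bS$ gives $\mathrm{range}(\bB_i\bS^{1/2}) \subseteq L$, i.e.\ $\bB_i L \subseteq L$. The same argument covers $r = 0$, where $\bB_i\bS^{1/2} = \bm 0$.

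Under irreducibility, this forces $L = \CC^d$, so $\bS \succ \bm 0$. If $r = 0$ we would then get $\bB_i = \bm 0$ for all $i$, contradicting $\sL \neq 0$; hence $r > 0$.

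For uniqueness, suppose $\bS'$ is another PSD (or, more generally, Hermitian) eigenmatrix for $r$. Let $t$ be the extremal value with $\bS - t\bS' \succeq \bm 0$ singular. Then $\bS - t\bS'$ is a singular PSD eigenmatrix, so by the lemma it is zero. This gives uniqueness, and by applying it to real and imaginary parts, geometric simplicity.

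For algebraic simplicity, note that the adjoint $\sL^*(\bQ) = \sum_i \bB_i^*\bQ\bB_i$ is also completely positive. It is irreducible, since an invariant subspace of all $\bB_i^*$ has an orthogonal complement invariant under all $\bB_i$. So $\sL^*$ has an eigenmatrix $\bQ \succ \bm 0$ with eigenvalue $r$. If there were a Jordan chain $(\sL - r)\bX = \bS$, then
\[ 0 < \langle \bQ, \bS\rangle = \langle (\sL^* - r)\bQ, \bX\rangle = 0, \]
a contradiction.

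\textbf{Primitive case.} I expect this to be the main obstacle. First normalize: set $\Psi(\bX) = r^{-1}\bS^{-1/2}\sL(\bS^{1/2}\bX\bS^{1/2})\bS^{-1/2}$. Then $\Psi$ is unital, completely positive, and primitive, with the same peripheral spectrum scaled by $r^{-1}$.

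Suppose $\Psi(\bX) = \mu\bX$ with $|\mu| = 1$ and $\|\bX\| = 1$. By the Kadison--Schwarz inequality, $\bX^*\bX = \Psi(\bX)^*\Psi(\bX) \preceq \Psi(\bX^*\bX)$. So $\bm I - \bX^*\bX \succeq \bm 0$ satisfies $\Psi(\bm I - \bX^*\bX) \preceq \bm I - \bX^*\bX$. Iterating and pairing with the positive definite fixed point of $\Psi^*$ shows that equality holds, so $\bm I - \bX^*\bX$ is a PSD fixed point of $\Psi$.

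Since $\Psi^m$ maps nonzero PSD matrices to positive definite ones, either $\bm I - \bX^*\bX$ is zero or it is positive definite. But $\|\bX\| = 1$ makes it singular, so $\bX^*\bX = \bm I$ and $\bX$ is unitary. Equality in Kadison--Schwarz places $\bX$ in the multiplicative domain of $\Psi$. Then $\Psi^m(\bX\bY) = \mu^m\bX\Psi^m(\bY)$, and applying this to a rank-one $\bY$, strict positivity of $\Psi^m$ should force $\bX$ to be a scalar multiple of $\bm I$. This gives $\mu = 1$, so every other eigenvalue satisfies $|\lambda| < r(\sL)$.

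The delicate point is this last step, using strict positivity to rule out non-scalar unitary peripheral eigenmatrices. If the multiplicative-domain argument becomes messy, I would fall back on Perron--Frobenius for the positive map $\Psi^{m}$, viewed as a map with positive entries relative to the interior of the cone.
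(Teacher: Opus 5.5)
The paper does not prove this theorem. It is quoted as a black box from \cite{EHK-1978-SpectralPropertiesPositiveMaps}, so there is no in-paper argument to compare your route against. Your proposal is a correct, self-contained proof.

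\begin{itemize}
\item \textbf{Existence.} Brouwer applied to $\sL(\bS)+\varepsilon\Tr(\bS)\bm I_d$, the order-interval bound, and Gelfand's formula correctly identify the fixed-point eigenvalue with the spectral radius, and the limit $\varepsilon\to 0$ is handled correctly.
\item \textbf{Irreducible case.} The range-invariance lemma ($\bB_i\bS\bB_i^*\preceq r\bS$ forces $\bB_i\,\mathrm{range}(\bS)\subseteq\mathrm{range}(\bS)$) gives positive definiteness and $r>0$. Combined with the extremal-$t$ trick it gives geometric simplicity.
\item \textbf{Algebraic simplicity.} The positive definite Perron eigenmatrix of the irreducible adjoint rules out Jordan blocks.
\end{itemize}

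Citing the reference buys brevity and generality, since it treats positive maps on general C$^*$-algebras. Your argument instead uses only finite-dimensional tools (Brouwer, Choi's Kraus form, Kadison--Schwarz, the multiplicative domain), at the cost of length.

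Three points should be made explicit.

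First, the primitive case uses $\bS\succ\bm 0$, $r>0$ and a positive definite fixed point of $\Psi^*$. All of these come from the irreducible case, so you should state that primitivity implies irreducibility. This is one line: if $\bB_iL\subseteq L$ for all $i$, then $\sL^m$ applied to the orthogonal projection onto $L$ has range inside $L$. It can therefore be positive definite only if $L=\CC^d$.

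Second, the step you flagged as delicate closes in two lines once the rank-one test matrix is chosen as an eigenprojection of the unitary $\bX$. Take a unit vector $\bu$ with $\bX\bu=\omega\bu$ and set $\bY=\bu\bu^*$, so that $\bX\bY=\omega\bY$ and
\[ \omega\,\Psi^m(\bY) = \Psi^m(\bX\bY) = \mu^m\,\bX\,\Psi^m(\bY). \]
Since $\Psi^m(\bY)\succ\bm 0$ is invertible, $\bX=\omega\mu^{-m}\bm I_d$. Unitality then gives $\Psi(\bX)=\bX$, so $\mu=1$, and no fallback is needed.

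Third, a small precision about which equality you use. The identity $\Psi(\bX\bY)=\Psi(\bX)\Psi(\bY)$ comes from the equality case $\Psi(\bX\bX^*)=\Psi(\bX)\Psi(\bX)^*$. The equality $\Psi(\bX^*\bX)=\Psi(\bX)^*\Psi(\bX)$ that you wrote gives $\Psi(\bY\bX)=\Psi(\bY)\Psi(\bX)$ instead. Both hold here because $\bX$ is unitary and $\Psi$ is unital, so only the citation needs adjusting.

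Finally, the equality $\Psi(\bm I_d-\bX^*\bX)=\bm I_d-\bX^*\bX$ follows from a single pairing with the positive definite fixed point of $\Psi^*$; no iteration is required.
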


\subsection{Convex optimization}

We will use standard general results on convex optimization without recalling the details below.
In particular, we will use Slater's condition for strong Lagrangian duality and the Karush-Kuhn-Tucker~(KKT) conditions, in particular the complementary slackness condition, for convex optimization problems as presented in \cite[Chapter 5]{BV-2004-ConvexOptimization}.
We apply these to SDPs; details of this application are given in \cite{VB-1996-SDP}, for instance.
We will work with SDPs involving complex numbers and Hermitian matrices, which is a slightly non-standard setting, but is easily converted to standard real SDP; see \cite[Exercise 4.42]{BV-2004-ConvexOptimization} or \cite{HZ-2007-ComplexMatrixDecomposition} for details.

\section{Reformulation via non-commutative perspectives}
\label{sec:perspective-reform}

We first make an observation that is common to both of Lehner's formulas.
We have mentioned in Section~\ref{sec:perspective} that Lehner's formulas are related to non-commutative perspective transformations.
Indeed, it turns out that Lehner's two formulas can be phrased in a unified form using the following two choices of $f$ in the non-commutative perspective:
\begin{align*}
  f_{\sc}(t) &\colonequals t^2, \\
  f_{\rad}(t) &\colonequals \frac{-1 + \sqrt{1 + 4t^2}}{2}.
\end{align*}
The following is immediate from the statement of the formulas.
\begin{proposition}[Lehner formulas via non-commutative perspectives]
    \label{prop:lehner-perspective}
    For any $\bA_0, \bA_1, \dots, \bA_n \in \CC^{d \times d}_{\herm}$, we have
    \begin{align*}
      \Lambda_{\sc}(\bA_0, \bA_1, \dots, \bA_n)
      &= \inf_{\bZ \succ \bm 0} \lambda_{\max}\left(\bZ + \bA_0 + \sum_{i = 1}^n P_{f_{\sc}}(\bA_i, \bZ)\right), \\
      \Lambda_{\rad}(\bA_0, \bA_1, \dots, \bA_n)
      &= \inf_{\bZ \succ \bm 0} \lambda_{\max}\left(\bZ + \bA_0 + \sum_{i = 1}^n P_{f_{\rad}}(\bA_i, \bZ)\right).
    \end{align*}
\end{proposition}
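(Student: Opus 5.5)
The plan is to verify the two identities term by term, using Definition~\ref{def:perspective} together with Theorems~\ref{thm:lehner-sc} and~\ref{thm:lehner-rad}. For each $i \in [n]$ and each $\bZ \succ \bm 0$, it suffices to show that the summand in Lehner's formula equals $P_{f}(\bA_i, \bZ)$ for the appropriate $f$. Then the expressions inside $\lambda_{\max}(\cdot)$ agree for every feasible $\bZ$, and so do the infima. No optimization argument is required.

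\textbf{Semicircular case.} Put $\bM \colonequals \bZ^{-1/2}\bA_i\bZ^{-1/2}$, which is Hermitian. Since $f_{\sc}(t) = t^2$ is a polynomial, the spectral-function definition gives $f_{\sc}(\bM) = \bM^2$. Therefore
\[ P_{f_{\sc}}(\bA_i, \bZ) = \bZ^{1/2}\bZ^{-1/2}\bA_i\bZ^{-1/2}\bZ^{-1/2}\bA_i\bZ^{-1/2}\bZ^{1/2} = \bA_i\bZ^{-1}\bA_i, \]
which is exactly the summand in \eqref{eq:lehner-sc}.

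\textbf{Rademacher case.} With the same $\bM$, I need to check that
\[ f_{\rad}(\bM) = \tfrac{1}{2}\left((\bm I_d + 4\bM^2)^{1/2} - \bm I_d\right). \]
This holds because $\bM$, $\bM^2$ and $\bm I_d + 4\bM^2$ are all diagonalized in the same eigenbasis $\{\bu_j\}$ of $\bM$, with eigenvalues $\lambda_j$, $\lambda_j^2$ and $1 + 4\lambda_j^2 > 0$ respectively. Hence the principal square root $(\bm I_d + 4\bM^2)^{1/2}$ has eigenvalues $\sqrt{1 + 4\lambda_j^2}$ in that basis, and the right-hand side equals $\sum_j f_{\rad}(\lambda_j)\bu_j\bu_j^*$. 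Conjugating by $\bZ^{1/2}$ then reproduces the $i$th summand of \eqref{eq:lehner-rad} exactly, using the rescaled convention $\bZ \leftarrow 2\bZ$ that the paper adopted.

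The only point needing any care is the compatibility of the functional calculus with composition, that is, that $g(h(\bM)) = (g \circ h)(\bM)$. This follows immediately from the shared eigenbasis, so I do not expect any real obstacle.
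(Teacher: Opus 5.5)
Your proposal is correct and matches the paper, which simply notes that the proposition is immediate from the statements of Theorems~\ref{thm:lehner-sc} and~\ref{thm:lehner-rad}; this amounts to the term-by-term identifications $P_{f_{\sc}}(\bA_i, \bZ) = \bA_i\bZ^{-1}\bA_i$ and $P_{f_{\rad}}(\bA_i, \bZ) = \frac{1}{2}\bZ^{1/2}((\bm I_d + 4(\bZ^{-1/2}\bA_i\bZ^{-1/2})^2)^{1/2} - \bm I_d)\bZ^{1/2}$ that you verify. One small clarification: no further rescaling of $\bZ$ is needed at this step, because \eqref{eq:lehner-rad} as stated already incorporates the change of variables $\bZ \leftarrow 2\bZ$, and your summand matches it verbatim.
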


\begin{remark}[Relation to free probability transforms]
    The functions $f(t)$ are closely related to the spectral distributions of the associated operators; indeed, in both cases, $f(t) / t$ is \emph{$R$-transform} of this distribution, the Wigner semicircle distribution supported on $[-2, 2]$ for the case of $f_{\sc}(t)$ and the Rademacher distribution $\Unif(\{\pm 1\})$ for the case of $f_{\rad}(t)$.
\end{remark}

Per Proposition~\ref{prop:perspective-convex}, it is then of interest to understand whether $f_{\sc}$ and $f_{\rad}$ are operator convex (it is easily verified that both are scalar convex functions).
The answers are as follows:
\begin{proposition}[Example V.1.3 of \cite{Bhatia-2013-MatrixAnalysis}]
    $f_{\sc}$ is operator convex on all of $\RR$.
\end{proposition}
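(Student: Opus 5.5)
The plan is to verify operator convexity of $f_{\sc}(t) = t^2$ straight from the definition. The whole argument rests on one algebraic identity, and no general theory of operator convex functions is needed. Fix $n \geq 1$, $\bX, \bY \in \CC^{n \times n}_{\herm}$ and $t \in [0,1]$. For Hermitian matrices the spectral function $f_{\sc}(\bX)$ is just the matrix square $\bX^2$: with $\bX = \sum_i \lambda_i \bu_i\bu_i^*$ we have $\sum_i \lambda_i^2 \bu_i\bu_i^* = \bX^2$ by orthonormality of the $\bu_i$. So the required inequality reads
\[ t\bX^2 + (1-t)\bY^2 - (t\bX + (1-t)\bY)^2 \succeq \bm 0. \]

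Expanding the square and keeping careful track of noncommutativity, the left-hand side equals
\[ t(1-t)\bX^2 + t(1-t)\bY^2 - t(1-t)(\bX\bY + \bY\bX). \]
Here I use $t - t^2 = t(1-t)$ and $(1-t) - (1-t)^2 = t(1-t)$ for the two diagonal terms. The cross terms appear with coefficient $t(1-t)$ and in the symmetrized form $\bX\bY + \bY\bX$, since $(t\bX + (1-t)\bY)^2$ contains both $\bX\bY$ and $\bY\bX$. The whole expression therefore collapses to
\[ t(1-t)(\bX - \bY)^2. \]
The only step where one must be careful is this expansion: one must not write $2\bX\bY$, and should check that the symmetrized cross term is exactly what appears in $(\bX - \bY)^2 = \bX^2 - \bX\bY - \bY\bX + \bY^2$.

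Finally, $t(1-t) \geq 0$ for $t \in [0,1]$. Since $\bX - \bY$ is Hermitian, $(\bX - \bY)^2 = (\bX - \bY)^*(\bX - \bY) \succeq \bm 0$. Hence the difference is positive semidefinite, which is operator convexity on all of $\RR$. No restriction on spectra is used, so the convexity holds on the whole real line.

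I do not expect a genuine obstacle. The one point needing attention is the noncommutative bookkeeping in the cross terms noted above. As a consequence, Proposition~\ref{prop:perspective-convex} then applies with $I = \RR$, so $P_{f_{\sc}}(\bA, \bZ) = \bA\bZ^{-1}\bA$ is jointly convex on the set of pairs with $\bZ \succ \bm 0$.
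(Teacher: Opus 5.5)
Your proposal is correct and is essentially the paper's proof: both verify the identity $t\bX^2 + (1-t)\bY^2 - (t\bX + (1-t)\bY)^2 = t(1-t)(\bX - \bY)^2 \succeq \bm 0$ directly. Your expansion, including the symmetrized cross term $\bX\bY + \bY\bX$, checks out.
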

\begin{proof}
    We calculate directly: for $t \in [0, 1]$,
    \begin{align*}
      t\bX^2 + (1 - t)\bY^2 - \left(t\bX + (1 - t)\bY\right)^2 = t(1 - t)(\bX - \bY)^2 \succeq \bm 0,
    \end{align*}
    completing the proof.
\end{proof}

\begin{proposition}
    \label{prop:frad}
    $f_{\rad}$ is not operator convex on any non-empty open interval.
    Further, there exists $\bA \succeq \bm 0$ such that $p(\bZ) \colonequals P_{f_{\rad}}(\bA, \bZ)$ is not a convex function $p: \RR^{d \times d}_{\succeq \bm 0} \to \RR^{d \times d}_{\succeq \bm 0}$ with respect to the Loewner partial order.
\end{proposition}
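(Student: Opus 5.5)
For the first claim we will use the Löwner characterization: $f$ is operator convex on an open interval $I$ if and only if it has an integral representation of a specific form. A convenient consequence is that an operator convex $f$ on $I$ extends analytically to the upper and lower half-planes, with $g(t) = (f(t) - f(t_0))/(t - t_0)$ operator monotone on $I$ for any $t_0 \in I$. Operator monotone functions on an interval are real analytic there and extend to Pick functions, meaning $\operatorname{Im} g(z) \ge 0$ for $\operatorname{Im} z > 0$, analytically across $I$.

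Since $f_{\rad}$ is real analytic on $\RR$, operator convexity on any nonempty open interval would, by analytic continuation, force the Pick-type property globally. So we may first try to show that $t \mapsto (f_{\rad}(t) - f_{\rad}(0))/t = (\sqrt{1+4t^2}-1)/(2t)$ is not a Pick function. This function is odd and bounded, tending to $\pm 1$ as $t \to \pm\infty$. It has branch points at $z = \pm i/2$ in the complex plane. A Pick function is analytic in the whole upper half-plane, but here $z = i/2$ is a branch point that cannot be removed.

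The cleaner concrete route is to exhibit $2\times 2$ matrices violating convexity. Take $\bX = \mathsf{Diag}(a, b)$ and $\bY = \bX + \varepsilon \bm H$ with $\bm H$ off-diagonal. Operator convexity is equivalent to positivity of the second divided-difference matrix $[f^{[2]}(\lambda_i, \lambda_j, \lambda_k)]$ in Kraus's criterion. Concretely, $f$ is operator convex on $I$ iff for all $t_0 \in I$ the matrix $[f^{[2]}(t_0,\lambda_i,\lambda_j)]_{i,j}$ is PSD for all $\lambda$'s in $I$. We then check a $2\times 2$ determinant with points clustered near some $t_0 \in I$. Expanding at three nearby points, positivity of $\det\begin{bmatrix} f''/2 & f'''/6 \\ f'''/6 & f''''/24\end{bmatrix}$ is forced in the limit, i.e. $3 f''(t) f''''(t) \ge 2 f'''(t)^2$ on $I$. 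This is a known necessary condition.

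So the plan is to compute the derivatives of $f_{\rad}$ and show $3f''f'''' - 2(f''')^2 < 0$ everywhere except possibly at isolated points, which suffices since $I$ is open. Write $u = 1+4t^2$. Then $f' = 2t u^{-1/2}$, $f'' = 2u^{-3/2}$, $f''' = -24 t u^{-5/2}$, and $f'''' = -24u^{-5/2} + 480 t^2 u^{-7/2} = 24 u^{-7/2}(16t^2 - 1)$. Hence $3f''f'''' = 144\, u^{-5}(16t^2-1)$ and $2(f''')^2 = 1152\, t^2 u^{-5}$. The difference is $144u^{-5}(16t^2 - 1 - 8t^2) = 144u^{-5}(8t^2-1)$, which is negative for $|t| < 1/(2\sqrt2)$. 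That only handles intervals meeting this region, so this determinant alone is insufficient. The main obstacle is therefore intervals far from zero. For those I would use larger ($3\times 3$ or general $n$) Löwner-type divided-difference matrices, or better the analytic argument: operator convexity on any open interval implies $f$ extends to an analytic function on $\CC \setminus (\RR\setminus I)$. This contradicts the branch points $\pm i/2$ of $\sqrt{1+4z^2}$, since $f_{\rad}$ is real analytic on $\RR$ and its continuation from $I$ is uniquely determined. I would rely on that Löwner-theorem route for full generality.

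For the second claim, take $\bA$ and use $p(\bZ) = P_f(\bA, \bZ)$ with $\bZ$ restricted to commuting with nothing special. Convexity of $\bZ \mapsto P_f(\bA,\bZ)$ for a fixed $\bA \succ 0$ is related, via the identity $P_f(\bA,\bZ) = P_{\tilde f}(\bZ,\bA)$ with $\tilde f(s) = s f(1/s)$, to operator convexity of $\tilde f(s) = (\sqrt{s^2+4}-s)/2$ on $(0,\infty)$. Here we use $f(t^{1/2}\cdot)$ adjustments, noting $f_{\rad}$ is even so $f(\bA\ldots)$ depends on $\bA^2$ through $\bZ^{-1/2}\bA\bZ^{-1/2}$. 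Take $\bA = \bm I$, so $p(\bZ) = \bZ^{1/2} f(\bZ^{-1}) \bZ^{1/2} = h(\bZ)$ with $h(z) = z f(1/z) = (\sqrt{z^2+4} - z)/2$, a spectral function. The question then reduces to operator convexity of $h$ on $(0,\infty)$. We check $3h''h'''' - 2(h''')^2$ as before: with $w = z^2+4$, $h'' = 2w^{-3/2}$, which has the same structure as $f''$ up to constants. The analogous computation gives sign $(2z^2-4)$-type behavior, negative for small $z > 0$. Explicit $2\times 2$ matrices $\bX, \bY$ with small spectra near such $z$ then violate midpoint convexity, and a numerical check will confirm a concrete pair.
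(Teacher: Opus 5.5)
Your proof is correct in the end, but the first half rests on a constant slip that creates an obstacle that does not exist. The determinant of the Hansen--Tomiyama matrix with entries $f''/2$, $f'''/6$, $f'''/6$, $f^{(4)}/24$ is
$\frac{1}{48}f''f^{(4)}-\frac{1}{36}(f''')^2=\frac{1}{144}\bigl(3f''f^{(4)}-4(f''')^2\bigr)$.
So the limiting condition is $3f''f^{(4)}\geq 4(f''')^2$, not $\geq 2(f''')^2$. With your (correct) derivatives, $3f''f^{(4)}-4(f''')^2=-144(1+4t^2)^{-5}$, so the determinant is $-(1+4t^2)^{-5}<0$ for every $t\in\RR$.

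That is exactly the paper's proof. $f_{\rad}$ fails the $2\times 2$ criterion at every point, so it is not even matrix convex of order $2$ on any open interval, and intervals away from $0$ are not special. Your constant-$2$ inequality is implied by the correct one, so it is a valid but weaker necessary condition, and nothing you deduced from its failure is wrong.

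Your fallback is also sound. Kraus's theorem plus L\"owner's theorem would give a holomorphic $F$ on the connected domain $\CC\setminus(\RR\setminus I)$ agreeing with $f_{\rad}$ on $I$. By the identity theorem $(2F+1)^2=1+4z^2$ there, which is impossible at the simple zero $z=i/2$. This proves the claim for operator convexity, but with much heavier machinery and without the order-$2$ strengthening.

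For the second claim the routes genuinely differ. The paper exhibits $\bA=\left[\begin{array}{cc} 2 & 1 \\ 1 & 2\end{array}\right]$ and two explicit $2\times 2$ matrices, verified numerically. You take $\bA=\bm I_d$, so that $P_{f_{\rad}}(\bm I_d,\bZ)=\bZ f_{\rad}(\bZ^{-1})=h(\bZ)$ with $h(z)=(\sqrt{z^2+4}-z)/2$. This reduces Loewner convexity of $p$ to operator convexity of the scalar function $h$ on $(0,\infty)$; your remarks about the transpose identity and ``$f(t^{1/2}\cdot)$ adjustments'' are not needed.

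The correct test gives $3h''h^{(4)}-4(h''')^2=-144(z^2+4)^{-5}<0$ for all $z>0$. Your weaker $72(z^2+4)^{-5}(z^2-2)<0$ for $z<\sqrt2$ already suffices. Since $h$ is continuous, failure of order-$2$ convexity yields $2\times 2$ positive definite $\bX,\bY$ violating midpoint convexity, and these can be padded by a common scalar block for larger $d$.

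Your version is cleaner, needs no numerics (given Hansen--Tomiyama), and shows that even the simplest choice $\bA=\bm I_d$ fails. The paper's version names concrete matrices but relies on a numerical check.
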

\begin{proof}
    As mentioned earlier, $f_{\rad}$ is strictly convex (as a scalar function), and is also smooth, so by \cite[Theorem 2.3]{HT-2007-DifferentialMatrixConvexFunctions}, it is operator convex on $2 \times 2$ Hermitian matrices with spectra in some open interval $I$ if and only if, for all $t \in I$, we have
    \[ \left[\begin{array}{cc} \frac{1}{2}f^{(2)}_{\rad}(t) & \frac{1}{6}f^{(3)}_{\rad}(t) \\ \frac{1}{6}f^{(3)}_{\rad}(t) & \frac{1}{24} f^{(4)}_{\rad}(t) \end{array}\right] \succeq \bm 0. \]
    However, a direct computation gives that the determinant of the matrix on the left-hand side is $-1 / (1 + 4t^2)^5 < 0$ for all $t \in \RR$, and the first result follows.

    For the second part, a numerical calculation confirms that if we set
    \[ \bA \colonequals \left[\begin{array}{cc} 2 & 1 \\ 1 & 2 \end{array}\right], \hspace{0.5cm} \bX \colonequals \left[\begin{array}{cc} 5 & 2 \\ 2 & 2 \end{array}\right], \hspace{0.5cm} \bY \colonequals \left[\begin{array}{cc} 9 & 8 \\ 8 & 11 \end{array}\right], \]
    then
    \[ \lambda_2\left(\frac{1}{2}(P_{f_{\rad}}(\bA, \bX) + P_{f_{\rad}}(\bA, \bY)) - P_{f_{\rad}}\left(\bA, \frac{1}{2}(\bX + \bY)\right)\right) < -10^{-3} < 0, \]
    and $\bA, \bX, \bY \succ \bm 0$.
\end{proof}

Later in Section~\ref{sec:rad}, we will show, as should now be expected, that the entire scalar-valued objective function of Lehner's formula in the Rademacher case is also not a convex function on the feasible set $\{\bZ \succ \bm 0\}$.

\section{Semidefinite program formulations for semicircular formula}
\label{sec:sc-sdp}

We draw a distinction between two different SDP reformulations of the semicircular Lehner formula, calling them ``small'' and ``large'' according to the dimension of auxiliary decision variables that must be introduced in the optimization.
The reason that there are two different ways to do this is that we may write a bound on the objective function $\lambda_{\max}(\bA_0 + \bZ + \sum_{i = 1}^n\bA_i \bZ^{-1}\bA_i)$ first as $\bA_0 + \bZ + \sum_{i = 1}^n\bA_i \bZ^{-1}\bA_i \preceq c\bm I_d$, and then may express this as a semidefinite constraint by using the ``Schur complement trick'' of Proposition~\ref{prop:schur} on the third term.
However, we could also first swap the roles of $\bZ$ and $\bZ^{-1}$ in Lehner's formula and use the Schur complement trick on the \emph{second} term.
As our results describe, the resulting SDPs are of course closely related but subtly different in certain details of their behavior.

It will be useful to define the operator
\begin{equation}
    \label{eq:Phi}
    \Phi(\bS) = \Phi(\bS; \bA_1, \dots, \bA_n) \colonequals \sum_{i = 1}^n \bA_i \bS\bA_i.
\end{equation}
This depends on the parameters $\bA_i$, but we omit this dependence in the notation for the sake of simplicity.
Note that, in the Gaussian series model $\bX = \bA_0 + \sum_{i = 1}^n g_i \bA_i$ discussed in Section~\ref{sec:appl}, we also have the intrinsic description
\[ \Phi(\bS) = \EE[(\bX - \EE \bX) \bS (\bX - \EE \bX)]. \]

These operators will play an important role throughout, so let us establish a few of their basic properties, which are simple to verify.
\begin{proposition}
    \label{prop:Phi-properties}
    For any $\bA_1, \dots, \bA_n \in \CC^{d \times d}_{\herm}$ and $\Phi: \CC^{d \times d}_{\herm} \to \CC^{d \times d}_{\herm}$ as defined in \eqref{eq:Phi}, the following hold.
    \begin{enumerate}
    \item $\Phi$ is linear.
    \item $\Phi$ is self-adjoint with respect to the Frobenius inner product: $\langle \Phi(\bS), \bT \rangle = \langle \bS, \Phi(\bT)\rangle$.
    \item $\Phi$ is completely positive (Definition~\ref{def:cp}).\footnote{In fact, every linear and completely positive $\Phi: \CC^{d \times d}_{\herm} \to \CC^{d \times d}_{\herm}$ must admit the closely related \emph{Kraus representation} $\Phi(\bS) = \sum_{i = 1}^n \bV_i^* \bS\bV_i$ for some $\bV_i \in \CC^{d \times d}$ \cite[Theorem 3.1.1]{Bhatia-2009-PositiveDefiniteMatrices}.}
    \item For all $\bS$, $\Tr(\Phi(\bS)) = \langle \sum_{i = 1}^n \bA_i^2, \bS\rangle$.
    \end{enumerate}
\end{proposition}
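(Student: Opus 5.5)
The plan is to verify the four properties in order, each by a short direct computation from the definition \eqref{eq:Phi} $\Phi(\bS) = \sum_{i=1}^n \bA_i\bS\bA_i$, using only that each $\bA_i$ is Hermitian.

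\emph{Linearity.} Each map $\bS \mapsto \bA_i\bS\bA_i$ is linear, since matrix multiplication distributes over addition and commutes with scalars, so the sum $\Phi$ is linear. We also note that $\Phi$ maps $\CC^{d\times d}_{\herm}$ into itself, because $(\bA_i\bS\bA_i)^* = \bA_i^*\bS^*\bA_i^* = \bA_i\bS\bA_i$ when $\bS$ is Hermitian.

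\emph{Self-adjointness.} By linearity it suffices to treat a single term. Cyclicity of the trace gives
\[
\langle \bA_i\bS\bA_i, \bT\rangle = \Tr(\bA_i\bS^*\bA_i\bT) = \Tr(\bS^*\bA_i\bT\bA_i) = \langle \bS, \bA_i\bT\bA_i\rangle,
\]
and summing over $i$ yields $\langle \Phi(\bS), \bT\rangle = \langle \bS, \Phi(\bT)\rangle$.

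\emph{Complete positivity.} Let $[\bS_{jk}]_{j,k=1}^m \succeq \bm 0$. For each $i$ write $\bm I_m \otimes \bA_i$ for the block-diagonal matrix with $m$ copies of $\bA_i$ on the diagonal. Then
\[
[\bA_i\bS_{jk}\bA_i]_{j,k} = (\bm I_m\otimes \bA_i)\,[\bS_{jk}]\,(\bm I_m\otimes\bA_i)^*,
\]
which is a congruence of a positive semidefinite matrix and is therefore positive semidefinite. Since the PSD cone is closed under sums, $[\Phi(\bS_{jk})]_{j,k} \succeq \bm 0$.

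\emph{Trace identity.} Again by cyclicity, $\Tr(\bA_i\bS\bA_i) = \Tr(\bA_i^2\bS)$. Because $\sum_i\bA_i^2$ is Hermitian, $\Tr\left(\sum_i \bA_i^2\,\bS\right) = \left\langle \sum_i \bA_i^2, \bS\right\rangle$.

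None of these steps presents a real obstacle. The only point needing care is the conjugation convention in the inner product $\langle \bX,\bY\rangle=\Tr(\bX^*\bY)$, which is handled by using $\bA_i^*=\bA_i$ throughout.
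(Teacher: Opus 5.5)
Your proposal is correct and matches the paper, which states these properties as simple to verify by direct computation from $\Phi(\bS) = \sum_{i=1}^n \bA_i \bS \bA_i$; your trace-cyclicity and congruence arguments are the same ones the paper uses elsewhere (e.g., in Appendix~\ref{app:dual-small} for self-adjointness). One small remark: the off-diagonal blocks $\bS_{jk}$ in the complete positivity check need not be Hermitian, so $\Phi$ is implicitly extended to all of $\CC^{d \times d}$ by the same formula, which your congruence by $\bm I_m \otimes \bA_i$ already handles.
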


\subsection{Small semicircular SDP: Proof of Theorem \ref{thm:sc-sdp-small}}

We mention the following technical point before giving additional details on the statement.
\begin{proposition}
    \label{prop:Z-pd}
    Any $\bZ \in \CC^{d \times d}_{\herm}$ feasible for \eqref{eq:sc-sdp-small-primal} has $\bZ \succ \bm 0$.
\end{proposition}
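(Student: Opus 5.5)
Feasibility in \eqref{eq:sc-sdp-small-primal} means that for some $c \in \RR$ the block matrix
\[ \bM \colonequals \left[\begin{array}{cc} c \bm I_d - \bA_0 - \Phi(\bZ) & \bm I_d \\ \bm I_d & \bZ \end{array}\right] \]
is positive semidefinite. The plan is to show directly that this forces $\bZ \succ \bm 0$, by testing $\bM$ against suitable vectors. No Schur complement argument is needed; that would require strict positivity of one block, which is exactly what we are trying to prove.

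First, $\bZ \succeq \bm 0$, since it is a principal submatrix of $\bM \succeq \bm 0$. It remains to rule out a nonzero kernel. Suppose $\bx \neq \bm 0$ satisfies $\bZ\bx = \bm 0$. For $t \in \RR$, take the test vector $(t\bx, \bx)$. Writing $\bB \colonequals c\bm I_d - \bA_0 - \Phi(\bZ)$, we compute
\[ 0 \leq \left[\begin{array}{c} t\bx \\ \bx \end{array}\right]^{*} \bM \left[\begin{array}{c} t\bx \\ \bx \end{array}\right] = t^2\, \bx^*\bB\bx + 2t\|\bx\|^2 + \bx^*\bZ\bx = t^2\, \bx^*\bB\bx + 2t\|\bx\|^2. \]
Take $t < 0$ with $|t|$ small. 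The linear term $2t\|\bx\|^2$ is strictly negative and dominates the $O(t^2)$ term, so the right-hand side is negative, a contradiction. Hence $\ker \bZ = \{\bm 0\}$, and together with $\bZ \succeq \bm 0$ this gives $\bZ \succ \bm 0$.

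Equivalently, one can phrase this as a standard fact: if a block matrix $\left[\begin{smallmatrix} \bA & \bB \\ \bB^* & \bC\end{smallmatrix}\right]$ is positive semidefinite, then $\ker \bC \subseteq \ker \bB$. The same fact follows from Proposition~\ref{prop:contraction}, since $\bB = \bA^{1/2}\bR\bC^{1/2}$ vanishes on $\ker \bC^{1/2} = \ker\bC$. Here the off-diagonal block is $\bm I_d$, whose kernel is trivial, so $\ker\bZ = \{\bm 0\}$.

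There is no real obstacle. The only care needed is the sign choice of $t$ in the quadratic form and the observation that $\bx^*\bZ\bx = 0$ for $\bx \in \ker\bZ$.
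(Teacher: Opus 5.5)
Your proof is correct and is essentially the paper's argument: get $\bZ \succeq \bm 0$ from the principal submatrix, then evaluate the quadratic form of $\bM$ at $(-\epsilon\bx, \bx)$ for $\bx \in \ker\bZ$, where the linear term $-2\epsilon\|\bx\|^2$ dominates for small $\epsilon$. Your alternative via the kernel-inclusion consequence of Proposition~\ref{prop:contraction} is also valid.
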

\begin{proof}
    We have $\bZ \succeq \bm 0$ as $\bZ$ is a diagonal block of a matrix constrained to be positive semidefinite.
    Suppose $\bZ\bx = \bm 0$.
    Then, for sufficiently small $\epsilon > 0$ we would have
    \[ \left[\begin{array}{cc} -\epsilon \bx \\ \bx \end{array}\right]^*\left[\begin{array}{cc} c \bm I_d - \bA_0 - \Phi(\bZ) & \bm I_d \\ \bm I_d & \bZ \end{array}\right]\left[\begin{array}{cc} -\epsilon \bx \\ \bx \end{array}\right] \leq \epsilon^2 \|\bx\|^2\|c\bm I_d - \bA_0 - \Phi(\bZ)\| - 2\epsilon\|\bx\|^2 < 0, \]
    a contradiction.
\end{proof}

\begin{remark}[Existence of minimizer in Lehner formula]
    \label{rem:inf-achieved}
    It is not unconditionally true that the infimum in \eqref{eq:sc-sdp-small-primal} or the original semicircular Lehner formula \eqref{eq:lehner-sc} is achieved.
    Indeed, consider the example $d = 2$, $n = 1$, and
    \[ \bA_0 = \left[\begin{array}{rr} 1 & 0 \\ 0 & -2\end{array}\right], \,\,\,\,\, \bA_1 = \left[\begin{array}{rr} 0 & 0 \\ 0 & 1\end{array}\right]. \]
    We then have
    \[ \Phi(\bZ) = \bA_1\bZ\bA_1 = \left[\begin{array}{cc} 0 & 0 \\ 0 & Z_{22} \end{array}\right]. \]

    Since $\Phi(\bZ) + \bZ^{-1} \succ \bm 0$ for all $\bZ \succ \bm 0$ we have $\lambda_{\max}(\bA_0 + \Phi(\bZ) + \bZ^{-1}) > 1$.
    On the other hand, taking
    \[ \bZ = \left[\begin{array}{cc} z & 0 \\ 0 & 1 \end{array}\right] \]
    with $z \to \infty$ has $\lambda_{\max}(\bA_0 + \Phi(\bZ) + \bZ^{-1}) = 1 + z^{-1}$, so in fact $\Lambda_{\sc}(\bA_0, \bA_1) = 1$ and this value is not achieved by any $\bZ$ in \eqref{eq:lehner-sc}.
    The same holds in the SDP in \eqref{eq:sc-sdp-small-primal}, since every feasible $\bZ$ in the SDP still has $\bZ \succ \bm 0$ by Proposition~\ref{prop:Z-pd}.

    This example also shows that we cannot in general relax the condition $\bigcap_{i = 1}^n \ker(\bA_i) = \{\bm 0\}$ to $\bigcap_{i = 0}^n \ker(\bA_i) = \{\bm 0\}$, which might look more natural since if it does not hold then the entire operator $\bX = \bA_0 \otimes \mathfrak{1} + \sum_{i = 1}^n \bA_i \otimes \mathfrak{s}_i$ would be isometrically equivalent to one where the $\bA_i$ are of lower dimension.
    The former condition is more relevant since it controls the ``coercivity'' properties of the $\Phi$ operator.
\end{remark}

\begin{proof}[Proof of Theorem \ref{thm:sc-sdp-small}]
    For the sake of brevity, we omit the domain constraints like $c \in \RR$ and $\bZ \in \CC^{d \times d}_{\herm}$ below; they are always as written in the statement of the Theorem.
    Starting from the Lehner formula \eqref{eq:lehner-sc}, we change variables $\bZ \leftarrow \bZ^{-1}$ to rewrite
    \begin{align*}
      \Lambda_{\sc}(\bA_0, \bA_1, \dots, \bA_n)
      &= \inf_{\bZ \succ \bm 0} \lambda_{\max}(\bA_0 + \Phi(\bZ) + \bZ^{-1}) \\
      &= \left\{\begin{array}{ll} \text{infimum of} & c \\ \text{subject to} & \bZ \succ \bm 0, \\ & c\bm I_d - \bA_0 - \Phi(\bZ) - \bZ^{-1} \succ \bm 0 \end{array}\right\}.
    \end{align*}
    By Proposition~\ref{prop:schur},
    \[
        \bZ \succ \bm 0 \text{ and } c\bm I_d - \bA_0 - \Phi(\bZ) - \bZ^{-1} \succ \bm 0
        \,\, \text{ if and only if } \,\,
        \left[\begin{array}{cc} c \bm I_d - \bA_0 - \Phi(\bZ) & \bm I_d \\ \bm I_d & \bZ \end{array}\right] \succ \bm 0.
  \]
  Thus we may further rewrite
  \begin{align*}
      \Lambda_{\sc}(\bA_0, \bA_1, \dots, \bA_n)
      &= \left\{\begin{array}{ll} \text{infimum of} & c \\ \text{subject to} & \left[\begin{array}{cc} c \bm I_d - \bA_0 - \Phi(\bZ) & \bm I_d \\ \bm I_d & \bZ \end{array}\right] \succ \bm 0 \end{array} \right\} \numberthis \label{eq:sc-small-sdp-1-2}
                                       \intertext{and so we also have}
                                                    &\geq \left\{\begin{array}{ll} \text{infimum of} & c \\ \text{subject to} & \left[\begin{array}{cc} c \bm I_d - \bA_0 - \Phi(\bZ) & \bm I_d \\ \bm I_d & \bZ \end{array}\right] \succeq \bm 0 \end{array}\right\} \numberthis \label{eq:sc-small-sdp-2-2}
  \end{align*}
  by relaxing the feasible set.
  On the other hand, suppose that $(c, \bZ)$ are feasible for \eqref{eq:sc-small-sdp-2-2}.
  By Proposition~\ref{prop:Z-pd} we have $\bZ \succ \bm 0$.
  So, for $(c, \bZ)$ feasible for \eqref{eq:sc-small-sdp-2-2}, again by Proposition~\ref{prop:schur}, $(c + \epsilon, \bZ)$ is feasible for \eqref{eq:sc-small-sdp-1-2} for any $\epsilon > 0$.
  Thus, the values of \eqref{eq:sc-small-sdp-1-2} and \eqref{eq:sc-small-sdp-2-2} are in fact equal.

  The second SDP \eqref{eq:sc-sdp-small-dual} is the Lagrangian dual of \eqref{eq:sc-sdp-small-primal} with the maximum \emph{a priori} replaced by a supremum; we give the routine calculation demonstrating this in Appendix~\ref{app:dual-small}.
  Note also that the first SDP is strictly feasible, as we see by taking $\bZ = \bm I_d$ and $c > 0$ sufficiently large.
  Therefore Slater's condition holds, and it follows that strong duality holds (i.e., \eqref{eq:sc-sdp-small-primal} and \eqref{eq:sc-sdp-small-dual} have the same optimal value) and that the maximum in \eqref{eq:sc-sdp-small-dual} is achieved (and so it is indeed a maximum and not just a supremum).

  Now consider the sufficient condition for the infimum in \eqref{eq:sc-sdp-small-primal} to also be achieved: suppose that $\bigcap_{i = 1}^n \ker(\bA_i) = \{\bm 0\}$.
  We always have $\Phi(\bZ) \succeq \bm 0$ for all $\bZ \succeq \bm 0$ by complete positivity of $\Phi$.
  Under the above assumption we claim that we also have $\Phi(\bZ) \neq \bm 0$ whenever $\bZ \succeq \bm 0$ and $\bZ \neq \bm 0$.
  Indeed, if $\bm 0 = \Phi(\bZ) = \sum_{i = 1}^n (\bA_i\bZ^{1/2})(\bA_i \bZ^{1/2})^*$ then $\bA_i\bZ^{1/2} = \bm 0$ for all $i \in [n]$.
  Then, every column of $\bZ^{1/2}$ belongs to $\bigcap_{i = 1}^n \ker(\bA_i) = \{\bm 0\}$, so $\bZ^{1/2} = \bm 0$ and $\bZ = \bm 0$.
  Thus, $\lambda_{\max}(\Phi(\bZ))$ is a strictly positive function on the compact set $\{\bZ: \bZ \succeq \bm 0, \lambda_{\max}(\bZ) = 1\}$, and so is bounded below on this set.
  Therefore, by linearity of $\Phi$, we also have $\lambda_{\max}(\Phi(\bZ)) \geq \alpha \lambda_{\max}(\bZ)$ for some $\alpha > 0$, for all $\bZ \succeq \bm 0$.
  In particular, the objective function $\lambda_{\max}(\bA_0 + \Phi(\bZ) + \bZ^{-1})$ diverges both as $\bZ$ approaches the boundary of the positive semidefinite cone and as $\lambda_{\max}(\bZ) \to \infty$, and thus the infimum is achieved in both the original \eqref{eq:lehner-sc} and in \eqref{eq:sc-sdp-small-primal}.

  Finally, for the third optimization \eqref{eq:sc-sdp-small-dual-2}, we eliminate the variable $\bm T$ from the second SDP.
  Fix $\bm S \succeq \bm 0$ with $\Tr(\bm S) = 1$.
  By Proposition~\ref{prop:contraction}, the constraint
  \[
      \left[\begin{array}{cc} \bm S & -\bm T \\ -\bm T^{*} & \Phi(\bm S) \end{array}\right] \succeq \bm 0
  \]
  is equivalent to the existence of $\bm R \in \CC^{d \times d}$ with $\|\bm R\| \leq 1$ and $\bm T = \bm S^{1/2} \bm R \, \Phi(\bm S)^{1/2}$.
  Using the dual form of the nuclear norm,
    \[
      \|\bY\|_*
      = \sup_{\|\bm R\| \leq 1} \Re(\langle\bR, \bY\rangle),
    \]
    we get, for fixed $\bS$ as above,
    \begin{align*}
      \sup\left\{2\Re(\Tr(\bm T)) : \left[\begin{array}{cc} \bm S & -\bm T \\ -\bm T^{*} & \Phi(\bm S) \end{array}\right] \succeq \bm 0\right\}
      &= 2\sup_{\|\bm R\| \leq 1} \Re(\Tr(\bm R^* \, \Phi(\bm S)^{1/2}\bm S^{1/2})) \\
      &= 2\|\Phi(\bm S)^{1/2}\bm S^{1/2}\|_* \\
      &= 2\|\bm S^{1/2}\Phi(\bm S)^{1/2}\|_*,
    \end{align*}
    giving the third formula \eqref{eq:sc-sdp-small-dual-2}.
    The last equality uses that $\bY\bY^{*}$ and $\bY^{*}\bY$ have the same nonzero eigenvalues, so $\|\bY\|_* = \|\bY^*\|_*$.
\end{proof}

\subsection{Large semicircular SDP: Proof of Theorem \ref{thm:sc-sdp-large}}

\begin{proof}[Proof of Theorem \ref{thm:sc-sdp-large}]
    Let
    \[
      \bH(c, \bZ)
      \colonequals
      \left[\begin{array}{ccccc} c\bm I_d - \bA_0 - \bZ & \bA_1 & \bA_2 & \cdots & \bA_n \\ \bA_1 & \bZ & \bm 0 & \cdots & \bm 0 \\ \bA_2 & \bm 0 & \bZ & \cdots & \bm 0 \\
                                                                                     \vdots & \vdots & \vdots & \ddots & \vdots \\ \bA_n & \bm 0 & \bm 0 & \cdots & \bZ \end{array}\right].
    \]
    We first identify the first SDP \eqref{eq:sc-sdp-large-primal} with Lehner's formula \eqref{eq:lehner-sc}.
    Suppose first that $\bZ \succ \bm 0$.
    Applying Proposition~\ref{prop:schur} to the lower-right block-diagonal matrix
    \[
      \mathsf{Diag}(\bZ, \dots, \bZ) = \bm I_n \otimes \bZ
    \]
    yields
    \[
      \bH(c, \bZ) \succ \bm 0
      \,\, \text{ if and only if } \,\,
      c\bm I_d - \bA_0 - \bZ - \sum_{i = 1}^n \bA_i \bZ^{-1} \bA_i \succ \bm 0.
    \]
    Therefore, if one further restricts \eqref{eq:sc-sdp-large-primal} to require $\bH(c, \bZ) \succ \bm 0$, its optimal value is exactly the value $\Lambda_{\sc}(\bA_0, \bA_1, \dots, \bA_n)$ of Lehner's formula.
    For any $\bZ$ feasible for \eqref{eq:sc-sdp-large-primal} we have $\bZ \succeq \bm 0$ and also $\bH(c + 2\epsilon, \bZ + \epsilon \bm I_d) = \bH(c, \bZ) + \epsilon \bm I_{d(n + 1)} \succ \bm 0$ for all $\epsilon > 0$.
    Taking $\epsilon \to 0$ then shows that \eqref{eq:sc-sdp-large-primal} with the original constraint $\bH(c, \bZ) \succeq \bm 0$ also equals $\Lambda_{\sc}(\bA_0, \bA_1, \dots, \bA_n)$.

  The second SDP \eqref{eq:sc-sdp-large-dual} is the Lagrangian dual of \eqref{eq:sc-sdp-large-primal} with the maximum replaced by a supremum; we show this in Appendix~\ref{app:dual-large}.
  Note also that the first SDP is strictly feasible, as we see by taking $\bZ = \bm I_d$ and $c > 0$ sufficiently large.
  Therefore Slater's condition holds, and it follows that strong duality holds and that the maximum in \eqref{eq:sc-sdp-large-dual} is achieved.
  (These steps are just as in the proof of Theorem~\ref{thm:sc-sdp-small} above.)
  Moreover, the dual \eqref{eq:sc-sdp-large-dual} is also strictly feasible, as we see by taking $\bm\Gamma^{[0, 0]} = \frac{1}{d}\bm I_d$, $\bm\Gamma^{[i,i]} = \frac{1}{dn}\bm I_d$ for all $i \in [n]$, and all off-diagonal blocks $\bm\Gamma^{[i, j]} = \bm 0$.
  Thus by Slater's condition the minimum in \eqref{eq:sc-sdp-large-primal} is also achieved.
\end{proof}

\subsection{Complementary slackness}
\label{sec:slackness}

We now derive relationships between the optimizers in the primal and dual programs above using the \emph{complementary slackness} relation given in the KKT conditions for optimality.
\begin{lemma}[Complementary slackness for small SDPs]
    \label{lem:sc-slackness}
    In the setting of Theorem~\ref{thm:sc-sdp-small}, suppose that the infimum in the primal problem is achieved by some $(c, \bZ)$ and that $(\bS, \bT)$ achieve the maximum of the dual problem \eqref{eq:sc-sdp-small-dual}.
    The following then hold:
    \begin{align}
      \bZ &\succ \bm 0, \\
      \bS &= \bZ \Phi(\bS) \bZ, \label{eq:sc-slackness-S} \\
      \bT &= \bZ \Phi(\bS) = \bS\bZ^{-1}, \label{eq:sc-slackness-T} \\
      (c \bm I_d - \bA_0 - \Phi(\bZ) - \bZ^{-1}) \bS &= \bm 0. \label{eq:sc-slackness-K}
                                                     \intertext{If $\bS \succ \bm 0$, then we further have}
                                                     c \bm I_d - \bA_0 - \Phi(\bZ) - \bZ^{-1} &= \bm 0, \numberthis \label{eq:original-slackness} \\
      \bZ &= \Phi(\bS)^{-1/2} (\Phi(\bS)^{1/2} \bS \Phi(\bS)^{1/2})^{1/2} \Phi(\bS)^{-1/2}. \label{eq:sc-slackness-geomean}
    \end{align}
\end{lemma}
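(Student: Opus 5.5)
Since both optimizers are attained and strong duality holds (Theorem~\ref{thm:sc-sdp-small}), the KKT conditions give complementary slackness: writing
\[ \bM \colonequals \left[\begin{array}{cc} c \bm I_d - \bA_0 - \Phi(\bZ) & \bm I_d \\ \bm I_d & \bZ \end{array}\right], \qquad \bm N \colonequals \left[\begin{array}{cc} \bS & -\bT \\ -\bT^{*} & \Phi(\bS) \end{array}\right], \]
we will have $\langle \bM, \bm N\rangle = 0$. Both matrices are positive semidefinite, so this upgrades to the matrix identity $\bM\bm N = \bm 0$. To confirm that the Lagrange multiplier of the constraint is $\bm N$ (with these signs and blocks), we check that $\langle \bM, \bm N\rangle = c - \langle \bA_0,\bS\rangle - 2\Re\Tr(\bT)$ once $\Tr(\bS)=1$ and self-adjointness of $\Phi$ are used. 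This is exactly the primal-minus-dual gap, hence zero. The first claim, $\bZ \succ \bm 0$, is just Proposition~\ref{prop:Z-pd}.

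Next we expand $\bM\bm N = \bm 0$ block by block. The bottom-left block gives $\bS - \bZ\bT^* = \bm 0$, and the bottom-right block gives $-\bT + \bZ\Phi(\bS) = \bm 0$. Hence $\bT = \bZ\Phi(\bS)$, and $\bT^* = \bZ^{-1}\bS$, so $\bT = \bS\bZ^{-1}$; this is \eqref{eq:sc-slackness-T}. Equating the two expressions for $\bT$ gives $\bZ\Phi(\bS) = \bS\bZ^{-1}$, which rearranges to \eqref{eq:sc-slackness-S}. For the top-left block, with $\bK \colonequals c\bm I_d - \bA_0 - \Phi(\bZ)$, we get $\bK\bS - \bT^* = \bm 0$. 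Substituting $\bT^* = \bZ^{-1}\bS$ gives $(\bK - \bZ^{-1})\bS = \bm 0$, which is \eqref{eq:sc-slackness-K}. The top-right block should then be consistent automatically, and we verify this.

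If $\bS \succ \bm 0$, then $\bS$ is invertible and \eqref{eq:sc-slackness-K} immediately yields \eqref{eq:original-slackness}. For \eqref{eq:sc-slackness-geomean}, we view \eqref{eq:sc-slackness-S} as a Riccati equation $\bZ\bP\bZ = \bS$ with $\bP \colonequals \Phi(\bS)$. We first need $\bP \succ \bm 0$. This follows from $\bZ\bP\bZ = \bS \succ \bm 0$ with $\bZ$ invertible, which gives $\bP = \bZ^{-1}\bS\bZ^{-1} \succ \bm 0$. Then $(\bP^{1/2}\bZ\bP^{1/2})^2 = \bP^{1/2}\bZ\bP\bZ\bP^{1/2} = \bP^{1/2}\bS\bP^{1/2}$. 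Since $\bP^{1/2}\bZ\bP^{1/2} \succ \bm 0$, uniqueness of the positive square root gives $\bP^{1/2}\bZ\bP^{1/2} = (\bP^{1/2}\bS\bP^{1/2})^{1/2}$, and solving for $\bZ$ gives the claimed formula.

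The main obstacle is the first step: carefully identifying the dual variable with $\bm N$ and checking that the duality gap equals $\langle \bM,\bm N\rangle$ exactly, including the sign conventions on $\bT$ and the Hermitian-versus-real-part bookkeeping. This would be done by rerunning the Lagrangian computation of Appendix~\ref{app:dual-small}. Once $\bM\bm N = \bm 0$ is established, everything else is block algebra.
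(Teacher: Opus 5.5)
Your proposal is correct and follows essentially the same route as the paper: complementary slackness $\langle \bM, \bm N\rangle = 0$ is upgraded to $\bM\bm N = \bm 0$, the blocks give $\bT = \bZ\Phi(\bS) = \bS\bZ^{-1}$, $\bS = \bZ\Phi(\bS)\bZ$ and $(c\bm I_d - \bA_0 - \Phi(\bZ) - \bZ^{-1})\bS = \bm 0$, and the geometric-mean formula for $\bS \succ \bm 0$ follows from uniqueness of the positive square root. Two differences are inessential but worth noting: you justify $\langle \bM, \bm N\rangle = 0$ by computing it equals the primal--dual gap, which is slightly more self-contained than the paper's appeal to the KKT conditions, and you use only $\bM\bm N = \bm 0$ together with taking adjoints, whereas the paper uses both $\bM\bm N$ and $\bm N\bM$.
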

\noindent
We note that the bottom right expression sometimes goes by the name of the \emph{matrix geometric mean} of $\bS$ and $\Phi(\bS)^{-1}$ \cite[Section 4.1]{Bhatia-2009-PositiveDefiniteMatrices}, a special case of the non-commutative perspective with respect to the function $t \mapsto t^{1/2}$.
\begin{proof}
    Fix $(c, \bZ)$ and $(\bS, \bT)$ primal and dual optimizers, respectively.
    Define
    \begin{align*}
      \bK &\colonequals c \bm I_d - \bA_0 - \Phi(\bZ) - \bZ^{-1}, \\
        \bM &\colonequals \left[\begin{array}{cc} c \bm I_d - \bA_0 - \Phi(\bZ) & \bm I_d \\ \bm I_d & \bZ \end{array}\right], \\
        \bY &\colonequals \left[\begin{array}{cc} \bS & -\bT \\ -\bT^* & \Phi(\bS) \end{array}\right].
    \end{align*}
    By feasibility, $\bM, \bY \succeq \bm 0$, and by Proposition~\ref{prop:Z-pd} we have $\bZ \succ \bm 0$.
    So, by Proposition~\ref{prop:schur}, $\bK \succeq \bm 0$.

    Because the primal problem is strictly feasible, the KKT conditions are necessary and sufficient for optimality.
    In particular, complementary slackness gives $\langle \bM, \bY \rangle = 0$.
    Since $\bM, \bY \succeq \bm 0$, this implies $\bM\bY = \bY\bM = \bm 0$.
    Expanding these matrix multiplications in blocks, we find the equalities
    \begin{align*}
      (c\bm I_d - \bA_0 - \Phi(\bZ))\bS &= \bT^*, \\
      (c\bm I_d - \bA_0 - \Phi(\bZ))\bT &= \Phi(\bS), \\
      \bS &= \bZ\bT^*, \\
      \bT &= \bZ\Phi(\bS), \\
      \bS(c\bm I_d - \bA_0 - \Phi(\bZ)) &= \bT, \\
      \bS &= \bT\bZ, \\
      \bT^*(c\bm I_d - \bA_0 - \Phi(\bZ)) &= \Phi(\bS), \\
      \bT^* &= \Phi(\bS)\bZ.
    \end{align*}
    Since $\bZ \succ \bm 0$, we may solve for $\bT$ to get \eqref{eq:sc-slackness-T}.
    Substituting $\bT = \bZ\Phi(\bS)$ into $\bS = \bT\bZ$ gives \eqref{eq:sc-slackness-S}.
    We also obtain \eqref{eq:sc-slackness-K}, since
    \[ \bK\bS = (c\bm I_d - \bA_0 - \Phi(\bZ))\bS - \bZ^{-1}\bS = \bT^* - \bZ^{-1}\bS = \bm 0. \]

    Finally, if $\bS \succ \bm 0$, then we may cancel $\bS$ above giving $\bK = \bm 0$, which is the statement \eqref{eq:original-slackness}.
    Also, we have $\Phi(\bS) = \bZ^{-1}\bS\bZ^{-1} \succ \bm 0$.
    Let
    \[
        \bW \colonequals \Phi(\bS)^{1/2}\bZ\Phi(\bS)^{1/2}.
    \]
    Then $\bW \succ \bm 0$, and \eqref{eq:sc-slackness-S} becomes
    \[
        \bW^2 = \Phi(\bS)^{1/2}\bS\Phi(\bS)^{1/2}.
    \]
    Taking the (positive definite) square root of both sides gives
    \[
        \bW = (\Phi(\bS)^{1/2}\bS\Phi(\bS)^{1/2})^{1/2},
    \]
    and conjugating by $\Phi(\bS)^{-1/2}$ yields \eqref{eq:sc-slackness-geomean}.
\end{proof}

\begin{remark}
    Recall that Lehner's formula in Theorem~\ref{thm:lehner-sc} has the additional feature that one may restrict to $\bZ$ for which $\bA_0 + \Phi(\bZ) + \bZ^{-1}$ is a multiple of the identity.
    It is very tempting to argue that this is a consequence of the complementary slackness relations: if it were true that there is \emph{always} a dual optimizer $\bS$ having full rank, then indeed that would follow from Lemma~\ref{lem:sc-slackness}.

    However, this need not be the case; for example, take $d = 2$, $n = 1$, and
    \[ \bA_0 = \be_1\be_1^{\top} = \left[\begin{array}{rr} 1 & 0 \\ 0 & 0\end{array}\right], \,\,\,\,\, \bA_1 = \bm I_2 = \left[\begin{array}{rr} 1 & 0 \\ 0 & 1\end{array}\right]. \]
    Then, $\Phi(\bS) = \bS$ and the condensed version \eqref{eq:sc-sdp-small-dual-2} of the dual SDP reduces to
    \[ \Lambda_{\sc}(\bA_0, \bA_1) = \max_{\substack{\bS \succeq \bm 0 \\ \Tr(\bS) = 1}} \left\{S_{11} + 2\right\} \]
    whose unique optimizer is $\bS = \bA_0 = \be_1\be_1^{\top}$, a rank-deficient matrix.

    This example also demonstrates that it need not be the case that \emph{every} primal optimizer $\bZ$ has $\bA_0 + \Phi(\bZ) + \bZ^{-1}$ a multiple of the identity.
    Indeed, in this case $\bZ = \mathsf{Diag}(1, z)$ is such an optimizer for any $z > 0$ such that $z + z^{-1} \leq 3$, and only the case $z + z^{-1} = 3$ makes $\bA_0 + \Phi(\bZ) + \bZ^{-1}$ a multiple of the identity.
\end{remark}

\begin{lemma}[Complementary slackness for large SDPs]
    In the setting of Theorem~\ref{thm:sc-sdp-large}, suppose that the minimum in the primal problem is achieved by some $(c, \bZ)$ and the maximum in the dual problem by some $\bm\Gamma$.
    The following then hold:
    \begin{align}
      (c \bm I_d - \bA_0 - \bZ)\bm\Gamma^{[0, j]} &= -\sum_{i = 1}^n \bA_i \bm\Gamma^{[i,j]} \text{ for all } 0 \leq j \leq n, \label{eq:sc-slackness-large-1} \\
      \bZ\bm\Gamma^{[i, j]} &= -\bA_i\bm\Gamma^{[0, j]} \text{ for all } 1 \leq i \leq n, 0 \leq j \leq n, \label{eq:sc-slackness-large-2}  \\
      \bZ\bm\Gamma^{[0, 0]}\bZ &= \Phi(\bm\Gamma^{[0, 0]}). \label{eq:sc-slackness-large-Z-G}
    \end{align}
\end{lemma}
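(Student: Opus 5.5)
Complementary slackness gives $\bH(c,\bZ)\bm\Gamma = \bm 0$; reading this product off block by block gives the first two relations, and combining them gives the third.

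\textbf{Obtaining $\bH\bm\Gamma = \bm 0$.} Let $\bH(c,\bZ)$ be the block matrix from the proof of Theorem~\ref{thm:sc-sdp-large}, so primal feasibility means $\bH(c,\bZ) \succeq \bm 0$.
\begin{itemize}
\item By Theorem~\ref{thm:sc-sdp-large}, both the primal and the dual are strictly feasible, so strong duality holds and the KKT conditions characterize optimality.
\item Complementary slackness for the constraint $\bH(c,\bZ)\succeq \bm 0$, whose multiplier is $\bm\Gamma \succeq \bm 0$, gives $\langle \bH(c,\bZ), \bm\Gamma\rangle = 0$.
\item To see this directly, check that for feasible $(c,\bZ)$ and $\bm\Gamma$,
\[ c - \langle \widetilde{\bA}_-, \bm\Gamma\rangle = \langle \bH(c,\bZ), \bm\Gamma\rangle, \]
where $\widetilde{\bA}_-$ is the objective matrix in \eqref{eq:sc-sdp-large-dual}.
\item Expanding the pairing, the $c$-term is $c\Tr(\bm\Gamma^{[0,0]}) = c$. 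The $\bZ$-terms are $-\langle \bZ, \bm\Gamma^{[0,0]}\rangle + \sum_{i\ge1}\langle \bZ,\bm\Gamma^{[i,i]}\rangle$, which cancel by the constraint $\bm\Gamma^{[0,0]}=\sum_i \bm\Gamma^{[i,i]}$. The $\bA_i$ off-diagonal terms carry opposite signs, matching the signs in the dual objective.
\item At optimality both sides vanish, since both values equal $\Lambda_{\sc}$.
\item Since $\bH,\bm\Gamma\succeq\bm 0$ and $\langle \bH,\bm\Gamma\rangle = \Tr(\bH^{1/2}\bm\Gamma\bH^{1/2})=0$, we get $\bH^{1/2}\bm\Gamma^{1/2}=\bm 0$, hence $\bH\bm\Gamma = \bm 0$.
\end{itemize}

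\textbf{Reading off the relations.}
\begin{itemize}
\item Block row $0$, block column $j$ of $\bH\bm\Gamma$: this is $(c\bm I_d-\bA_0-\bZ)\bm\Gamma^{[0,j]} + \sum_{i\ge1}\bA_i\bm\Gamma^{[i,j]} = \bm 0$, which is \eqref{eq:sc-slackness-large-1}.
\item Block row $i\ge1$, block column $j$: this is $\bA_i\bm\Gamma^{[0,j]} + \bZ\bm\Gamma^{[i,j]} = \bm 0$, which is \eqref{eq:sc-slackness-large-2}.
\end{itemize}

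\textbf{Deriving \eqref{eq:sc-slackness-large-Z-G}.}
\begin{itemize}
\item Because $\bH$ and $\bm\Gamma$ are Hermitian, $\bH\bm\Gamma = \bm 0$ also gives $\bm\Gamma\bH = \bm 0$.
\item Row block $j$, column block $i\ge1$ of $\bm\Gamma\bH$ gives $\bm\Gamma^{[j,0]}\bA_i + \bm\Gamma^{[j,i]}\bZ = \bm 0$.
\item Taking $j=0$ in \eqref{eq:sc-slackness-large-2} gives $\bZ\bm\Gamma^{[i,0]} = -\bA_i\bm\Gamma^{[0,0]}$. Multiply this on the right by $\bZ$.
\item Now use $\bm\Gamma^{[i,0]}\bZ = -\bm\Gamma^{[i,i']}$-type substitutions from the $\bm\Gamma\bH$ identities. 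Concretely, the $\bm\Gamma\bH$ relation with $j=i$ gives $\bm\Gamma^{[i,0]}\bA_i = -\bm\Gamma^{[i,i]}\bZ$, and its transpose form is $\bA_i\bm\Gamma^{[0,i]} = -\bZ\bm\Gamma^{[i,i]}$.
\item The cleaner route is to multiply \eqref{eq:sc-slackness-large-2} with $j=0$ on the right by $\bZ$ and apply the conjugate relation $\bm\Gamma^{[i,0]}\bZ = -\bm\Gamma^{[i,0]}\ldots$.
\item I expect this chaining to produce $\bZ\bm\Gamma^{[i,i]}\bZ = \bA_i\bm\Gamma^{[0,0]}\bA_i$ for each $i$, by
\[ \bZ\bm\Gamma^{[i,i]}\bZ = (-\bA_i\bm\Gamma^{[0,i]})\bZ = -\bA_i(\bZ\bm\Gamma^{[i,0]})^* = \bA_i\bm\Gamma^{[0,0]}\bA_i. \]
\item Summing over $i$ and using $\sum_i\bm\Gamma^{[i,i]}=\bm\Gamma^{[0,0]}$ gives $\bZ\bm\Gamma^{[0,0]}\bZ = \Phi(\bm\Gamma^{[0,0]})$.
\end{itemize}

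The main obstacle is getting the signs and block-index bookkeeping right. This means checking the duality-gap identity against the sign convention of the dual, and using $\bZ$ and $\bm\Gamma^{[0,i]} = (\bm\Gamma^{[i,0]})^*$ with the correct adjoints. No invertibility of $\bZ$ is needed.
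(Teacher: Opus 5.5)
Your proposal is correct and follows the paper's proof. Complementary slackness gives $\langle \bH, \bm\Gamma\rangle = 0$, which you usefully verify directly through the duality-gap identity, and hence $\bH\bm\Gamma = \bm\Gamma\bH = \bm 0$; the first two relations are then read off blockwise, and the third comes from combining \eqref{eq:sc-slackness-large-2} at $j = i$ and $j = 0$ via adjoints, with the only cosmetic differences being that you obtain the per-$i$ identity $\bZ\bm\Gamma^{[i,i]}\bZ = \bA_i\bm\Gamma^{[0,0]}\bA_i$ before summing (the paper sums first), and that your garbled intermediate bullets containing ``$\ldots$'' should simply be deleted in favor of your final displayed chain, which is valid.
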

\noindent
We note that, if we set $\bS = \bm\Gamma^{[0, 0]}$ and have $\bZ \succ \bm 0$, then \eqref{eq:sc-slackness-large-Z-G} is the same as \eqref{eq:sc-slackness-S} from the complementary slackness relations of the small SDP, except that $\bZ$ is replaced by $\bZ^{-1}$, which precisely matches the same replacement that we make at the beginning of our proof of Theorem~\ref{thm:sc-sdp-small}.
\begin{proof}
    Fix $(c, \bZ)$ and $\bm \Gamma$ primal and dual optimizers, respectively.
    Define
    \[ \bH
      \colonequals
      \left[\begin{array}{ccccc} c\bm I_d - \bA_0 - \bZ & \bA_1 & \bA_2 & \cdots & \bA_n \\ \bA_1 & \bZ & \bm 0 & \cdots & \bm 0 \\ \bA_2 & \bm 0 & \bZ & \cdots & \bm 0 \\
              \vdots & \vdots & \vdots &  \ddots & \vdots \\ \bA_n & \bm 0 & \bm 0 & \cdots & \bZ \end{array}\right]. \]
  As before, by feasibility we have $\bH, \bm\Gamma \succeq \bm 0$.
  Again the primal problem is strictly feasible, the KKT conditions are necessary and sufficient for optimality, and complementary slackness gives $\langle \bH, \bm \Gamma \rangle = 0$ and therefore $\bH\bm\Gamma = \bm\Gamma\bH = \bm 0$.
  Expanding blockwise gives, precisely the relations~\eqref{eq:sc-slackness-large-1} and~\eqref{eq:sc-slackness-large-2}.

  Further, setting $i = j$ in \eqref{eq:sc-slackness-large-2} and taking the conjugate transpose gives that, for each $i \in [n]$,
  \[ \bm\Gamma^{[i, i]} \bZ = -\bm\Gamma^{[i, 0]} \bA_i. \]
  Summing over $i$ and using that, by the constraint on $\bm\Gamma$, we have $\bm\Gamma^{[0, 0]} = \sum_{i = 1}^n \bm\Gamma^{[i, i]}$, we have
  \[ \bm\Gamma^{[0, 0]}\bZ = -\sum_{i = 1}^n \bm\Gamma^{[i, 0]} \bA_i. \]
  Finally, multiplying on the left by $\bZ$ and using \eqref{eq:sc-slackness-large-2} again gives
  \[ \bZ\bm\Gamma^{[0, 0]}\bZ = -\sum_{i = 1}^n \bZ \bm\Gamma^{[i, 0]} \bA_i = \sum_{i = 1}^n \bA_i \bm\Gamma^{[0, 0]} \bA_i = \Phi(\bm\Gamma^{[0, 0]}), \]
  giving the final result \eqref{eq:sc-slackness-large-Z-G}.
\end{proof}

\subsection{Estimating distance from optimality}
\label{sec:dist-to-opt}

We now aim to deduce, motivated by the structure of the complementary slackness relations, an ``automatic'' estimate on how close a given primal feasible point $(c, \bZ)$ of the small SDP in Theorem~\ref{thm:sc-sdp-small} is to optimal.
The simple idea is that, if $\bZ$ is a primal optimizer and $\bS$ a dual optimizer, then from Lemma~\ref{lem:sc-slackness} we expect to have $\bS = \bZ\Phi(\bS)\bZ$.
In fact, it turns out that even if we find such $\bS$ for an arbitrary feasible $(c, \bZ)$, we obtain a convenient bound on the distance from $c$ to the true optimal value.

\begin{lemma}
    \label{lem:approx-slackness}
    Let $\bA_0, \bA_1, \dots, \bA_n \in \CC^{d \times d}_{\herm}$, let $(c, \bZ)$ be feasible for the primal small SDP \eqref{eq:sc-sdp-small-primal}.
    Recall that by Proposition~\ref{prop:Z-pd} this implies $\bZ \succ \bm 0$.
    Define $\Psi_{\bZ}: \CC^{d \times d}_{\herm} \to \CC^{d \times d}_{\herm}$ by
    \[
        \Psi_{\bZ}(\bS) \colonequals \bZ \Phi(\bS)\bZ.
    \]
    Suppose that $\bS \succeq \bm 0$, $\Tr(\bS) = 1$, and
    \[
        \Psi_{\bZ}(\bS) = \bZ \Phi(\bS)\bZ = \rho \bS
    \]
    for some $\rho > 0$ (that is, $\bS$ is an eigenmatrix of the operator $\Psi_{\bZ}$ with eigenvalue $\rho$).
    Then,
    \begin{equation}
        \label{eq:approx-slackness}
        c - \langle c \bm I_d - \bA_0 - \Phi(\bZ) - \bZ^{-1}, \bS \rangle - (\sqrt{\rho} - 1)^2 \langle \bZ^{-1}, \bS\rangle \leq \Lambda_{\sc}(\bA_0, \bA_1, \dots, \bA_n) \leq c.
    \end{equation}
\end{lemma}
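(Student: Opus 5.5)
The upper bound $\Lambda_{\sc}(\bA_0, \bA_1, \dots, \bA_n) \leq c$ is immediate: $(c, \bZ)$ is feasible for the primal small SDP \eqref{eq:sc-sdp-small-primal}, and by Theorem~\ref{thm:sc-sdp-small} that SDP has value $\Lambda_{\sc}$. For the lower bound, the plan is to build from $\bS$ an explicit feasible point $(\bS, \bT)$ of the dual SDP \eqref{eq:sc-sdp-small-dual}. By weak duality, or by the equality in Theorem~\ref{thm:sc-sdp-small}, its objective value then bounds $\Lambda_{\sc}$ from below. The remaining work is to show that this objective equals the left-hand side of \eqref{eq:approx-slackness} exactly.

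Guided by the slackness relation \eqref{eq:sc-slackness-T}, $\bT = \bS\bZ^{-1}$, I take the rescaled choice
\[ \bT \colonequals \alpha \bS\bZ^{-1}, \qquad \alpha = \sqrt{\rho}. \]
We already have $\bS \succeq \bm 0$ and $\Tr(\bS) = 1$, so feasibility only requires the block matrix to be positive semidefinite. Writing $\bm J \colonequals [\bm I_d, \; -\alpha\bZ^{-1}]$, which is $d \times 2d$, I would verify the decomposition
\[ \left[\begin{array}{cc} \bS & -\bT \\ -\bT^{*} & \Phi(\bS) \end{array}\right] = \bm J^{*}\bS\bm J + \left[\begin{array}{cc} \bm 0 & \bm 0 \\ \bm 0 & \Phi(\bS) - \alpha^2\bZ^{-1}\bS\bZ^{-1} \end{array}\right]. \]
The eigen-relation $\bZ\Phi(\bS)\bZ = \rho\bS$ gives $\bZ^{-1}\bS\bZ^{-1} = \rho^{-1}\Phi(\bS)$. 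With $\alpha^2 = \rho$ the second summand therefore vanishes, and the block matrix equals $\bm J^{*}\bS\bm J \succeq \bm 0$. Working with this factorization rather than the Schur complement criterion (Proposition~\ref{prop:schur}) avoids any need for $\bS$ to be invertible.

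Next I compute the dual objective at this point. Since $\bS$ and $\bZ^{-1}$ are Hermitian, $\Tr(\bS\bZ^{-1})$ is real, so the objective is
\[ \langle \bA_0, \bS\rangle + 2\sqrt{\rho}\,\langle \bZ^{-1}, \bS\rangle. \]
On the other side, I expand the left-hand side of \eqref{eq:approx-slackness} using $\Tr(\bS) = 1$:
\[ \langle \bA_0, \bS\rangle + \langle \Phi(\bZ), \bS\rangle + \langle \bZ^{-1}, \bS\rangle - (\rho - 2\sqrt{\rho} + 1)\langle \bZ^{-1}, \bS\rangle. \]

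The key identity is $\langle \Phi(\bZ), \bS\rangle = \rho\langle \bZ^{-1}, \bS\rangle$. It follows from self-adjointness of $\Phi$ (Proposition~\ref{prop:Phi-properties}) together with the eigen-relation:
\[ \langle \bZ, \Phi(\bS)\rangle = \Tr(\bZ\Phi(\bS)) = \Tr(\bZ^{-1}\cdot \bZ\Phi(\bS)\bZ) = \rho\Tr(\bZ^{-1}\bS). \]
Substituting this, the expansion collapses to $\langle \bA_0, \bS\rangle + 2\sqrt{\rho}\langle \bZ^{-1}, \bS\rangle$, which is exactly the dual objective. This proves the lower bound.

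The only real obstacle is guessing the rescaling $\alpha = \sqrt{\rho}$ in $\bT$. It is forced by requiring the Schur-complement residual $\Phi(\bS) - \alpha^2\bZ^{-1}\bS\bZ^{-1}$ to vanish, and once it is fixed everything else is a direct computation.
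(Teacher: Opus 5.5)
Your proposal is correct and follows the paper's proof essentially line for line. You use the same dual certificate $\bT = \sqrt{\rho}\,\bS\bZ^{-1}$ for \eqref{eq:sc-sdp-small-dual}, and the same collapse of the left-hand side to $\langle \bA_0, \bS\rangle + 2\sqrt{\rho}\,\langle \bZ^{-1}, \bS\rangle$ via $\langle \Phi(\bZ), \bS\rangle = \rho\langle \bZ^{-1}, \bS\rangle$. Your factorization of the dual block matrix as $\bm J^{*}\bS\bm J$ is a useful addition, since the paper asserts feasibility of $(\bS, \bT)$ without checking it, and its Schur complement criterion is stated only for strict positivity.
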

\noindent
To interpret the result, suppose $\rho = 1$, as we have for an actual pair of primal and dual optimizers.
Then, the second term in the lower bound of \eqref{eq:approx-slackness} vanishes.
Note that, as appeared in our argument in Lemma~\ref{lem:sc-slackness}, $c \bm I_d - \bA_0 - \Phi(\bZ) - \bZ^{-1} \succeq \bm 0$, and further this is a natural ``slack variable'' for the primal small SDP, given by a Schur complement of the matrix constrained to be positive semidefinite.
So, Lemma~\ref{lem:approx-slackness} says that the distance of $\bZ$ to optimality is governed by how little the eigenmatrix $\bS$ ``overlaps'' with this slack variable.

\begin{proof}
    Note that $\Lambda_{\sc}(\bA_0, \bA_1, \dots, \bA_n) \leq c$ immediately by feasibility of $(c, \bZ)$.
    Expanding the proposed lower bound using that $\Tr(\bS) = 1$ and the eigenmatrix relation for $\bS$, we have
    \begin{align*}
      &c - \langle c \bm I_d - \bA_0 - \Phi(\bZ) - \bZ^{-1}, \bS \rangle - (\rho^{1/2} - 1)^2 \langle \bZ^{-1}, \bS \rangle \\
      &= \langle \bA_0, \bS \rangle + \langle \Phi(\bZ), \bS \rangle + \langle \bZ^{-1}, \bS \rangle - (\rho + 1 - 2\rho^{1/2}) \langle \bZ^{-1}, \bS \rangle \\
      &= \langle \bA_0, \bS \rangle + \langle \bZ, \Phi(\bS) \rangle - (\rho - 2\rho^{1/2}) \langle \bZ^{-1}, \bS \rangle \\
      &= \langle \bA_0, \bS \rangle + \rho \langle \bZ, \bZ^{-1}\bS\bZ^{-1} \rangle - (\rho - 2\rho^{1/2}) \langle \bZ^{-1}, \bS \rangle \\
      &= \langle \bA_0, \bS \rangle + 2\rho^{1/2} \langle \bZ^{-1}, \bS \rangle.
    \end{align*}

    Define
    \[
        \bT \colonequals \rho^{-1/2}\bZ\Phi(\bS) = \rho^{1/2}\bS\bZ^{-1},
    \]
    then $(\bS, \bT)$ is feasible for the expanded dual problem \eqref{eq:sc-sdp-small-dual}.
    So, we have the lower bound
    \[ \Lambda_{\sc}(\bA_0, \bA_1, \dots, \bA_n) \geq \langle \bA_0, \bS \rangle + 2\Re(\Tr(\bT)) = \langle \bA_0, \bS \rangle + 2\rho^{1/2}\langle \bZ^{-1}, \bS \rangle, \]
    giving the result.
\end{proof}

\begin{remark}
    We do not do it here, but one can write a similar statement ``in reverse'' that shows that, given $\bS$ of full rank, if one takes $\bZ$ to be defined by the complementarity relation \eqref{eq:sc-slackness-geomean}, then one can use that $\bZ$ as a primal certificate to bound the distance of $\bS$ from optimality.
    This seems less useful, since even in the relatively straightforward spiked matrix model setting we consider in Section~\ref{sec:bbp-iso}, already we run into the situation that a near-optimal $\bZ$ is easy to construct while we do not know a closed form for a near-optimal $\bS$.
\end{remark}

We also give some ancillary results that help find suitable eigenmatrices $\bS$ of operators $\Psi_{\bZ}$.
In the setting of Lemma~\ref{lem:approx-slackness}, we define the additional operator
\[ \widetilde{\Psi}_{\bZ}(\bS) \colonequals \bZ^{1/2}\Phi(\bZ^{1/2}\bS\bZ^{1/2})\bZ^{1/2}. \]
This satisfies the following properties:
\begin{proposition}
    \label{prop:Psi}
    For any $\bA_1, \dots, \bA_n, \bZ \in \CC^{d \times d}_{\herm}$ with $\bZ \succ \bm 0$, the following hold:
    \begin{enumerate}
    \item $\widetilde{\Psi}_{\bZ}$ is linear.
    \item $\widetilde{\Psi}_{\bZ}$ is self-adjoint with respect to the Frobenius inner product: $\langle \widetilde{\Psi}_{\bZ}(\bS), \bT \rangle = \langle \bS, \widetilde{\Psi}_{\bZ}(\bT)\rangle$.
    \item $\widetilde{\Psi}_{\bZ}$ is completely positive (Definition~\ref{def:cp}).
    \item Let $\rho \colonequals \lambda_{\max}(\widetilde{\Psi}_{\bZ})$, the largest eigenvalue when $\widetilde{\Psi}_{\bZ}$ is viewed as a self-adjoint operator on $\CC^{d \times d}_{\herm}$.
        Then, there exists $\widetilde{\bS} \succeq \bm 0$ an eigenmatrix with eigenvalue $\rho$, i.e., with $\widetilde{\Psi}_{\bZ}(\widetilde{\bS}) = \rho\widetilde{\bS}$.
        Setting $\bS \colonequals \bZ^{1/2} \widetilde{\bS} \bZ^{1/2}$, we also have $\bS \succeq \bm 0$ and $\Psi_{\bZ}(\bS) = \rho\bS$.
    \end{enumerate}
\end{proposition}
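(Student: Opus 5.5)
The plan is to write $\widetilde{\Psi}_{\bZ}$ explicitly in Kraus form. Once that is done, the first three items follow from Proposition~\ref{prop:Phi-properties}, and the fourth follows from Theorem~\ref{thm:quantum-pf} together with a change of variables. Expanding the definition of $\Phi$ in \eqref{eq:Phi}, we get
\[ \widetilde{\Psi}_{\bZ}(\bS) = \sum_{i = 1}^n (\bZ^{1/2}\bA_i\bZ^{1/2}) \bS (\bZ^{1/2}\bA_i\bZ^{1/2}) = \sum_{i = 1}^n \bB_i \bS \bB_i, \qquad \bB_i \colonequals \bZ^{1/2}\bA_i\bZ^{1/2} \in \CC^{d \times d}_{\herm}, \]
where $\bZ^{1/2}$ is Hermitian because $\bZ \succ \bm 0$. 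This says exactly that $\widetilde{\Psi}_{\bZ} = \Phi(\,\cdot\,; \bB_1, \dots, \bB_n)$: it is the operator $\Phi$ of \eqref{eq:Phi} with the Hermitian parameters $\bB_i$ in place of the $\bA_i$.

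Items 1--3 then follow at once from Proposition~\ref{prop:Phi-properties} applied to the $\bB_i$. Linearity is immediate. For self-adjointness, cyclicity of trace gives $\langle \bB\bS\bB, \bT\rangle = \Tr(\bB\bS^*\bB\bT) = \Tr(\bS^*\bB\bT\bB) = \langle \bS, \bB\bT\bB\rangle$ for Hermitian $\bB$. For complete positivity, the block matrix $[\bB_i\bS_{jk}\bB_i]_{j,k}$ equals $(\bm I_m \otimes \bB_i)[\bS_{jk}](\bm I_m \otimes \bB_i)^*$, which is a congruence of a positive semidefinite matrix, and a sum of such matrices is again positive semidefinite.

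For item 4, we view $\widetilde{\Psi}_{\bZ}$ as a self-adjoint operator on the real inner product space $\CC^{d \times d}_{\herm}$, so its spectrum is real and $\rho = \lambda_{\max}(\widetilde{\Psi}_{\bZ})$ makes sense. The key step is to show that $\rho$ equals the spectral radius $r(\widetilde{\Psi}_{\bZ})$, so that Theorem~\ref{thm:quantum-pf} supplies the PSD eigenmatrix.

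There is a subtlety here: Theorem~\ref{thm:quantum-pf} refers to the spectral radius on $\CC^{d \times d}$. However, $\CC^{d \times d} = \CC^{d \times d}_{\herm} \oplus i\,\CC^{d \times d}_{\herm}$, and $\widetilde{\Psi}_{\bZ}$ is the complexification of its restriction to the Hermitian matrices, because it maps Hermitian matrices to Hermitian matrices and is complex linear. The two spectra therefore coincide and are real. Since the eigenvalues are real, $r = \max(\lambda_{\max}, -\lambda_{\min})$. If $\widetilde{\Psi}_{\bZ} = 0$, then $\rho = 0$ and any $\widetilde{\bS} \succeq \bm 0$ works.

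Otherwise Theorem~\ref{thm:quantum-pf} gives a nonzero $\widetilde{\bS} \succeq \bm 0$ with $\widetilde{\Psi}_{\bZ}(\widetilde{\bS}) = r\widetilde{\bS}$. In particular $r$ is an eigenvalue, so $r \leq \lambda_{\max} \leq r$, and hence $\rho = r$ and $\widetilde{\bS}$ is the desired eigenmatrix. This identification of $\rho$ with the spectral radius is the only step requiring any care, and it is the step I expect to be the main obstacle.

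Finally, we transfer the eigenmatrix back. Set $\bS = \bZ^{1/2}\widetilde{\bS}\bZ^{1/2}$. It is a congruence of a PSD matrix, so $\bS \succeq \bm 0$. Moreover,
\[ \Psi_{\bZ}(\bS) = \bZ\Phi(\bS)\bZ = \bZ^{1/2}\bigl(\bZ^{1/2}\Phi(\bZ^{1/2}\widetilde{\bS}\bZ^{1/2})\bZ^{1/2}\bigr)\bZ^{1/2} = \bZ^{1/2}\widetilde{\Psi}_{\bZ}(\widetilde{\bS})\bZ^{1/2} = \rho\bS, \]
which completes item 4.
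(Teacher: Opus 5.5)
Your proof is correct; items 1--3 and the final transfer to $\bS = \bZ^{1/2}\widetilde{\bS}\bZ^{1/2}$ match what the paper does, but you obtain the positive semidefinite top eigenmatrix by a different route.

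\textbf{The paper's route.} The paper argues variationally. It takes any maximizer $\widetilde{\bS}$ of $\langle \widetilde{\bS}, \widetilde{\Psi}_{\bZ}(\widetilde{\bS})\rangle$ over $\|\widetilde{\bS}\|_F = 1$ and splits it as $\widetilde{\bS}^+ - \widetilde{\bS}^-$ with orthogonal positive semidefinite parts. By positivity, the cross terms $\langle \widetilde{\bS}^{\mp}, \widetilde{\Psi}_{\bZ}(\widetilde{\bS}^{\pm})\rangle$ are nonnegative. Hence $|\widetilde{\bS}| = \widetilde{\bS}^+ + \widetilde{\bS}^-$ is also a maximizer, and therefore a top eigenmatrix.

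\textbf{Your route.} You invoke the quantum Perron--Frobenius theorem to get a positive semidefinite eigenmatrix at the spectral radius $r$. You then use that the spectrum is real to conclude $r = \lambda_{\max}(\widetilde{\Psi}_{\bZ})$. Your complexification argument and your squeeze $r \le \lambda_{\max} \le r$ are sound. In the degenerate case $\widetilde{\Psi}_{\bZ} = 0$ you should take $\widetilde{\bS}$ nonzero.

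\textbf{Comparison.} The paper's argument is elementary and self-contained: it uses only positivity and the finite-dimensional spectral theorem. Yours outsources the key step to an external Perron--Frobenius-type theorem. In exchange, it uses self-adjointness only to identify $r$ with $\lambda_{\max}$, not to produce the eigenmatrix. It would therefore apply directly to the non-self-adjoint $\Psi_{\bZ}$, which is completely positive with Kraus operators $\bZ\bA_i$ and similar to $\widetilde{\Psi}_{\bZ}$, so the detour through $\widetilde{\Psi}_{\bZ}$ is not strictly needed for existence.
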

\noindent
The key point is that, as one may verify, $\Psi_{\bZ}$ is \emph{not} in general self-adjoint, and the self-adjoint variant $\widetilde{\Psi}_{\bZ}$ is easier to work with.
Further, $\widetilde{\Psi}_{\bZ}$ is similar to $\Psi_{\bZ}$, as it is given by conjugating $\Psi_{\bZ}$ by the operation $\bS \mapsto \bZ^{-1/2}\bS\bZ^{-1/2}$.
For this reason the last part lets us translate eigenmatrices of one operator to those of the other, and in particular gives us a means of producing positive semidefinite eigenmatrices of $\Psi_{\bZ}$.

\begin{proof}
    The first three results are easy to verify directly.
    For the last part, the only part that is not immediate is that there exists a positive semidefinite eigenmatrix with eigenvalue $\rho$.
    By the variational description of the largest eigenvalue we have that for any
    \[ \widetilde{\bS} \in \argmax_{\|\widetilde{\bS}\|_F = 1}\, \langle \widetilde{\bS}, \widetilde{\Psi}_{\bZ}(\widetilde{\bS}) \rangle \]
    we will have $\widetilde{\Psi}_{\bZ}(\widetilde{\bS}) = \rho\widetilde{\bS}$, so it suffices to show that there exists a maximizer $\widetilde{\bS}$ that is positive semidefinite.
    Let $\widetilde{\bS}$ be any maximizer, and suppose $\widetilde{\bS}^{\pm} \succeq \bm 0$ are such that $\widetilde{\bS} = \widetilde{\bS}^+ - \widetilde{\bS}^-$ and $\langle \widetilde{\bS}^+, \widetilde{\bS}^- \rangle = 0$.
    Define $|\widetilde{\bS}| \colonequals \widetilde{\bS}^+ + \widetilde{\bS}^- \succeq \bm 0$ and note that $\|\,|\widetilde{\bS}|\,\|_F^2 = \|\widetilde{\bS}\|_F^2 = \|\widetilde{\bS}^+\|_F^2 + \|\widetilde{\bS}^-\|_F^2 = 1$.
    We have
    \begin{align*}
      \langle \widetilde{\bS}, \widetilde{\Psi}_{\bZ}(\widetilde{\bS}) \rangle
      &= \langle \widetilde{\bS}^+ - \widetilde{\bS}^-, \widetilde{\Psi}_{\bZ}(\widetilde{\bS}^+ - \widetilde{\bS}^-) \rangle \\
      &= \langle \widetilde{\bS}^+, \widetilde{\Psi}_{\bZ}(\widetilde{\bS}^+) \rangle + \langle \widetilde{\bS}^-, \widetilde{\Psi}_{\bZ}(\widetilde{\bS}^-) \rangle - \langle \widetilde{\bS}^-, \widetilde{\Psi}_{\bZ}(\widetilde{\bS}^+) \rangle - \langle \widetilde{\bS}^+, \widetilde{\Psi}_{\bZ}(\widetilde{\bS}^-) \rangle
        \intertext{Here all matrices inside the inner products are positive semidefinite by the complete positivity of $\widetilde{\Psi}_{\bZ}$, so we have}
      &\leq \langle \widetilde{\bS}^+ + \widetilde{\bS}^-, \widetilde{\Psi}_{\bZ}(\widetilde{\bS}^+ + \widetilde{\bS}^-) \rangle \\
      &= \langle |\widetilde{\bS}|, \widetilde{\Psi}_{\bZ}(|\widetilde{\bS}|) \rangle,
    \end{align*}
    thus $|\widetilde{\bS}|$ is also a maximizer.
\end{proof}

Thus, given any $\bZ \succ \bm 0$, we may always apply Lemma~\ref{lem:approx-slackness} with $\rho = \lambda_{\max}(\widetilde{\Psi}_{\bZ})$ by taking the $\bS \succeq \bm 0$ produced in the last result of Proposition~\ref{prop:Psi} and normalizing it to have $\Tr(\bS) = 1$.
If we insist on having $\rho = 1$ exactly, we may always renormalize $\bZ \leftarrow \bZ / \sqrt{\lambda_{\max}(\widetilde{\Psi}_{\bZ})}$ which will achieve this, though as we will see in Section~\ref{sec:bbp-iso} it can be easier to argue differently that a suitable $\bZ$ achieving $\rho = 1$ exists.

\subsection{Explicit bounds on spectral edge}
\label{sec:explicit-bounds}

We suppose in this section for the sake of simplicity that $\bA_0 = \bm 0$.
We explore explicit bounds on $\Lambda_{\sc}(\bm 0, \bA_1, \dots, \bA_n)$ that can be written in terms of simple quantities associated to the $\bA_i$.
Analogous bounds that allow $\bA_0 \neq \bm 0$ can be obtained by using that
\[ \Lambda_{\sc}(\bA_0, \bA_1, \dots, \bA_n) \leq \lambda_{\max}(\bA_0) + \Lambda_{\sc}(\bm 0, \bA_1, \dots, \bA_n). \]
The benchmark bound to which we compare ours is the following.
\begin{theorem}[Proof of Theorem 9.9.5 in \cite{Pisier-2003-IntroductionOperatorSpaceTheory}]
    \label{thm:pisier}
    Let $\bA_1, \dots, \bA_n \in \CC^{d \times d}_{\herm}$, and define
    \[ \sigma \colonequals \left\|\sum_{i = 1}^n \bA_i^2\right\|^{1/2}. \]
    Then,
    \[ \sigma \leq \Lambda_{\sc}(\bm 0, \bA_1, \dots, \bA_n) \leq 2\sigma. \]
\end{theorem}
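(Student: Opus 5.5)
The plan is to prove both bounds by exhibiting explicit certificates for the small SDPs of Theorem~\ref{thm:sc-sdp-small}: a primal point for the upper bound and a dual point for the lower bound. Throughout write $\bV \colonequals \sum_{i=1}^n \bA_i^2 = \Phi(\bm I_d)$, so that $\sigma^2 = \|\bV\| = \lambda_{\max}(\bV)$ since $\bV \succeq \bm 0$.

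\textbf{Upper bound.} Plug the scalar choice $\bZ = t\bm I_d$ with $t > 0$ into Lehner's formula \eqref{eq:lehner-sc} with $\bA_0 = \bm 0$. The objective becomes
\[ \lambda_{\max}\left(t\bm I_d + t^{-1}\bV\right) = t + t^{-1}\sigma^2. \]
Minimizing over $t$ with $t = \sigma$ gives $2\sigma$. The degenerate case $\sigma = 0$ is handled by letting $t \to 0$.

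\textbf{Lower bound.} Use the reduced dual \eqref{eq:sc-sdp-small-dual-2} with $\bA_0 = \bm 0$, so that
\[ \Lambda_{\sc} \geq 2\|\bS^{1/2}\Phi(\bS)^{1/2}\|_* \]
for any $\bS \succeq \bm 0$ with $\Tr(\bS) = 1$. I would take $\bS = \bu\bu^*$, where $\bu$ is a unit top eigenvector of $\bV$. Then $\bS^{1/2} = \bu\bu^*$, and
\[ \bS^{1/2}\Phi(\bS)^{1/2} = \bu\bw^*, \qquad \bw \colonequals \Phi(\bS)^{1/2}\bu, \]
is rank one, so its nuclear norm is
\[ \|\bu\|\,\|\bw\| = \langle \bu, \Phi(\bS)\bu\rangle^{1/2}. \]
Next compute
\[ \langle \bu, \Phi(\bu\bu^*)\bu\rangle = \sum_{i=1}^n |\langle \bu, \bA_i\bu\rangle|^2. \]
This only bounds $\Lambda_{\sc}$ below by $2\big(\sum_i |\langle \bu,\bA_i\bu\rangle|^2\big)^{1/2}$, which can be much smaller than $\sigma$. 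So the rank-one choice is the main obstacle, and I would change certificates.

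\textbf{Fixing the lower bound.} Instead I would use the large dual \eqref{eq:sc-sdp-large-dual} with a rank-one $\bm\Gamma = \bx\bx^*$. Take
\[ \bx = (\bx_0, \bx_1, \dots, \bx_n), \qquad \bx_0 = \tfrac{1}{\sqrt2}\bu, \qquad \bx_i = -\tfrac{1}{\sqrt2}\sigma^{-1}\bA_i\bu. \]
With these choices the objective is
\[ -2\Re\sum_{i=1}^n \langle \bx_0, \bA_i\bx_i\rangle = \frac{1}{\sigma}\sum_{i=1}^n \|\bA_i\bu\|^2 = \sigma. \]
However, the constraint $\bm\Gamma^{[0,0]} = \sum_i \bm\Gamma^{[i,i]}$ demands the matrix identity
\[ \bu\bu^* = \sigma^{-2}\sum_{i=1}^n \bA_i\bu\bu^*\bA_i, \]
which fails in general because only the traces match. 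To repair this, I would drop rank one and take the rank-one pieces as blocks of a general $\bm\Gamma$. Concretely, set $\bm\Gamma^{[0,0]} = \bS$ and $\bm\Gamma^{[i,i]}$ to be a decomposition of $\bS$, forced to be consistent by choosing $\bS$ as the Perron eigenmatrix of the completely positive map $\sigma^{-2}\Phi$ given by Theorem~\ref{thm:quantum-pf}; this avoids the trace-only issue.

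Alternatively, and more simply, I would appeal to the small dual \eqref{eq:sc-sdp-small-dual-2} with $\bS$ a positive semidefinite eigenmatrix of $\Phi$ with eigenvalue $r = r(\Phi)$. This gives
\[ \Lambda_{\sc} \geq 2\|\bS^{1/2}(r\bS)^{1/2}\|_* = 2\sqrt{r}\,\Tr\bS = 2\sqrt{r}. \]
What remains is to compare $r(\Phi)$ with $\sigma^2$, noting that $\Tr\Phi(\bS) = \langle\bV,\bS\rangle$. Since $r \geq \langle \bV, \bS\rangle$ is not automatic, I expect the final inequality $2\sqrt r \geq \sigma$ to be the key estimate. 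I would prove it via the test matrix $\bm I_d/d$ and self-adjointness of $\Phi$: averaging $\langle \bS, \Phi(\bS)\rangle$ and using Cauchy--Schwarz in the Frobenius norm gives $r \geq \sigma^2/4$, possibly losing dimension factors that would need to be removed. This last comparison is the step most likely to need a cleverer choice of $\bS$.
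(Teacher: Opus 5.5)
\textbf{Upper bound.} This part is correct and is the paper's argument: take $\bZ = t\bm I_d$ in Lehner's formula and optimize over $t$. The paper does not prove the lower bound itself. It cites Pisier's Fock-space argument and remarks that it could not find a dual SDP certificate.

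\textbf{Lower bound: the key estimate is false.} Both of your final routes end at the value $2\sqrt{r(\Phi)}$. The Perron eigenmatrix in \eqref{eq:sc-sdp-small-dual-2} gives exactly this. So does the natural realization of your Perron-based repair of \eqref{eq:sc-sdp-large-dual}, namely the Gram matrix of $\bS^{1/2}$ and $-r(\Phi)^{-1/2}\bA_i\bS^{1/2}$. If you normalize by $\sigma^{-2}$ instead, you get $\sum_i \bm\Gamma^{[i,i]} = (r(\Phi)/\sigma^2)\bS \neq \bS$ in general. The inequality $r(\Phi) \geq \sigma^2/4$ you hope to prove is false. Take the star $\bA_i = \be_1\be_i^{\top} + \be_i\be_1^{\top}$ for $i = 2, \dots, d$.
\begin{itemize}
\item $\sum_i \bA_i^2 = (d-1)\be_1\be_1^{\top} + \sum_{i \geq 2}\be_i\be_i^{\top}$, so $\sigma^2 = d - 1$.
\item $\Phi$ sends $\be_1\be_1^{\top} \mapsto \sum_{i \geq 2}\be_i\be_i^{\top}$ and $\be_j\be_j^{\top} \mapsto \be_1\be_1^{\top}$, swaps $\be_1\be_j^{\top}$ with $\be_j\be_1^{\top}$, and kills $\be_j\be_k^{\top}$ for distinct $j, k \geq 2$. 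Hence $r(\Phi) = \sqrt{d-1} = \sigma$.
\end{itemize}
So $2\sqrt{r(\Phi)} = 2\sigma^{1/2} < \sigma$ once $d \geq 18$, and no dimension-free bound $r(\Phi) \geq c\,\sigma^2$ can hold. Your test matrix $\bm I_d/d$ only gives $r(\Phi) \geq \Tr(\sum_i \bA_i^2)/d$, an average eigenvalue rather than the top one.

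\textbf{The missing idea.} Your rank-one attempt in \eqref{eq:sc-sdp-large-dual} was already essentially right, and its value $\sigma$ is correct. Two constraints fail: the block identity, and also $\Tr \bm\Gamma^{[0,0]} = 1$, since $\Tr(\bx_0\bx_0^*) = \frac12$. Both are fixed by padding the diagonal blocks with positive semidefinite matrices. Padding keeps $\bm\Gamma \succeq \bm 0$ and leaves the objective unchanged, because for $\bA_0 = \bm 0$ the objective involves only the blocks $\bm\Gamma^{[0,i]}$. With your $\bx$, set
\[ \bm\Gamma \colonequals \bx\bx^* + \mathsf{Diag}\left(\frac{1}{2\sigma^2}\Phi(\bu\bu^*), \frac{1}{2n}\bu\bu^*, \dots, \frac{1}{2n}\bu\bu^*\right). \]
Then:
\begin{itemize}
\item $\bm\Gamma^{[0,0]} = \frac12\bu\bu^* + \frac{1}{2\sigma^2}\Phi(\bu\bu^*) = \sum_{i=1}^n \bm\Gamma^{[i,i]}$;
\item $\Tr \bm\Gamma^{[0,0]} = \frac12 + \frac{1}{2\sigma^2}\langle \sum_i \bA_i^2, \bu\bu^* \rangle = 1$.
\end{itemize}
So $\bm\Gamma$ is dual feasible with value $\sigma$, and Theorem~\ref{thm:sc-sdp-large} gives $\Lambda_{\sc}(\bm 0, \bA_1, \dots, \bA_n) \geq \sigma$; the case $\sigma = 0$ is trivial. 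This is exactly the kind of explicit dual certificate the paper says it was unable to find.
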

\noindent
The proof in the reference uses manipulations involving the construction of the associated operator $\mathfrak{X}$ in Fock space.
But, there also is a simple proof of the upper bound using Lehner's formula or our SDP formulations: write
\[ \bV^2 \colonequals \sum_{i = 1}^n \bA_i^2, \]
so that $\sigma = \|\bV\|$.
Take $\bZ = t\bm I_d$ in Lehner's formula \eqref{eq:lehner-sc} for some $t > 0$.
Since $\Phi(\bm I_d) = \bV^2$, this gives
\[ \Lambda_{\sc}(\bm 0, \bA_1, \dots, \bA_n) \leq \lambda_{\max}(t\bV^2 + t^{-1}\bm I_d) = t\sigma^2 + t^{-1}. \]
Choosing the optimal $t = \sigma^{-1}$ gives the upper bound.

Curiously, though, we have not been able to find a dual SDP construction that reproduces the lower bound of $\sigma$.
By strong duality, one must exist, and it is a curious problem to find it; perhaps indirect constructions of the kind we use in Section~\ref{sec:bbp-iso} below may achieve this.
On the other hand, we can use our tools to produce different bounds that are tighter in some cases and indicate a natural criterion for when they should be nearly tight, unlike the above which always is loose by a factor of 2 on one side or the other.

\subsubsection{Bound from \texorpdfstring{$\Phi$}{Phi} operator}
Our first alternative bound is based on the associated operator $\Phi: \CC^{d \times d}_{\herm} \to \CC^{d \times d}_{\herm}$ defined as above,
\[ \Phi(\bS) \colonequals \sum_{i = 1}^n \bA_i \bS\bA_i. \]
This is a linear operator on $\CC^{d \times d}_{\herm}$ that is self-adjoint with respect to the Frobenius inner product (Proposition~\ref{prop:Phi-properties}).
Under this structure, we may speak of the eigenvalues of $\Phi$, and in particular we have the usual variational description
\[ \lambda_{\max}(\Phi) = \max_{\|\bS\|_F = 1} \langle \bS, \Phi(\bS) \rangle. \]
A way to view this in terms of matrices is to observe that
\[ \mathsf{vec}(\Phi(\bS)) = \sum_{i = 1}^n \mathsf{vec}(\bA_i\bS\bA_i) = \left(\sum_{i = 1}^n \bA_i \otimes \bA_i\right)\mathsf{vec}(\bS), \]
where $\bA_i \otimes \bA_i \in \CC^{d^2 \times d^2}_{\herm}$ denotes the Kronecker product of matrices (note that this is different than how this notation is used in, for example, the related work \cite{BB-2021-SpectralNormGaussianCorrelated}).
Thus we have
\[ \lambda_{\max}(\Phi) = \lambda_{\max}\left(\sum_{i = 1}^n \bA_i \otimes \bA_i \right). \]
Again, this is distinct from the $v$ parameter of \cite{BB-2021-SpectralNormGaussianCorrelated, BBVH-2021-MatrixConcentrationFreeProbability} given in our \eqref{eq:BBVH-v}, which in our notation is instead given by
\[ v = \lambda_{\max}\left(\sum_{i = 1}^n \mathsf{vec}(\bA_i)\mathsf{vec}(\bA_i)^{\top}\right). \]

We first establish a few preliminaries.
\begin{proposition}
    \label{prop:eigenmatrix}
    There exists $\bS$ with $\Phi(\bS) = \lambda_{\max}(\Phi) \bS$ having $\bS \succeq \bm 0$.
\end{proposition}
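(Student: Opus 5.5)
The plan is to repeat the argument in the last part of the proof of Proposition~\ref{prop:Psi}, now for $\Phi$ in place of $\widetilde{\Psi}_{\bZ}$. By Proposition~\ref{prop:Phi-properties}, $\Phi$ is a linear, Frobenius self-adjoint operator on the real vector space $\CC^{d \times d}_{\herm}$. Hence $\lambda_{\max}(\Phi) = \max_{\|\bS\|_F = 1} \langle \bS, \Phi(\bS) \rangle$, and every maximizer of this Rayleigh quotient is an eigenmatrix with eigenvalue $\lambda_{\max}(\Phi)$. So it suffices to exhibit a positive semidefinite maximizer.

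Let $\bS$ be any maximizer, which exists by compactness of the Frobenius unit sphere. Split it via its spectral decomposition as $\bS = \bS^+ - \bS^-$, with $\bS^\pm \succeq \bm 0$ and $\langle \bS^+, \bS^- \rangle = 0$. Set $|\bS| \colonequals \bS^+ + \bS^-$; then $\|\,|\bS|\,\|_F = \|\bS\|_F = 1$ because the two parts are orthogonal. Expanding the quadratic form gives
\[ \langle \bS, \Phi(\bS) \rangle = \langle \bS^+, \Phi(\bS^+) \rangle + \langle \bS^-, \Phi(\bS^-) \rangle - 2\langle \bS^+, \Phi(\bS^-) \rangle, \]
where self-adjointness lets us combine the two cross terms into one. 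Since $\Phi$ is positive (it is completely positive), $\Phi(\bS^-) \succeq \bm 0$. The inner product of two positive semidefinite matrices is nonnegative, so $\langle \bS^+, \Phi(\bS^-) \rangle \geq 0$. Therefore $\langle \bS, \Phi(\bS) \rangle \leq \langle |\bS|, \Phi(|\bS|) \rangle$, which makes $|\bS|$ also a maximizer, hence a positive semidefinite eigenmatrix with eigenvalue $\lambda_{\max}(\Phi)$.

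There is no real obstacle here. The only points needing care are that positivity of $\Phi$ (rather than complete positivity) is all that is used, and that the maximizer of the Rayleigh quotient over the real space of Hermitian matrices is indeed an eigenmatrix. The latter follows from the first-order condition, since $\Phi$ is self-adjoint on that real inner product space.

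Alternatively, one could invoke Theorem~\ref{thm:quantum-pf}. That route would additionally require checking that the spectral radius of $\Phi$ equals $\lambda_{\max}(\Phi)$, which follows from the same positivity argument, so the direct variational proof above is the one I would write.
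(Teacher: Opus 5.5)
Your proof is correct and matches the paper's: the paper also re-runs the Rayleigh-quotient argument of Proposition~\ref{prop:Psi} with $\Phi$ in place of $\widetilde{\Psi}_{\bZ}$, using $\langle |\bS|, \Phi(|\bS|) \rangle \geq \langle \bS, \Phi(\bS) \rangle$. You are also right that only positivity of $\Phi$, not complete positivity, is needed for this step.
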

\begin{proof}
    This follows by the complete positivity of $\Phi$ by the same argument as in Proposition~\ref{prop:Psi}, since we always have $\langle |\bS|, \Phi(|\bS|) \rangle \geq \langle \bS, \Phi(\bS) \rangle$.
\end{proof}

\begin{proposition}
    \label{prop:lam-max-bound}
    $\lambda_{\max}(\Phi) \leq \sigma^2$.
\end{proposition}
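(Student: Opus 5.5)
We need to show $\lambda_{\max}(\Phi) \leq \sigma^2$, where $\sigma^2 = \|\sum_i \bA_i^2\|$. The plan is to pair a positive semidefinite top eigenmatrix with a trace identity. By Proposition~\ref{prop:eigenmatrix}, there is a nonzero $\bS \succeq \bm 0$ with $\Phi(\bS) = \lambda_{\max}(\Phi)\bS$; we normalize it so that $\Tr(\bS) = 1$, which is possible because $\bS$ is nonzero and positive semidefinite, so its trace is positive.

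Taking traces of the eigen-equation and using part 4 of Proposition~\ref{prop:Phi-properties} gives
\[
\lambda_{\max}(\Phi) = \lambda_{\max}(\Phi)\Tr(\bS) = \Tr(\Phi(\bS)) = \left\langle \sum_{i = 1}^n \bA_i^2, \bS \right\rangle .
\]
Because $\bS \succeq \bm 0$ with $\Tr(\bS) = 1$, the right-hand side is at most $\lambda_{\max}(\sum_i \bA_i^2) = \|\sum_i \bA_i^2\| = \sigma^2$, which is the claim. This step uses that $\sum_i \bA_i^2 \succeq \bm 0$, so its largest eigenvalue coincides with its norm.

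The only point needing care is the positivity of the eigenmatrix, since without it the trace normalization and the bound $\langle \bM, \bS \rangle \leq \lambda_{\max}(\bM)\Tr(\bS)$ would fail. That positivity is supplied by Proposition~\ref{prop:eigenmatrix}.

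If instead $\lambda_{\max}(\Phi)$ were negative, the inequality would hold trivially, since $\sigma^2 \geq 0$. In fact this cannot happen: any positive semidefinite $\bS$ gives $\langle \bS, \Phi(\bS)\rangle \geq 0$, so $\lambda_{\max}(\Phi) \geq 0$.
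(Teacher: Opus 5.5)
Your proof is correct, but it takes a different route from the paper. The paper works directly with the variational formula $\lambda_{\max}(\Phi) = \max_{\|\bS\|_F = 1}\langle \bS, \Phi(\bS)\rangle$ over \emph{all} Hermitian $\bS$. It bounds each term by the trace inequality $\Tr(\bA_i\bS\bA_i\bS) \leq \Tr(\bA_i^2\bS^2)$, which gives $\langle \bS, \Phi(\bS)\rangle \leq \langle \sum_i \bA_i^2, \bS^2\rangle \leq \sigma^2\|\bS\|_F^2$. That argument uses only self-adjointness of $\Phi$ and this commutator-type inequality; it never needs a positive semidefinite eigenmatrix.

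You instead invoke Proposition~\ref{prop:eigenmatrix} to obtain a positive semidefinite top eigenmatrix. Then you need only the linear identity $\Tr(\Phi(\bS)) = \langle \sum_i \bA_i^2, \bS\rangle$, which replaces a quadratic estimate by a linear one. There is no circularity, since Proposition~\ref{prop:eigenmatrix} is proved independently via the $|\bS|$ argument. Your route is the standard Perron--Frobenius bound: the spectral radius of a positive map is at most the norm of its adjoint applied to $\bm I_d$. Using Theorem~\ref{thm:quantum-pf} instead of Proposition~\ref{prop:eigenmatrix}, it would carry over verbatim to non-self-adjoint completely positive maps. The paper's argument is more elementary, in that it does not depend on positivity of eigenmatrices at all.

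Your closing remark about negative $\lambda_{\max}(\Phi)$ is unnecessary. Your trace identity already shows $\lambda_{\max}(\Phi) = \langle \sum_i \bA_i^2, \bS\rangle \geq 0$.
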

\begin{proof}
    Suppose $\|\bS\|_F = 1$.
    We have
    \begin{align*}
      \langle \bS, \Phi(\bS) \rangle
      &= \sum_{i = 1}^n \Tr(\bA_i \bS\bA_i\bS) \\
      \intertext{and we have by a standard trace inequality that}
      &\leq \sum_{i = 1}^n \Tr(\bA_i^2 \bS^2) \\
      &= \left\langle \sum_{i = 1}^n \bA_i^2, \bS^2 \right\rangle \\
      &\leq \sigma^2 \Tr(\bS^2) \\
      &= \sigma^2,
    \end{align*}
    and the result follows by maximizing over $\bS$.
\end{proof}

The following is our first new bound on $\Lambda_{\sc}(\bm 0, \bA_1, \dots, \bA_n)$.
\begin{corollary}
    \label{cor:spectral-bound}
    In the above setting, suppose that there is $\bS$ such that $\Phi(\bS) = \lambda_{\max}(\Phi)\bS$ and having $\bS \succ \bm 0$.
    Then,
    \[ 2\sqrt{\lambda_{\max}(\Phi)} \leq \Lambda_{\sc}(\bm 0, \bA_1, \dots, \bA_n) \leq \left(\sqrt{\frac{\lambda_{\min}(\bS)}{\lambda_{\max}(\bS)}} + \sqrt{\frac{\lambda_{\max}(\bS)}{\lambda_{\min}(\bS)}}\right) \sqrt{\lambda_{\max}(\Phi)}. \]
    Further, when $\Phi(\bm I_d) = \sum_{i = 1}^n \bA_i^2 = C \bm I_d$ for some $C > 0$, then we may take $\bS = \bm I_d$ above, in which case both inequalities above are equalities and we obtain the exact formula
    \[ \Lambda_{\sc}(\bm 0, \bA_1, \dots, \bA_n) = 2\sqrt{\lambda_{\max}(\Phi)} = 2\sigma. \]
\end{corollary}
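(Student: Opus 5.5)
We prove the two inequalities separately, the lower bound from the dual formulation and the upper bound from Lehner's formula. Write $\lambda \colonequals \lambda_{\max}(\Phi)$, and let $\bS \succ \bm 0$ be the given eigenmatrix, normalized so that $\Tr(\bS) = 1$.

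\textbf{Lower bound.} We plug $\bS$ into the reduced dual \eqref{eq:sc-sdp-small-dual-2} with $\bA_0 = \bm 0$. Since $\Phi(\bS) = \lambda\bS$, we have $\Phi(\bS)^{1/2} = \sqrt{\lambda}\,\bS^{1/2}$, and so $\bS^{1/2}\Phi(\bS)^{1/2} = \sqrt{\lambda}\,\bS$. Its nuclear norm is $\sqrt{\lambda}\Tr(\bS) = \sqrt{\lambda}$, and the dual gives $\Lambda_{\sc} \geq 2\sqrt{\lambda}$. This step needs only $\bS \succeq \bm 0$, which Proposition~\ref{prop:eigenmatrix} provides in any case.

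\textbf{Upper bound.} We use Lehner's formula in the inverted form from the proof of Theorem~\ref{thm:sc-sdp-small}, namely $\Lambda_{\sc} = \inf_{\bZ \succ \bm 0} \lambda_{\max}(\Phi(\bZ) + \bZ^{-1})$, and try $\bZ = t\bS$ with $t > 0$. This gives $\Phi(\bZ) + \bZ^{-1} = t\lambda\bS + t^{-1}\bS^{-1}$. This matrix is a function of $\bS$ alone, so its eigenvalues are $t\lambda s + (ts)^{-1}$ for $s \in \spec(\bS)$. The map $s \mapsto t\lambda s + (ts)^{-1}$ is convex on $s > 0$, so its maximum over $[\lambda_{\min}(\bS), \lambda_{\max}(\bS)]$ occurs at an endpoint. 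We choose $t$ to balance the two endpoints: $t = (\lambda\,\lambda_{\min}(\bS)\lambda_{\max}(\bS))^{-1/2}$. Substituting, both endpoint values equal $\sqrt{\lambda}\left(\sqrt{\lambda_{\max}(\bS)/\lambda_{\min}(\bS)} + \sqrt{\lambda_{\min}(\bS)/\lambda_{\max}(\bS)}\right)$, which is the stated upper bound. The only care needed here is this balancing computation, which is routine algebra.

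\textbf{Equality case.} Suppose $\Phi(\bm I_d) = C\bm I_d$. Then $\bm I_d$ is an eigenmatrix with eigenvalue $C$. We must check that $C = \lambda_{\max}(\Phi)$. In this case $\sigma^2 = \|\Phi(\bm I_d)\| = C$, and Proposition~\ref{prop:lam-max-bound} gives $\lambda \leq \sigma^2 = C$. Conversely, testing the variational formula on $\bm I_d/\sqrt{d}$ gives $\lambda \geq C$. Hence $\lambda = C = \sigma^2$. With $\bS = \bm I_d$ the condition number equals $1$, so the upper bound collapses to $2\sqrt{\lambda}$ and matches the lower bound. This gives $\Lambda_{\sc} = 2\sqrt{\lambda} = 2\sigma$. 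I expect no real obstacle in any of these steps; the only thing to confirm is the identification $C = \lambda_{\max}(\Phi)$, which the two-sided argument above settles.
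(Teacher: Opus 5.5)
Your proposal is correct and follows essentially the same route as the paper: the trace-normalized eigenmatrix $\bS$ serves as the dual certificate in \eqref{eq:sc-sdp-small-dual-2} for the lower bound, $\bZ = t\bS$ in the inverted Lehner objective $\lambda_{\max}(\Phi(\bZ) + \bZ^{-1})$ with $t$ optimized gives the upper bound, and Proposition~\ref{prop:lam-max-bound} handles the isotropic case. You also spell out the balancing value of $t$ and the reverse inequality $\lambda_{\max}(\Phi) \geq C$, both of which the paper leaves implicit; your explicit $t$ requires $\lambda_{\max}(\Phi) > 0$, which fails only in the trivial case where every $\bA_i = \bm 0$.
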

\noindent
The latter claim about isotropic models is already known, as discussed for instance in \cite{BBVH-2021-MatrixConcentrationFreeProbability}, and can be seen as following from Lehner's formula or from the characterization of the spectral distribution of the operator $\mathfrak{X} = \sum_{i = 1}^n \bA_i \otimes \mathfrak{s}_i$ as obeying a semicircle law of suitable width in this case.
However, unlike the bounds in Theorem~\ref{thm:pisier}, the bound of Corollary~\ref{cor:spectral-bound} decays gracefully for models that are ``nearly isotropic'' in the sense that the top eigenvector of $\Phi$ (which is itself a matrix) is close in spectrum to the identity matrix.

\begin{proof}[Proof of Corollary~\ref{cor:spectral-bound}]
    Let $\bS$ be as in the statement, normalized such that $\Tr(\bS) = 1$.
    This $\bS$ is feasible for the dual SDP form of the Lehner formula \eqref{eq:sc-sdp-small-dual-2}.
    Plugging it in, we have
    \begin{align*}
      \lambda_{\max}(\bX)
      &\geq 2\|\bS^{1/2} \Phi(\bS)^{1/2}\|_* \\
      &= 2\|\bS^{1/2} (\lambda_{\max}(\Phi) \bS)^{1/2}\|_* \\
      &= 2\sqrt{\lambda_{\max}(\Phi)} \cdot \|\bS\|_* \\
      &= 2\sqrt{\lambda_{\max}(\Phi)},
    \end{align*}
    giving the lower bound (using that $\|\bS\|_* = \Tr(\bS) = 1$).

    For the upper bound, take $\bZ = t\bS$ for $\bS$ as above and $t > 0$ to be selected in the original Lehner formula \eqref{eq:lehner-sc}.
    We have
    \begin{align*}
      \lambda_{\max}(\bX)
      &\leq \lambda_{\max}(\bZ^{-1} + \Phi(\bZ)) \\
      &\leq \lambda_{\max}\left(\frac{1}{t}\bS^{-1} + t \lambda_{\max}(\Phi) \bS\right) \\
      &= \max\left\{\frac{1}{t \lambda_{\min}(\bS)} + t \lambda_{\max}(\Phi) \lambda_{\min}(\bS),\, \frac{1}{t \lambda_{\max}(\bS)} + t \lambda_{\max}(\Phi) \lambda_{\max}(\bS)\right\}.
    \end{align*}
    Minimizing over $t$ then gives the upper bound for $\lambda_{\max}(\bX)$, and an identical argument gives the same bound on $\lambda_{\max}(-\bX)$, whereby the same bound applies to $\|\bX\|$.

    Finally, when $\Phi(\bm I_d) = C \bm I_d$, then $\bm I_d$ is an eigenvector of $\Phi$ with eigenvalue $C = \sigma^2$.
    By Proposition~\ref{prop:lam-max-bound}, $\lambda_{\max}(\Phi) \leq \sigma^2$, so in fact $\lambda_{\max}(\Phi) = C = \sigma^2$ in this case.
    The result then follows by substituting into the bounds established above.
\end{proof}

\subsubsection{Bound from \texorpdfstring{$\bm V$}{V} matrix}

The second inequality will be based instead on the matrix mentioned above,
\[ \bV^2 \colonequals \Phi(\bm I_d) = \sum_{i = 1}^n \bA_i^2. \]
\begin{theorem}
    \label{thm:V-bounds}
    In the above setting, we have
    \[ 2\,\frac{\Tr(\bV^{3})}{\Tr(\bV^2)} \leq \Lambda_{\sc}(\bm 0, \bA_1, \dots, \bA_n) \leq 2\sigma = 2\|\bV\|. \]
    If $\bV = C\bm I_d$ for some $C > 0$, then both inequalities are tight.
\end{theorem}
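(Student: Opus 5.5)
The upper bound $\Lambda_{\sc}(\bm 0, \bA_1, \dots, \bA_n) \leq 2\sigma$ is exactly the one already derived above from Lehner's formula \eqref{eq:lehner-sc}. There we took $\bZ = t\bm I_d$, used $\Phi(\bm I_d) = \bV^2$, and chose $t = \sigma^{-1}$. So the work lies in the lower bound, which I will obtain by exhibiting an explicit dual feasible point in the reduced dual \eqref{eq:sc-sdp-small-dual-2}. The natural candidate is built from $\bV$ itself:
\[ \bS \colonequals \frac{\bV^2}{\Tr(\bV^2)}, \]
which is positive semidefinite with unit trace. With $\bA_0 = \bm 0$, the dual gives $\Lambda_{\sc} \geq 2\|\bS^{1/2}\Phi(\bS)^{1/2}\|_*$.

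To evaluate this, I will lower bound the nuclear norm by testing against a contraction, using $\|\bY\|_* \geq \Re\langle \bR, \bY\rangle$ for $\|\bR\|\leq 1$. The simplest choice is $\bR = \bm I_d$, giving $\|\bS^{1/2}\Phi(\bS)^{1/2}\|_* \geq \Tr(\bS^{1/2}\Phi(\bS)^{1/2})$. Here $\bS^{1/2} = \bV/\Tr(\bV^2)^{1/2}$. The term $\Phi(\bS)^{1/2}$ is the obstacle, since $\Phi(\bV^2) = \sum_i \bA_i\bV^2\bA_i$ does not simplify in general. One could instead try to compare $\Phi(\bV^2)$ to something like $\bV^4$, but that relation fails in general, so I would avoid it.

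A cleaner route is to pass through the non-reduced dual \eqref{eq:sc-sdp-small-dual} and choose $\bT$ directly. I need $\bT$ such that $\left[\begin{smallmatrix} \bS & -\bT \\ -\bT^* & \Phi(\bS)\end{smallmatrix}\right] \succeq \bm 0$ with $\Re\Tr(\bT)$ computable. The alternative is to go back to \eqref{eq:sc-sdp-small-dual-2} and use concavity: the map $(\bS,\bP)\mapsto \Tr(\bS^{1/2}\bP^{1/2})$ is jointly concave, by Lieb or the matrix geometric mean. Writing $\Phi(\bS) = \sum_i \bA_i \bS \bA_i$ and using superadditivity of $\bP\mapsto \Tr(\bS^{1/2}\bP^{1/2})$, I would get
\[ \Tr(\bS^{1/2}\Phi(\bS)^{1/2}) \geq \sum_i \Tr\big(\bS^{1/2}(\bA_i\bS\bA_i)^{1/2}\big). \]
This sum is still not obviously $\Tr(\bV^3)/\Tr(\bV^2)$, so I would rather pick $\bT$ explicitly. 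The choice I will try first is $\bT = \sum_i \bA_i \bS^{1/2} \bA_i \bS^{1/2}/c$ or a similar Cauchy--Schwarz-type witness. Concretely, the block matrix
\[ \sum_i \left[\begin{array}{c} \bS^{1/2}\bU_i \\ -\bA_i\bS^{1/2} \end{array}\right]\left[\begin{array}{c} \bS^{1/2}\bU_i \\ -\bA_i\bS^{1/2} \end{array}\right]^* \]
is PSD, with diagonal blocks $\bS^{1/2}(\sum_i \bU_i\bU_i^*)\bS^{1/2}$ and $\Phi(\bS)$, and off-diagonal block $-\sum_i \bS^{1/2}\bU_i\bS^{1/2}\bA_i$. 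Taking $\bU_i = \bA_i \bV^{-1}$ (on the range of $\bV$) gives $\sum_i \bU_i\bU_i^* = \bV^{-1}\bV^2\bV^{-1} = $ the identity on the range of $\bV$. Since $\bS$ lives on that range, the top-left block is dominated by $\bS$, so adding a PSD correction there keeps the matrix PSD.

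Then $\bT = \sum_i \bS^{1/2}\bA_i\bV^{-1}\bS^{1/2}\bA_i$, and with $\bS^{1/2} = \bV/\Tr(\bV^2)^{1/2}$ we get
\[ \Tr(\bT) = \frac{1}{\Tr(\bV^2)}\sum_i \Tr(\bV\bA_i\bA_i) = \frac{\Tr(\bV^3)}{\Tr(\bV^2)}. \]
Hence $\Lambda_{\sc} \geq 2\Tr(\bV^3)/\Tr(\bV^2)$, which is the claimed lower bound. The main thing to verify carefully is this PSD witness, including the range or pseudo-inverse bookkeeping when $\bV$ is singular.

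Tightness follows by direct computation: for $\bV = C\bm I_d$ both sides equal $2C$.
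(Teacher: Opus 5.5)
Your upper bound and the tightness check are fine. The PSD witness for the lower bound, which is the step you flagged, fails as written. With $\bU_i = \bA_i\bV^{-1}$ you get $\bU_i\bU_i^* = \bA_i\bV^{-2}\bA_i$, so $\sum_i \bU_i\bU_i^* = \Phi(\bV^{-2})$. The identity $\bV^{-1}\bV^2\bV^{-1}$ you computed is $\sum_i \bU_i^*\bU_i$, not $\sum_i \bU_i\bU_i^*$. The top-left block of your Gram matrix is therefore $\bS^{1/2}\Phi(\bV^{-2})\bS^{1/2}$, which is not dominated by $\bS$ in general.

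The resulting point is in fact infeasible. Each $\bA_i$ has range inside the range of $\bV$, so your $\bT$ simplifies to $\bV^3/\Tr(\bV^2)$. For invertible $\bV$, the Schur complement condition for dual feasibility then reads $\Phi(\bV^2) \succeq \bV^4$. Both sides have trace $\Tr(\bV^4)$, since $\Tr(\Phi(\bY)) = \langle \bV^2, \bY\rangle$. So this would force $\Phi(\bV^2) = \bV^4$, which is exactly the relation you correctly noted fails in general. For instance, $\bA_1 = \be_1\be_1^{\top}$ and $\bA_2 = \be_1\be_2^{\top} + \be_2\be_1^{\top}$ give $\bV^2 = \mathsf{Diag}(2,1)$ and $\Phi(\bV^2) - \bV^4 = \mathsf{Diag}(-1,1)$.

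The fix is to put $\bV^{-1}$ on the other side: take $\bU_i = \bV^{+}\bA_i$ with $\bV^{+}$ the pseudo-inverse. Then $\sum_i \bU_i\bU_i^*$ is the projection onto the range of $\bV$, the top-left block equals $\bS$ exactly, and $\bT = \Phi(\bV)/\Tr(\bV^2)$. More directly, with no pseudo-inverses,
\[ \sum_{i=1}^n \left[\begin{array}{c} \bA_i \\ -\bA_i\bV \end{array}\right]\left[\begin{array}{c} \bA_i \\ -\bA_i\bV \end{array}\right]^* = \left[\begin{array}{cc} \bV^2 & -\Phi(\bV) \\ -\Phi(\bV) & \Phi(\bV^2) \end{array}\right] \succeq \bm 0. \]
So $\bS = \bV^2/\Tr(\bV^2)$ and $\bT = \Phi(\bV)/\Tr(\bV^2)$ are feasible for \eqref{eq:sc-sdp-small-dual}, with $\Tr(\bT) = \langle \Phi(\bm I_d), \bV \rangle/\Tr(\bV^2) = \Tr(\bV^3)/\Tr(\bV^2)$. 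This gives the claimed lower bound.

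The paper instead works in the large dual \eqref{eq:sc-sdp-large-dual}. It uses the rank-one $\bm\Gamma$ built from the stacked column $(\bV, -\bA_1, \dots, -\bA_n)$, which is PSD by construction. Its only nontrivial constraint, $\bm\Gamma^{[0,0]} = \sum_i \bm\Gamma^{[i,i]}$, is just $\bV^2 = \sum_i \bA_i^2$. The corrected small-dual certificate has the same $\bS = \bm\Gamma^{[0,0]}$ and the same value.
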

\noindent
These bounds are close to tight when the model is nearly isotropic in a different sense, that of $\bV$ having a flat spectrum (rather than the properties of the optimizer $\bS$ in Corollary~\ref{cor:spectral-bound}).
\begin{proof}
    We take the dual witness $\bm\Gamma$ in \eqref{eq:sc-sdp-large-dual} given by
    \[ \bm\Gamma = \frac{1}{\mathrm{Tr}(\bV^2)}\left[\begin{array}{c} \bV \\ -\bA_1 \\ \vdots \\ -\bA_n \end{array}\right]\left[\begin{array}{c} \bV \\ -\bA_1 \\ \vdots \\ -\bA_n \end{array}\right]^{*}, \]
    which will satisfy the constraints, having $\bm\Gamma \succeq \bm 0$ by construction and
    \begin{align*}
      \bm\Gamma^{[0, 0]} &= \frac{1}{\Tr(\bV^2)}\bV^2, \\
      \bm\Gamma^{[i, i]} &= \frac{1}{\Tr(\bV^2)} \bA_i^2 \text{ for } 1 \leq i \leq n, \\
      \bm\Gamma^{[0, i]} &= -\frac{1}{\Tr(\bV^2)} \bV \bA_i \text{ for } 1 \leq i \leq n.
    \end{align*}
    As we are assuming $\bm A_0 = \bm 0$, this then gives the bound
    \[ \Lambda_{\sc}(\bm 0, \bA_1, \dots, \bA_n) \geq -2\Re\left(\sum_{i = 1}^n \langle \bm\Gamma^{[0, i]}, \bA_i \rangle\right) = \frac{2}{\Tr(\bV^2)} \Re\left(\sum_{i = 1}^n \Tr(\bV \bA_i^2)\right) = 2\,\frac{\Tr(\bV^{3})}{\Tr(\bV^2)}, \]
    as claimed.
\end{proof}

\begin{remark}
    There is a more general family of construction taking, for some $\bB_i \in \CC^{d \times d}$ for each $1 \leq i \leq n$,
    \[ \bm\Gamma = \frac{1}{\mathrm{Tr}(\sum_{i = 1}^n \bB_i\bB_i^*)}\left[\begin{array}{c} (\sum_{i = 1}^n \bB_i\bB_i^*)^{1/2} \\ -\bB_1 \\ \vdots \\ -\bB_n \end{array}\right]\left[\begin{array}{c} (\sum_{i = 1}^n \bB_i\bB_i^*)^{1/2} \\ -\bB_1 \\ \vdots \\ -\bB_n \end{array}\right]^{*}. \]
    This is feasible in \eqref{eq:sc-sdp-large-dual} for the same reasons as in the proof above, and it shows that
    \[ \Lambda_{\sc}(\bm 0, \bA_1, \dots, \bA_n) \geq \frac{2}{\mathrm{Tr}(\sum_{i = 1}^n \bB_i\bB_i^*)} \sum_{i = 1}^n\Re\left(\Tr\left(\bA_i \left(\sum_{i = 1}^n \bB_i\bB_i^*\right)^{1/2} \bB_i^*\right)\right). \]
    For example, optimizing over $\sum_{i = 1}^n \bB_i\bB_i^* = \bm I_d$ gives $\Lambda_{\sc}(\bm 0, \bA_1, \dots, \bA_n) \geq \frac{2}{d}\|\bA\|_*$, where $\bA \in \CC^{dn \times d}$ is the matrix of the $\bA_i$ stacked together.
    This bound looks natural, but is easily seen to never be superior to the one in Theorem~\ref{thm:V-bounds}.
    It is also reasonable, for example, to take $\bB_i \colonequals f(\bA_i)$ for various spectral functions $f$, and each such choice gives a different variant of the bound of Theorem~\ref{thm:V-bounds}.
\end{remark}

\section{Semidefinite program formulation for Rademacher formula}
\label{sec:rad}

\subsection{Non-convexity: Proof of Theorem~\ref{thm:rad-non-convex}}

First, let us complete the discussion begun in Section~\ref{sec:perspective}, where we showed that the non-commutative perspective associated to the Rademacher case does not yield a convex matrix-valued function.
Here, we show that in fact the entire (scalar-valued) objective of Lehner's formula in this case is not convex.
\begin{proof}[Proof of Theorem~\ref{thm:rad-non-convex}]
    We recall the statement, reformulated according to the notation of Section~\ref{sec:perspective-reform}: we will show that there exist $\bA_0, \bA_1, \dots, \bA_n \in \CC^{d \times d}_{\herm}$ such that the function $F: \{\bZ \in \CC^{d \times d}_{\herm}: \bZ \succ \bm 0\} \to \RR$ defined by $F(\bZ) \colonequals \lambda_{\max}(\bZ + \bA_0 + \sum_{i = 1}^n P_{f_{\rad}}(\bA_i, \bZ))$, the objective function of the optimization \eqref{eq:lehner-rad}, is not convex.
    These may further be chosen to satisfy $\bA_i \succ \bm 0$ for all $i = 0, 1, \dots, n$.

    We take $d = 2$, $n = 1$, and $\bA_0 = \bm 0$.
    In this case, we are reduced to considering the function
    \[ F_{\bA}(\bZ) \colonequals \lambda_{\max}(\bZ + P_{f_{\rad}}(\bA, \bZ)). \]
    for $\bA = \bA_1$.
    Taking
    \[ \bA = \left[\begin{array}{cc} 21 & 2 \\ 2 & 2 \end{array}\right], \hspace{0.5cm} \bY = \left[\begin{array}{cc} 10 & 3 \\ 3 & 3 \end{array}\right], \hspace{0.5cm} \bZ = \left[\begin{array}{cc} 10 & 3 \\ 3 & 2 \end{array}\right], \]
    a numerical computation shows that
    \[ \frac{1}{2}(F_{\bA}(\bY) + F_{\bA}(\bZ)) - F_{\bA}\left(\frac{1}{2}(\bY + \bZ)\right) < -10^{-4} < 0, \]
    and $\bA, \bY, \bZ \succ \bm 0$.
    To take $\bA_0 \succ \bm 0$ as well, we may instead set $\bA_0 = \epsilon \bm I_d$ for an arbitrary sufficiently small $\epsilon > 0$.
\end{proof}

\subsection{Rademacher SDP: Proof of Theorem~\ref{thm:rad-sdp}}
\label{sec:pf-rad-sdp}

We recall the SDP in question in Theorem~\ref{thm:rad-sdp}, which we denote
\[ \mathsf{SDP}_{\rad}(\bA_0, \bA_1, \dots, \bA_n) \colonequals \left\{\begin{array}{ll} \text{minimum of} & c \\ \text{subject to} & \bZ \succeq \bm 0, \\ & \left[\begin{array}{cc} \bY_i & \bA_i \\ \bA_i & \bY_i + \bZ \end{array}\right] \succeq \bm 0 \text{ for all } i \in [n], \\ & \bZ + \bA_0 + \sum_{i = 1}^n \bY_i \preceq c \bm I_d, \\ & c \in \RR, \bZ, \bY_i \in \CC^{d \times d}_{\herm}, \text{ for all } i \in [n] \end{array}\right\}. \]
In this section, we suppose that we have fixed $\bA_0, \bA_1, \dots, \bA_n \in \CC^{d \times d}_{\herm}$, and we write $\SDP_{\rad}$ and $\Lambda_{\rad}$ without arguments for the sake of brevity.

We prove Theorem~\ref{thm:rad-sdp} by proving below that $\SDP_{\rad} \leq \Lambda_{\rad}$ (Lemma~\ref{lem:rad-sdp-1}) and $\SDP_{\rad} \geq \Lambda_{\rad}$ (Lemma~\ref{lem:rad-sdp-2}).
First, we give the few final details aside from these two results.
\begin{proof}[Proof of Theorem~\ref{thm:rad-sdp}]
    From Lemmas~\ref{lem:rad-sdp-1} and~\ref{lem:rad-sdp-2} it follows that $\Lambda_{\rad} = \SDP_{\rad}$.
    The second SDP \eqref{eq:rad-sdp-dual} is the Lagrangian dual of \eqref{eq:rad-sdp-primal} with the maximum replaced by a supremum; we show this in Appendix~\ref{app:rad}.
    The primal SDP is strictly feasible by taking $\bZ = t\bm I_d$ and $\bY_i = t\bm I_d$ for $t > 0$ large enough, and then taking $c$ large enough.
    The dual SDP is strictly feasible by taking $\bm\Delta = d^{-1}\bm I_d$, $\bR_i = \bm 0$, $\bT_i = \alpha \bm\Delta$, and $\bS_i = (1 - \alpha)\bm\Delta$ for any $0 < \alpha < 1/n$.
    Both programs have finite value.
    Applying Slater's condition in both directions gives strong duality, and the primal minimum and dual maximum are both achieved.
\end{proof}

We now give the proofs of the two main inequalities.
The first direction is easier.
\begin{lemma}
    \label{lem:rad-sdp-1}
    $\SDP_{\rad} \leq \Lambda_{\rad}$.
\end{lemma}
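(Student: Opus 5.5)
The plan is to show that every $\bZ \succ \bm 0$ in Lehner's Rademacher formula \eqref{eq:lehner-rad} produces a feasible point of $\SDP_{\rad}$ with the same objective value. Taking the infimum over $\bZ$ then gives $\SDP_{\rad} \leq \Lambda_{\rad}$.

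Fix $\bZ \succ \bm 0$ and set, for each $i \in [n]$,
\[ \bY_i \colonequals P_{f_{\rad}}(\bA_i, \bZ) = \frac{1}{2}\bZ^{1/2}\left((\bm I_d + 4(\bZ^{-1/2}\bA_i\bZ^{-1/2})^2)^{1/2} - \bm I_d\right)\bZ^{1/2}, \]
and take $c \colonequals \lambda_{\max}(\bZ + \bA_0 + \sum_i \bY_i)$. The constraints $\bZ \succeq \bm 0$ and $\bZ + \bA_0 + \sum_i \bY_i \preceq c\bm I_d$ then hold trivially. It remains to verify the block constraint
\[ \left[\begin{array}{cc} \bY_i & \bA_i \\ \bA_i & \bY_i + \bZ \end{array}\right] \succeq \bm 0. \]

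To do this, conjugate the block matrix by $\mathsf{Diag}(\bZ^{-1/2}, \bZ^{-1/2})$, which preserves semidefiniteness since $\bZ \succ \bm 0$. Writing $\bB_i \colonequals \bZ^{-1/2}\bA_i\bZ^{-1/2}$ and $\widetilde{\bY}_i \colonequals f_{\rad}(\bB_i) = \frac{1}{2}((\bm I + 4\bB_i^2)^{1/2} - \bm I)$, the conjugated matrix is
\[ \left[\begin{array}{cc} \widetilde{\bY}_i & \bB_i \\ \bB_i & \widetilde{\bY}_i + \bm I_d \end{array}\right]. \]
All blocks are spectral functions of the single Hermitian matrix $\bB_i$, so they commute and are simultaneously diagonalized by the eigenbasis of $\bB_i$. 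The block matrix is therefore unitarily equivalent to a direct sum of $2 \times 2$ scalar matrices
\[ \left[\begin{array}{cc} y & b \\ b & y + 1 \end{array}\right], \qquad y = \frac{-1 + \sqrt{1 + 4b^2}}{2}, \]
one for each eigenvalue $b$ of $\bB_i$.

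Each such scalar matrix is positive semidefinite: its diagonal entries satisfy $y \geq 0$ and $y + 1 > 0$, and its determinant is $y(y+1) - b^2 = 0$, because $y$ is exactly the nonnegative root of $y^2 + y - b^2 = 0$. This is the key observation, and it also explains where $f_{\rad}$ comes from. Hence $(c, \bZ, \bY_1, \dots, \bY_n)$ is feasible with objective value equal to Lehner's objective at $\bZ$, and taking the infimum over $\bZ \succ \bm 0$ gives $\SDP_{\rad} \leq \Lambda_{\rad}$.

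The only step requiring care is the simultaneous-diagonalization reduction, which works precisely because the $\bZ^{1/2}$ factors in the perspective $P_{f_{\rad}}$ are removed by the congruence. I do not expect any real obstacle in this direction. The harder reverse inequality, where the SDP optimizer need not have $\bY_i$ of perspective form, is deferred to Lemma~\ref{lem:rad-sdp-2}.
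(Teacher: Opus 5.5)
Your proposal is correct and follows essentially the same argument as the paper. The paper also plugs $\bY_i = P_{f_{\rad}}(\bA_i, \bZ)$ into the SDP and factors the block matrix as $\mathsf{Diag}(\bZ^{1/2}, \bZ^{1/2})$ times a middle matrix times $\mathsf{Diag}(\bZ^{1/2}, \bZ^{1/2})$; that middle matrix reduces to $2 \times 2$ scalar blocks with zero determinant and nonnegative trace, which is the congruence you perform in the opposite direction.
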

\begin{proof}
    Suppose that $\bZ \succ \bm 0$.
    Introduce the variables
    \[ \bM_i \colonequals \bZ^{-1/2}\bA_i \bZ^{-1/2} \]
    so that
    \[ P_{f_{\rad}}(\bA_i, \bZ) = \bZ^{1/2} f_{\rad}(\bM_i) \bZ^{1/2}. \]
    Note that
    \begin{align*}
      &\left[\begin{array}{cc} P_{f_{\rad}}(\bA_i, \bZ) & \bA_i \\ \bA_i & P_{f_{\rad}}(\bA_i, \bZ) + \bZ \end{array}\right] \\
      &\hspace{1.5cm} = \left[\begin{array}{cc} \bZ^{1/2} & \bm 0 \\ \bm 0 & \bZ^{1/2} \end{array}\right]\left[\begin{array}{cc} f_{\rad}(\bM_i) & \bM_i \\ \bM_i & f_{\rad}(\bM_i) + \bm I_d \end{array}\right]\left[\begin{array}{cc} \bZ^{1/2} & \bm 0 \\ \bm 0 & \bZ^{1/2} \end{array}\right],
    \end{align*}
    and here the middle matrix is positive semidefinite, since it is similar to a block-diagonal matrix of two-by-two blocks of the form
    \[ \left[\begin{array}{cc} f_{\rad}(\lambda) & \lambda \\ \lambda & f_{\rad}(\lambda) + 1 \end{array}\right] \]
    over $\lambda$ the eigenvalues of $\bM_i$, and any matrix of the above form is positive semidefinite (having determinant zero and non-negative trace).
    Thus we have found
    \[ \left[\begin{array}{cc} P_{f_{\rad}}(\bA_i, \bZ) & \bA_i \\ \bA_i & P_{f_{\rad}}(\bA_i, \bZ) + \bZ \end{array}\right] \succeq \bm 0. \]
    Thus, in $\mathsf{SDP}_{\rad}$ we have that, for any given $\bZ \succ \bm 0$, $\bY_i = P_{f_{\rad}}(\bA_i, \bZ)$ satisfy the second positivity constraint, and the minimum value of $c$ for these $\bZ$ and $\bY_i$ is precisely $\lambda_{\max}(\bA_0 + \bZ + \sum_{i = 1}^n P_{f_{\rad}}(\bA_i, \bZ))$, the objective function of $\Lambda_{\rad}$.
    So, we have $\SDP_{\rad} \leq \Lambda_{\rad}$, as claimed.
\end{proof}

The opposite bound is subtler and more interesting.
Indeed, as we discuss further in Section~\ref{sec:ai}, this result was not expected by the author and was proved autonomously by the GPT-5.5~Pro large language model when prompted to instead find an example showing that it does \emph{not} hold.
We present the two proofs that this model discovered, one using operator-theoretic reasoning and based on the original relationship in Lehner's formula between $\Lambda_{\rad}$ and operator norms, and the other treating $\Lambda_{\rad}$ purely as a finite-dimensional optimization problem.
The first proof is simpler, but the second reveals some interesting structure in the Rademacher Lehner formula and the non-commutative perspective of $f_{\rad}$.
\begin{lemma}
    \label{lem:rad-sdp-2}
    $\SDP_{\rad} \geq \Lambda_{\rad}$.
\end{lemma}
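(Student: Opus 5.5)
The plan is to bound $\lambda_{\max}(\mathfrak{X})$ for $\mathfrak{X} = \bA_0 \otimes \id + \sum_{i=1}^n \bA_i \otimes \mathfrak{r}_i$ directly, using the concrete realization of Section~\ref{sec:appl}, rather than passing through Lehner's nonlinear objective. A purely matrix-level argument of the form ``$\bY_i \succeq P_{f_{\rad}}(\bA_i, \bZ)$ for feasible $\bY_i$'' cannot work. If it did, the convex SDP feasible set would be exactly the epigraph of $P_{f_{\rad}}$, which would make $P_{f_{\rad}}$ convex and contradict Proposition~\ref{prop:frad}. So the SDP constraints will instead be used edge by edge on the tree underlying the operator.

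\textbf{Setup.} Fix any feasible $(c, \bZ, \bY_1, \dots, \bY_n)$ for \eqref{eq:rad-sdp-primal}; it suffices to show $\lambda_{\max}(\mathfrak{X}) \leq c$. Realize $\mathfrak{r}_i$ as the left-regular images of the generators $g_i$ of $G = (\ZZ/2\ZZ)^{*n}$ on $\ell^2(G)$. Then $\mathfrak{X}$ acts on $\CC^d \otimes \ell^2(G)$. Write vectors as $\xi = \sum_{w \in G} \xi_w \otimes \delta_w$ with $\xi_w \in \CC^d$, and restrict to finitely supported $\xi$; these are dense, and $\mathfrak{X}$ is bounded, so this loses nothing. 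Since $\mathfrak{r}_i \delta_w = \delta_{g_i w}$, the quadratic form is
\[ \langle \xi, \mathfrak{X}\xi \rangle = \sum_{w} \langle \xi_w, \bA_0 \xi_w \rangle + \sum_{\{w, g_i w\}} 2\Re \langle \xi_w, \bA_i \xi_{g_i w}\rangle , \]
where the second sum runs over the edges of the Cayley graph of $G$. This graph is the $n$-regular tree, and each vertex has exactly one incident edge of each type $i$.

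\textbf{Edge bound.} For each edge, the constraint $\left[\begin{smallmatrix} \bY_i & \bA_i \\ \bA_i & \bY_i + \bZ\end{smallmatrix}\right] \succeq \bm 0$ is evaluated at the vector $(a, -b)$. This gives
\[ 2\Re\langle a, \bA_i b\rangle \leq \langle a, \bY_i a\rangle + \langle b, (\bY_i + \bZ) b\rangle \]
for all $a, b \in \CC^d$, and the inequality is asymmetric in $a$ and $b$.

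\textbf{Charging scheme.} Root the tree at the identity $e$. For every edge, apply the edge bound with $a$ the parent's vector and $b$ the child's vector. Each non-root vertex $w$ is the child along exactly one edge, say of type $j$, and the parent along the other $n-1$ edges, of types $i \neq j$. It therefore receives the total charge $\bY_j + \bZ + \sum_{i \neq j} \bY_i = \bZ + \sum_{i} \bY_i$. The root is the parent along all $n$ edges and receives $\sum_i \bY_i \preceq \bZ + \sum_i \bY_i$, using $\bZ \succeq \bm 0$. Summing over all edges then gives
\[ \langle \xi, \mathfrak{X}\xi\rangle \leq \sum_w \Big\langle \xi_w, \Big(\bA_0 + \bZ + \sum_{i=1}^n \bY_i\Big)\xi_w\Big\rangle \leq c\|\xi\|^2 , \]
and hence $\Lambda_{\rad} \leq c$.

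\textbf{Main obstacle.} The main obstacle is finding this asymmetric charging. The constraint has $\bY_i$ in one diagonal slot and $\bY_i + \bZ$ in the other, and the point is to orient each edge so that every vertex collects exactly one ``$+\bZ$''. The root orientation of the tree does this, with only the harmless slack $\bZ \succeq \bm 0$ at the root. The remaining points are routine: the identification of $\mathfrak{X}$ with this tree operator from Section~\ref{sec:appl}, and the density argument. If a proof independent of the operator realization were preferred, the same charging could instead be run on the finite random-reflection or random-lift models of Theorem~\ref{thm:lehner-random}, but the tree argument above is the one I would carry out.
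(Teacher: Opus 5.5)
Your argument is correct and is essentially the paper's first proof of this lemma, written edge by edge in coordinates. Orienting each type-$i$ edge toward the root corresponds to the paper's decomposition $\ell^2(G) = \mathfrak{q}_i\ell^2(G) \oplus \mathfrak{p}_i\ell^2(G)$, with $\mathfrak{p}_i$ the projection onto words beginning with $g_i$; summing your edge bound over all type-$i$ edges gives the paper's operator inequality $\bA_i \otimes \mathfrak{r}_i \preceq \bY_i \otimes \mathfrak{1} + \bZ \otimes \mathfrak{p}_i$; and your count that each non-root vertex collects exactly one $\bZ$ is the identity $\sum_{i=1}^n \mathfrak{p}_i = \mathfrak{1} - \mathfrak{p}_0$.
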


\begin{proof}[Proof 1 of Lemma~\ref{lem:rad-sdp-2}]
    We use the Lehner formula \eqref{eq:lehner-rad}, which gives
    \[ \Lambda_{\rad} = \lambda_{\max}\left(\bA_0 \otimes \id + \sum_{i = 1}^n \bA_i \otimes \mathfrak{r}_i\right). \]
    We use the concrete realization of the $\mathfrak{r}_i$ mentioned above and in \cite{Lehner-1999-NormFreeOperatorMatrixCoefficients}: let $G = (\ZZ / 2\ZZ)^{* n}$ be the free product, write $g_1, \dots, g_n$ for the generators, and realize $\mathfrak{r}_i$ on the Hilbert space $\sH = \ell^2(G)$ as left multiplication by $g_i$.
    Write $\mathfrak{p}_i$ for the projection operator to the closed span of words which, when reduced by the relations $g_i^2 = 1$, have $g_i$ as their first term for $1 \leq i \leq n$, write $\mathfrak{p}_0$ for the projection to the span of the empty word, and let $\mathfrak{q}_i = \mathfrak{1} - \mathfrak{p}_i$.
    We have $\sum_{i = 0}^n \mathfrak{p}_i = \mathfrak{1}$ and $\mathfrak{p}_i + \mathfrak{q}_i = \mathfrak{1}$ for all $0 \leq i \leq n$.
    Note that, for $1 \leq i \leq n$, left multiplication by $\mathfrak{r}_i$ maps the subspace $\mathfrak{p}_i\sH$ to $\mathfrak{q}_i\sH$ and vice-versa.
    Thus, there is a unitary $\mathfrak{u}_i$ such that, relative to the decomposition $\sH = \mathfrak{q}_i\sH \oplus \mathfrak{p}_i\sH$, we have the block decompositions
    \[ \mathfrak{r}_i = \left[\begin{array}{cc} \mathfrak{0} & \mathfrak{u}_i^* \\ \mathfrak{u}_i & \mathfrak{0} \end{array}\right], \quad
      \mathfrak{q}_i = \left[\begin{array}{cc} \mathfrak{1} & \mathfrak{0} \\ \mathfrak{0} & \mathfrak{0} \end{array}\right], \quad
      \mathfrak{p}_i = \left[\begin{array}{cc} \mathfrak{0} & \mathfrak{0} \\ \mathfrak{0} & \mathfrak{1} \end{array}\right]. \]

    Returning to the main argument, let $(c, \bZ, \bY_1, \dots, \bY_n)$ be feasible for $\SDP_{\rad}$.
    Conjugating the block constraint involving $\bY_i$ appropriately, we have
    \[ \left[\begin{array}{cc} \bY_i & -\bA_i \\ -\bA_i & \bY_i + \bZ \end{array}\right] \succeq \bm 0, \]
    and combining with the above observation gives
    \[ \bY_i \otimes \mathfrak{q}_i + (\bY_i + \bZ) \otimes \mathfrak{p}_i - \bA_i \otimes \mathfrak{r}_i \succeq \mathfrak{0}, \]
    and rearranging this gives
    \[ \bA_i \otimes \mathfrak{r}_i \preceq \bY_i \otimes \mathfrak{1} + \bZ \otimes \mathfrak{p}_i. \]
    Summing over $i$ and using that $\sum_{i = 1}^n \mathfrak{p}_i = \mathfrak{1} - \mathfrak{p}_0 \preceq \mathfrak{1}$,
    \[ \bA_0 \otimes \mathfrak{1} + \sum_{i = 1}^n \bA_i \otimes \mathfrak{r}_i = \left(\bA_0 + \sum_{i = 1}^n\bY_i\right) \otimes \mathfrak{1} + \bZ \otimes \left(\sum_{i = 1}^n \mathfrak{p}_i \right) \preceq \left(\bA_0 + \sum_{i = 1}^n\bY_i + \bZ\right) \otimes \mathfrak{1}. \]
    Taking $\lambda_{\max}$ on either side and using Lehner's formula then gives
    \[ \Lambda_{\rad} \leq \lambda_{\max}\left(\bA_0 + \sum_{i = 1}^n\bY_i + \bZ\right) \leq c, \]
    and the result follows since this holds for all feasible points of $\SDP_{\rad}$.
\end{proof}

The second proof is instead based on several special properties of the function $f_{\rad}$ and the associated non-commutative perspective.
First, we show that the non-commutative perspective of $f_{\rad}$ can be characterized as the solution of a Riccati-like matrix equation.
\begin{proposition}
    \label{prop:Pfrad-riccati}
    Let $\bA, \bW \in \CC^{d \times d}_{\herm}$ have $\bW \succ \bm 0$.
    Then, for $\bP \succeq \bm 0$, $\bP = P_{f_{\rad}}(\bA, \bW)$ if and only if $\bP = \bA(\bP + \bW)^{-1}\bA$.
\end{proposition}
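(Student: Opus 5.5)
The plan is to reduce both directions to a scalar-type identity by conjugating with $\bW^{1/2}$. Set $\bM \colonequals \bW^{-1/2}\bA\bW^{-1/2}$ and, for any Hermitian $\bP$, set $\widetilde{\bP} \colonequals \bW^{-1/2}\bP\bW^{-1/2}$. Since $\bP + \bW = \bW^{1/2}(\widetilde{\bP} + \bm I_d)\bW^{1/2}$, the equation $\bP = \bA(\bP + \bW)^{-1}\bA$ becomes
\[ \widetilde{\bP} = \bM(\widetilde{\bP} + \bm I_d)^{-1}\bM, \]
with the condition $\bP \succeq \bm 0$ equivalent to $\widetilde{\bP} \succeq \bm 0$, and with $\bP = P_{f_{\rad}}(\bA, \bW)$ equivalent to $\widetilde{\bP} = f_{\rad}(\bM)$. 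So it suffices to prove the normalized statement with $\bW = \bm I_d$.

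For the forward direction, note that $f_{\rad}(\bM)$ commutes with $\bM$. In an eigenbasis of $\bM$, the claim reduces to the scalar identity $f(\lambda)(f(\lambda) + 1) = \lambda^2$ for $f = f_{\rad}(\lambda) = (-1 + \sqrt{1 + 4\lambda^2})/2$. This identity holds because $f_{\rad}(\lambda)$ is the nonnegative root of $x^2 + x - \lambda^2 = 0$. Also $f_{\rad}(\bM) \succeq \bm 0$, so $\widetilde{\bP} = f_{\rad}(\bM)$ is a positive semidefinite solution.

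The reverse direction, uniqueness among positive semidefinite solutions, is the main obstacle, because a general solution $\widetilde{\bP}$ need not commute with $\bM$ a priori. I would try the following.
\begin{enumerate}
\item Rewrite the equation as $\widetilde{\bP}(\widetilde{\bP} + \bm I_d)^{-1}$-type relations. More directly, let $\bQ \colonequals (\widetilde{\bP} + \bm I_d)^{-1/2}\bM(\widetilde{\bP} + \bm I_d)^{-1/2}$. Then $\bM(\widetilde{\bP} + \bm I_d)^{-1}\bM = \widetilde{\bP}$, conjugated appropriately, gives
\[ \bQ^2 = (\widetilde{\bP} + \bm I_d)^{-1/2}\widetilde{\bP}(\widetilde{\bP} + \bm I_d)^{-1/2} = \bm I_d - (\widetilde{\bP} + \bm I_d)^{-1}. \]
\item From this, $\bQ^2$ is a function of $\widetilde{\bP}$ and hence commutes with $\widetilde{\bP}$.
\item Alternatively, and more robustly, I would use a monotonicity argument. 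The map $G(\bX) \colonequals \bM(\bX + \bm I_d)^{-1}\bM$ is operator-monotone decreasing on $\bX \succeq \bm 0$. The map $\bX \mapsto G(G(\bX))$ is increasing, with iterates from $\bm 0$ and from any upper bound bracketing every PSD fixed point. Since every PSD fixed point satisfies $\bX \preceq \bM^2$, the iterates are trapped between $\bm 0$ and $\bM^2$.
\item All iterates started from $\bm 0$ or $\bM^2$ are functions of $\bM$ and commute with it. Their limits are then commuting fixed points, which the scalar analysis identifies: for each eigenvalue $\lambda$ of $\bM$, the scalar map $x \mapsto \lambda^2/(x+1)$ is a contraction on $[0, \lambda^2]$ in the relevant sense, with unique fixed point $f_{\rad}(\lambda)$.
\end{enumerate}
Both limits therefore equal $f_{\rad}(\bM)$. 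Since any PSD fixed point $\widetilde{\bP}$ is squeezed between them, by induction on the monotone iteration, it follows that $\widetilde{\bP} = f_{\rad}(\bM)$.

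The delicate point is justifying the squeeze. Applying $G$ reverses the order, so the sandwich $\bm 0 \preceq \widetilde{\bP} \preceq \bM^2$ propagates through the two-step map as $G^{2k}(\bm 0) \preceq \widetilde{\bP} \preceq G^{2k}(\bM^2)$. I would then check that both sequences converge to $f_{\rad}(\bM)$ eigenvalue-by-eigenvalue.
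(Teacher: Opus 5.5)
Your proof is correct, but for the uniqueness direction it takes a different route from the paper. The paper finishes the substitution you started in items 1--2. The identity $\bQ^2 = \bm I_d - (\widetilde{\bP} + \bm I_d)^{-1}$ is most useful read the other way, as $\widetilde{\bP} = \bQ^2(\bm I_d - \bQ^2)^{-1}$. Then $(\widetilde{\bP} + \bm I_d)^{1/2} = (\bm I_d - \bQ^2)^{-1/2}$ commutes with $\bQ$, and $\bM = (\widetilde{\bP} + \bm I_d)^{1/2}\bQ(\widetilde{\bP} + \bm I_d)^{1/2} = \bQ(\bm I_d - \bQ^2)^{-1}$. Hence $\bM$ and $\widetilde{\bP}$ are both functions of the single Hermitian matrix $\bQ$, whose spectrum lies in $(-1,1)$, and this removes the commutation obstacle outright. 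Since $c \mapsto c/(1-c^2)$ is injective on $(-1,1)$ and $c^2/(1-c^2) = f_{\rad}(c/(1-c^2))$, functional calculus gives $\widetilde{\bP} = f_{\rad}(\bM)$; reversing the steps gives the other direction.

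Your monotone squeeze instead uses three facts: $G$ reverses the Loewner order on the positive semidefinite cone, every positive semidefinite fixed point lies between $\bm 0$ and $\bM^2$, and the $G^2$-iterates from these endpoints are functions of $\bM$. This is sound. One detail to fix: the one-step scalar map $x \mapsto \lambda^2/(x+1)$ is not a contraction on $[0,\lambda^2]$ when $|\lambda| > 1$ (its derivative at $0$ is $-\lambda^2$). So run the scalar analysis on the two-step map $x \mapsto \lambda^2(x+1)/(\lambda^2+x+1)$ instead. This map is increasing, has Lipschitz constant at most $\lambda^4/(\lambda^2+1)^2 < 1$ on $[0,\infty)$, and its only nonnegative fixed point is $f_{\rad}(\lambda)$, since its fixed points solve $x^2 + x - \lambda^2 = 0$. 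Then pass to the limit using closedness of the positive semidefinite cone.

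The paper's argument is finite, purely algebraic, and exhibits an explicit joint parametrization of $\bM$ and $\widetilde{\bP}$. Yours is longer and needs a limit, but as a by-product it proves (after conjugating by $\bW^{1/2}$) the bracketing and convergence statement of Proposition~\ref{prop:Pfrad-iter}, which the paper only cites. Your upper iterates $G^{2k}(\bM^2) = G^{2k+1}(\bm 0)$ are exactly the odd iterates from $\bm 0$, and the argument is in the same monotone-iteration spirit as Proof~2 of Lemma~\ref{lem:rad-sdp-2}.
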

\begin{proof}
    Define $\what{\bA} \colonequals \bW^{-1/2}\bA \bW^{-1/2}$ and $\what{\bP} \colonequals \bW^{-1/2}\bP \bW^{-1/2}$.
    Conjugating the condition $\bP = \bA(\bP + \bW)^{-1}\bA$ by $\bW^{-1/2}$ gives
    \[ \what{\bP} = \what{\bA}(\bm I_d + \what{\bP})^{-1}\what{\bA}. \]
    Conjugating this in turn by $(\bm I_d + \what{\bP})^{-1/2}$ gives, if we set $\bC \colonequals (\bm I_d + \what{\bP})^{-1/2}\what{\bA}(\bm I_d + \what{\bP})^{-1/2}$, the equivalent
    \[ \bC^2 = (\bm I_d + \what{\bP})^{-1/2}\what{\bP} (\bm I_d + \what{\bP})^{-1/2} = \what{\bP}(\bm I_d + \what{\bP})^{-1}. \]
    We invert the above relation and then substitute for $\what{\bA}$ to obtain
    \begin{align*}
      \what{\bP}
      &= \bC^2(\bm I_d - \bC^2)^{-1}, \\
      \what{\bA}
      &= (\bm I_d + \what{\bP})^{1/2} \bC (\bm I_d + \what{\bP})^{1/2} \\
      &= \bC(\bm I_d - \bC^2)^{-1}.
    \end{align*}
    We may then verify by functional calculus that $\what{\bP} = f(\what{\bA})$, and conjugating by $\bW^{1/2}$ gives that $\bP = P_{f_{\rad}}(\bA, \bW)$.
    This shows that $\bP = \bA(\bP + \bW)^{-1}\bA$ implies that $\bP = P_{f_{\rad}}(\bA, \bW)$, and reversing the same steps shows the converse.
\end{proof}

The basic idea that we will apply below is that, by analogy with Riccati equations, we may solve this equation for $\bP$ and thereby compute $P_{f_{\rad}}(\bA, \bW)$ with a fixed point iteration (given the structure of the equation we use, this may also be viewed as an infinite matrix-valued continued fraction for the non-commutative perspective).
We will actually use this in slightly different form in the main proof, but, since it also follows from prior work, we state a simple standalone version here for the sake of intuition.
\begin{proposition}[Lemma 5.1 and Proposition 5.1 of \cite{FL-1996-HermitianSolutionsRiccati}; Algorithm 2.4 of \cite{GL-1999-IterativeSolutionMatrixEquations}]
    \label{prop:Pfrad-iter}
    In the setting of Proposition~\ref{prop:Pfrad-riccati}, define the sequence
    \begin{align*}
      \bP^{(0)} &\colonequals \bm 0, \\
      \bP^{(k + 1)} &\colonequals \bA (\bP^{(k)} + \bW)^{-1} \bA.
    \end{align*}
    Then, $\bP^{(k)} \to P_{f_{\rad}}(\bA, \bW)$ as $k \to \infty$.
    Further,
    \[ \bP^{(0)} \preceq \bP^{(2)} \preceq \bP^{(4)} \preceq \cdots \preceq P_{f_{\rad}}(\bA, \bW) \preceq \cdots \preceq \bP^{(5)} \preceq \bP^{(3)} \preceq \bP^{(1)}. \]
\end{proposition}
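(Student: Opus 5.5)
We need to prove Proposition~\ref{prop:Pfrad-iter}: the iteration $\bP^{(k+1)} = \bA(\bP^{(k)} + \bW)^{-1}\bA$ started from $\bP^{(0)} = \bm 0$ converges to $P_{f_{\rad}}(\bA,\bW)$, with even iterates increasing and odd iterates decreasing. The plan is to first conjugate by $\bW^{-1/2}$, exactly as in the proof of Proposition~\ref{prop:Pfrad-riccati}. Writing $\what{\bA} = \bW^{-1/2}\bA\bW^{-1/2}$ and $\what{\bP}^{(k)} = \bW^{-1/2}\bP^{(k)}\bW^{-1/2}$, the iteration becomes
\[ \what{\bP}^{(k+1)} = \what{\bA}(\bm I_d + \what{\bP}^{(k)})^{-1}\what{\bA}, \qquad \what{\bP}^{(0)} = \bm 0. \]
Conjugation by $\bW^{\pm 1/2}$ preserves the Loewner order and continuity, so it suffices to prove everything in this normalized setting with target $f_{\rad}(\what{\bA})$.

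The key observation is that every iterate is a function of $\what{\bA}$ alone. By induction, $\what{\bP}^{(k)} = g_k(\what{\bA})$ for scalar functions defined by $g_0 = 0$ and $g_{k+1}(t) = t^2/(1 + g_k(t))$. All of these commute, so functional calculus applies. The whole statement then reduces to the scalar continued fraction $x_{k+1} = t^2/(1+x_k)$ for each fixed eigenvalue $t$ of $\what{\bA}$, with the Loewner inequalities following eigenvalue by eigenvalue. If $t = 0$, every iterate past the first is $0 = f_{\rad}(0)$, so assume $t \neq 0$.

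For the scalar problem, let $h(x) = t^2/(1+x)$ on $[0,\infty)$. This map is decreasing, and its unique nonnegative fixed point solves $x^2 + x - t^2 = 0$, namely $x^\star = f_{\rad}(t)$. Because $h$ is decreasing, $h\circ h$ is increasing. From $x_0 = 0 \le x^\star$ we get $x_1 = h(0) = t^2 \ge x^\star$ and $x_2 = h(t^2) \ge 0 = x_0$. Applying the increasing map $h\circ h$ repeatedly gives that the even iterates increase, the odd iterates decrease, and $x_{2k} \le x^\star \le x_{2k+1}$, using that $h$ reverses the inequalities relative to $x^\star$.

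The only step needing an actual estimate is convergence, i.e.\ ruling out a 2-cycle. I would use the contraction estimate
\[ |x_{k+1} - x^\star| = \frac{t^2\,|x_k - x^\star|}{(1+x_k)(1+x^\star)} = \frac{x^\star}{1+x_k}\,|x_k - x^\star| \le \frac{x^\star}{1+x^\star} \cdot \frac{1+x^\star}{1+x_k}\,|x_k-x^\star|. \]
The cleanest route is to compose two steps. A direct computation gives
\[ |x_{k+2}-x^\star| = \frac{(x^\star)^2}{(1+x_k)(1+x_{k+1})}\,|x_k - x^\star|. \]
Since one of $x_k, x_{k+1}$ is at least $x^\star$ and the other is at least $0$, the factor is at most $x^\star/(1+x^\star) < 1$. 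Hence $x_k \to x^\star$ geometrically, and convergence of the matrices follows since $\what{\bA}$ has finitely many eigenvalues.
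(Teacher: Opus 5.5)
Your reduction is sound. After conjugating by $\bW^{-1/2}$, every iterate equals $g_k(\what{\bA})$, so both the Loewner chain and the limit reduce to the scalar recursion $x_{k+1} = h(x_k)$, $h(x) = t^2/(1+x)$, on each eigenvalue $t$ of $\what{\bA}$. Your interlacing argument is correct: $h$ is decreasing and $h\circ h$ is increasing. This is also a different, more elementary route than the paper's. The paper gives no proof and cites the Riccati literature for the equation $\bX = \bW + \bA\bX^{-1}\bA$ with $\bX = \bP + \bW$, where no commutativity is assumed. For a single Hermitian $\bA$, your diagonalization is all that is needed.

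The gap is in the one step you flagged as needing an estimate. Your two-step identity
\[ |x_{k+2} - x^\star| = \frac{(x^\star)^2}{(1+x_k)(1+x_{k+1})}\,|x_k - x^\star| \]
is right. But your justification, that one of $x_k, x_{k+1}$ is at least $x^\star$ and the other at least $0$, only gives $(1+x_k)(1+x_{k+1}) \geq 1 + x^\star$. That bounds the factor by $(x^\star)^2/(1+x^\star)$, not by $x^\star/(1+x^\star)$. This bound is $\geq 1$ once $x^\star \geq (1+\sqrt{5})/2$. For example, at $t = 3$ you have $x^\star \approx 2.54$ and the bound is about $1.82$. So, as written, convergence is not established for eigenvalues of $\what{\bA}$ of modulus above roughly $2.06$.

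The inequality you claimed is nevertheless true, for a different reason. Since $x_{k+1} = h(x_k)$,
\[ (1+x_k)(1+x_{k+1}) = 1 + x_k + t^2 \geq 1 + t^2 = 1 + x^\star + (x^\star)^2. \]
Hence the factor is at most $(x^\star)^2/(1 + x^\star + (x^\star)^2) < 1$, uniformly in $k$.

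Alternatively, you can drop the estimate entirely. Your even and odd subsequences are monotone and bounded, so they converge to some $a \leq x^\star \leq b$. By continuity of $h$, $b = h(a)$ and $a = h(b)$, i.e.\ $b(1+a) = t^2 = a(1+b)$. This forces $a = b = x^\star = f_{\rad}(t)$. With either repair, the rest of your proof goes through unchanged.
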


\begin{proof}[Proof 2 of Lemma~\ref{lem:rad-sdp-2}]
    Let $(c, \bZ, \bY_1, \dots, \bY_n)$ be strictly feasible for $\SDP_{\rad}$, so that
    \[ \bZ \succ \bm 0, \quad \left[\begin{array}{cc} \bY_i & \bA_i \\ \bA_i & \bY_i + \bZ \end{array}\right] \succ \bm 0, \quad c\bm I_d - \bA_0 \succ \bZ  + \sum_{i = 1}^n \bY_i. \]

    Let us digress to explain the motivation of the argument to follow.
    We would like to simply reverse our argument for Lemma~\ref{lem:rad-sdp-1} by saying that the second condition above implies $\bY_i \succeq P_{f_{\rad}}(\bA_i, \bZ)$.
    But, this implication cannot hold: if it did, then combined with our argument for Lemma~\ref{lem:rad-sdp-1} (and after attending to technicalities involving weak versus strict Loewner ordering) it would show that the above condition is equivalent to $\bY_i \succeq P_{f_{\rad}}(\bA_i, \bZ)$ and thus gives a semidefinite characterization of the epigraph of the function $P_{f_{\rad}}$.
    In Proposition~\ref{prop:frad} we found that, for certain $\bA_i$, this is not a convex function of $\bZ$, so such a characterization cannot exist.
    Instead, we will use an iterative algorithm inspired by Proposition~\ref{prop:Pfrad-iter} to build $\bW \succ \bm 0$ a ``replacement'' for $\bZ$ and $\bP_i \preceq \bY_i$ replacements for $\bY_i$ such that $\bP_i = \bA_i(\bW + \bP_i)^{-1}\bA_i$ and thus, by Proposition~\ref{prop:Pfrad-riccati}, $\bP_i = P_{f_{\rad}}(\bA_i, \bW)$.
    This will let us relate the above feasible point of $\SDP_{\rad}$ to one of $\Lambda_{\rad}$ to complete the proof.

    Continuing with the proof, note that the second condition above implies that, for each $i \in [n]$,
    \[ \bY_i \succeq \bA_i(\bY_i + \bZ)^{-1}\bA_i. \]
    In this sense, $\bY_i$ is a ``supersolution'' of the equation in Proposition~\ref{prop:Pfrad-riccati}.
    Define the iterates
    \begin{align*}
      \bY_i^{(0)} &\colonequals \bY_i, \\
      \bY_i^{(k + 1)} &\colonequals \bA_i \left(c\bm I_d - \bA_0 - \sum_{j \in [n] \setminus \{i\}}\bY_j^{(k)}\right)^{-1}\bA_i.
    \end{align*}
    For $k = 0$, we have
    \[ c\bm I_d - \bA_0 - \sum_{j \in [n] \setminus \{i\}}\bY_j^{(k)} = (\bY_i + \bZ) + \left(c\bm I_d - \bA_0 - \bZ - \sum_{j = 1}^n \bY_j\right) \succ \bY_i + \bZ, \]
    and thus $\bY_i^{(1)} \preceq \bY_i^{(0)}$.
    By induction it then follows that $\bY_i^{(k + 1)} \preceq \bY_i^{(k)}$ for each $k \geq 0$, so the $\bY_i^{(k)}$ are positive semidefinite and decreasing in the Loewner order.

    Thus, for each $i$, the $\bY_i^{(k)}$ must converge, so let $\bP_i \colonequals \lim_{k \to \infty} \bY_i^{(k)}$.
    And, set $\bW \colonequals c\bm I_d - \bA_0 - \sum_{i = 1}^n \bP_i \succeq \bZ \succ \bm 0$.
    Taking the limit in the iterations gives that, for each $i \in [n]$,
    \[ \bP_i = \bA_i \left(c\bm I_d - \bA_0 - \sum_{j \in [n] \setminus \{i\}}\bP_j\right)^{-1}\bA_i = \bA_i(\bW + \bP_i)^{-1}\bA_i. \]
    So, by Proposition~\ref{prop:Pfrad-riccati}, $\bP_i = P_{f_{\rad}}(\bA_i, \bW)$.
    Rearranging the above then gives
    \[ c\bm I_d = \bA_0 + \bW + \sum_{i = 1}^n P_{f_{\rad}}(\bA_i, \bW). \]
    Thus, the objective function of $\Lambda_{\rad}$ at the feasible point $\bW \succ \bm 0$ is $c$, so $\Lambda_{\rad} \leq c$.

    This holds for $c$ a component of any strictly feasible point of $\SDP_{\rad}$.
    Any feasible point is a limit of strictly feasible points for $\SDP_{\rad}$, and so we find $\Lambda_{\rad} \leq \SDP_{\rad}$, as claimed.
\end{proof}

\section{Remarks on numerical evaluation}
\label{sec:numerical}

We next give some discussion of computing $\Lambda_{\sc}$ and $\Lambda_{\rad}$ in practice.
For the sake of simplicity, we focus here on $\Lambda_{\sc}(\bA_0, \bA_1, \dots, \bA_n)$; the same observations with minor variations also hold for $\Lambda_{\rad}$, and when there are significant differences we will point them out below.

Recall that, if we write
\[ K(\bZ) \colonequals \bA_0 + \bZ + \sum_{i = 1}^n \bA_i\bZ^{-1}\bA_i, \]
Lehner's formula \eqref{eq:lehner-sc} says that $\Lambda_{\sc}$ is the infimum of $\lambda_{\max}(K(\bZ))$ over $\bZ \succ \bm 0$, where we may also restrict to those $\bZ$ for which $K(\bZ)$ is a multiple of the identity.
Lehner also describes a numerical procedure for evaluating this formula in \cite{Lehner-1999-NormFreeOperatorMatrixCoefficients}, whose premise is to use the latter observation and navigate the curve of $\bZ$ for which $K(\bZ)$ is a multiple of the identity.
In the semicircular case, the procedure may be summarized as follows.
For a large scalar $c$, first solve $K(\bZ) = c\bm I_d$ by the fixed-point iteration
\[ \bZ^{(t + 1)} = c\bm I_d - \bA_0 - \sum_{i = 1}^n \bA_i(\bZ^{(t)})^{-1}\bA_i. \]
Then, move along the implicit curve $K(\bZ) = c\bm I_d$ in the direction of smaller $c$.
The tangent direction is found by inverting the derivative
\[ \sD K_{\bZ}(\bH)
  =
  \bH - \sum_{i = 1}^n \bA_i\bZ^{-1}\bH\bZ^{-1}\bA_i,
\]
since if $\bH_{\bZ}$ solves $\sD K_{\bZ}(\bH_{\bZ}) = \bm I_d$, then decreasing $c$ corresponds locally to moving in the direction $-\bH_{\bZ}$.
After such a step, we use a Newton iteration to project back to the curve where $K(\bZ)$ is a multiple of the identity, and loop these steps until convergence.

To the best of our knowledge, no theoretical guarantees have been proven for this numerical scheme in the literature.
Parts of it admit natural local guarantees; for instance, the fixed point iteration above is a contraction provided $c$ is sufficiently large, and Newton's method is locally quadratically convergent when $\sD K_{\bZ}$ is invertible and the initial point is close enough to the set where $K(\bZ)$ is a multiple of the identity \cite[Proposition 6.1]{Lehner-1999-NormFreeOperatorMatrixCoefficients}.
But, it is unclear how to combine these observations into a global convergence theorem for the entire method.
Further, various edge cases can arise: the line search moving in the $-\bH_{\bZ}$ direction can fail, the Newton projection afterwards can leave the positive definite cone, and the derivative $\sD K_{\bZ}$ can become singular as we approach the optimal value.
Indeed, experimenting with a direct implementation of this algorithm we have encountered all of these issues, which at the very least require some ad hoc modifications to detect or for the iteration to recover from (\cite{Lehner-1999-NormFreeOperatorMatrixCoefficients} also discusses some similar issues and various possible fixes).

The following is a simple example of how numerical instability can appear in this method as it nears optimality.
Suppose that $\sum_{i = 1}^n \bA_i^2 = \bm I_d$ (an assumption appearing in Theorem~\ref{thm:BCSVH-BBP}, for example) and $\bA_0 = \bm 0$.
Then $\bZ = \bm I_d$ is optimal for $\Lambda_{\sc}$, but
\[ \sD K_{\bm I_d}(\bm I_d) = \bm I_d - \sum_{i = 1}^n \bA_i^2 = \bm 0. \]
Therefore, the derivative operator at this optimal point becomes singular, and correspondingly becomes ill-conditioned near the optimum.

In contrast to all of this, the SDP formulation in Theorem~\ref{thm:sc-sdp-small} is a routine convex conic optimization problem, solved stably and accurately by off-the-shelf SDP solvers and enjoying standard convergence guarantees and measurements of optimality by primal-dual residuals and duality gaps.
In the semicircular case the SDP is also quite small, requiring only a single $d \times d$ decision variable.
The SDP of Theorem~\ref{thm:rad-sdp} for the Rademacher case enjoys the same advantages but is costlier, requiring $n + 1$ such variables.
Thus, especially for small $d$, the SDP formulations of these optimizations seem to be a much more reliable means of evaluating the Lehner formulas.
Also, while it is unfair to compare solving these SDPs through highly optimized solvers (we ran experiments with the Mosek solver, for instance) versus a direct implementation of the iterative method (which could also sometimes enter the above unstable divergent regimes of behavior, again making it difficult to time fairly), we mention that for small dimension around $d \leq 20$ the SDPs were also considerably faster to solve, by factors of 5 to 10 of wall clock time.

\section{Applications to BBP transitions in spiked matrix models}
\label{sec:bbp}

We now give a new proof of one of the main applications of Lehner's formula of \cite{BCSVH-2024-MatrixConcentrationFreeProbability2}, cited here as Theorem~\ref{thm:BCSVH-BBP}.
Our proof will be slightly more complicated than that in the reference, but we propose that it is perhaps conceptually clearer and may be easier to generalize to other situations.
That work derives numerous results about Gaussian random matrices $\bX$ by constructing the corresponding operator $\mathfrak{X}$ ($\bX_{\free}$ in their notation) and controlling $\lambda_{\max}(\mathfrak{X})$.
If this is the goal, our pairs of primal and dual SDPs suggest a natural strategy: using for example the original Lehner formula and the dual from Theorem~\ref{thm:sc-sdp-small}, we should look for a pair of primal and dual variables $\bZ, \bS \in \CC^{d \times d}_{\herm}$ satisfying the feasibility conditions $\bZ \succ \bm 0$, $\bS \succeq \bm 0$, $\Tr(\bS) = 1$ that approximately satisfy strong duality,
\[ \langle \bA_0, \bS \rangle + 2\|\bS^{1/2}\Phi(\bS)^{1/2}\|_* \approx \lambda_{\max}(\bA_0 + \bZ^{-1} + \Phi(\bZ)). \]
Since we must have
\[ \langle \bA_0, \bS \rangle + 2\|\bS^{1/2}\Phi(\bS)^{1/2}\|_* \leq \lambda_{\max}(\mathfrak{X}) \leq \lambda_{\max}(\bA_0 + \bZ^{-1} + \Phi(\bZ)), \]
if the above holds then we have approximately identified $\lambda_{\max}(\mathfrak{X})$.
Alternatively, Lemma~\ref{lem:approx-slackness} gives an ``automated'' approach to such bounds based on a particular construction of a dual $\bS$ starting from the primal $\bZ$ alone.

However, the proofs of \cite{BCSVH-2024-MatrixConcentrationFreeProbability2} do not quite use this approach.
In particular, while they do prove an upper bound on $\lambda_{\max}(\mathfrak{X})$ by constructing an explicit primal certificate $\bZ$, instead of showing the existence of an explicit dual certificate $\bS$ of a nearly-matching lower bound, they argue the non-existence of a primal certificate $\bZ^{\prime}$ with a value substantially lower than the $\bZ$ they construct.

Our goal here is to give a different proof of one of their main results by the arguably more natural argument of producing both primal and dual certificates.
We use our general Lemma~\ref{lem:approx-slackness}, and hope that this tool and its application here might be useful in more general models.
To review the setting, we work with \emph{spiked matrix models}, random matrices
\[ \bX = \theta \bv\bv^{\top} + \sum_{i = 1}^n g_i \bA_i \]
for $\theta \geq 0$, $\bv$ a fixed ``signal'' unit vector, and $g_1, \dots, g_n \sim \sN(0, 1)$ i.i.d.
We focus here on the rank-one case, but $\theta \bv\bv^{\top}$ could be replaced with a general low-rank matrix and similar methods should still apply.
We seek to understand, for a given low-rank $\bv$, how large $\theta$ must be in order for such a model to have an \emph{outlier} eigenvalue, a value of $\lambda_{\max}(\bX)$ much larger than the typical value when $\theta = 0$.
The tools of \cite{BBVH-2021-MatrixConcentrationFreeProbability, BCSVH-2024-MatrixConcentrationFreeProbability2}, in particular Theorem~\ref{thm:BCSVH}, relate the behavior of this model to the operator
\[ \mathfrak{X} = \theta \bv\bv^{\top} \otimes \mathfrak{1} + \sum_{i = 1}^n \bA_i \otimes \mathfrak{s}_i, \]
for which we can compute $\lambda_{\max}(\mathfrak{X}) = \Lambda_{\sc}(\theta\bv\bv^{\top}, \bA_1, \dots, \bA_n)$.

\subsection{Sensitivity interpretation of SDP dual certificates}
\label{sec:dual-interp}

Reviewing the argument from Section~\ref{sec:intro-spiked}, let us give some intuition for how we expect the dual certificates we construct to behave, since they have an interesting meaning in the context of spiked matrix models.

Consider for a moment a more general model $\bX = \bX(\bA_0, \bA_1, \dots, \bA_n) = \bA_0 + \sum_{i = 1}^n g_i\bA_i$.
Provided the largest eigenvalue of such $\bX$ is simple almost surely, we have
\[ \frac{\partial}{\partial \bA_0} \EE[\lambda_{\max}(\bX)] = \EE\left[\frac{\partial}{\partial \bA_0} \lambda_{\max}(\bX)\right] = \EE[\bv_{\max}(\bX)\bv_{\max}(\bX)^{\top}], \]
where $\bv_{\max}(\bX)$ is the unit eigenvector associated to the eigenvalue $\lambda_{\max}(\bX)$.
Now, if Theorem~\ref{thm:BCSVH} applies, we have $\EE[\lambda_{\max}(\bX)] \approx \Lambda_{\sc}(\bA_0, \bA_1, \dots, \bA_n)$, and thus we expect to have
\[ \frac{\partial}{\partial \bA_0} \Lambda_{\sc}(\bA_0, \bA_1, \dots, \bA_n) \approx \EE[\bv_{\max}(\bX)\bv_{\max}(\bX)^{\top}]. \]
On the other hand, provided we can differentiate the value of the SDP with respect to $\bA_0$, by Theorem~\ref{thm:sdp-diff} we have that, for $\bS^{\star}$ the dual optimizer, we expect to have
\[ \bS^{\star} \approx \EE[\bv_{\max}(\bX)\bv_{\max}(\bX)^{\top}]. \]
That is, the meaning of the dual optimizer $\bS^{\star}$ in such applications is that it is the expected value of the projection in the leading eigendirection of the random $\bX$.

Returning to the specific spiked matrix context, suppose that $\bA_0 = \theta \bv\bv^{\top}$ for $\|\bv\| = 1$.
On the one hand, we may use this observation to guide our construction of dual certificates, at least in simple cases.
In particular, for the most symmetric noise models such as GOE matrices (treated below in Section~\ref{sec:bbp-goe}), we expect to have
\[ \bS^{\star} \approx \EE[\bv_{\max}(\bX)\bv_{\max}(\bX)^{\top}] \approx \alpha(\theta) \bv\bv^{\top} + \beta(\theta)(\bm I_d - \bv\bv^{\top}), \]
which says that $\bv_{\max}(\bX)$ appears isotropically distributed aside from a possibly large component in the direction of the rank-one perturbation $\bv$ that is made to $\bX$.
We will see below (Remark~\ref{rem:bbp-S}) that, at least in the simplest noise model, this is indeed precisely what happens and that the value of $\alpha(\theta)$ matches that established in previous random matrix theory literature.
On the other hand, in more complicated models we will appeal to Lemma~\ref{lem:approx-slackness} and see that it describes nearly-optimal $\bS^{\star}$, which we use to conversely make predictions about $\bv_{\max}(\bX)$ in Conjectures~\ref{conj:spiked-matrix} and~\ref{conj:spiked-matrix-fluct}.

\subsection{Warmup: Gaussian orthogonal ensemble noise}
\label{sec:bbp-goe}

Let $\bW = \bW^{(d)} \sim \GOE(d)$ and let $\bv = \bv^{(d)} \in \SS^{d - 1} \subset \RR^d$ be an arbitrary sequence of deterministic unit vectors.
Let $\theta \geq 0$ be a constant not depending on $d$.
We then consider the sequence of random matrices
\[ \bX = \bX^{(d)} = \theta \bv\bv^{\top} + \bW. \]
We will give a proof of the first part \eqref{eq:goe-bbp-eval} of the following well-known statement.
\begin{theorem}[\cite{FP-2007-LargestEigenvalueWigner,CDMF-2009-DeformedWigner}]
    \label{thm:bbp}
    The following convergences in probability holds as $d \to \infty$:
    \begin{align}
      \lambda_{\max}(\bX^{(d)}) &\to B(\theta) \colonequals \left\{\begin{array}{ll} 2 & \text{if } \theta \leq 1, \\ \theta + \theta^{-1} & \text{if } \theta > 1 \end{array}\right\}, \label{eq:goe-bbp-eval} \\
      |\langle \bv_{\max}(\bX^{(d)}), \bv^{(d)}\rangle|^2 &\to B^{\prime}(\theta) = \left\{\begin{array}{ll} 0 & \text{if } \theta \leq 1, \\ 1 - \theta^{-2} & \text{if } \theta > 1 \end{array}\right\}. \label{eq:goe-bbp-evec}
    \end{align}
\end{theorem}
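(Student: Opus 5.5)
We prove \eqref{eq:goe-bbp-eval} in two steps. First we show $\Lambda_{\sc}(\theta\bv\bv^{\top}, \bA_1, \dots, \bA_n) \to B(\theta)$, where $\bW = \sum_i g_i \bA_i$ is any Gaussian-series representation of $\GOE(d)$. Then we transfer this to $\lambda_{\max}(\bX^{(d)})$ using Theorem~\ref{thm:BCSVH}.

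For the transfer, note that $\EE \bW^2 = \frac{d+1}{d}\bm I_d$, so $\sigma = 1 + O(1/d)$. Also $v(\bW) = O(d^{-1/2})$, since the entries are independent with variance $O(1/d)$. Hence $\widetilde v(\bW)(\log d)^{3/4} \to 0$ and Theorem~\ref{thm:BCSVH} applies. The only input needed from $\bA_1, \dots, \bA_n$ is $\Phi$, which has the intrinsic form $\Phi(\bS) = \EE[\bW\bS\bW]$. From $\EE[W_{ik}W_{lj}] = \frac1d(\delta_{il}\delta_{kj} + \delta_{ij}\delta_{kl})$ we get
\[ \Phi(\bS) = \tfrac1d\bigl(\bS^{\top} + \Tr(\bS)\,\bm I_d\bigr). \]
We work with real symmetric $\bS$, where this is $\frac1d(\bS + \Tr(\bS)\bm I_d)$. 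Let $\bP \colonequals \bm I_d - \bv\bv^{\top}$. Then $\Phi$, $\bA_0 = \theta\bv\bv^{\top}$, and everything below preserve the two-dimensional family $\mathrm{span}\{\bv\bv^{\top}, \bP\}$, so all computations reduce to $2\times 2$ scalar problems.

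\emph{Upper bound.} We use the small primal SDP \eqref{eq:sc-sdp-small-primal}, i.e.\ $\lambda_{\max}(\bA_0 + \Phi(\bZ) + \bZ^{-1})$, with the ansatz $\bZ = a\bv\bv^{\top} + b\bP$. Then:
\begin{itemize}
\item In the $\bv$ direction the objective equals $\theta + \frac{2a + b(d-1)}{d} + \frac1a \approx \theta + b + \frac{2a}{d} + \frac1a$.
\item On $\bP$ it equals $\frac{b + a + b(d-1)}{d} + \frac1b \approx b + \frac1b$.
\end{itemize}
Take $a = \sqrt d$, so the terms $\frac{2a}{d}$ and $\frac1a$ vanish, and take $b = \min\{1, \theta^{-1}\}$. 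Then $\theta + b \le b + b^{-1}$, and the value is $B(\theta) + O(d^{-1/2})$. The essential point is that $a$ must grow like $\sqrt d$.

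\emph{Lower bound.} We apply Lemma~\ref{lem:approx-slackness} to this same $\bZ$, now with $a = c\sqrt d$ for a constant $c$ to be tuned. Restricted to the family, $\Psi_{\bZ}(\bS) = \bZ\Phi(\bS)\bZ$ acts on $\bS = \alpha\bv\bv^{\top} + \frac{\beta}{d-1}\bP$ as a $2\times 2$ entrywise nonnegative matrix, up to $O(1/d)$ corrections:
\[ (\alpha, \beta) \mapsto \bigl(c^2(\alpha + 1),\; b^2\beta\bigr). \]
Here the $c^2$ in the first coordinate comes from $a^2/d$, and $\Tr\bS = \alpha + \beta = 1$. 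For $\theta > 1$ we have $b = \theta^{-1}$, and we want a Perron eigenvector with eigenvalue $\rho = 1$. Writing the eigen-equations with $\rho$ free and then choosing $c$ so that $\rho = 1$ exactly (or $\rho \to 1$) should force $\alpha \to 1 - \theta^{-2}$, matching Remark~\ref{rem:bbp-S}. The Perron eigenvector is obtained from Proposition~\ref{prop:Psi}; by the symmetry of $\bZ$ and $\Phi$ under orthogonal maps fixing $\bv$, it can be chosen inside the family.

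It then remains to check that the slack term in \eqref{eq:approx-slackness} is $o(1)$. Let $\bK = c'\bm I_d - \bA_0 - \Phi(\bZ) - \bZ^{-1}$, where $c'$ is the primal objective value. The $\bv$-component of $\bK$ is $O(d^{-1/2})$ because the primal is balanced. The $\bP$-component of $\bK$ is $O(1)$ only when $\theta < 1$, but then we expect $\alpha \to 0$, so $\bS$ puts nearly all its mass on $\bP$, where $\bK \approx 0$ with $b = 1$. In the case $\theta > 1$, both components of $\bK$ vanish asymptotically. The penalty $(\sqrt\rho - 1)^2\langle \bZ^{-1}, \bS\rangle$ is also $o(1)$.

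The main obstacle is this lower-bound step. We must track the $O(1/d)$ trace corrections in $\Psi_{\bZ}$, show that a choice of $c$ gives $\rho = 1$ (or $|\rho - 1| = o(1)$) with the correct Perron vector, and handle $\theta \le 1$ and the boundary case $\theta = 1$. In the latter regimes $\alpha \to 0$ and $c$ may need to depend on $d$; we plan to take $c \to 0$ slowly, or to fall back on simply bounding $\rho$ and using the penalty term.
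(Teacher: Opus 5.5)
\paragraph{Comparison with the paper.} Your transfer step through Theorem~\ref{thm:BCSVH} and your upper bound ($a=\sqrt d$, $b=\min\{1,\theta^{-1}\}$) are exactly the paper's. The lower bound is where you diverge.

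The paper does something simpler there. It plugs the explicit point $\bS=\alpha\bv\bv^{\top}+\frac{1-\alpha}{d-1}(\bm I_d-\bv\bv^{\top})$, with $\alpha=\max\{0,1-\theta^{-2}\}$, directly into \eqref{eq:sc-sdp-small-dual-2}. Since $\bS$ and $\Phi(\bS)$ commute, the nuclear norm is explicit, and this gives $B(\theta)-2/\sqrt d$ uniformly in $\theta\ge 0$ with no eigenproblem to solve. Your route through Lemma~\ref{lem:approx-slackness} is the one the paper reserves for general isotropic noise (Lemma~\ref{lem:bbp-iso-free-lower}). It does work here, and it yields the certificate $\bS^{\mathsf{est}}$ of Definition~\ref{def:dual-cert}. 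However, your key computation is wrong as written.

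\paragraph{The error in the reduced map.} The second coordinate is incorrect. The trace term $\frac1d\Tr(\bS)\bm I_d$ in $\Phi(\bS)$ deposits $\frac{b^2(\alpha+\beta)}{d}$ on each of the $d-1$ directions orthogonal to $\bv$. So in your coordinates the $\bv$-mass $\alpha$ feeds into $\beta$ at order one, not at order $1/d$. The exact map is
\[ (\alpha,\beta)\mapsto\Bigl(\tfrac{a^2}{d}(2\alpha+\beta),\; b^2\bigl((1-\tfrac1d)\alpha+\beta\bigr)\Bigr). \]
Taken at face value, your map $(c^2(\alpha+1),b^2\beta)$ has eigenvalue-one eigenvector $(1,0)$, forcing $2c^2=1$. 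That is $\bS=\bv\bv^{\top}$, which is not an eigenmatrix of $\Psi_{\bZ}$ and, as a dual point, certifies only about $\theta$. So the conclusion $\alpha\to 1-\theta^{-2}$ does not follow from the map you wrote.

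\paragraph{How to fix the lower bound.}
\begin{itemize}
\item With the correct map and $b=\theta^{-1}$, you get $\rho=1$ exactly at $a^2/d=\frac{1-b^2}{2-b^2-b^2/d}$. The eigenvector is $\alpha=\frac{1-\theta^{-2}}{1-\theta^{-2}/d}$, $\beta=1-\alpha$, with both entries positive, so $\bS\succ\bm 0$ directly.
\item You do not need Proposition~\ref{prop:Psi} or a symmetry argument, because Lemma~\ref{lem:approx-slackness} accepts any positive semidefinite eigenmatrix, not only the top one.
\item You also need not estimate the slack $\bK$. The proof of Lemma~\ref{lem:approx-slackness} shows the lower bound equals $\langle\bA_0,\bS\rangle+2\sqrt{\rho}\,\langle\bZ^{-1},\bS\rangle=\theta\alpha+2\alpha/a+2\theta\beta$, which tends to $\theta+\theta^{-1}$.
\item For $\theta\le 1$, including $\theta=1$ where $b=\theta^{-1}$ forces $a=0$, take $\bS=\bm I_d/d$ in \eqref{eq:sc-sdp-small-dual-2}. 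This gives $2\sqrt{(d+1)/d}\ge 2$, so no limit $c\to 0$ is needed.
\end{itemize}

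\paragraph{The overlap limit.} Like the paper, you only address \eqref{eq:goe-bbp-eval}. The paper simply cites the overlap limit \eqref{eq:goe-bbp-evec}, and your proposal does not prove it either. If you want it, it follows from \eqref{eq:goe-bbp-eval} by a convexity argument:
\begin{itemize}
\item The function $f_d(\theta)=\lambda_{\max}(\theta\bv\bv^{\top}+\bW)$ is convex in $\theta$, with almost-sure derivative $\langle\bv_{\max}(\bX^{(d)}),\bv\rangle^2$.
\item Hence this derivative is sandwiched between the difference quotients at $\theta-h$ and $\theta+h$ (take $0<h<\theta$; at $\theta=0$ use only the right quotient together with nonnegativity).
\item Since $B$ is $C^1$, with $B'(1)=0$ from both sides, sending $d\to\infty$ and then $h\to 0$ gives the claim.
\end{itemize}
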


As mentioned before, the theory of \cite{BBVH-2021-MatrixConcentrationFreeProbability, BCSVH-2024-MatrixConcentrationFreeProbability2} gives tools for comparing this $\bX$ to the following operator.
We expand $\bW$ in a Gaussian series: let $g_{ij} \sim \sN(0, 1)$ be i.i.d.\ for \(1 \leq i \leq j \leq d\), and set
\[ \bE_{ij} \colonequals \frac{1}{\sqrt{d(1 + \One\{i = j\})}}(\be_i\be_j^{\top} + \be_j \be_i^{\top}). \]
Then
\[ \Law(\bW) = \Law\left(\sum_{1 \leq i \leq j \leq d} g_{ij} \bE_{ij}\right). \]
We also have
\[ \EE \bW^2 = \sum_{1 \leq i \leq j \leq d} \bE_{ij}^2 = \frac{d + 1}{d} \bm I_d. \]
We consider
\[ \mathfrak{X} = \mathfrak{X}^{(d)} \colonequals \theta \bv\bv^{\top} \otimes \mathfrak{1} + \sum_{1 \leq i \leq j \leq d} \bE_{ij} \otimes \mathfrak{s}_{ij} \]
for $(\mathfrak{s}_{ij})_{1 \leq i \leq j \leq d}$ a free semicircular family.
We will show the following statement.
\begin{theorem}
    \label{thm:bbp-free}
    For any $\theta \geq 0$, we have $\lim_{d \to \infty} \lambda_{\max}(\mathfrak{X}^{(d)}) = B(\theta)$.
\end{theorem}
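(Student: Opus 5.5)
The plan is to sandwich $\lambda_{\max}(\mathfrak{X}^{(d)}) = \Lambda_{\sc}(\theta\bv\bv^{\top}, (\bE_{ij}))$ between an explicit primal value from Lehner's formula \eqref{eq:lehner-sc} and an explicit dual value from \eqref{eq:sc-sdp-small-dual-2}, and to show that both tend to $B(\theta)$.

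\textbf{Computing $\Phi$.} The first step is to compute $\Phi(\bS) = \sum_{i \le j} \bE_{ij}\bS\bE_{ij}$ for real symmetric $\bS$. Summing over ordered pairs, the terms $\be_i\be_j^{\top}\bS\be_j\be_i^{\top}$ contribute $\Tr(\bS)\bm I_d$ and the terms $\be_i\be_j^{\top}\bS\be_i\be_j^{\top}$ contribute $\bS$. The diagonal normalization is exactly compensated by the factor $1 + \One\{i = j\}$. This should give the closed form
\[ \Phi(\bS) = \frac{1}{d}\left(\Tr(\bS)\,\bm I_d + \bS\right). \]
The rotational invariance of this formula means we may work entirely with matrices in the two-dimensional commutative algebra spanned by $\bv\bv^{\top}$ and $\bP^{\perp} \colonequals \bm I_d - \bv\bv^{\top}$. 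Checking this formula is the main computational point, and I expect it to be routine. Everything else reduces to scalar optimization with $O(1/d)$ error terms.

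\textbf{Upper bound.} Take $\bZ = \alpha\bP^{\perp} + \beta\bv\bv^{\top}$ in \eqref{eq:lehner-sc}, where the objective is $\bZ + \theta\bv\bv^{\top} + \Phi(\bZ^{-1})$. Then
\[ \Phi(\bZ^{-1}) = \frac{1}{d}\left(\frac{d-1}{\alpha} + \frac{1}{\beta}\right)\bm I_d + O\!\left(\frac{1}{d\alpha} + \frac{1}{d\beta}\right). \]
Choose $\beta = d^{-1/2}$, so that $1/(d\beta) \to 0$ and $\beta \to 0$. The objective then has eigenvalue $\approx \alpha + \alpha^{-1}$ on $\bP^{\perp}$ and $\approx \theta + \alpha^{-1}$ on $\bv$.
\begin{itemize}
\item For $\theta \le 1$, take $\alpha = 1$. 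The two eigenvalues are $\approx 2$ and $\approx \theta + 1 \le 2$.
\item For $\theta > 1$, take $\alpha = \theta$. Both eigenvalues are $\approx \theta + \theta^{-1}$.
\end{itemize}
Hence $\limsup_{d \to \infty} \lambda_{\max}(\mathfrak{X}^{(d)}) \le B(\theta)$.

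\textbf{Lower bound.} Use \eqref{eq:sc-sdp-small-dual-2} with the ansatz
\[ \bS = a\,\bv\bv^{\top} + \frac{1-a}{d-1}\,\bP^{\perp}, \qquad a \in [0, 1], \]
which is PSD with trace one. Then $\Phi(\bS) = \frac{1}{d}(\bm I_d + \bS)$ commutes with $\bS$. Therefore $\|\bS^{1/2}\Phi(\bS)^{1/2}\|_* = \sum_k \sqrt{s_k(1 + s_k)/d}$ over the eigenvalues $s_k$ of $\bS$.
\begin{itemize}
\item The $\bv$ eigenvalue contributes $O(d^{-1/2})$.
\item The $d - 1$ eigenvalues on $\bP^{\perp}$ contribute $(d-1)\sqrt{(1-a)/((d-1)d)}\,(1 + o(1)) \to \sqrt{1-a}$.
\end{itemize}
So the dual value tends to $\theta a + 2\sqrt{1-a}$.
\begin{itemize}
\item For $\theta \le 1$, take $a = 0$, giving $2$.
\item For $\theta > 1$, maximize over $a$: the optimum is $1 - a = \theta^{-2}$, giving $\theta - \theta^{-1} + 2\theta^{-1} = \theta + \theta^{-1}$.
\end{itemize}
Thus $\liminf_{d \to \infty} \lambda_{\max}(\mathfrak{X}^{(d)}) \ge B(\theta)$, completing the proof.

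The choice $a = 1 - \theta^{-2} = B'(\theta)$ is consistent with the sensitivity heuristic of Section~\ref{sec:dual-interp}. The only care needed is to track the $O(1/d)$ corrections uniformly, which is straightforward since $\theta$ is fixed.
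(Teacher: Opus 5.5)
Your proposal is correct and follows essentially the same route as the paper. You use the same closed form $\Phi(\bS) = \frac{1}{d}(\Tr(\bS)\bm I_d + \bS)$ and the same dual certificate $a\bv\bv^{\top} + \frac{1-a}{d-1}\bP^{\perp}$ with $a = \max\{0, 1-\theta^{-2}\}$ in \eqref{eq:sc-sdp-small-dual-2}. Your primal certificate $\bZ = \max\{1,\theta\}\bP^{\perp} + d^{-1/2}\bv\bv^{\top}$ in \eqref{eq:lehner-sc} is exactly the inverse of the paper's $\sqrt{d}\,\bP + \min\{1,\theta^{-1}\}\bQ$, which the paper plugs into the inverted form of Lehner's formula. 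The only difference is that the paper tracks the errors explicitly, obtaining $B(\theta) - 2/\sqrt{d} \leq \lambda_{\max}(\mathfrak{X}) \leq B(\theta) + 3/\sqrt{d}$, whereas you pass to the limit directly.
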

\noindent
The eigenvalue limit \eqref{eq:goe-bbp-eval} of Theorem~\ref{thm:bbp} then follows from Theorem~\ref{thm:bbp-free} together with Theorem~\ref{thm:BCSVH} and some routine calculations of the parameters $\sigma(\bX), \sigma_*(\bX)$, and $v(\bX)$ (as defined in the statement of Theorem~\ref{thm:BCSVH}) in this model.

For convenience, we define for use below
\begin{align*}
  \bP = \bP^{(d)} &\colonequals \bv\bv^{\top}, \\
  \bQ = \bQ^{(d)} &\colonequals \bm I_d - \bP,
\end{align*}
two orthogonal projections that partition the identity, $\bP + \bQ = \bm I_d$.

\begin{proof}[Proof of Theorem~\ref{thm:bbp-free}]
    Following our previous notation, we define the operator
    \[ \Phi(\bS) \colonequals \sum_{1 \leq i \leq j \leq d} \bE_{ij}\bS\bE_{ij} = \frac{1}{d}(\bS + \Tr(\bS)\bm I_d), \]
    the right-hand side following by a straightforward computation.

    We now use Lehner's formula in its primal and dual forms to prove upper and lower bounds on $\lambda_{\max}(\mathfrak{X})$.
    For the upper bound, we use Lehner's semicircular formula \eqref{eq:lehner-sc} directly.
    Consider feasible points $\bZ \succ \bm 0$ of the form
    \[ \bZ = \alpha \bP + \beta \bQ \]
    for $\alpha, \beta > 0$ to be chosen later.
    We have $\Tr(\bZ) = \alpha + (d - 1)\beta$, and so
    \[ \Phi(\bZ) = \frac{1}{d}(\bZ + \Tr(\bZ)\bm I_d) = \frac{2\alpha + (d - 1)\beta}{d}\bP + \frac{\alpha + d\beta}{d}\bQ. \]
    Also, $\bZ^{-1} = \alpha^{-1}\bP + \beta^{-1}\bQ$, so
    \begin{align*}
      \lambda_{\max}(\mathfrak{X})
      &\leq \lambda_{\max}\left(\theta \bP + \bZ^{-1} + \Phi(\bZ)\right) \\
      &= \lambda_{\max}\left(\left(\theta + \frac{1}{\alpha} + \frac{2\alpha + (d - 1)\beta}{d}\right)\bP + \left(\frac{1}{\beta} + \frac{\alpha + d\beta}{d}\right)\bQ\right) \\
      &= \max\left\{\theta + \frac{1}{\alpha} + \frac{2\alpha + (d - 1)\beta}{d}, \frac{1}{\beta} + \frac{\alpha + d\beta}{d}\right\}
        \intertext{We choose $\alpha \colonequals \sqrt{d}$, which makes this bounded by}
      &\leq \max\left\{\theta + \frac{d - 1}{d}\beta, \frac{1}{\beta} + \beta\right\} + \frac{3}{\sqrt{d}}
        \intertext{and we choose $\beta \colonequals \min\{1, \theta^{-1}\}$, which finally gives}
      &\leq B(\theta) + \frac{3}{\sqrt{d}}.
    \end{align*}

    For the lower bound, we use the dual small SDP in its condensed form \eqref{eq:sc-sdp-small-dual-2}, which gives
    \[ \lambda_{\max}(\mathfrak{X}) = \max_{\substack{\bS \succeq \bm 0 \\ \Tr(\bS) = 1}} \left\{\langle \theta\bP, \bS \rangle + 2\|\bS^{1/2} \Phi(\bS)^{1/2}\|_*\right\}. \]
    We consider a dual feasible point $\bS$ as before, but here because of the constraint $\Tr(\bS) = 1$ there is only one degree of freedom, so for $\alpha \in [0, 1]$ and $d \geq 2$, we consider
    \[
        \bS \colonequals \alpha \bP + \frac{1-\alpha}{d-1}\bQ.
    \]
    Any such $\bS$ is feasible, and we calculate
    \begin{align*}
      \bS^{1/2} &= \sqrt{\alpha}\,\bP + \sqrt{\frac{1 - \alpha}{d - 1}}\,\bQ, \\
      \Phi(\bS)^{1/2} &= \frac{1}{\sqrt{d}}\left(\sqrt{1 + \alpha}\,\bP + \sqrt{1 + \frac{1-\alpha}{d-1}}\,\bQ\right).
    \end{align*}
    Using that $\bP\bQ = \bm 0$, we get
    \begin{align*}
      \lambda_{\max}(\mathfrak{X})
      &\geq \langle \theta \bP, \bS \rangle + 2\|\bS^{1/2}\Phi(\bS)^{1/2}\|_* \\
      &= \theta\alpha + \frac{2}{\sqrt{d}}\left(\sqrt{\alpha(1 + \alpha)} + \sqrt{(1 - \alpha)(d - \alpha)}\right)
        \intertext{We choose $\alpha \colonequals \max\{0, 1 - \theta^{-2}\}$. This gives}
      &\geq \max\left\{0, \theta - \frac{1}{\theta}\right\} + 2\min\left\{1, \frac{1}{\theta}\right\} \cdot \sqrt{\frac{d - 1}{d}} \\
      &\geq B(\theta) - \frac{2}{\sqrt{d}}.
    \end{align*}

    In summary, we have
    \[ B(\theta) - \frac{2}{\sqrt{d}} \leq \lambda_{\max}(\mathfrak{X}) \leq B(\theta) + \frac{3}{\sqrt{d}}, \]
    so taking $d \to \infty$ gives the result.
\end{proof}

\begin{remark}
    \label{rem:bbp-S}
    We note that the $\bS$ we constructed in the second part of the proof above is of the form $\bS = (B^{\prime}(\theta) + o(1))\bv\bv^{\top} + \beta(\theta) (\bm I_d - \bv\bv^{\top})$, which is precisely what we expect from the second claim \eqref{eq:goe-bbp-evec} of Theorem~\ref{thm:bbp}, along with the sensitivity interpretation of the dual optimizer $\bS^{\star} \approx \EE[\bv_{\max}(\bX)\bv_{\max}(\bX)^{\top}]$ from Section~\ref{sec:dual-interp}.
\end{remark}

\subsection{General isotropic Gaussian noise: Proof of eigenvalue limit of Theorem~\ref{thm:BCSVH-BBP}}
\label{sec:bbp-iso}

We now consider the general case of $\bW^{(d)} = \sum_{i = 1}^n g_i \bA_i^{(d)}$ a jointly Gaussian random matrix with $\EE [(\bW^{(d)})^2] = \bm I_d$ for each $d \geq 1$.
We often omit the ``$(d)$'' superscripts below for the sake of brevity.
As before, we define the associated operator
\[ \mathfrak{X} = \mathfrak{X}^{(d)} \colonequals \theta \bv\bv^{\top} \otimes \mathfrak{1} + \sum_{i = 1}^n \bA_i \otimes \mathfrak{s}_i. \]
Again, by using Theorem~\ref{thm:BCSVH}, the eigenvalue limit theorem part of Theorem~\ref{thm:BCSVH-BBP} is immediate from the following.
\begin{theorem}[Special case of Theorem 2.7 of \cite{BCSVH-2024-MatrixConcentrationFreeProbability2}]
    \label{thm:bbp-iso-free}
    Suppose that $\sigma_*(\bX^{(d)}) = o(1)$, for $\sigma_*(\cdot)$ as defined in \eqref{eq:BBVH-sigma-star}.
    Then, for any $\theta \geq 0$, we have
    \[ \lim_{d \to \infty} \lambda_{\max}(\mathfrak{X}^{(d)}) = B(\theta). \]
\end{theorem}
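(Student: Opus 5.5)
We prove matching upper and lower bounds on $\lambda_{\max}(\mathfrak{X}) = \Lambda_{\sc}(\theta\bP, \bA_1, \dots, \bA_n)$, where $\bP = \bv\bv^{\top}$ and $\bQ = \bm I_d - \bP$. The upper bound comes from an explicit primal $\bZ$; the lower bound comes from Lemma~\ref{lem:approx-slackness}, applied to a near-optimal $\bZ$ together with the positive semidefinite eigenmatrix of $\Psi_{\bZ}$ supplied by Proposition~\ref{prop:Psi}.

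\textbf{Step 1: smallness of $\Phi$ on the spike.} The assumption $\EE\bW^2 = \bm I_d$ says $\Phi(\bm I_d) = \bm I_d$, and $\Phi$ is linear. For rank-one inputs we have
\[ \langle \bu, \Phi(\bw\bw^{\top})\bu \rangle = \sum_i |\langle \bu, \bA_i \bw\rangle|^2 \leq \sigma_*^2, \]
so $\|\Phi(\bP)\| \leq \sigma_*^2 = o(1)$. It follows that
\[ \Phi(\alpha\bP + \beta\bQ) = \beta\bm I_d + (\alpha - \beta)\Phi(\bP) = \beta \bm I_d + O(|\alpha - \beta|\sigma_*^2). \]

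\textbf{Step 2: upper bound.} In the primal form $\inf_{\bZ} \lambda_{\max}(\theta\bP + \Phi(\bZ) + \bZ^{-1})$, take $\bZ = \alpha\bP + \beta\bQ$ with $\beta = \min\{1, \theta^{-1}\}$ and $\alpha = \sigma_*^{-1}$, which tends to infinity. The $\bP$-block of the objective is at most
\[ \theta + \beta + \alpha^{-1} + \alpha\sigma_*^2 = \theta + \beta + o(1). \]
The $\bQ$-block is $\beta + \beta^{-1} + o(1)$. In both regimes of $\theta$ the larger of these is $B(\theta) + o(1)$. There is a cross term $\bP\Phi(\bP)\bQ$ of norm $O(\alpha\sigma_*^2)$, which is also $o(1)$. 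This gives $\limsup \lambda_{\max}(\mathfrak{X}) \leq B(\theta)$.

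\textbf{Step 3: lower bound for $\theta \leq 1$.} Use $\bS = \bm I_d/d$ in \eqref{eq:sc-sdp-small-dual-2}. Then $\Phi(\bS) = \bS$, so the dual objective is
\[ \frac{\theta}{d} + 2\|\bS\|_* \geq 2. \]
This also gives the bound $\lambda_{\max}(\mathfrak{X}) \geq 2$ for every $\theta$.

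\textbf{Step 4: lower bound for $\theta > 1$ (the main obstacle).} Here the goal is $\theta + \theta^{-1} - o(1)$. The difficulty is that no closed-form dual certificate is available. The plan is as follows.
\begin{itemize}
\item Take $\bZ = \alpha\bP + \theta^{-1}\bQ$ with $\alpha$ fixed rather than growing, and let $c = \lambda_{\max}(\theta\bP + \Phi(\bZ^{-1})\text{-type objective})$ be the resulting upper value; concretely, form the small-SDP primal in the inverted variable, so its feasible $\bZ$ corresponds to $(\alpha\bP + \theta^{-1}\bQ)^{-1}$.
\item Let $\rho = \lambda_{\max}(\widetilde{\Psi}_{\bZ})$ and let $\bS \succeq \bm 0$ be its eigenmatrix from Proposition~\ref{prop:Psi}, normalized to $\Tr(\bS) = 1$.
\item Rescale $\bZ \leftarrow \bZ/\sqrt{\rho}$ so that $\rho = 1$, making the penalty $(\sqrt{\rho} - 1)^2$ in \eqref{eq:approx-slackness} vanish. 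Alternatively, tune $\alpha$ by continuity so that $\rho = 1$ exactly.
\item The loss in \eqref{eq:approx-slackness} is then $\langle \bK, \bS \rangle$, where $\bK = c\bm I_d - \theta\bP - \Phi(\bZ) - \bZ^{-1} \succeq \bm 0$.
\end{itemize}
By Step 1, up to $o(1)$ errors, $\bK$ is diagonal in the $(\bP, \bQ)$ decomposition. Since one of the two blocks attains $c$, $\bK$ vanishes on that block. So the remaining task is to show that $\bS$ puts only $o(1)$ of its mass (weighted by $\|\bK\|$) on the other block.

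For this, decompose $\Psi_{\bZ}$ using Step 1. Approximately, $\Psi_{\bZ}(\bS) \approx \bZ(\Tr(\bQ\bS)\bm I_d \cdot \text{const} + \ldots)\bZ$. Its Perron eigenmatrix should be close to $a\bP + b\bQ$ with $a \approx 1 - \theta^{-2}$, mirroring the GOE computation of Remark~\ref{rem:bbp-S}. I would verify this by a perturbation argument for the Perron eigenvalue, comparing $\widetilde{\Psi}_{\bZ}$ with the explicit rank-structured operator obtained when $\Phi(\bP)$ is replaced by $0$. The error is $o(1)$ in operator norm because $\|\Phi(\bP)\| \leq \sigma_*^2$. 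Choosing $\alpha$ so that the $\bP$- and $\bQ$-blocks of the objective coincide at $\theta + \theta^{-1}$ (making $\bK \approx 0$ on both blocks) would make the loss $o(1)$ directly. Controlling this eigenmatrix perturbation, uniformly in $d$, is the step I expect to be hardest.
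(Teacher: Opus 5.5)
Your Steps 1--3 (the upper bound with $\alpha = \sigma_*^{-1}$ and the lower bound $\bS = \bm I_d/d$ for $\theta \leq 1$) are correct and match the paper. Step 4, however, has a genuine gap.

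\textbf{Fixed $\alpha$ cannot work.} Here $\bZ = \alpha\bP + \theta^{-1}\bQ$ is the variable of \eqref{eq:sc-sdp-small-primal}, with objective $\theta\bP + \Phi(\bZ) + \bZ^{-1}$. Inverting it instead, as your first bullet suggests, makes the $\bP$-block roughly $2\theta + \alpha$. Take $\alpha \geq \theta^{-1}$ bounded, and let $\widetilde{\bS} \succeq \bm 0$ with $\Tr(\widetilde{\bS}) = 1$ be the top eigenmatrix of $\widetilde{\Psi}_{\bZ}$, with eigenvalue $\rho \geq \theta^{-2}$. Pairing the eigen-equation with $\bP$ gives $\rho\langle \bP, \widetilde{\bS}\rangle = \alpha\langle \Phi(\bP), \bZ^{1/2}\widetilde{\bS}\bZ^{1/2}\rangle \leq \alpha^2\sigma_*^2$. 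So the eigenmatrix puts vanishing mass on $\bP$. Taking the trace then gives $\rho \leq (\theta^{-1} + \alpha\sigma_*^2)\langle \bZ, \widetilde{\bS}\rangle \to \theta^{-2}$.

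The proof of Lemma~\ref{lem:approx-slackness} shows its lower bound equals $\theta\langle \bP, \bS\rangle + 2\sqrt{\rho}\,\langle \bZ^{-1}, \bS\rangle$. This is invariant under $\bZ \mapsto t\bZ$, so rescaling to $\rho = 1$ changes nothing. Here it tends to $0 + 2\theta^{-1}\cdot\theta = 2$, not $\theta + \theta^{-1}$. In particular, your expectation that the Perron eigenmatrix has $\bP$-mass near $1 - \theta^{-2}$ is false for any fixed $\alpha$.

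You must instead take $\alpha = \alpha(d)$ to be the value with $\lambda_{\max}(\widetilde{\Psi}_{\bZ(\alpha)}) = 1$, whose existence needs the continuity and monotonicity of Proposition~\ref{prop:lam-max-mon}. You then have to prove $\alpha(d) \to \infty$, which is a real step. The same two pairings show that bounded $\alpha$ would force $1 = \rho \leq \theta^{-2} + o(1)$. This is also what your last sentence implicitly needs: the two blocks of the objective, roughly $\theta + \theta^{-1} + \alpha^{-1}$ and $\theta + \theta^{-1}$, coincide only as $\alpha \to \infty$.

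\textbf{The perturbation argument is unavailable, and unnecessary.} For general isotropic noise, $\Phi$ is known only through $\Phi(\bm I_d) = \bm I_d$ and the rank-one bound of Step 1. Replacing $\Phi(\bP)$ by $\bm 0$ does not pin down $\Phi$ anywhere else, so there is no explicit operator to compare $\widetilde{\Psi}_{\bZ}$ with. Unlike the GOE case, the $\bQ$-block of the eigenmatrix is genuinely anisotropic. Moreover, once $\alpha \to \infty$ the $\Phi(\bP)$ error terms are multiplied by $\alpha$. So neither the claimed $o(1)$ operator-norm closeness nor the approximate block-diagonality of $\bK$ is justified.

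The missing idea is that you never need $\bS$ or $\bK$ themselves. In Lemma~\ref{lem:approx-slackness} the constant $c$ always cancels against $\Tr(\bS) = 1$. With $\rho = 1$ the bound is $\theta\langle \bP, \bS\rangle + 2\langle \bZ^{-1}, \bS\rangle = 2\theta + (2\alpha^{-1} - \theta)\langle \bP, \bS\rangle$. Now pair $\bS = \bZ\Phi(\bS)\bZ$ with $\bP$ and with $\bm I_d$. Using self-adjointness of $\Phi$, $\bZ\bP\bZ = \alpha^2\bP$, $\bZ^2 = \theta^{-2}\bm I_d + (\alpha^2 - \theta^{-2})\bP$ and $\Phi(\bm I_d) = \bm I_d$, this gives
\[ \langle \bP, \bS\rangle = \alpha^2\langle \Phi(\bP), \bS\rangle \quad\text{and}\quad 1 = \theta^{-2} + (\alpha^2 - \theta^{-2})\langle \Phi(\bP), \bS\rangle. \]
Hence $\langle \bP, \bS\rangle = \alpha^2(1 - \theta^{-2})/(\alpha^2 - \theta^{-2})$ exactly, and as $\alpha \to \infty$ the bound tends to $\theta + \theta^{-1}$.
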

\noindent
We note that $\sigma_*(\bX) \leq v(\bX)$ always, and so the assumption of Theorem~\ref{thm:bbp-iso-free} follows from that of Theorem~\ref{thm:BCSVH-BBP}.

And, again, our goal is to give a proof of this through constructing explicit primal (Lemma~\ref{lem:bbp-iso-free-upper}) and dual (Lemma~\ref{lem:bbp-iso-free-lower}) certificates.
As we will see, this is possible though subtler in this case; in particular, we can describe a dual certificate only somewhat indirectly using the method from Lemma~\ref{lem:approx-slackness} and Proposition~\ref{prop:Psi}.

Let us first establish some preliminary tools.
We again use the operator
\[ \Phi(\bS) \colonequals \sum_{i = 1}^n \bA_i \bS \bA_i. \]
Note that in this much more general case, unlike in Theorem~\ref{thm:bbp-free}, we do not have a closed form for this operator.
We are, however, guaranteed by the isotropy condition that
\begin{equation}
    \Phi(\bm I_d) = \sum_{i = 1}^n \bA_i^2 = \bm I_d, \label{eq:Phi-Id}
\end{equation}
i.e., that the identity is an eigenmatrix of $\Phi$ with eigenvalue 1, which will be important repeatedly below.

The key role of the $\sigma_*$ quantity for us is that it controls how this operator acts on rank-one matrices (or, low-rank matrices more generally):
\begin{proposition}
    \label{prop:Phi-rank-one}
    Allowing $\Phi$ to act by the same definition on non-Hermitian matrices, we have for any $\bu, \bv \in \CC^{d}$
    \[ \|\Phi(\bu\bv^*)\| \leq \|\bu\| \cdot \|\bv\| \cdot \sigma_*(\bX)^2. \]
\end{proposition}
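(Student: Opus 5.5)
We bound the operator norm through its bilinear form. For unit vectors $\bx, \by \in \CC^d$,
\[ |\langle \bx, \Phi(\bu\bv^*)\by \rangle| = \Big|\sum_{i = 1}^n \langle \bx, \bA_i \bu\rangle \langle \bv, \bA_i \by\rangle\Big|, \]
where we use that each $\bA_i$ is Hermitian, so that $\bv^*\bA_i\by = \langle \bv, \bA_i\by\rangle$. The plan is to split this sum with Cauchy--Schwarz and then compare each resulting factor with the quantity in the definition \eqref{eq:BBVH-sigma-star} of $\sigma_*(\bX)$. Taking the supremum over $\bx, \by$ at the end recovers $\|\Phi(\bu\bv^*)\|$.

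Concretely, we first reduce to $\|\bu\| = \|\bv\| = 1$. Both sides are homogeneous of degree one in $\bu$ and in $\bv$ (the case $\bu = \bm 0$ or $\bv = \bm 0$ is trivial), so this is without loss of generality. Cauchy--Schwarz over $i \in [n]$ then gives
\[ |\langle \bx, \Phi(\bu\bv^*)\by \rangle| \leq \Big(\sum_{i=1}^n |\langle \bx, \bA_i\bu\rangle|^2\Big)^{1/2}\Big(\sum_{i=1}^n |\langle \bv, \bA_i\by\rangle|^2\Big)^{1/2}. \]
Since $\bx, \bu, \bv, \by$ are all unit vectors, each factor is at most $\sigma_*(\bX)$ by \eqref{eq:BBVH-sigma-star}, where we use that $\bX - \EE\bX = \sum_{i = 1}^n g_i\bA_i$ has the $\bA_i$ as its coefficient matrices. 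Hence the product is at most $\sigma_*(\bX)^2$, and taking the supremum over unit $\bx, \by$ gives $\|\Phi(\bu\bv^*)\| \leq \sigma_*(\bX)^2$. Undoing the normalization restores the factor $\|\bu\|\cdot\|\bv\|$, which proves the claimed bound.

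There is no real obstacle in this argument. The only point to check is that the supremum in \eqref{eq:BBVH-sigma-star} ranges over complex unit vectors, and that it covers pairs appearing in either order, such as $(\bx, \bu)$ and $(\bv, \by)$. The order is immaterial because $\sum_i |\langle \bu, \bA_i\bw\rangle|^2$ is symmetric under swapping $\bu$ and $\bw$ when the $\bA_i$ are Hermitian.
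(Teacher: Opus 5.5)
Your proof is correct and follows the paper's argument: expand $\ba^*\Phi(\bu\bv^*)\bm b = \sum_{i=1}^n (\ba^*\bA_i\bu)(\bv^*\bA_i\bm b)$, apply Cauchy--Schwarz over $i$, bound each factor by the definition \eqref{eq:BBVH-sigma-star} of $\sigma_*$, and take the supremum over unit $\ba, \bm b$. The Hermitian assumption and the remark about swapping the order of the vectors are harmless but unneeded, since both Cauchy--Schwarz factors already have exactly the form $\sum_i |\langle \bu, \bA_i \bw\rangle|^2$ that appears in that definition.
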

\begin{proof}
    We expand directly,
    \begin{align*}
      \|\Phi(\bu\bv^*)\|
      &= \sup_{\|\ba\| = \|\bm b\| = 1} |\ba^*\Phi(\bu\bv^*)\bm b| \\
      &= \sup_{\|\ba\| = \|\bm b\| = 1} \left|\sum_{i = 1}^n (\ba^*\bA_i\bu)(\bv^*\bA_i\bm b)\right| \\
      &\leq \sup_{\|\ba\| = \|\bm b\| = 1} \left(\sum_{i = 1}^n |\ba^*\bA_i\bu|^2\right)^{1/2} \left(\sum_{i = 1}^n |\bv^*\bA_i\bm b|^2\right)^{1/2} \\
      &\leq \|\bu\| \cdot \|\bv\| \cdot \sigma_*(\bX)^2,
    \end{align*}
    using the Cauchy-Schwarz inequality.
\end{proof}
\noindent
The upper bound of Theorem~\ref{thm:bbp-iso-free} then follows relatively easily:
\begin{lemma}[Upper bound on $\lambda_{\max}(\mathfrak{X}^{(d)})$]
    \label{lem:bbp-iso-free-upper}
    In the above setting, $\lambda_{\max}(\mathfrak{X}^{(d)}) \leq B(\theta) + o(1)$.
\end{lemma}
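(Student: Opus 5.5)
We mimic the GOE warmup and plug an explicit $\bZ = \alpha\bP + \beta\bQ$ into Lehner's formula \eqref{eq:lehner-sc}, in the form used in the proof of Theorem~\ref{thm:sc-sdp-small}: $\lambda_{\max}(\mathfrak{X}) \leq \lambda_{\max}(\theta\bP + \bZ^{-1} + \Phi(\bZ))$ for any $\bZ \succ \bm 0$. Here $\bP = \bv\bv^{\top}$ and $\bQ = \bm I_d - \bP$ as before.

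The point is that $\Phi$ has no closed form, but we only need two facts about it. First, by isotropy \eqref{eq:Phi-Id} we have $\Phi(\bQ) = \Phi(\bm I_d) - \Phi(\bP) = \bm I_d - \Phi(\bP)$. Second, by Proposition~\ref{prop:Phi-rank-one}, $\|\Phi(\bP)\| \leq \sigma_*(\bX)^2 = o(1)$. Writing $\bZ = \beta\bm I_d + (\alpha - \beta)\bP$, we get
\[ \Phi(\bZ) = \beta\bm I_d + (\alpha - \beta)\Phi(\bP), \]
and $\bm 0 \preceq \Phi(\bP) \preceq \sigma_*^2 \bm I_d$ by complete positivity. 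Hence, for $\alpha \geq \beta$,
\[ \theta\bP + \bZ^{-1} + \Phi(\bZ) \preceq \left(\theta + \alpha^{-1}\right)\bP + \beta^{-1}\bQ + \beta\bm I_d + \alpha\sigma_*^2\bm I_d. \]
The right-hand side has largest eigenvalue
\[ \max\{\theta + \alpha^{-1} + \beta,\ \beta^{-1} + \beta\} + \alpha\sigma_*^2. \]

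Comparing with the GOE case, the term $\beta$ in the $\bP$-entry is slightly worse than $\frac{d-1}{d}\beta$ there, so we must recheck $\beta \colonequals \min\{1, \theta^{-1}\}$. For $\theta \leq 1$ we have $\beta = 1$, and both entries are at most $2 + \alpha^{-1}$ since $\theta + 1 \leq 2$. For $\theta > 1$ we have $\beta = \theta^{-1}$, and both entries equal $\theta + \theta^{-1}$ up to the $\alpha^{-1}$ term. So the bound is $B(\theta) + \alpha^{-1} + \alpha\sigma_*^2$.

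Finally, choose $\alpha \colonequals \sigma_*^{-1}$, or any $\alpha \to \infty$ with $\alpha\sigma_*^2 \to 0$, taking $\alpha$ large if $\sigma_* = 0$. This gives $\lambda_{\max}(\mathfrak{X}^{(d)}) \leq B(\theta) + 2\sigma_*(\bX^{(d)}) = B(\theta) + o(1)$.

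The only delicate point is controlling $\Phi(\bP)$ in the Loewner order rather than just in norm. This follows from positivity of $\Phi$ together with the norm bound of Proposition~\ref{prop:Phi-rank-one} applied to $\bu = \bv$. The condition $\alpha \geq \beta$ holds for large $d$, since $\beta \leq 1$ while $\alpha \to \infty$.
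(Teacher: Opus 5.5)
Your proposal is correct and is essentially the paper's proof. It uses the same $\bZ = \alpha\bP + \beta\bQ$ in Lehner's formula, the same control of $\Phi(\bZ) = \beta\bm I_d + (\alpha-\beta)\Phi(\bP)$ via isotropy, positivity and Proposition~\ref{prop:Phi-rank-one}, and the same choice $\beta = \min\{1,\theta^{-1}\}$ with $\alpha \to \infty$ and $\alpha\sigma_*^2 \to 0$; your explicit $\alpha = \sigma_*^{-1}$ just makes the error term $2\sigma_*$ explicit, and since isotropy forces $0 < \sigma_* \leq 1$, your side conditions $\alpha \geq \beta$ and $\sigma_* \neq 0$ hold automatically.
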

\begin{proof}
    For the upper bound, we follow essentially the same argument as for Theorem~\ref{thm:bbp-free}.
    We use Lehner's formula \eqref{eq:lehner-sc} and again consider feasible points $\bZ$ of the form
    \[ \bZ = \alpha \bP + \beta \bQ = (\alpha - \beta)\bP + \beta \bm I_d \]
    for $\alpha, \beta > 0$.
    The second form above will be useful in this case because it gives
    \[ \Phi(\bZ) = (\alpha - \beta)\Phi(\bP) + \beta \bm I_d \preceq (\beta + (\alpha + \beta)\sigma_*(\bX^{(d)})^2) \bm I_d \]
    where we use linearity and complete positivity of $\Phi$, \eqref{eq:Phi-Id}, and Proposition~\ref{prop:Phi-rank-one} on the first term, since $\bP = \bv\bv^*$.
    We then have
    \begin{align*}
      \lambda_{\max}(\mathfrak{X})
      &\leq \lambda_{\max}\bigl(\theta \bP + \bZ^{-1} + \Phi(\bZ)\bigr) \\
      &\leq \lambda_{\max}\bigl(\theta \bP + \alpha^{-1}\bP + \beta^{-1}\bQ + (\beta + (\alpha + \beta)\sigma_*(\bX^{(d)})^2) \bm I_d\bigr) \\
      &\leq \max\left\{\theta + \alpha^{-1} + \beta, \beta + \beta^{-1}\right\} + (\alpha + \beta)\sigma_*(\bX^{(d)})^2 \\
      &= B(\theta) + o(1)
    \end{align*}
    provided that we choose
    \[ \beta \colonequals \min\{1, \theta^{-1}\} \]
    not depending on $d$, and $\alpha = \alpha(d)$ such that
    \begin{equation}
        \omega(1) \leq \alpha(d) \leq o(\sigma_*(\bX^{(d)})^{-2}). \label{eq:alpha-cond}
    \end{equation}
    Since $\sigma_*(\bX^{(d)}) \to 0$ by assumption, this is always possible.
\end{proof}

For the lower bound, we use the tools we established in Lemma~\ref{lem:approx-slackness} and Proposition~\ref{prop:Psi}.
We first establish a few preliminaries about these.
Recall that, to $\bZ \succ \bm 0$, we associate a self-adjoint linear operator on $\CC^{d \times d}_{\herm}$ defined by
\[ \widetilde{\Psi}_{\bZ}(\bS) \colonequals \bZ^{1/2}\Phi(\bZ^{1/2}\bS\bZ^{1/2})\bZ^{1/2}. \]
We work with the same class of $\bZ$ as before, $\bZ = \alpha \bP + \beta \bQ$, and we still take $\beta \colonequals \min\{1, \theta^{-1}\}$, but let us consider $\alpha$ as a variable for this part of the argument and view $\bZ = \bZ(\alpha)$.
(Note that while morally we expect the best $\bZ$ here and from the previous part of the argument in Lemma~\ref{lem:bbp-iso-free-upper} to be close or identical, formally for the lower bound we are allowed to choose an entirely new $\bZ$; it is just an input into Lemma~\ref{lem:approx-slackness} and Proposition~\ref{prop:Psi}.)
For fixed $\beta$, we have the following:
\begin{proposition}
    \label{prop:lam-max-mon}
    Assume $\Phi(\bm I_d) = \bm I_d$.
    Fix $\beta > 0$, and for $\alpha > 0$ set $\bZ(\alpha) \colonequals \alpha\bP + \beta\bQ$ and
    \[
        f(\alpha) \colonequals \lambda_{\max}(\widetilde{\Psi}_{\bZ(\alpha)}).
    \]
    Then $f$ is continuous and non-decreasing on $(0, \infty)$.
    If $\beta < 1$, then there is a unique $\alpha > 0$ such that $f(\alpha) = 1$.
\end{proposition}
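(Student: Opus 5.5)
Monotonicity comes from operator-monotonicity of $\bZ(\alpha)$ in $\alpha$ combined with positivity of $\Phi$; existence comes from the two limits $\alpha\to 0$ and $\alpha\to\infty$; uniqueness is the delicate part.

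\textbf{Continuity.} Since $\widetilde{\Psi}_{\bZ}$ is similar to $\Psi_{\bZ}(\bS)=\bZ\Phi(\bS)\bZ$ by Proposition~\ref{prop:Psi}, I would work with whichever form is convenient. The map $\alpha\mapsto\widetilde{\Psi}_{\bZ(\alpha)}$ is continuous as a map into self-adjoint operators on $\CC^{d\times d}_{\herm}$, because $\bZ(\alpha)^{1/2}=\sqrt{\alpha}\,\bP+\sqrt{\beta}\,\bQ$. Continuity of $f$ then follows from continuity of $\lambda_{\max}$.

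\textbf{Monotonicity.} Write $\lambda_{\max}(\widetilde{\Psi}_{\bZ})$ variationally over $\|\widetilde{\bS}\|_F=1$, and restrict to $\widetilde{\bS}\succeq\bm 0$, which is allowed by the argument of Proposition~\ref{prop:Psi}. Substituting $\bS=\bZ^{1/2}\widetilde{\bS}\bZ^{1/2}$ gives
\[
\langle\widetilde{\bS},\widetilde{\Psi}_{\bZ}(\widetilde{\bS})\rangle=\langle\bS,\Phi(\bS)\rangle .
\]
This substitution changes the normalization, so I would avoid it. Instead I would use the spectral radius of the positive map $\Psi_{\bZ}$, which equals $\lambda_{\max}(\widetilde{\Psi}_{\bZ})$ by similarity. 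If $\alpha\le\alpha'$, then $\bZ(\alpha)\preceq\bZ(\alpha')$. Because $\bZ(\alpha)$ and $\bZ(\alpha')$ commute (both are functions of $\bP$), we get $\bZ(\alpha)^{1/2}\bX\bZ(\alpha)^{1/2}\preceq\bZ(\alpha')^{1/2}\bX\bZ(\alpha')^{1/2}$ for all $\bX\succeq\bm 0$.

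Hence $\widetilde{\Psi}_{\bZ(\alpha)}(\bX)\preceq\widetilde{\Psi}_{\bZ(\alpha')}(\bX)$ for PSD $\bX$, using complete positivity of $\Phi$ twice. Since the top eigenvalue of $\widetilde{\Psi}$ is attained at a PSD eigenmatrix, this gives
\[
f(\alpha)=\langle\widetilde{\bS},\widetilde{\Psi}_{\bZ(\alpha)}(\widetilde{\bS})\rangle\le\langle\widetilde{\bS},\widetilde{\Psi}_{\bZ(\alpha')}(\widetilde{\bS})\rangle\le f(\alpha').
\]

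\textbf{Limits at $0$ and $\infty$.} As $\alpha\to0$, $\widetilde{\Psi}_{\bZ(\alpha)}\to\bQ\,\widetilde\Psi_{\beta\bm I_d}$-type operator $\beta\,\bQ\Phi(\bQ\cdot\bQ)\bQ$, with $\beta^2$ replaced appropriately. Its norm is at most $\beta^2\|\Phi\|\le\beta^2<1$, using $\lambda_{\max}(\Phi)\le\sigma^2=1$ from Proposition~\ref{prop:lam-max-bound}. So $\lim_{\alpha\to0}f(\alpha)<1$.

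As $\alpha\to\infty$, test $\widetilde{\bS}=\bP$:
\[
\langle\bP,\widetilde{\Psi}(\bP)\rangle=\alpha^2\,\bv^*\Phi(\bP)\bv=\alpha^2\sum_i|\bv^*\bA_i\bv|^2 .
\]
This diverges unless it vanishes. To avoid that degenerate case, I would instead test $\widetilde{\bS}=\bm I_d/\sqrt d$, which gives
\[
\langle\widetilde{\bS},\widetilde{\Psi}(\widetilde{\bS})\rangle=\tfrac1d\langle\bZ,\Phi(\bZ)\rangle\ge\tfrac{\alpha^2}{d}\langle\bP,\Phi(\bP)\rangle+\dots .
\]
Better still, I would note that $\langle\bP,\Phi(\bP)\rangle=\sum_i|\bv^*\bA_i\bv|^2$ might be $0$. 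In that case I would use $\langle\bP,\Phi(\bQ)\rangle=\langle\bP,\Phi(\bm I)\rangle-\langle\bP,\Phi(\bP)\rangle=1$ and the test matrix $\widetilde{\bS}=a\bP+b\bQ/\sqrt{d-1}$. This gives a cross term of order $\alpha\beta ab$, which still diverges. The intermediate value theorem then gives existence.

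\textbf{Uniqueness (main obstacle).} Monotonicity alone allows a flat stretch at level $1$, so I need strict increase wherever $f(\alpha)\ge$ its limit at $0$. At a point with $f(\alpha)=1>\beta^2$, the PSD top eigenmatrix $\widetilde{\bS}$ must have $\bP\widetilde{\bS}\ne\bm 0$. Otherwise it would live on $\bQ$, where the Rayleigh quotient is at most $\beta^2<1$.

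Then for $\alpha'>\alpha$, the difference $\widetilde{\Psi}_{\bZ(\alpha')}(\widetilde{\bS})-\widetilde{\Psi}_{\bZ(\alpha)}(\widetilde{\bS})$ is a sum of PSD terms that include a factor $(\alpha'-\alpha)$ times something involving $\bP\widetilde{\bS}\bP$ or cross terms. I would try to show its inner product with $\widetilde{\bS}$ is strictly positive, using $\langle\bP,\Phi(\bQ)\rangle=1$ to rule out vanishing. Getting this strictness cleanly, without an irreducibility assumption, is where I expect the real work. A fallback is to show directly that $\alpha\mapsto f(\alpha)/\alpha$ or $\alpha\mapsto\log f$ is strictly increasing above level $\beta^2$.
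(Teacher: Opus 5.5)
Your continuity argument and your two limits ($\lim_{\alpha\to 0}f(\alpha)\le\beta^2<1$ and $f(\alpha)\to\infty$) are fine. The monotonicity step, however, rests on a false inequality, and the uniqueness step, which you leave open, was going to rest on the same inequality.

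You claim that $\bZ(\alpha)\preceq\bZ(\alpha')$, together with commutativity of the two $\bZ$'s, gives $\bZ(\alpha)^{1/2}\bX\bZ(\alpha)^{1/2}\preceq\bZ(\alpha')^{1/2}\bX\bZ(\alpha')^{1/2}$ for all $\bX\succeq\bm 0$. This fails because $\bX$ need not commute with $\bP$. Take $d=2$, $\bv=\be_1$ and $\bX=(\be_1+\be_2)(\be_1+\be_2)^{\top}$. The difference is then
\[
\left[\begin{array}{cc} \alpha'-\alpha & (\sqrt{\alpha'}-\sqrt{\alpha})\sqrt{\beta} \\ (\sqrt{\alpha'}-\sqrt{\alpha})\sqrt{\beta} & 0 \end{array}\right],
\]
which has negative determinant. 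So $\widetilde{\Psi}_{\bZ(\alpha')}-\widetilde{\Psi}_{\bZ(\alpha)}$ is not a positive map, and neither the spectral-radius comparison nor your displayed chain follows.

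This is not merely a technicality: Rayleigh quotients with $\widetilde{\bS}$ held fixed genuinely can decrease. Take $n=1$ and $\bA_1=\mathsf{Diag}(1,-1)$, so that $\Phi(\bm I_2)=\bm I_2$, and take $\widetilde{\bS}=\frac12(\be_1+\be_2)(\be_1+\be_2)^{\top}$. Then $\langle\widetilde{\bS},\widetilde{\Psi}_{\bZ(\alpha)}(\widetilde{\bS})\rangle=(\alpha-\beta)^2/4$, which decreases on $(0,\beta)$. Your argument never used that $\widetilde{\bS}$ is an eigenmatrix, so it would make this quantity non-decreasing for every PSD $\widetilde{\bS}$.

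The missing idea is the substitution you explicitly set aside. Hold $\bT\colonequals\bZ(\alpha)^{1/2}\widetilde{\bS}\bZ(\alpha)^{1/2}$ fixed instead of $\widetilde{\bS}$. The change of normalization you were worried about is exactly what helps, because it moves all of the $\alpha$-dependence into the denominator:
\[
f(\alpha)=\sup_{\bT\succeq\bm 0,\ \bT\neq\bm 0}\frac{\langle\bT,\Phi(\bT)\rangle}{\alpha^{-2}\|\bP\bT\bP\|_F^2+2(\alpha\beta)^{-1}\|\bP\bT\bQ\|_F^2+\beta^{-2}\|\bQ\bT\bQ\|_F^2}.
\]
The numerator is $\geq 0$ and independent of $\alpha$, and the denominator is non-increasing in $\alpha$, which gives monotonicity.

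The same formula settles strictness without any irreducibility assumption. Suppose $f(\alpha_1)>\beta^2$, and let $\bT\succeq\bm 0$ be a maximizer at $\alpha_1$; it exists by Proposition~\ref{prop:Psi}. Then $\bT=\bQ\bT\bQ$ is impossible, which is your own observation rewritten in these coordinates. Hence for $\alpha_2>\alpha_1$ the denominator strictly decreases while the numerator stays positive, so $f(\alpha_2)>f(\alpha_1)$. Combine this with $f(\beta)=\beta^2<1$ (or your limit at $0$) and $f\to\infty$ to get existence and uniqueness of $\alpha$ with $f(\alpha)=1$.
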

\begin{proof}
    Write $\bH(\alpha) \colonequals \bZ(\alpha)^{1/2}$.
    By Proposition~\ref{prop:Psi}, the top eigenvalue of $\widetilde{\Psi}_{\bZ(\alpha)}$ has a positive semidefinite eigenmatrix.
    Thus, after the change of variables $\bT = \bH(\alpha) \bS \bH(\alpha)$ in the Rayleigh quotient,
    \[
        f(\alpha)
        =
        \sup_{\substack{\bT \succeq \bm 0 \\ \bT \neq \bm 0}}
        \frac{\langle \bT, \Phi(\bT) \rangle}
        {\|\bH(\alpha)^{-1}\bT\bH(\alpha)^{-1}\|_F^2}.
    \]
    For $\bT \succeq \bm 0$,
    \[
        \langle \bT, \Phi(\bT) \rangle
        =
        \sum_i \Tr(\bT \bA_i \bT \bA_i)
        =
        \sum_i \Tr((\bT^{1/2}\bA_i\bT^{1/2})^2)
        \geq 0.
    \]
    On the other hand,
    \begin{align*}
      \|\bH(\alpha)^{-1}\bT\bH(\alpha)^{-1}\|_F^2
      &= \alpha^{-2}\|\bP\bT\bP\|_F^2
        + 2(\alpha\beta)^{-1}\|\bP\bT\bQ\|_F^2
        + \beta^{-2}\|\bQ\bT\bQ\|_F^2.
    \end{align*}
    For fixed $\bT$, this denominator is non-increasing in $\alpha$, while the numerator does not depend on $\alpha$.
    Therefore $f$ is non-decreasing.
    Continuity follows from the continuity of the entries of a matrix representation of the operator $\widetilde{\Psi}_{\bZ(\alpha)}$ as functions of $\alpha > 0$.

    We next prove uniqueness of $\alpha$ satisfying $f(\alpha) = 1$ under the assumption $\beta < 1$.
    First note that $\Phi$ is a contraction for the Frobenius norm.
    Indeed, $\Phi(\bm I_d) = \bm I_d$ and the $\bA_i$ are Hermitian, so $\Phi$ is unital and trace-preserving.
    Kadison's inequality \cite[Theorem 2.3.2]{Bhatia-2009-PositiveDefiniteMatrices} then gives $\Phi(\bY)^2 \preceq \Phi(\bY^2)$ for Hermitian $\bY$.
    Taking traces gives $\|\Phi(\bY)\|_F \leq \|\bY\|_F$.
    Since $\Phi(\bm I_d) = \bm I_d$, it follows that $\lambda_{\max}(\Phi) = 1$ as a self-adjoint operator on $\CC^{d \times d}_{\herm}$.
    Hence
    \[
        f(\beta) = \lambda_{\max}(\widetilde{\Psi}_{\beta \bm I_d}) = \beta^2\lambda_{\max}(\Phi) = \beta^2 < 1.
    \]

    We next show that $f$ is strictly increasing wherever $f > \beta^2$.
    Let $0 < \alpha_1 < \alpha_2$, and suppose $f(\alpha_1) > \beta^2$.
    Let $\bT \succeq \bm 0$ attain the above supremum for $\alpha_1$.
    If $\bT = \bQ\bT\bQ$, then
    \[
        \frac{\langle \bT, \Phi(\bT) \rangle}
        {\|\bH(\alpha_1)^{-1}\bT\bH(\alpha_1)^{-1}\|_F^2}
        \leq
        \frac{\|\bT\|_F^2}{\beta^{-2}\|\bT\|_F^2}
        =
        \beta^2,
    \]
    contradicting $f(\alpha_1) > \beta^2$.
    Thus $\bT \neq \bQ\bT\bQ$.
    The denominator in the quotient is therefore strictly smaller at $\alpha_2$ than at $\alpha_1$.
    Since this quotient has value $f(\alpha_1) > 0$, its numerator is positive.
    This gives $f(\alpha_2) > f(\alpha_1)$.

    Also, using the test matrix $\bm I_d/\sqrt{d}$ in the Rayleigh quotient, for $\alpha \geq \beta$ we have
    \[
        f(\alpha)
        \geq
        \frac{1}{d}\langle \bZ(\alpha), \Phi(\bZ(\alpha)) \rangle
        \geq
        \frac{\alpha\beta}{d},
    \]
    so in particular $f(\alpha) \to \infty$ as $\alpha \to \infty$.
    The last inequality uses $\alpha \geq \beta$, $\bZ(\alpha) \succeq \beta\bm I_d$, $\Phi(\bZ(\alpha)) \succeq \alpha\Phi(\bP)$, and $\Tr(\Phi(\bP)) = \Tr(\bP) = 1$.
    By continuity, there exists $\alpha > 0$ such that $f(\alpha) = 1$, and since $\beta^2 < 1$ this $\alpha$ is unique.
\end{proof}

The dual certificate we will use in the proof below, also that prescribed by Lemma~\ref{lem:approx-slackness}, is then as follows.
\begin{definition}[Spiked matrix model dual certificate]
    \label{def:dual-cert}
    Suppose that $\bA_1, \dots, \bA_n \in \RR^{d \times d}_{\sym}$ have $\sum_{i = 1}^n \bA_i^2 = \bm I_d$ and $\bA_0 = \theta \bv\bv^{\top}$ for $\bv \in \SS^{d - 1}$.
    Define the operator $\Phi: \CC^{d \times d}_{\herm} \to \CC^{d \times d}_{\herm}$ by
    \begin{align*}
      \Phi(\bS) &\colonequals \sum_{i = 1}^n \bA_i \bS \bA_i
                  \intertext{and, for each $\bZ \succ \bm 0$, define the operator $\widetilde{\Psi}_{\bZ}: \CC^{d \times d}_{\herm} \to \CC^{d \times d}_{\herm}$ by}
                  \widetilde{\Psi}_{\bZ}(\bS) &\colonequals \bZ^{1/2}\Phi(\bZ^{1/2}\bS\bZ^{1/2})\bZ^{1/2},
    \end{align*}
    both self-adjoint operators with respect to the Frobenius inner product.
    Suppose that $\theta > 1$ and let $\bZ(\alpha) \colonequals \alpha \bv\bv^{\top} + \theta^{-1} (\bm I_d - \bv\bv^{\top})$.
    There is a unique $\alpha > 0$ such that $\lambda_{\max}(\widetilde{\Psi}_{\bZ(\alpha)}) = 1$ (Proposition~\ref{prop:lam-max-mon}), in the above viewpoint on $\widetilde{\Psi}_{\bZ(\alpha)}$ as a self-adjoint operator.
    Up to rescaling, with this choice of $\alpha$, the eigenvalue 1 of $\widetilde{\Psi}_{\bZ(\alpha)}$ has a positive semidefinite associated eigenmatrix $\widetilde{\bS} \succeq \bm 0$.
    Writing $\bZ \colonequals \bZ(\alpha)$, set
    \[
        \bS \colonequals \frac{\bZ^{1/2}\widetilde{\bS}\bZ^{1/2}}{\Tr(\bZ^{1/2}\widetilde{\bS}\bZ^{1/2})}.
    \]
    Then $\bS \succeq \bm 0$, $\Tr(\bS) = 1$, and $\bS = \Psi_{\bZ}(\bS) = \bZ\Phi(\bS)\bZ$ by Proposition~\ref{prop:Psi}.
    We call this $\bS \equalscolon \bS^{\mathsf{est}} = \bS^{\mathsf{est}}(\theta, \bv, \bA_1, \dots, \bA_n)$.
\end{definition}

We note that $\bS^{\mathsf{est}}$ may not be well-defined, because the eigenvalue 1 above may have several positive semidefinite eigenmatrices.
This is not an issue for the argument below: one may simply pick any such eigenmatrix.
However, this might cast some doubt on the formulation of Conjectures~\ref{conj:spiked-matrix} and~\ref{conj:spiked-matrix-fluct}, which refer to a single $\bS^{\mathsf{est}}$.
Thus, before proceeding to the rest of the proof, we give the following simple condition for uniqueness of this eigenmatrix.

\begin{proposition}
    \label{prop:Phi-irreducible-transfer}
    In the above setting, suppose that $\Phi$ is irreducible (Definition~\ref{def:cp-irred}) and that $\bm I_d \in \operatorname{span}\{\bA_1, \dots, \bA_n\}$.
    Then, for every $\bZ \succ \bm 0$, $\widetilde{\Psi}_{\bZ}$ is irreducible, and thus by Theorem~\ref{thm:quantum-pf} the eigenvalue $\lambda_{\max}(\widetilde{\Psi}_{\bZ})$ is simple.
\end{proposition}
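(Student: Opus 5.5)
The plan is to identify $\widetilde{\Psi}_{\bZ}$ as a completely positive map with explicit Kraus operators, and then check the invariant-subspace criterion of Definition~\ref{def:cp-irred} directly.

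\textbf{Kraus form.} Writing out the definition,
\[ \widetilde{\Psi}_{\bZ}(\bS) = \sum_{i=1}^n (\bZ^{1/2}\bA_i\bZ^{1/2})\,\bS\,(\bZ^{1/2}\bA_i\bZ^{1/2}). \]
So $\widetilde{\Psi}_{\bZ}$ has the Kraus representation $\bB_i \colonequals \bZ^{1/2}\bA_i\bZ^{1/2}$. These matrices are Hermitian, so $\bB_i^* = \bB_i$. Irreducibility of $\widetilde{\Psi}_{\bZ}$ then means: no nonzero proper subspace $L \subset \CC^d$ satisfies $\bB_i L \subseteq L$ for all $i$.

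\textbf{Pulling back an invariant subspace.} Suppose, for contradiction, that such an $L$ exists, and set $M \colonequals \bZ^{1/2}L$. Then $\bZ^{1/2}\bA_i\bZ^{1/2}L \subseteq L$ becomes
\[ \bA_i \bZ^{1/2} L \subseteq \bZ^{-1/2}L, \qquad\text{that is,}\qquad \bA_i M \subseteq \bZ^{-1}M \ \text{ for all } i. \]
This is not immediately invariance under the $\bA_i$, because $\bZ^{-1}$ twists the target subspace. This is where the hypothesis $\bm I_d \in \operatorname{span}\{\bA_1,\dots,\bA_n\}$ enters. Write $\bm I_d = \sum_i c_i\bA_i$. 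Applying this combination to $M$ gives
\[ M = \bm I_d M \subseteq \sum_i c_i \bA_i M \subseteq \bZ^{-1}M. \]
Since $\bZ^{-1}$ is invertible and $\dim M = \dim \bZ^{-1}M$, we get $M = \bZ^{-1}M$. Hence $\bA_i M \subseteq M$ for every $i$. Also $M$ is nonzero and proper because $\bZ^{1/2}$ is invertible.

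\textbf{Conclusion.} The subspace $M$ is a nonzero proper subspace invariant under all the Kraus operators $\bA_i$ of $\Phi$, which contradicts irreducibility of $\Phi$. So $\widetilde{\Psi}_{\bZ}$ is irreducible. Theorem~\ref{thm:quantum-pf} then gives that $r(\widetilde{\Psi}_{\bZ})$ is algebraically simple. Because $\widetilde{\Psi}_{\bZ}$ is self-adjoint on $\CC^{d \times d}_{\herm}$ (Proposition~\ref{prop:Psi}), its spectral radius equals $\max\{\lambda_{\max}, -\lambda_{\min}\}$. We still need $r = \lambda_{\max}(\widetilde{\Psi}_{\bZ})$, so that the simple eigenvalue is $\lambda_{\max}$ and not $-\lambda_{\min}$. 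This follows because Theorem~\ref{thm:quantum-pf} produces a positive semidefinite eigenmatrix $\bS$ with eigenvalue $r(\widetilde{\Psi}_{\bZ}) \geq 0$, so $r$ is itself an eigenvalue and hence at most $\lambda_{\max}$.

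One technical point is that Theorem~\ref{thm:quantum-pf} is stated on $\CC^{d\times d}$ rather than on $\CC^{d\times d}_{\herm}$. Simplicity on the complex space implies simplicity on the Hermitian real form, since the map commutes with taking adjoints.

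I expect the main obstacle to be the middle step: showing that invariance under the twisted operators $\bB_i$ reduces to invariance under the $\bA_i$. The identity-in-span hypothesis is exactly what forces $\bZ^{-1}M = M$, and that step is what the argument hinges on.
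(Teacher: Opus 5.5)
Your proof is correct and follows essentially the same route as the paper. Both arguments take the Kraus operators $\bZ^{1/2}\bA_i\bZ^{1/2}$ and use $\bm I_d \in \operatorname{span}\{\bA_1, \dots, \bA_n\}$ to force an invariant subspace to be $\bZ$-invariant, which then makes it $\bA_i$-invariant; the paper does this for $L$ directly via invariance under $\bZ^{\pm 1/2}$, whereas your dimension count on $M = \bZ^{1/2}L$ is the same step in disguise (indeed $M = L$ a posteriori), and your extra checks that $r(\widetilde{\Psi}_{\bZ}) = \lambda_{\max}(\widetilde{\Psi}_{\bZ})$ and that simplicity passes to the Hermitian real form fill in details the paper leaves implicit.
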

\begin{proof}
    Write $\bB_i \colonequals \bZ^{1/2}\bA_i\bZ^{1/2}$, so that
    \[
        \widetilde{\Psi}_{\bZ}(\bS) = \sum_{i = 1}^n \bB_i \bS \bB_i^*.
    \]
    Suppose $L \subset \CC^d$ is invariant under every $\bB_i$.
    Since $\bm I_d \in \operatorname{span}\{\bA_1, \dots, \bA_n\}$, the matrix $\bZ$ belongs to $\operatorname{span}\{\bB_1, \dots, \bB_n\}$.
    Thus $L$ is invariant under $\bZ$.
    Since $\bZ$ is Hermitian, $L$ is also invariant under $\bZ^{1/2}$ and $\bZ^{-1/2}$.
    Therefore every $\bA_i = \bZ^{-1/2}\bB_i\bZ^{-1/2}$ leaves $L$ invariant.
    By irreducibility of $\Phi$, $L$ is either $\{\bm 0\}$ or all of $\CC^d$.
\end{proof}

\begin{lemma}
    \label{lem:bbp-iso-free-lower}
    In the above setting, $\lambda_{\max}(\mathfrak{X}^{(d)}) \geq B(\theta) - o(1)$.
\end{lemma}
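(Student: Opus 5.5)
We will split into the cases $\theta \leq 1$ and $\theta > 1$. In both cases we lower bound $\lambda_{\max}(\mathfrak{X}) = \Lambda_{\sc}(\theta\bP, \bA_1, \dots, \bA_n)$ by exhibiting a feasible point of the condensed dual \eqref{eq:sc-sdp-small-dual-2}. For $\theta > 1$ we do this through Lemma~\ref{lem:approx-slackness}, using the certificate $\bS^{\mathsf{est}}$ of Definition~\ref{def:dual-cert}.

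\emph{Case $\theta \leq 1$.} Here $B(\theta) = 2$, and we take $\bS = \frac{1}{d}\bm I_d$. By \eqref{eq:Phi-Id}, $\Phi(\bS) = \bS$, so $\bS^{1/2}\Phi(\bS)^{1/2} = \bS$ and $2\|\bS\|_* = 2$. The dual value is then $\theta/d + 2 \geq 2$, which already suffices.

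\emph{Case $\theta > 1$.} Set $\beta = \theta^{-1} < 1$. Proposition~\ref{prop:lam-max-mon} gives a unique $\alpha > 0$ with $\lambda_{\max}(\widetilde{\Psi}_{\bZ(\alpha)}) = 1$. Let $\bZ = \alpha\bP + \theta^{-1}\bQ$ and let $\bS = \bS^{\mathsf{est}}$, so that $\bS \succeq \bm 0$, $\Tr(\bS) = 1$, and $\bZ\Phi(\bS)\bZ = \bS$. Take $c \colonequals \lambda_{\max}(\theta\bP + \Phi(\bZ) + \bZ^{-1})$; then $(c, \bZ)$ is feasible for \eqref{eq:sc-sdp-small-primal} by the Schur complement criterion. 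Applying Lemma~\ref{lem:approx-slackness} with $\rho = 1$, the computation in its proof shows that the lower bound equals
\[ \langle \theta\bP, \bS\rangle + 2\langle \bZ^{-1}, \bS\rangle = \theta s + 2\alpha^{-1}s + 2\theta(1 - s), \qquad s \colonequals \langle \bP, \bS\rangle. \]
We drop the nonnegative term $2\alpha^{-1}s$. This leaves $\lambda_{\max}(\mathfrak{X}) \geq 2\theta - \theta s$, so it suffices to show $s \leq 1 - \theta^{-2} + o(1)$.

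The key step is extracting $s$ from the eigenmatrix relation. Since $\bZ\bQ = \bQ\bZ = \theta^{-1}\bQ$, compressing $\bS = \bZ\Phi(\bS)\bZ$ by $\bQ$ gives $\bQ\bS\bQ = \theta^{-2}\bQ\Phi(\bS)\bQ$. Taking traces,
\[ 1 - s = \theta^{-2}\Tr(\bQ\Phi(\bS)) = \theta^{-2}\bigl(\Tr(\Phi(\bS)) - \langle \bP, \Phi(\bS)\rangle\bigr). \]
By Proposition~\ref{prop:Phi-properties} and \eqref{eq:Phi-Id}, $\Tr(\Phi(\bS)) = \langle \bm I_d, \bS\rangle = 1$. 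By self-adjointness and Proposition~\ref{prop:Phi-rank-one}, $\langle \bP, \Phi(\bS)\rangle = \langle \Phi(\bv\bv^{\top}), \bS\rangle \leq \|\Phi(\bv\bv^{\top})\|\,\Tr(\bS) \leq \sigma_*(\bX^{(d)})^2 = o(1)$. Hence $1 - s = \theta^{-2}(1 - o(1))$, i.e.\ $s = 1 - \theta^{-2} + o(1)$. Substituting gives $\lambda_{\max}(\mathfrak{X}) \geq 2\theta - \theta + \theta^{-1} - o(1) = B(\theta) - o(1)$.

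I expect the main obstacle to be the legitimacy of the certificate, namely that some $\alpha$ makes the top eigenvalue of $\widetilde{\Psi}_{\bZ(\alpha)}$ exactly $1$ with a positive semidefinite eigenmatrix. This is already supplied by Propositions~\ref{prop:lam-max-mon} and~\ref{prop:Psi}. The only remaining check is that $\Tr(\bZ^{1/2}\widetilde{\bS}\bZ^{1/2}) > 0$ for the normalization, which holds because $\bZ \succ \bm 0$ and $\widetilde{\bS} \neq \bm 0$. The argument does not need any quantitative control on $\alpha$, since the $\alpha^{-1}$ term enters with a favorable sign.
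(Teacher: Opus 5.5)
Your proof is correct. It uses the same framework as the paper: the same $\bZ(\alpha) = \alpha\bP + \theta^{-1}\bQ$ with $\alpha$ from Proposition~\ref{prop:lam-max-mon}, the same eigenmatrix certificate $\bS^{\mathsf{est}}$, and Lemma~\ref{lem:approx-slackness} with $\rho = 1$. Both arguments arrive at the same lower bound $2\theta - \theta s + 2\alpha^{-1}s$, where $s = \langle \bP, \bS \rangle$. The endgames differ, and yours is simpler.

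The paper first proves $\alpha(d) = \omega(1)$ by a separate contradiction argument. That argument bounds $\langle \bP, \widetilde{\bS}\rangle$ by $\alpha\beta\sigma_*^2/(1 - \alpha(\alpha-\beta)\sigma_*^2)$ and shows that bounded $\alpha$ would force $1 \leq \theta^{-2}$. The paper then combines $s = \alpha^2\langle \Phi(\bP), \bS\rangle$ with the trace identity $1 = \Tr(\bZ\Phi(\bS)\bZ)$ to get the exact value $s = \alpha^2(1-\theta^{-2})/(\alpha^2 - \theta^{-2})$. Only after that does it let $\alpha \to \infty$, which handles both $s$ and the $2\alpha^{-1}s$ term.

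You instead drop the $2\alpha^{-1}s$ term because it is nonnegative. You then read $1 - s = \theta^{-2}(1 - \langle \Phi(\bP), \bS\rangle)$ off the $\bQ$-compression of $\bS = \bZ\Phi(\bS)\bZ$, and bound the correction by $\sigma_*^2$ directly. This needs no information about $\alpha$. It also gives a two-sided estimate $1 - \theta^{-2} \leq s \leq 1 - \theta^{-2} + \theta^{-2}\sigma_*(\bX^{(d)})^2$ and an explicit rate, $\lambda_{\max}(\mathfrak{X}) \geq \theta + \theta^{-1} - \theta^{-1}\sigma_*(\bX^{(d)})^2$.

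As a byproduct, combining $s \geq 1 - \theta^{-2}$ with $s = \alpha^2\langle\Phi(\bP),\bS\rangle \leq \alpha^2\sigma_*^2$ gives $\alpha \geq \sqrt{1-\theta^{-2}}/\sigma_*(\bX^{(d)})$. So the paper's $\alpha = \omega(1)$ step follows from your identity rather than being needed for it. What the paper's route provides in addition is a closed form for $s$ as a function of $\alpha$. For the lemma itself, your argument is shorter and quantitative.
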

\begin{proof}
    In the case $\theta \leq 1$, choosing the dual certificate $\bS \colonequals \frac{1}{d}\bm I_d$ in the dual form \eqref{eq:sc-sdp-small-dual-2} immediately gives $\lambda_{\max}(\mathfrak{X}^{(d)}) \geq 2 + O(\frac{1}{d})$, giving the result since $B(\theta) = 2$ in this case.
    So, we restrict our attention below to the case $\theta > 1$.

    Let $\beta \colonequals \theta^{-1} < 1$, and $\bZ = \alpha \bP + \beta \bQ$ for that value of $\alpha = \alpha(d) > 0$ that, according to Proposition~\ref{prop:lam-max-mon}, is the unique such $\alpha$ such that $\lambda_{\max}(\widetilde{\Psi}_{\bZ(\alpha)}) = 1$.
    Let $\widetilde{\bS} \succeq \bm 0$ be a positive semidefinite eigenmatrix of $\widetilde{\Psi}_{\bZ}$ with eigenvalue $1$, normalized so that $\Tr(\widetilde{\bS}) = 1$.

    We first show that $\alpha(d) = \omega(1)$ as $d \to \infty$.
    Note that, as observed in the proof of Proposition~\ref{prop:lam-max-mon}, we will always have $\alpha > \beta$.
    We then have
    \begin{align*}
      1
      &= \lambda_{\max}(\widetilde{\Psi}_{\bZ(\alpha)}) \\
      &= \Tr(\widetilde{\Psi}_{\bZ(\alpha)}(\widetilde{\bS})) \\
      &= \langle \bZ, \Phi(\bZ^{1/2} \widetilde{\bS} \bZ^{1/2}) \rangle \\
      &= \langle \Phi(\bZ), \bZ^{1/2} \widetilde{\bS} \bZ^{1/2} \rangle \\
      &\leq \langle \alpha \Phi(\bP) + \beta \bm I_d, (\alpha \bP + \beta \bQ)^{1/2} \widetilde{\bS} (\alpha \bP + \beta \bQ)^{1/2} \rangle \\
      &\leq \beta \langle \alpha \bP + \beta \bQ, \widetilde{\bS} \rangle + \alpha \cdot \|\Phi(\bP)\| \cdot \Tr((\alpha \bP + \beta \bQ)^{1/2} \widetilde{\bS} (\alpha \bP + \beta \bQ)^{1/2}) \\
      &\leq (\beta + \alpha \sigma_*^2)\langle \alpha \bP + \beta \bQ, \widetilde{\bS} \rangle \\
      &= (\beta + \alpha \sigma_*^2)\langle (\alpha - \beta) \bP + \beta \bm I_d, \widetilde{\bS} \rangle \\
      &\leq (\beta + \alpha \sigma_*^2)(\beta + \alpha\langle \bP, \widetilde{\bS} \rangle).
    \end{align*}
    We are aiming for a contradiction if $\alpha$ were bounded as $d \to \infty$.
    To this end, we control $\langle \bP, \widetilde{\bS} \rangle$, using that $\widetilde{\Psi}_{\bZ}(\widetilde{\bS}) = \widetilde{\bS}$:
    \begin{align*}
      \langle \bP, \widetilde{\bS} \rangle
      &= \langle \widetilde{\Psi}_{\bZ}(\widetilde{\bS}), \bP \rangle \\
      &= \langle \bZ^{1/2}\Phi(\bZ^{1/2}\widetilde{\bS}\bZ^{1/2})\bZ^{1/2}, \bP \rangle \\
      &= \alpha \langle \Phi((\alpha \bP + \beta \bQ)^{1/2}\widetilde{\bS}(\alpha \bP + \beta \bQ)^{1/2}), \bP \rangle \\
      &= \alpha \langle (\alpha \bP + \beta \bQ)^{1/2}\widetilde{\bS}(\alpha \bP + \beta \bQ)^{1/2}, \Phi(\bP) \rangle \\
      &\leq \alpha \|\Phi(\bP)\| \Tr((\alpha \bP + \beta \bQ)^{1/2}\widetilde{\bS}(\alpha \bP + \beta \bQ)^{1/2}) \\
      &\leq \alpha \sigma_*^2 \langle \alpha \bP + \beta \bQ, \widetilde{\bS} \rangle \\
      &= \alpha \sigma_*^2 \langle (\alpha - \beta) \bP + \beta \bm I_d, \widetilde{\bS} \rangle \\
      &= \alpha \sigma_*^2 (\beta + (\alpha - \beta) \langle \bP, \widetilde{\bS} \rangle),
    \end{align*}
    and rearranging this gives
    \[ \langle \bP, \widetilde{\bS} \rangle \leq \frac{\alpha \beta \sigma_*^2}{1 - \alpha(\alpha - \beta) \sigma_*^2} \]
    provided $\alpha^2 \sigma_*^2 < 1$.
    Thus, since $\sigma_*^2 \to 0$ as $d \to \infty$, if $\alpha(d)$ were bounded as $d \to \infty$ then we would have $\langle \bP, \widetilde{\bS} \rangle \to 0$ and thus $1 \leq \beta^2 = \theta^{-2}$, a contradiction.

    We have $\bZ \succ \bm 0$, and thus $\bZ$ is invertible.
    Write $c \colonequals \lambda_{\max}(\theta \bP + \bZ^{-1} + \Phi(\bZ))$.
    For sufficiently large $d$ that $\alpha(d) \geq \beta = \theta^{-1}$, we have $\bZ \succeq \beta\bm I_d = \theta^{-1}\bm I_d$ and $\bZ^{-1} \succeq \beta^{-1}\bQ = \theta\bQ$, so this satisfies
    \begin{align*}
      c
      &\geq \lambda_{\max}(\theta \bP + \theta \bQ + \Phi(\theta^{-1} \bm I_d)) \\
      &= \lambda_{\max}(\theta \bm I_d + \theta^{-1} \bm I_d) \\
      &= \theta + \theta^{-1} \\
      &= B(\theta).
    \end{align*}
    By Proposition~\ref{prop:Psi}, since $\lambda_{\max}(\widetilde{\Psi}_{\bZ}) = 1$, we may choose $\bS \succeq \bm 0$ with $\Tr(\bS) = 1$ such that $\bS = \Psi_{\bZ}(\bS) = \bZ \Phi(\bS)\bZ$.
    Now, we plug this choice of $\bZ$ and $\bS$ into Lemma~\ref{lem:approx-slackness}.
    This gives the lower bound, again for sufficiently large $d$,
    \begin{align*}
      \lambda_{\max}(\mathfrak{X})
      &\geq c - \langle c \bm I_d - \theta \bP - \Phi(\bZ) - \bZ^{-1}, \bS \rangle \\
      &= \theta \langle \bP, \bS \rangle + \langle \Phi(\bZ), \bS \rangle + \langle \bS, \bZ^{-1} \rangle
        \intertext{and further, using that $\bS = \bZ \Phi(\bS)\bZ$, we have $\langle \Phi(\bZ), \bS \rangle = \langle \bZ, \Phi(\bS) \rangle = \Tr(\bZ\Phi(\bS)) = \Tr(\bS\bZ^{-1}) = \langle \bS, \bZ^{-1} \rangle$, whereby}
      &= \theta \langle \bP, \bS \rangle + 2\langle \bS, \bZ^{-1} \rangle \\
      &= \theta \langle \bP, \bS \rangle + 2\left\langle \bS, \frac{1}{\alpha} \bP + \frac{1}{\beta} \bQ \right\rangle \\
      &= \frac{2}{\beta} + \left(\theta + 2\left(\frac{1}{\alpha} - \frac{1}{\beta}\right)\right) \langle \bP, \bS \rangle.
    \end{align*}

    We now identify $\langle \bP, \bS \rangle$ from the fixed point property of $\bS$.
    Indeed, we have
    \begin{align*}
      \langle \bP, \bS \rangle
      &= \langle \bP, \bZ \Phi(\bS) \bZ \rangle \\
      &= \langle \bZ\bP\bZ, \Phi(\bS) \rangle \\
      &= \alpha^2 \langle \bP, \Phi(\bS) \rangle \\
      &= \alpha^2 \langle \Phi(\bP), \bS \rangle,
    \end{align*}
    and also
    \begin{align*}
      1
      &= \Tr(\bS) \\
      &= \Tr(\bZ\Phi(\bS)\bZ) \\
      &= \langle \bZ^2, \Phi(\bS) \rangle \\
      &= \langle \Phi((\alpha^2 - \beta^2)\bP + \beta^2 \bm I_d), \bS \rangle \\
      &= \beta^2 + (\alpha^2 - \beta^2) \langle \Phi(\bP), \bS \rangle \\
      &= \beta^2 + \frac{\alpha^2 - \beta^2}{\alpha^2} \langle \bP, \bS \rangle,
    \end{align*}
    and solving gives the exact expression
    \[ \langle \bP, \bS \rangle = \frac{\alpha^2(1 - \beta^2)}{\alpha^2 - \beta^2}. \]

    Substituting into our lower bound,
    \begin{align*}
      \lambda_{\max}(\mathfrak{X})
      &\geq \frac{2}{\beta} + \left(\theta + 2\left(\frac{1}{\alpha} - \frac{1}{\beta}\right)\right)\frac{\alpha^2(1 - \beta^2)}{\alpha^2 - \beta^2}
        \intertext{and using that $\alpha = \alpha(d) = \omega(1)$ as we established above, this is}
      &= B(\theta) + o(1),
    \end{align*}
    completing the proof.
\end{proof}

\subsection{Eigenvector fluctuations: Evidence for Conjecture~\ref{conj:spiked-matrix-fluct}}
\label{sec:conj-evidence}

\begin{figure}[p]
    \centering
    \begingroup
    \setlength{\tabcolsep}{0.4pt}
    \renewcommand{\arraystretch}{0.55}
    \newcommand{\spikedcovrowlabel}[1]{\rotatebox{90}{\parbox{0.22\textwidth}{\centering\scriptsize #1}}}
    {\scriptsize
      \begin{tabular}{@{}c c c c c c@{}}
        & Noise Model 1: & Noise Model 2: & Noise Model 3: & Noise Model 4: \\
        & GOE & Block Independent & Smooth Independent & Correlated & \\[0.375em]
        \spikedcovrowlabel{Variance Profile}
        & \includegraphics[width=0.22\textwidth]{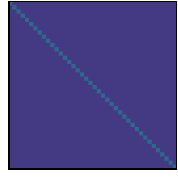}
        & \includegraphics[width=0.22\textwidth]{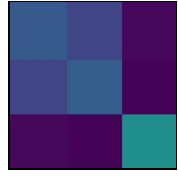}
        & \includegraphics[width=0.22\textwidth]{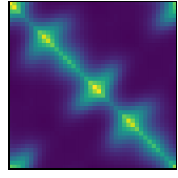}
        & \includegraphics[width=0.22\textwidth]{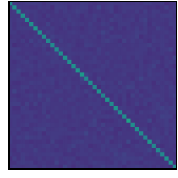}
        & \raisebox{0.009\textwidth}[0pt][0pt]{\makebox[0.075\textwidth][l]{\includegraphics[height=0.210\textwidth]{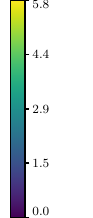}}} \\
        \spikedcovrowlabel{Off-Diagonal Predicted\\$d \cdot \bQ\,\bm\Sigma^{\mathsf{est}}\,\bQ^{\top}$}
        & \includegraphics[width=0.22\textwidth]{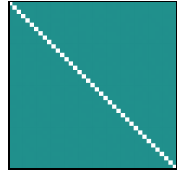}
        & \includegraphics[width=0.22\textwidth]{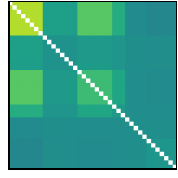}
        & \includegraphics[width=0.22\textwidth]{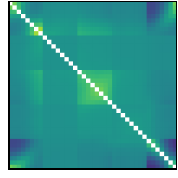}
        & \includegraphics[width=0.22\textwidth]{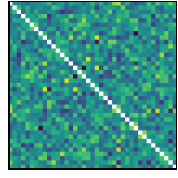}
        & \raisebox{-0.106\textwidth}[0pt][0pt]{\makebox[0.075\textwidth][l]{\includegraphics[height=0.210\textwidth]{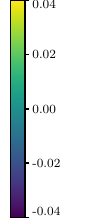}}} \\
        \spikedcovrowlabel{Off-Diagonal Empirical\\$d \cdot \bQ\Cov[\widehat{\bv}]\,\bQ^{\top}$}
        & \includegraphics[width=0.22\textwidth]{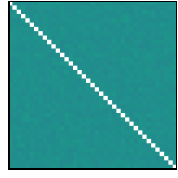}
        & \includegraphics[width=0.22\textwidth]{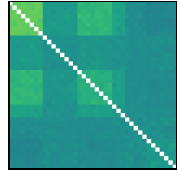}
        & \includegraphics[width=0.22\textwidth]{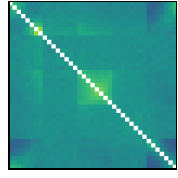}
        & \includegraphics[width=0.22\textwidth]{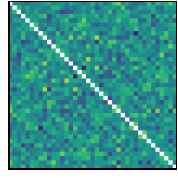}
        & \\
        \spikedcovrowlabel{Diagonal Predicted\\$d \cdot \bQ\,\bm\Sigma^{\mathsf{est}}\,\bQ^{\top}$}
        & \includegraphics[width=0.22\textwidth]{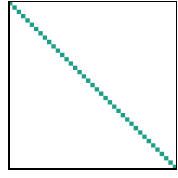}
        & \includegraphics[width=0.22\textwidth]{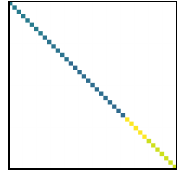}
        & \includegraphics[width=0.22\textwidth]{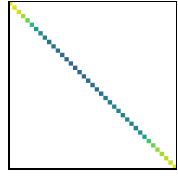}
        & \includegraphics[width=0.22\textwidth]{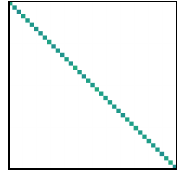}
        & \raisebox{-0.106\textwidth}[0pt][0pt]{\makebox[0.075\textwidth][l]{\includegraphics[height=0.210\textwidth]{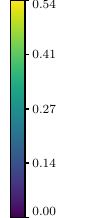}}} \\
        \spikedcovrowlabel{Diagonal Empirical\\$d \cdot \bQ\Cov[\widehat{\bv}]\,\bQ^{\top}$}
        & \includegraphics[width=0.22\textwidth]{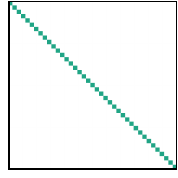}
        & \includegraphics[width=0.22\textwidth]{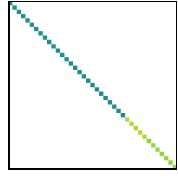}
        & \includegraphics[width=0.22\textwidth]{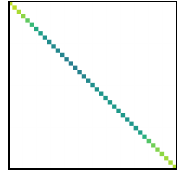}
        & \includegraphics[width=0.22\textwidth]{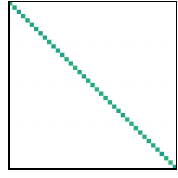}
        & \\
    \end{tabular}}
    \endgroup
    \caption{\textbf{Eigenvector fluctuation covariance estimates.}
      We compare an empirical estimate of the eigenvector covariance $d \cdot \bQ\Cov[\widehat{\bv}]\,\bQ^{\top}$ with the prediction $\Cov[\widehat{\bv}] \approx \bm\Sigma^{\mathsf{est}}$ of Conjecture~\ref{conj:spiked-matrix-fluct} under various noise models.
      See Section~\ref{sec:conj-evidence} for description of the noise models and choice of signal vector $\bv$.
      Here $\bQ \in \RR^{(d - 1) \times d}$ has in its rows an orthonormal basis for the orthogonal complement of the $\bv$ direction.
      The top row shows the variance profile $d \cdot \sigma_{ij}^2$ for the noise model.
      The two middle rows compare the predicted and empirical off-diagonal entries, and the bottom two rows the predicted and empirical diagonal entries.}
    \label{fig:spiked-covariance-table}
\end{figure}

\begin{figure}[p]
    \centering
    \begingroup
    \setlength{\tabcolsep}{0.4pt}
    \renewcommand{\arraystretch}{0.55}
    \newlength{\spikedqqcellsize}
    \setlength{\spikedqqcellsize}{0.22\textwidth}
    \newlength{\spikedqqplotsize}
    \setlength{\spikedqqplotsize}{0.211\textwidth}
    \newlength{\spikedqqrowshift}
    \setlength{\spikedqqrowshift}{4pt}
    \newcommand{\spikedqqrowlabel}[1]{\rotatebox{90}{\parbox{0.22\textwidth}{\centering\scriptsize #1}}}
    {\scriptsize
      \begin{tabular}{@{}c c c c c c@{}}
        & Noise Model 1: & Noise Model 2: & Noise Model 3: & Noise Model 4: \\
        & GOE & Block Independent & Smooth Independent & Correlated & \\[0.375em]
        \spikedqqrowlabel{Variance Profile}
        & \makebox[\spikedqqcellsize][l]{\includegraphics[width=\spikedqqcellsize]{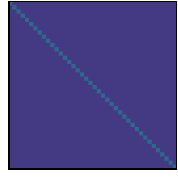}}
        & \makebox[\spikedqqcellsize][l]{\includegraphics[width=\spikedqqcellsize]{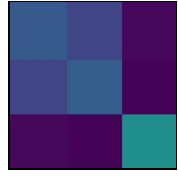}}
        & \makebox[\spikedqqcellsize][l]{\includegraphics[width=\spikedqqcellsize]{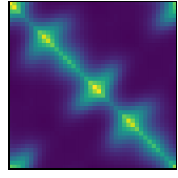}}
        & \makebox[\spikedqqcellsize][l]{\includegraphics[width=\spikedqqcellsize]{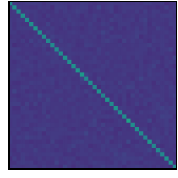}}
        & \raisebox{0.009\textwidth}[0pt][0pt]{\makebox[0.075\textwidth][l]{\includegraphics[height=0.210\textwidth]{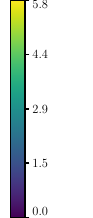}}} \\[-0.5em]
        \spikedqqrowlabel{Sparse Direction 1\\Standardized Q--Q}
        & \makebox[\spikedqqcellsize][l]{\hspace*{\spikedqqrowshift}\includegraphics[width=\spikedqqplotsize]{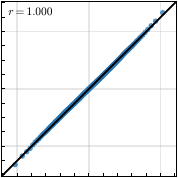}}
        & \makebox[\spikedqqcellsize][l]{\hspace*{\spikedqqrowshift}\includegraphics[width=\spikedqqplotsize]{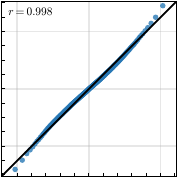}}
        & \makebox[\spikedqqcellsize][l]{\hspace*{\spikedqqrowshift}\includegraphics[width=\spikedqqplotsize]{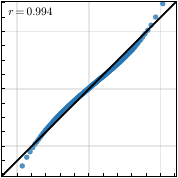}}
        & \makebox[\spikedqqcellsize][l]{\hspace*{\spikedqqrowshift}\includegraphics[width=\spikedqqplotsize]{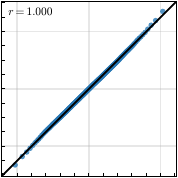}}
        & \\
        \spikedqqrowlabel{Sparse Direction 2\\Standardized Q--Q}
        & \makebox[\spikedqqcellsize][l]{\hspace*{\spikedqqrowshift}\includegraphics[width=\spikedqqplotsize]{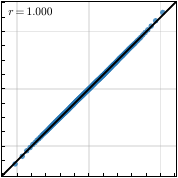}}
        & \makebox[\spikedqqcellsize][l]{\hspace*{\spikedqqrowshift}\includegraphics[width=\spikedqqplotsize]{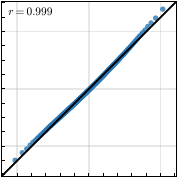}}
        & \makebox[\spikedqqcellsize][l]{\hspace*{\spikedqqrowshift}\includegraphics[width=\spikedqqplotsize]{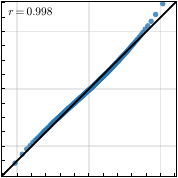}}
        & \makebox[\spikedqqcellsize][l]{\hspace*{\spikedqqrowshift}\includegraphics[width=\spikedqqplotsize]{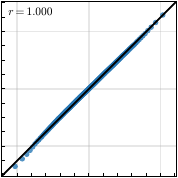}}
        & \\
        \spikedqqrowlabel{Dense Sine Direction\\Standardized Q--Q}
        & \makebox[\spikedqqcellsize][l]{\hspace*{\spikedqqrowshift}\includegraphics[width=\spikedqqplotsize]{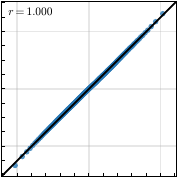}}
        & \makebox[\spikedqqcellsize][l]{\hspace*{\spikedqqrowshift}\includegraphics[width=\spikedqqplotsize]{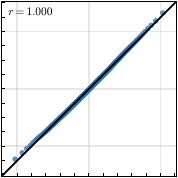}}
        & \makebox[\spikedqqcellsize][l]{\hspace*{\spikedqqrowshift}\includegraphics[width=\spikedqqplotsize]{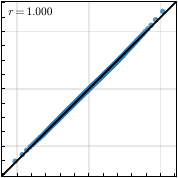}}
        & \makebox[\spikedqqcellsize][l]{\hspace*{\spikedqqrowshift}\includegraphics[width=\spikedqqplotsize]{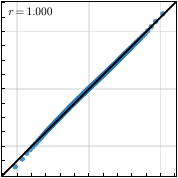}}
        & \\
        \spikedqqrowlabel{Dense Random Direction\\Standardized Q--Q}
        & \makebox[\spikedqqcellsize][l]{\hspace*{\spikedqqrowshift}\includegraphics[width=\spikedqqplotsize]{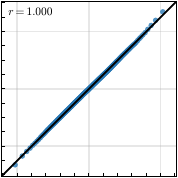}}
        & \makebox[\spikedqqcellsize][l]{\hspace*{\spikedqqrowshift}\includegraphics[width=\spikedqqplotsize]{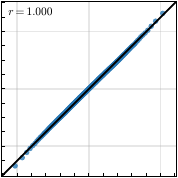}}
        & \makebox[\spikedqqcellsize][l]{\hspace*{\spikedqqrowshift}\includegraphics[width=\spikedqqplotsize]{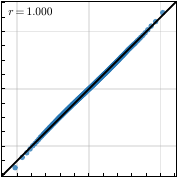}}
        & \makebox[\spikedqqcellsize][l]{\hspace*{\spikedqqrowshift}\includegraphics[width=\spikedqqplotsize]{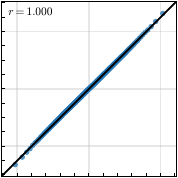}}
        & \\
    \end{tabular}}
  \endgroup
  \vspace{0.125em}
    \caption{\textbf{Eigenvector fluctuation Gaussianity.}
      We demonstrate the near-Gaussianity of inner products $\langle \bx, \widehat{\bv}\rangle$ per Conjecture~\ref{conj:spiked-matrix-fluct} for various choices of test direction $\bx$ and various noise models.
      See Section~\ref{sec:conj-evidence} for description of the noise models, choice of signal vector $\bv$, and choice of test directions.
      The top row repeats the four variance profiles from Figure~\ref{fig:spiked-covariance-table}.
      In the remaining rows, each panel is a Q--Q plot for $\langle \bx,\widehat{\bv}\rangle$ after subtracting its empirical mean and dividing by its empirical standard deviation.
      The comparison distribution is the standard Gaussian $\sN(0, 1)$.}
    \label{fig:spiked-gaussianity-table}
\end{figure}

We now provide evidence for various aspects of Conjecture~\ref{conj:spiked-matrix-fluct}, which we recall proposes a limiting distribution for fluctuations of the leading eigenvector $\what{\bv} = \bv_{\max}(\bX)$ of a spiked matrix model $\bX = \theta \bv\bv^{\top} + \bW \in \RR^{d \times d}_{\sym}$ around the component $\sqrt{1 - \theta^{-2}}\,\bv$ correlated with the signal $\bv$, in directions orthogonal to $\bv$.
We normalize such models in accordance with Theorem~\ref{thm:BCSVH-BBP}, such that $\|\bv\| = 1$ and $\EE \bW^2 = \bm I_d$.

We consider the following four Gaussian noise models for $\bW$.
We write $\sigma_{ij}^2 \colonequals \Var[W_{ij}]$ for the variance profile, though we recall that we allow the entries of $\bW$ to be correlated, so this matrix does not contain all of the information of the law of $\bW$ unless the entries are independent (up to symmetry).
\begin{enumerate}
\item \textbf{Gaussian orthogonal ensemble (GOE):} For reference, we include the case of GOE noise (Definition~\ref{def:goe}) that has been studied by many previous works, rescaled such that $\EE \bW^2 = \bm I_d$. Recall that, as prior work has shown (e.g., \cite{CH-2012-FluctuationsEigenvectorSpikedMatrix, LCC-2024-GaussianFluctuationsEigenvectors, CK-2025-EigenvectorFluctuationsSpikedMatrix}), here by the rotational symmetry of this model we expect the fluctuations of $\what{\bv}$ in the direction orthogonal to $\bv$ to be isotropic Gaussian, with no correlations.
\item \textbf{Independent entries, block variance profile:} We consider a model with entries independent up to symmetry and with the matrix of $\sigma_{ij}^2$ having a block structure.
    Namely, for $b_i \colonequals \lfloor 3(i - 1)/d \rfloor + 1$, we set $\sigma_{ij}^2 = \frac{1}{d} B_{b_i b_j}$ with values roughly
    \[
        \bB
        =
        \begin{bmatrix}
        1.6165 & 1.2102 & 0.1258 \\
        1.2102 & 1.7156 & 0.0580 \\
        0.1258 & 0.0580 & 2.8834
    \end{bmatrix}.
\]
We note that the normalization is such that $\sum_{j = 1}^d \sigma_{ij}^2 = 1$ for all $i$, which gives $\EE \bW^2 = \bm I_d$.
The quality of results is not very sensitive to this choice; we work with an example that makes the presence of covariance in the eigenvector fluctuations clearer.
\item \textbf{Independent entries, smooth variance profile:} We again consider independent entries, but now with smoothly varying variances $\sigma_{ij}^2 = \frac{1}{d} D_iD_j f(\frac{i - 1}{d}, \frac{j - 1}{d})$, where we choose the $D_i$ to ensure that again we have the isotropy condition $\sum_{j = 1}^d \sigma_{ij}^2 = 1$ (these may be computed by the Sinkhorn algorithm).
    In our experiments, we use the function
    \begin{align*}
      f(x, y) &= \num{0.025} + \exp\big(\num{-8.69}\min\{|x - y|, 1 - |x - y|\} + \num{0.65}\cos(4\pi(x + y) + \num{0.2}) \\
              &\hspace{3cm} + \num{0.35} \sin(6\pi(x + y) + \num{0.7}) + \num{0.15}\cos(2\pi(x - y))\big).
    \end{align*}
    Again, this rather convoluted example is chosen purely for the sake of visual illustration.
\item \textbf{Correlated noise:} Finally, we propose a model such that the entries of $\bW$ do have a non-trivial correlation structure.
    For a parameter $R$ and orthonormal bases $\{\bq_{r,1}, \dots, \bq_{r,d}\}$ of $\RR^d$ for each $r \in [R]$, we set $\bW \colonequals \frac{1}{\sqrt{R}}\sum_{r = 1}^R \sum_{k = 1}^d g_{r,k} \bq_{r,k}\bq_{r,k}^{\top}$, normalized so that $\EE \bW^2 = \bm I_d$.
    In our experiments we take $R = 16$ and the orthonormal bases are chosen uniformly at random from the Haar measure on orthogonal matrices.
\end{enumerate}
We illustrate the variance profiles $\sigma_{ij}^2$ of these models in the top row of Figures~\ref{fig:spiked-covariance-table} and~\ref{fig:spiked-gaussianity-table}, but we emphasize that for Noise Model 4 this does not contain the full information of the model.

In our experiments, we consider a signal vector $\bv$ that also has a block structure, but with a different number of blocks from the matrix in Noise Model 2, again to emphasize the intricate covariance structures that can appear and that depend on both the signal vector and the noise model.
For $c_i \colonequals \lfloor 5(i - 1)/d \rfloor + 1$, we set $v^{(0)}_i = a_{c_i}$ for $a = [\,6 \,\,\, 1 \,\,\, 4 \,\,\, 2 \,\,\, 8\,]$, and then normalize $\bv = \bv^{(0)} / \|\bv^{(0)}\|$.

We consider relatively small experiments, with $d = 40$ and signal strength $\theta = \num{1.8}$.
We note that it is not obvious that Conjecture~\ref{conj:spiked-matrix-fluct} should yet ``kick in'' at such small sizes, but we will see that it does quite clearly.

In Figure~\ref{fig:spiked-covariance-table}, we first consider the covariance structure of $\what{\bv}$, the leading eigenvector of $\bX$, drawn with each noise model above.
For each noise model, we sample \num{150000} such matrices. In the third and fifth rows of Figure~\ref{fig:spiked-covariance-table}, we plot the associated empirical estimate of $\Cov[\what{\bv}]$, separating diagonal and off-diagonal entries since the former are much larger.
In the second and fourth rows we plot our prediction of the same matrix from Conjecture~\ref{conj:spiked-matrix-fluct}, using the version with $\bm\Sigma^{\mathsf{est}}$ as defined in Definition~\ref{def:dual-cert} for the sake of efficiency (since computing this requires merely solving an eigenvalue problem as opposed to a full SDP).
Already at this small size and for several quite different noise models, we see remarkable agreement between the predicted and empirical values, the prediction capturing all major qualitative phenomena of the actual covariance.
We emphasize also that the empirical covariances show that the eigenvector fluctuations are indeed non-universal in these models and can have a broad range of structures for different noise distributions.
We also remark that there seems to be a systematic finite-size effect of the prediction of Conjecture~\ref{conj:spiked-matrix-fluct} overestimating the range of variances and covariances, making larger values larger and smaller values smaller.
Thus, perhaps if these estimates were to be used in practice a shrinkage procedure might be useful.

Second, in Figure~\ref{fig:spiked-gaussianity-table}, we consider the predicted Gaussianity of finite-dimensional projections of $\what{\bv}$.
For four choices of test vector $\bx$ fixed in advance, for each noise model we again draw \num{30000} independent samples of $\what{\bv}$ and plot Q--Q plots of a normalization of $\langle \bx, \what{\bv} \rangle$ to have mean zero and variance 1.
(Conjecture~\ref{conj:spiked-matrix-fluct} gives a prediction of the variance, but as we see in the first set of experiments this estimate seems to have a systematic bias, so we normalize separately to isolate the test of Gaussianity from this bias.)
The first two choices of $\bx$ are sparse two-coordinate directions orthogonal to $\bv$: if $i \neq j$, we use the normalized vector proportional to $v_j\be_i - v_i\be_j$.
In the experiment we take the pairs $(i,j) = (1,9)$ and $(17,33)$.
The other two choices are dense directions obtained by projecting the $\bv$ direction away from a vector and then normalizing.
The first initial dense vector has oscillating entries $\sin(2\pi(i - 1)/d) + 0.35\cos(6\pi(i - 1)/d)$, and the second is a standard Gaussian random vector.
We see an excellent fit to a Gaussian distribution, with correlations of Q--Q plots at least \num{0.99} in all cases.
For the cases of a sparse test direction $\bx$ having slightly mismatched tail distributions, we see (in additional experiments omitted here) improvement and clear convergence to Gaussianity as we increase $d$, with an essentially perfect fit once $d = 120$.

\section{Contributions by artificial intelligence}
\label{sec:ai}

The GPT-5.5 family of large language models (specifically, GPT-5.5~Pro accessed via the ChatGPT interface as well as the version of GPT-5.5 available in the Codex interface) was used for certain parts of this work.

Their most significant role was in the proof of Lemma~\ref{lem:rad-sdp-2}.
As mentioned above, we first derived the bound in Lemma~\ref{lem:rad-sdp-1} independently.
We believed that the SDP treated there (also that appearing in Theorem~\ref{thm:rad-sdp}) could only give a lower bound on $\Lambda_{\rad}$ and that this bound should sometimes be loose.
This incorrect belief was based on experiments with a very slightly different and also valid SDP relaxation of $\Lambda_{\rad}$ that \emph{is} indeed loose in some cases.
When prompted to verify and look for simple examples of this looseness for the SDP of Theorem~\ref{thm:rad-sdp}, GPT-5.5~Pro insisted that the bound was never loose and instead produced the proofs given in Section~\ref{sec:pf-rad-sdp}, presented here after editing for notation and clarity of exposition.

The other uses of these models involved more extensive interaction with and guidance by the author.
Beyond routine uses in programming numerical experiments and the above proof, the largest role the models played was in developing the details of the proof ideas in Section~\ref{sec:bbp-iso}.

\section*{Acknowledgments}
\addcontentsline{toc}{section}{Acknowledgments}

Thanks to Afonso Bandeira, Ramon van Handel, Lucas Pesenti, and Robert Wang for helpful discussions and comments over the course of this project.

\addcontentsline{toc}{section}{References}
\bibliography{main}
\bibliographystyle{alpha}

\clearpage

\appendix

\section{Lagrangian duality calculations}

We give the omitted calculations of Lagrangian duals of semidefinite programs from Theorems~\ref{thm:sc-sdp-small} and~\ref{thm:sc-sdp-large}.
Both are entirely routine semidefinite programming calculations.

\subsection{Small semicircular SDP: Omitted calculation from Theorem~\ref{thm:sc-sdp-small}}
\label{app:dual-small}
  Let
    \[
        \bM(c, \bZ)
      \colonequals
      \left[\begin{array}{cc} c \bm I_d - \bA_0 - \Phi(\bZ) & \bm I_d \\ \bm I_d & \bZ \end{array}\right].
    \]
    For a Hermitian dual variable
    \[
      \bm \bY
      = \bY(\bS, \bT, \bU) = \left[\begin{array}{cc} \bm S & -\bm T \\ -\bm T^{*} & \bm U \end{array}\right] \succeq \bm 0,
    \]
    the Lagrangian is
    \[
      \sL(c, \bZ; \bm S, \bm T, \bm U)
      \colonequals c - \langle \bm \bY(\bS, \bT, \bU), \bM(c, \bZ) \rangle.
    \]
    We note that $\Phi$ is self-adjoint with respect to the Frobenius inner product:
    \[
      \langle \bm S, \Phi(\bZ) \rangle
      = \sum_{i = 1}^n \Tr(\bm S \bA_i \bZ \bA_i)
      = \sum_{i = 1}^n \Tr(\bA_i \bm S \bA_i \bZ)
      = \langle \Phi(\bm S), \bZ \rangle.
    \]
    Using this, we calculate
    \[ \sL(c, \bZ; \bm S, \bm T, \bm U) = c(1 - \Tr(\bm S)) + \langle \bA_0, \bm S \rangle + 2\Re(\Tr(\bm T)) + \langle \Phi(\bm S) - \bm U, \bZ \rangle. \]
    Because $c$ is unconstrained and $\bZ$ ranges over all Hermitian matrices, the infimum of $\sL$ over $(c, \bZ)$ is finite if and only if $\Tr(\bS) = 1$ and $\bU = \Phi(\bS)$.
    In that case, the infimum of $\sL$ over $(c, \bZ)$ is $\langle \bA_0, \bm S \rangle + 2\Re(\Tr(\bm T))$.
    Therefore the dual problem is
    \[
\left\{ \begin{array}{ll} \text{supremum of} & \langle \bA_0, \bm S \rangle + 2\Re(\Tr(\bm T)) \\ \text{subject to} & \left[\begin{array}{cc} \bm S & -\bm T \\ -\bm T^{*} & \Phi(\bS) \end{array}\right] \succeq \bm 0, \\ & \bS \in \CC^{d \times d}_{\herm}, \bT \in \CC^{d \times d}, \\ & \Tr(\bS) = 1 \end{array}\right\},
\]
the same as \eqref{eq:sc-sdp-small-dual} from Theorem~\ref{thm:sc-sdp-small}.

\subsection{Large semicircular SDP: Omitted calculation from Theorem~\ref{thm:sc-sdp-large}}
\label{app:dual-large}

Let
\[ \bH(c, \bZ)
      \colonequals
      \left[\begin{array}{ccccc} c\bm I_d - \bA_0 - \bZ & \bA_1 & \bA_2 & \cdots & \bA_n \\ \bA_1 & \bZ & \bm 0 & \cdots & \bm 0 \\ \bA_2 & \bm 0 & \bZ & \cdots & \bm 0 \\
                                                                                     \vdots & \vdots & \vdots & \ddots & \vdots \\ \bA_n & \bm 0 & \bm 0 & \cdots & \bZ \end{array}\right]. \]
Let $\bm \Gamma = [\bm \Gamma^{[i, j]}]_{i, j = 0}^n \succeq \bm 0$ be a Hermitian block matrix of the same size as $\bH(c, \bZ)$, the dual variable for the positivity constraint on $\bH$.
    The Lagrangian is
    \begin{align*}
      \sL(c, \bZ; \bm \Gamma)
      &\colonequals c - \langle \bm \Gamma, \bH(c, \bZ) \rangle
        \intertext{and expanding the inner product blockwise gives}
      &= c\bigl(1 - \Tr({\bm \Gamma}^{[0, 0]})\bigr)
      + \langle \bA_0, {\bm \Gamma}^{[0, 0]} \rangle - \sum_{i = 1}^n \bigl(\langle \bA_i, {\bm \Gamma}^{[0, i]} \rangle + \langle \bA_i, {\bm \Gamma}^{[i, 0]} \rangle\bigr) \\
      &\hspace{1cm}
      + \left\langle {\bm \Gamma}^{[0, 0]} - \sum_{i = 1}^n {\bm \Gamma}^{[i, i]}, \bZ \right\rangle.
    \end{align*}

    As in the proof of Theorem~\ref{thm:sc-sdp-small}, the infimum of this Lagrangian over unconstrained $c \in \RR$ and Hermitian $\bZ$ is finite if and only if $\Tr({\bm \Gamma}^{[0, 0]}) = 1$ and ${\bm \Gamma}^{[0, 0]} = \sum_{i = 1}^n {\bm \Gamma}^{[i, i]}$.
    In this case, the infimum of $\sL$ over $(c, \bZ)$ is
    \[
      \langle \bA_0, {\bm \Gamma}^{[0, 0]} \rangle
      - \sum_{i = 1}^n \bigl(\langle \bA_i, {\bm \Gamma}^{[0, i]} \rangle + \langle \bA_i, {\bm \Gamma}^{[i, 0]} \rangle\bigr) =
      \left\langle \left[\begin{array}{cccc} \bA_0 & -\bA_1 & \cdots & -\bA_n \\ -\bA_1 & \bm 0 & \cdots & \bm 0 \\ \vdots & \vdots & \ddots & \vdots \\ -\bA_n & \bm 0 & \cdots & \bm 0 \end{array}\right], \bm \Gamma \right\rangle.
  \]
  Therefore the dual problem is
  \[ \left\{\begin{array}{ll} \text{supremum of} & \left\langle \left[\begin{array}{cccc} \bA_0 & -\bA_1 & \cdots & -\bA_n \\ -\bA_1 & \bm 0 & \cdots & \bm 0 \\ \vdots & \vdots & \ddots & \vdots \\ -\bA_n & \bm 0 & \cdots & \bm 0 \end{array}\right], \left[\begin{array}{cccc} \bm \Gamma^{[0, 0]} & \bm \Gamma^{[0, 1]} & \cdots & \bm \Gamma^{[0, n]} \\ \bm \Gamma^{[1, 0]} & \bm \Gamma^{[1, 1]} & \cdots & \bm\Gamma^{[1, n]} \\ \vdots & \vdots & \ddots & \vdots \\ \bm\Gamma^{[n, 0]} & \bm\Gamma^{[n, 1]} & \cdots & \bm \Gamma^{[n, n]} \end{array}\right]\right\rangle
              \\ \text{subject to} & \bm \Gamma = [\bm \Gamma^{[i, j]}]_{i, j = 0}^n \succeq \bm 0, \\ & \bm \Gamma^{[0, 0]} = \sum_{i = 1}^n \bm \Gamma^{[i, i]}, \\ & \Tr(\bm \Gamma^{[0, 0]}) = 1 \end{array}\right\}, \]
  the same as \eqref{eq:sc-sdp-large-dual} from Theorem~\ref{thm:sc-sdp-large}.

\subsection{Rademacher SDP: Omitted calculation from Theorem~\ref{thm:rad-sdp}}
\label{app:rad}

Let
\[
  \bK(c, \bZ, \bY_1, \dots, \bY_n)
  \colonequals c \bm I_d - \bA_0 - \bZ - \sum_{i = 1}^n \bY_i
\]
and, for each $i \in [n]$, let
\[
  \bM_i(\bY_i, \bZ)
  \colonequals
  \left[\begin{array}{cc}
    \bY_i & \bA_i \\
    \bA_i & \bY_i + \bZ
  \end{array}\right].
\]
Then the primal problem \eqref{eq:rad-sdp-primal} can be written as
\[
  \left\{
  \begin{array}{ll}
    \text{minimum of} & c \\
    \text{subject to} & \bK(c, \bZ, \bY_1, \dots, \bY_n) \succeq \bm 0, \\
    & \bZ \succeq \bm 0, \\
    & \bM_i(\bY_i, \bZ) \succeq \bm 0 \text{ for all } i \in [n].
  \end{array}
  \right\}.
\]

For Hermitian dual variables
\[
  \bm \Delta \succeq \bm 0,
  \qquad
  \bm U \succeq \bm 0,
  \qquad
  \bm \Gamma_i
  =
  \left[\begin{array}{cc}
    \bS_i & -\bR_i \\
    -\bR_i^{*} & \bT_i
  \end{array}\right]
  \succeq \bm 0
  \text{ for all } i \in [n],
\]
the Lagrangian is
\begin{align*}
  &\sL(c, \bZ, \bY_1, \dots, \bY_n; \bm \Delta, \bm U, \bm \Gamma_1, \dots, \bm \Gamma_n) \\
  &\colonequals
  c
  - \langle \bm \Delta, \bK(c, \bZ, \bY_1, \dots, \bY_n) \rangle
  - \langle \bm U, \bZ \rangle
  - \sum_{i = 1}^n \langle \bm \Gamma_i, \bM_i(\bY_i, \bZ) \rangle.
\end{align*}
Expanding the inner products gives
\begin{align*}
  \sL
  &=
  c
  - \left\langle \bm \Delta, c \bm I_d - \bA_0 - \bZ - \sum_{i = 1}^n \bY_i \right\rangle
  - \langle \bm U, \bZ \rangle \\
  &\hspace{1cm}
  - \sum_{i = 1}^n
  \left\langle
    \left[\begin{array}{cc}
      \bS_i & -\bR_i \\
      -\bR_i^{*} & \bT_i
    \end{array}\right],
    \left[\begin{array}{cc}
      \bY_i & \bA_i \\
      \bA_i & \bY_i + \bZ
    \end{array}\right]
  \right\rangle \\
  &=
  c(1 - \Tr(\bm \Delta))
  + \langle \bA_0, \bm \Delta \rangle
  + \left\langle \bm \Delta - \bm U - \sum_{i = 1}^n \bT_i, \bZ \right\rangle \\
  &\hspace{1cm}
  + \sum_{i = 1}^n \left\langle \bm \Delta - \bS_i - \bT_i, \bY_i \right\rangle
  + \sum_{i = 1}^n \langle \bA_i, \bR_i + \bR_i^{*} \rangle.
\end{align*}
Indeed, for each $i \in [n]$ we have
\[
  \left\langle
    \left[\begin{array}{cc}
      \bS_i & -\bR_i \\
      -\bR_i^{*} & \bT_i
    \end{array}\right],
    \left[\begin{array}{cc}
      \bY_i & \bA_i \\
      \bA_i & \bY_i + \bZ
    \end{array}\right]
  \right\rangle
  =
  \langle \bS_i + \bT_i, \bY_i \rangle
  + \langle \bT_i, \bZ \rangle
  - \langle \bA_i, \bR_i + \bR_i^{*} \rangle.
\]

Because $c$ is unconstrained and $\bZ, \bY_1, \dots, \bY_n$ range over all Hermitian matrices, the infimum of $\sL$ over $(c, \bZ, \bY_1, \dots, \bY_n)$ is finite if and only if
\[
  \Tr(\bm \Delta) = 1,
  \qquad
  \bS_i + \bT_i = \bm \Delta \text{ for all } i \in [n],
  \qquad
  \bm U = \bm \Delta - \sum_{i = 1}^n \bT_i.
\]
Since $\bm U$ is constrained to be positive semidefinite, this last condition is equivalent to
\[
  \bm \Delta - \sum_{i = 1}^n \bT_i \succeq \bm 0.
\]
In that case, the infimum of $\sL$ over $(c, \bZ, \bY_1, \dots, \bY_n)$ is
\[
  \langle \bA_0, \bm \Delta \rangle
  + \sum_{i = 1}^n \langle \bA_i, \bR_i + \bR_i^{*} \rangle.
\]
Therefore the dual problem is
\[
  \left\{
  \begin{array}{ll}
    \text{supremum of}
    &
    \langle \bA_0, \bm \Delta \rangle
    + \displaystyle\sum_{i = 1}^n \langle \bA_i, \bR_i + \bR_i^{*} \rangle
    \\
    \text{subject to}
    &
    \left[\begin{array}{cc}
      \bS_i & -\bR_i \\
      -\bR_i^{*} & \bT_i
    \end{array}\right]
    \succeq \bm 0
    \text{ for all } i \in [n],
    \\
    &
    \bS_i, \bT_i, \bm \Delta \in \CC^{d \times d}_{\herm},
    \qquad
    \bR_i \in \CC^{d \times d},
    \\
    &
    \bS_i + \bT_i = \bm \Delta
    \text{ for all } i \in [n],
    \\
    &
    \bm \Delta - \sum_{i = 1}^n \bT_i \succeq \bm 0,
    \\
    &
    \Tr(\bm \Delta) = 1.
  \end{array}
  \right\},
\]
the same as \eqref{eq:rad-sdp-dual} from Theorem~\ref{thm:rad-sdp}.

\end{document}